\documentclass[12pt]{amsart}
\usepackage{amsthm,amsmath,amssymb,amscd,amsfonts,mathrsfs,color,graphicx,bm}
\theoremstyle{plain}
\newtheorem{theorem}{Theorem}[section]
\newtheorem{lemma}[theorem]{Lemma}
\newtheorem{nodesignation}[theorem]{}
\newtheorem{proposition}[theorem]{Proposition}

\theoremstyle{definition}

\theoremstyle{remark}
\newtheorem{remark}[theorem]{Remark}

\setcounter{secnumdepth}{3}
\def\cal{\mathcal}
\begin{document}

\centerline{\bf Hyperbolicity of Generic High-Degree Hypersurfaces}
\centerline{\bf in Complex Projective Space}

\bigbreak\centerline{\it Dedicated to the Memory of Hans Grauert}

\bigbreak
\centerline{Yum-Tong Siu\ %
\footnote{Partially supported by grant DMS-1001416 from the National Science Foundation.}
}

\bigbreak

\bigbreak

\tableofcontents

\setcounter{section}{-1}
\section{\sc Introduction} In this paper we
are going to present the proofs of the following two theorems on the hyperbolicity of generic hypersurfaces of sufficiently high degree and of their complements, together with a number of related results, obtained by the same methods, such as: (i) a Big-Picard-Theorem type statement concerning extendibility, across the puncture, of holomorphic maps from a punctured disk to a generic hypersurface of high degree, (ii) entire holomorphic functions satisfying polynomial equations with slowly varying coefficients, and (iii) Second Main Theorems for jet differentials and slowly moving targets.

\begin{theorem}\label{main_theorem}For any integer
$n\geq 3$ there exists a positive integer $\delta_n$ (which is explicitly expressible as
a function of $n$) with the
following property. For any generic hypersurface $X$ in ${\mathbb
P}_n$ of degree $\delta\geq\delta_n$ there is no nonconstant holomorphic map from ${\mathbb C}$ to $X$.
\end{theorem}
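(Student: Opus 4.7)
The plan is to rule out a nonconstant entire map $f \colon \mathbb{C} \to X$ by showing that its $k$-jet $j_k f$ must satisfy so many independent algebraic conditions that $f$ is forced to be constant, following the Bloch--Green--Griffiths--Demailly philosophy applied to the universal family of hypersurfaces, as in the approach initiated by Clemens, Ein, and Voisin.

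First I would move to the universal hypersurface $\mathcal{X} \to V$, where $V \subset \mathbb{P}\bigl(H^0(\mathbb{P}_n,\mathcal{O}(\delta))\bigr)$ parametrizes smooth degree-$\delta$ hypersurfaces, and build the relative Demailly--Semple tower of (vertical) jet spaces $J_k^{\mathrm{vert}}(\mathcal{X}) \to V$. A Riemann--Roch / Schur-bundle computation on the tautological line bundle of this tower, together with a cohomological vanishing for the higher direct images, should give: for every fiber $X$ and every $\delta \geq \delta_n$, there exist global jet differentials $\omega$ on $X$ of some order $k$ and weight $m$, vanishing on an ample divisor $A$ of $X$. By the standard fundamental vanishing theorem, any entire $f \colon \mathbb{C} \to X$ then satisfies $\omega(j_k f) \equiv 0$, forcing $j_k f$ into the proper subvariety $\{\omega = 0\} \subset J_k(X)$.

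The technical heart, and where I expect the main obstacle, is the construction of sufficiently many meromorphic vector fields on $J_k^{\mathrm{vert}}(\mathcal{X})$ having pole order along $\mathcal{O}(1)$ of $\mathbb{P}_n$ that grows only linearly (and independently of $\delta$ in the other direction) in $k$. Such slanted vector fields must have components in the parameter directions of $V$ so as to evade the fiberwise tangency constraint coming from the defining equation of $\mathcal{X}$. Producing them with low pole order requires an explicit rational construction that differentiates the defining equation against monomial variations of its coefficients; controlling these pole orders is the step that ultimately dictates the explicit function $\delta_n$.

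Given enough such vector fields, I would apply iterated Lie derivatives $L_{v_1}\cdots L_{v_p}\omega$ to the initial jet differential. Each such derivative still vanishes along $j_k f$, because $f$ lies in a fixed fiber while the Lie derivatives probe directions transverse to it only at the level of the ambient family; after twisting away the (controlled) pole orders by a suitable multiple of $A$ and using $\delta \geq \delta_n$ to keep the resulting line bundle still effective, each derivative yields a new jet differential on $X$ vanishing on an ample divisor. Iterating, the $k$-jet $j_k f$ is trapped in the common zero locus of $\omega$ and all its Lie derivatives along a spanning set of directions of $J_k(X)$, which by a formal Taylor-expansion argument forces $j_k f \equiv 0$, so $f$ is constant. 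Balancing the weight $m$ of the initial jet differential against the number $p$ of derivatives needed, the pole orders of the slanted vector fields, and the degree of $A$, all as functions of $n$, yields the explicit $\delta_n$ in the statement.
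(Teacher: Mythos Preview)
Your outline follows the overall architecture of the paper's proof---universal family, vertical jet space, slanted vector fields of low vertical pole order, iterated Lie derivatives, Schwarz lemma---but there is a genuine gap at the final step, and it is precisely the point the paper isolates as decisive.

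You propose to obtain the initial jet differential $\omega$ by a Riemann--Roch/Schur-bundle computation, and then to argue that the common zero locus of $\omega$ and all its Lie derivatives along a spanning set of slanted vector fields is the zero section ``by a formal Taylor-expansion argument''. This does not follow. Each Lie derivative consumes a fixed amount (of order $n^2$) of the vanishing of $\omega$ along the ample divisor, so you may take only a bounded number $p$ of derivatives before the twist becomes ineffective. The common zero locus of $\omega$ and its derivatives up to order $p$ is empty above a point $y\in X$ only if the vanishing order at $y$ of every coefficient of $\omega$ (as a function on $X$) is at most $p$. A jet differential produced by Riemann--Roch comes with no such control: its coefficients may vanish to order comparable to $\delta$ along a proper subvariety of $X$, and the entire curve could a priori be trapped there. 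This is exactly why the Riemann--Roch route, as in Diverio--Merker--Rousseau, yields only algebraic degeneracy and not hyperbolicity; the paper discusses this explicitly in (\ref{linear_algebra_versus_riemann_roch}).

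The paper closes this gap by replacing Riemann--Roch with a direct linear-algebra construction (Proposition~\ref{construction_jet_differential_as_polynomial}): the jet differential has the explicit form $\omega=Q/(f_{x_1}-1)$ where $Q$ is a polynomial of degree $m_0\sim\delta^{\theta_0}$ in $x_1,\dots,x_n$ and weight $m\sim\delta^{\theta}$ in their differentials, with $\theta_0,\theta<1$, while the vanishing on the ample divisor is of order $\sim\delta^{\theta'}$ with $\theta'>\max(\theta_0,\theta)$. The crucial additional ingredient is Lemma~\ref{vanishing_order_restriction_polynomial}: for \emph{generic} $\alpha$, the restriction to $X^{(\alpha)}$ of a degree-$m_0$ polynomial vanishes to order at most $m_0$ at every point. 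Combined with the bound $m$ on the fibre-direction vanishing, this caps the total number of Lie derivatives needed at $O(\delta^{\theta_0}+\delta^{\theta})$, which is dominated by the available $\delta^{\theta'}$; hence Proposition~\ref{nonzero_jet_differential} produces, at every nonsingular jet, a derivative of $\omega$ still vanishing on an ample divisor but nonzero there. Your proposal needs both the explicit low-degree form of $\omega$ and this genericity lemma on vanishing orders; without them the argument stalls at algebraic degeneracy.
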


\begin{theorem}\label{hyperbolicity_complement_hypersurface}For any integer
$n\geq 2$ there exists a positive integer $\delta_n^*$ (which is explicitly expressible as
a function of $n$) with the
following property. For any generic hypersurface $X$ in ${\mathbb
P}_n$ of degree $\delta\geq\delta_n^*$ there is no nonconstant holomorphic map from ${\mathbb C}$ to ${\mathbb P}_n-X$.
\end{theorem}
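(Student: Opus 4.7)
The plan is to adapt the jet-differential / slanted-vector-field strategy used for Theorem \ref{main_theorem} to the logarithmic setting, with the role of holomorphic jet differentials on $X$ played by logarithmic jet differentials on $\mathbb{P}_n$ with poles along the hypersurface $X$. Concretely, a nonconstant holomorphic map $f\colon\mathbb{C}\to\mathbb{P}_n\setminus X$ lifts to a holomorphic map $j_k f\colon\mathbb{C}\to J_k^{\log}(\mathbb{P}_n,X)$ into the logarithmic $k$-jet bundle, and the tautological/Ahlfors--Schwarz inequality says that the pullback under $j_k f$ of any global section of a negative twist of the logarithmic jet bundle must vanish identically. So the entire problem is reduced to (i) producing enough global logarithmic $k$-jet differentials on $\mathbb{P}_n$ vanishing to high order on $X$ and vanishing on an ample divisor, and (ii) producing enough first-order variations of these differentials, via slanted vector fields along the universal family of degree-$\delta$ hypersurfaces, to cut out an algebraic locus in $J_k^{\log}$ that the image of $j_k f$ cannot lie in unless $f$ is constant.

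For step (i) I would mimic the Schur-polynomial/Riemann--Roch construction used in the compact case, but carried out on $\mathbb{P}_n$ with the logarithmic cotangent sheaf $\Omega^1_{\mathbb{P}_n}(\log X)$ in place of $\Omega^1_X$. One advantage here is that the ambient space is already $\mathbb{P}_n$ whose cohomology is well understood; the Euler sequence twisted with $X$ controls $\Omega^1_{\mathbb{P}_n}(\log X)$, and a filtration by weight on the Demailly--Semple jet bundle reduces the existence of sections vanishing on an ample divisor to the positivity of an explicit polynomial in the Chern classes of $\Omega^1_{\mathbb{P}_n}(\log X)$ and of $\mathcal{O}_{\mathbb{P}_n}(1)$. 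Choosing $k=n$ (to match the dimension) and $\delta$ large in terms of $n$ makes the leading term dominate and yields a lower bound for $h^0$ of the required type.

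For step (ii), I would construct meromorphic vector fields on the total space of the logarithmic $k$-jet bundle of the universal family $\mathcal{X}\to U$ over the parameter space $U$ of hypersurfaces of degree $\delta$, with the crucial property that their restrictions to a fiber $J_k^{\log}(\mathbb{P}_n,X)$ span the vertical tangent space modulo a controlled pole order along $X$ and along a hypersurface in the jet bundle; this is the log analogue of Siu's slanted-vector-field lemma and of the Merker/P\u{a}un constructions. Differentiating a global logarithmic jet differential along such vector fields produces many new sections, and after sufficiently many differentiations one obtains a system whose common zero locus projects to an algebraic subvariety of $\mathbb{P}_n\setminus X$ of dimension zero, which, combined with the tautological inequality, forces $f$ to be constant. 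The main obstacle will be step (ii): ensuring the slanted vector fields have small enough pole order along $X$ so that, after subtracting the vanishing order of the initial jet differential along $X$, one is still left with a section vanishing on an ample divisor. This is exactly the delicate degree bookkeeping that determines $\delta_n^*$ as an explicit function of $n$.
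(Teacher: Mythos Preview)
Your high-level architecture matches the paper's: produce a log-pole jet differential on $\mathbb{P}_n$ vanishing on an ample divisor with log poles only along $X$, then differentiate it along slanted vector fields on the universal family to force any entire curve in $\mathbb{P}_n\setminus X$ to be constant, invoking a log-pole version of the Schwarz lemma. Where you diverge is in step~(i), and the divergence is consequential.

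The paper does \emph{not} build the log-pole jet differential by Riemann--Roch or Schur-polynomial estimates on $\Omega^1_{\mathbb{P}_n}(\log X)$. Instead it uses a cyclic-cover trick (Theorem~\ref{construction_log_pole_differential}): one passes to the hypersurface $\hat X=\{f(x_1,\dots,x_n)-x_{n+1}^\delta=0\}\subset\mathbb{P}_{n+1}$, applies the explicit linear-algebra construction of Proposition~\ref{construction_jet_differential_as_polynomial} to obtain a holomorphic $n$-jet differential on $\hat X$ of the form $Q/(F_{x_1}-1)$ with $Q$ a low-degree polynomial in $d^jx_\ell$, and then takes the direct image under the $\delta$-sheeted branched cover $\hat X\to\mathbb{P}_n$. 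The resulting object on $\mathbb{P}_n$ is a log-pole jet differential whose coefficients are still explicit low-degree polynomials in $x_1,\dots,x_n$ and in $d^j\log f$. This explicit polynomial form is exactly what feeds into Lemma~\ref{vanishing_order_restriction_polynomial} and Remark~\ref{vanishing_order_direct_image_jet_differential}, giving uniform control on the vanishing order of each coefficient at every point of $\mathbb{P}_n\setminus X$ for generic $\alpha$; that control, together with the slanted vector fields of Proposition~\ref{vector_field_projective_space}, is what lets you conclude the base locus is empty rather than merely proper.

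Your Riemann--Roch/Demailly--Semple route produces a section whose existence is guaranteed but whose shape is opaque. The paper explicitly flags this in \S\ref{linear_algebra_versus_riemann_roch}: the exact-sequence machinery ``so obscures the eventual form of the resulting jet differential that not enough control can be retained to get beyond the weaker conclusion'' of algebraic degeneracy. Your assertion that after enough differentiations the common zero locus becomes zero-dimensional is the step that is not justified by your outline; without a vanishing-order bound on coefficients analogous to Lemma~\ref{vanishing_order_restriction_polynomial}, the slanted vector fields only lower multiplicity along the fibre variables, not along the base. The obstacle you single out (pole-order bookkeeping for the slanted fields along $X$) is real but secondary; the primary missing ingredient is the explicit construction of the initial jet differential, for which the paper's cyclic-cover device is the key idea you have not anticipated.
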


\smallbreak Theorem \ref{main_theorem} was presented with a sketch of its proof in \cite{Si02} and \cite{Si04}.  The methods used, though rather tedious in some of their details, consist essentially just of some skillful manipulations in linear algebra and the chain rule of differentiation.  The underlying ideas in these methods can be traced to the techniques which Bloch developed in his 1926 paper \cite{B26}.  To explain this link to Bloch's paper \cite{B26}, we first very briefly describe Bloch's techniques with explanations about how they foreshadow to a certain extent our techniques in this paper.

\begin{nodesignation}\label{bloch_techniques}{\it Bloch's Technique of Construction of Jet Differential.}\end{nodesignation}  In his 1926 paper \cite{B26} Bloch proved the nonexistence of nonconstant holomorphic maps from ${\mathbb C}$ to a submanifold $Y$ of an abelian variety $A$, which does not contain a translate of a positive-dimensional abelian subvariety of $A$, by producing sufficiently independent holomorphic jet differentials $\omega$ on $Y$ vanishing on some ample divisor of $Y$ and using the fact that the pullbacks of such jet differentials by holomorphic maps from ${\mathbb C}$ to $Y$ vanish identically.

\medbreak He produced such holomorphic jet differentials $\omega$ on $Y$, not by applying to $Y$ the theorem of Riemann-Roch (which was not yet readily available at the time of Bloch's paper for the case needed for its application to $Y$), but explicitly by pulling back to $Y$ constant-coefficient polynomials $P$ (with homogeneous weight) of differentials of the coordinates (including higher-order differentials) of the universal cover $\tilde A$ of the abelian variety $A$.   When the constant-coefficient polynomials $P$ of differentials of coordinates of $\tilde A$ are pulled back to $Y$, the condition of $Y$ not containing a translate of a positive-dimensional abelian subvariety of $A$ causes new vanishing of the pullbacks on $Y$.  Moreover, the new vanishing is on some ample divisor of $Y$ when the constant-coefficient polynomials $P$ of differentials of coordinates of $\tilde A$ are appropriately chosen.  The reason why it is possible to choose $P$ so that its pullback $\omega$ to $Y$ vanishes on an ample divisor of $Y$ is that the condition of not containing a translate of a positive-dimensional abelian subvariety of $A$ guarantees that the dimension of the ${\mathbb C}$-vector space of the pullbacks to $Y$ of all possible such polynomials $P$ is so high that at least one ${\mathbb C}$-linear combination $\omega$ of such pullbacks vanishes on some ample divisor of $Y$.

\medbreak Bloch's construction is related to the classical construction of a ${\mathbb C}$-basis of holomorphic $1$-forms for a regular plane curve $C$ defined by an equation $R(x,y)=0$ of degree $\delta\geq 3$ in the inhomogeneous coordinates $x,y$ of ${\mathbb P}_2$, which are constructed by pulling back to $C$ meromorphic $1$-forms
$$
P(x,y)\frac{dx}{R_y(x,y)}=P(x,y)\frac{-dy}{R_x(x,y)}
$$
of ``low pole order'' on ${\mathbb P}_2$, where $R_x(x,y)$ and $R_y(x,y)$ are the first-order partial derivatives of $R(x,y)$ and $P(x,y)$ is a polynomial of degree $\leq\delta-3$.  The adjunction formula for the plane curve $C$ causes new vanishing to cancel the ``low pole order'' of the meromorphic $1$-forms on ${\mathbb P}_2$ to yield holomorphic $1$-forms on $C$  when the meromorphic $1$-forms on ${\mathbb P}_2$ are pulled back to the plane curve $C$.

\medbreak In this paper, the construction of holomorphic jet differentials on a generic hypersurface $X$ of sufficiently high degree $\delta$ in ${\mathbb P}_n$ combines Bloch's method and the classical construction of holomorphic $1$-forms
on plane curves of high degree.  We take meromorphic jet differentials of low pole orders (of magnitude $\delta^{1-\varepsilon}$ for some appropriate $0<\varepsilon<1$)
on ${\mathbb P}_n$ and pull them back to $X$.  The high degree $\delta$ of $X$ will guarantee (according to Lemma \ref{injectivity_pullback_map_jet_differential} concerning the injectivity of the pullback map for certain jet differentials) that the dimension of the ${\mathbb C}$-vector space of such pullbacks is so high that some ${\mathbb C}$-linear combination of such pullbacks will be a non identically zero holomorphic jet differential on $X$ vanishing on some ample divisor of $X$ (see Proposition \ref{construction_jet_differential_as_polynomial} below).  One key point in this argument is that, because the dimension of ${\mathbb P}_n$ is higher than that of $X$, there are more degrees of freedom in constructing meromorphic $(n-1)$-jet differentials of low pole order on ${\mathbb P}_n$ and, if the pullback map to $X$ of such meromorphic $(n-1)$-jet differentials is injective, there are sufficient independent pullbacks to $X$ to form a non identically zero ${\mathbb C}$-linear combination which vanishes on an ample divisor of $X$.

\begin{nodesignation}\label{key_technique_slanted_vector_field}{\it Key Technique of Slanted Vector Fields.}\end{nodesignation} After so many decades of impasse, the real key which opens the way to the proof of the hyperbolicity of generic hypersurface of sufficiently high degree (in the sense stated in Theorem \ref{main_theorem}) is the introduction in \cite{Si02}\cite{Si04} of the technique of slanted vector fields in the subspace $J^{\rm{\scriptstyle (vert)}}_{n-1}\left({\mathcal X}\right)$ of vertical $(n-1)$-jets in the $(n-1)$-jet space $J_{n-1}({\mathcal X})$ of the universal hypersurface ${\mathcal X}$ of degree $\delta$ in ${\mathbb P}_n\times{\mathbb P}_N$ (where $N={\delta+n\choose n}-1$).

\medbreak For a complex manifold $Y$ the space $J_k(Y)$ of $k$-jets of $Y$ consists of all $k$-jets of $Y$ (each of which is represented by a parametrized complex curve germ).
The universal hypersurface ${\mathcal X}$ of degree $\delta$ in ${\mathbb P}_n\times{\mathbb P}_N$ (with $N={\delta+n\choose n}-1$) is defined by
$$
\sum_{\nu_0+\cdots+\nu_n=\delta}\alpha_{\nu_0,\cdots,\nu_n}z_0^{\nu_0}\cdots z_n^{\nu_n}=0,\leqno{(\ref{key_technique_slanted_vector_field}.1)}
$$
where $\alpha=\left[\alpha_{\nu_0,\cdots,\nu_n}\right]_{\nu_0+\cdots+\nu_n=\delta}$ is the homogeneous coordinate of ${\mathbb P}_N$ and $\left[z_0,\cdots,z_n\right]$ is the homogeneous coordinate of ${\mathbb P}_n$.  For $\alpha\in{\mathbb P}_N$ let $X^{(\alpha)}$ be the hypersurface of degree $\delta$ defined by $(\ref{key_technique_slanted_vector_field}.1)$ when $\alpha$ is fixed as constant.
A vertical $k$-jet of ${\mathcal X}$ is a $k$-jet in ${\mathcal X}$ representable by some (parametrized) complex curve germ lying completely in some fiber $X^{(\alpha)}$ of ${\mathcal X}$.  We denote by $J^{\rm{\scriptstyle (vert)}}_k\left({\mathcal X}\right)$ the space of all vertical $k$-jets  on ${\mathcal X}$.  There is a projection map $\pi_{k,{\rm vert}}: J^{\rm{\scriptstyle (vert)}}_k\left({\mathcal X}\right)\to{\mathbb P}_N$ such that an element $P_0$ of  $J^{\rm{\scriptstyle (vert)}}_k\left({\mathcal X}\right)$ is represented by a (parametrized) complex curve germ in $X^{(\alpha)}$ with $\alpha=\pi_{k,{\rm vert}}\left(P_0\right)$.

\medbreak A slanted vector field $\xi$ on $J^{\rm{\scriptstyle (vert)}}_k\left({\mathcal X}\right)$ means a vector field of $J^{\rm{\scriptstyle (vert)}}_k\left({\mathcal X}\right)$ which at a generic point $P_0$ of $J^{\rm{\scriptstyle (vert)}}_k\left({\mathcal X}\right)$ is not tangential to the space $J_k\left(X^{(\alpha)}\right)$ of $k$-jets of the fiber $X^{(\alpha)}$ at the point $P_0$ of $J_k\left(X^{(\alpha)}\right)$ with $\alpha=\pi_{k,{\rm vert}}\left(P_0\right)$.  When a local $k$-jet differential form on $X^{(\alpha)}$ defined for $\alpha$ in some open subset $U$ of ${\mathbb P}_N$ is regarded as a local function on $J^{\rm{\scriptstyle (vert)}}_k\left({\mathcal X}\right)$ and is differentiated with respect to $\xi$, the result is a local function on  $J^{\rm{\scriptstyle (vert)}}_k\left({\mathcal X}\right)$ which is represented by a local $k$-jet differential on $X^{(\alpha)}$ for $\alpha\in U$. In the case of $k=n-1$, meromorphic slanted vector fields $\xi$ of low vertical pole-order (of the magnitude ${\mathcal O}_{{\mathbb P}_n}\left(n^2\right)$ on the vertical fiber), whose existence is given in Proposition \ref{sufficient_slanted_vector_fields} below,
play the following indispensable role in generating sufficiently independent holomorphic $(n-1)$-jet differentials on $X^{(\alpha)}$ vanishing on an ample divisor for a generic $\alpha$ and for $\delta$ sufficiently large.

\medbreak On a regular hypersurface $X^{(\alpha)}$ of high degree $\delta$, there cannot be any nonzero meromorphic vector fields on $X^{(\alpha)}$ of low pole order.  However, the universal hypersurface ${\mathcal X}$ of degree $\delta$ in ${\mathbb P}_n\times{\mathbb P}_N$ has bidegree $(\delta,1)$ with respect to the two hyperplane section line bundles
${\mathcal O}_{{\mathbb P}_n}(1)$ and ${\mathcal O}_{{\mathbb P}_N}(1)$.  Because of the second component $1$ in the bidegree $(\delta,1)$ of ${\mathcal X}$, when slanted vector fields are used, it is possible to get meromorphic slanted vector fields of $J^{\rm{\scriptstyle (vert)}}_k\left({\mathcal X}\right)$ with low vertical pole order.

\medbreak For a generic $\hat\alpha\in{\mathbb P}_n$ the holomorphic $(n-1)$-jet differential $\omega^{(\hat\alpha)}$ on $X^{(\hat\alpha)}$ vanishing on an appropriate ample divisor (constructed by pulling back an appropriate meromorphic $(n-1)$-jet differential of low pole order on ${\mathbb P}_n$ according to Proposition \ref{construction_jet_differential_as_polynomial} below) can be extended to a holomorphic family $\omega^{(\alpha)}$ on $X^{(\alpha)}$ for $\alpha$ in some open neighborhood $U$ of $\hat\alpha$ in ${\mathbb P}_N$ so that successive application of different finite sets of meromorphic slanted vector fields $\xi_1,\cdots,\xi_\ell$ of low vertical pole order (as constructed in in Proposition \ref{sufficient_slanted_vector_fields} below) would yield sufficiently independent holomorphic jet differentials vanishing on ample divisor on $X^{(\hat\alpha)}$ so that the application of the Schwarz lemma of the vanishing of pullbacks, to ${\mathbb C}$ by a holomorphic map ${\mathbb C}\to X^{(\hat\alpha)}$, of holomorphic jet differentials vanishing on ample divisor of $X^{(\hat\alpha)}$ would force every holomorphic map from ${\mathbb C}$ to $X^{(\hat\alpha)}$ to be constant (see Proposition \ref{basepoint_freeness_jet_differential}, and the proof of Theorem \ref{main_theorem} given in \ref{proof_main_theorem}
below).

\medbreak We would like to remark that the Schwarz lemma of the vanishing of pullbacks, to ${\mathbb C}$ by a holomorphic map ${\mathbb C}\to X^{(\hat\alpha)}$, of holomorphic jet differentials vanishing on ample divisor of $X^{(\hat\alpha)}$ also has its origin in Bloch's 1926 paper \cite{B26}, though its formulation and its proof there are in a form very different from our current way of mathematical presentation.  Bloch's proof of applying Nevanlinna's logarithmic derivative lemma (p.51 of \cite{Ne25}) to local coordinates which are the logarithms of global meromorphic functions still is the best proof of the Schwarz lemma.  It is recast in the current language of mathematical presentation on pp.1162-1164 of \cite{SY97}.

\begin{nodesignation}\label{slanted_vector_field_bloch_translation_technique}{\it Slanted Vector Fields and Bloch's Technique of Maps by Translation.}\end{nodesignation}  The technique of slanted vector fields in a way also finds some remote ancestry in the 1926 paper of Bloch \cite{B26}, though the connection is not so transparent.  We point this out here in order to dispel the wrong perception that the technique of slanted vector fields is applicable to generic hypersurfaces of high degree because of the variation of complex structure of hypersurfaces as the coefficients of their defining functions vary.

\medbreak In his proof of the hyperbolicity of a submanifold $Y$ in an abelian variety $A$ which contains no translate of positive-dimensional abelian subvariety of $A$, Bloch used the vector fields from translations in $A$ to do the differentiation of jet differentials.  That is the reason why when such differentiations cannot yield enough independent holomorphic jet differentials to give hyperbolicity of $Y$, $Y$ must contain a translate of some positive-dimensional abelian subvariety of $A$.  This link of the use of slanted vector fields to Bloch's technique of using vector fields of maps by translation is obscured by the fact that in Bloch's technique the result of differentiation with respect to vector fields of maps by translation is just the same as the use of a different constant-coefficient polynomial in differentials of the coordinates of the universal cover $\tilde A$ of $A$.

\medbreak Let us return to our situation at hand of using slanted vector fields $\xi$ on  $J_k\left(X^{(\alpha)}\right)$ of low vertical pole order for $k=n-1$. Though the complex structure of the hypersurface $X^{(\alpha)}$ changes as $\alpha$ varies in ${\mathbb P}_N$, the slanted vector fields $\xi$ in general do not respect the fibers in the sense that for two distinct points $P_0$ and $P_0^\prime$ on $J_k\left(X^{(\alpha)}\right)$ the two projections $\pi_{k,{\rm vert}}\left(\xi_{P_0}\right)$ and $\pi_{k,{\rm vert}}\left(\xi_{P_0^\prime}\right)$ are in general different vectors in the tangent space of ${\mathbb P}_N$ at $\alpha$.

\medbreak For our situation at hand, the geometric picture is not the pulling back of $k$-jet differential from a neighboring fiber by the slanted vector field $\xi$, even in the infinitesimal setting.  What is relevant is the existence of slanted vector fields pointing in sufficiently many different directions on $J^{\rm{\scriptstyle (vert)}}_k\left({\mathcal X}\right)$ at the prescribed point in question.  The realization of the irrelevancy of the variation of the complex structure $X^{(\alpha)}$ as $\alpha$ varies in ${\mathbb P}_N$, as well as the interpretation of Bloch's technique of differentiation with respect to vector fields of maps by translation, points to the promise of the applicability of our method even to the case of some rigid complex manifolds $Z$ inside some ${\mathbb P}_m$ as a submanifold of possibly high codimension.  In certain cases, though $Z$ may be rigid as a compact complex manifold, yet there is a possibility that appropriate meromorphic vector fields on ${\mathbb P}_n$ applied to pullbacks to $Z$ of low pole-order meromorphic jet differentials on ${\mathbb P}_n$ may yield sufficiently independent holomorphic jet differentials on $Z$ vanishing on an ample divisor.

\begin{nodesignation}\label{necessity_vertical_jet_space}
{\it Necessity of Use of Vertical Jet Space.}\end{nodesignation}
The reason why the more complicated space $J^{\rm{\scriptstyle (vert)}}_{n-1}\left({\mathcal X}\right)$ of vertical $(n-1)$-jets of ${\mathcal X}$ has to be used instead of the simpler $(n-1)$-jet space $J_{n-1}({\mathcal X})$ of ${\mathcal X}$ is that, while it is possible to extend a holomorphic $(n-1)$-jet differential $\omega^{(\hat\alpha)}$ on a hypersurface $X^{(\hat\alpha)}$ for a generic $\hat\alpha\in{\mathbb P}_N$ to a holomorphic family of
$\omega^{(\alpha)}$ on $X^{(\alpha)}$ for $\alpha$ in an open neighborhood $U$ of $\hat\alpha$ in ${\mathbb P}_N$, it is in general impossible to find a holomorphic $(n-1)$-jet differential on the part of $J_{n-1}({\mathcal X})$ above some open neighborhood $U$ of $\hat\alpha$ in ${\mathbb P}_N$ whose pullback to $X^{(\hat\alpha)}$ is equal to $\omega^{(\hat\alpha)}$.

\medbreak Difficulty of the latter kind of extension can be illustrated easily in the case of a holomorphic family of plane curves $C_a$  given by $R(x,y,a)=0$ with $a$ in the open unit disk $\Delta$ of ${\mathbb C}$ as a holomorphic parameter.  A holomorphic $1$-form on a single plane curve $C_a$ can be constructed as
$$
\frac{dx}{R_y(x,y,a)}=-\frac{dy}{R_x(x,y,a)}
$$
from the vanishing of the differential $dR=R_xdx+R_ydy$ on $C_a$ when $a$ is considered as a constant, but in the total space $\bigcup_{a\in\Delta}C_a$ of the family of plane curves it is not easy to carry out a similar construction, because when $a$ is regarded as a variable, the differential $dR$ becomes $R_xdx+R_ydy+R_ada$ and the same method cannot be applied.

\medbreak Furthermore, it is for this kind of difficulty of constructing $(n-1)$-jet differentials on the universal hypersurface ${\mathcal X}$ that the additional condition $(\ref{II}.1)_j$
for $1\leq j\leq n-1$ is introduced into Theorem \ref{solution_slow_varying_coefficient} on entire function solutions of polynomial equations with slowing varying coefficients, so that  families of vertical $(n-1)$-jet differentials on the fibers can be used instead.

\begin{nodesignation}\label{algebraic_geometric_counterpart_of slanted_vector_fields}
{\it Algebraic Geometric Counterpart of Slanted Vector Fields.}\end{nodesignation}  In his 1986 paper \cite{Cl86} Clemens introduced a technique  (later generalized by Ein \cite{Ei88}, and Voisin \cite{Voi96}) to prove the nonexistence of rational and elliptic curves in generic hypersurfaces of high degree by showing that the normal bundle of one such curve in the family of such curves is globally generated by sections with vertical pole order $1$.  His technique can be considered the counterpart of our method of slanted vector fields and as a matter of fact serves as motivation for our method.

\medbreak On its face value Clemens's technique of using normal bundle to estimate the genus of a curve is algebraic in nature and cannot possibly have anything to do with the problem of hyperbolicity of transcendental in nature.  Its relevancy was realized for the first time in \cite{Si02} and \cite{Si04} partly because of our seemingly completely unrelated earlier work on the deformational invariance of the plurigenera \cite{Si98} \cite{Si00}.

\medbreak Like jet differentials, pluricanonical sections can be naturally pulled back by a map and, as a result, their Lie differentiation can be naturally defined without specifying any special connection.  In a hitherto unsuccessful attempt to study deformational invariance of sections of other bundles associated with the tangent bundle (besides pluricanonical sections), we investigated the obstruction of moving jet differentials out of a fiber in a family of compact complex manifolds and considered their Lie derivatives with respect to slanted vector fields.  Such investigations, though unsuccessful so far as its original goal is concerned, serendipitously led to the use of slanted vector fields in the study of hyperbolicity problems and to the realization that Clemens's technique is relevant to, and can serve as motivation for, the differentiation of jet differentials by slanted vector fields to produce new ones.

\begin{nodesignation}\label{linear_algebra_versus_riemann_roch}{\it Linear Algebra versus Theorem of Riemann-Roch.}\end{nodesignation}  As already pointed out in the paragraph straddling p.445 and p.446 in \cite{Si02}, a non identically zero holomorphic $(n-1)$-jet differential on $X^{(\alpha)}$ vanishing on an ample divisor can be constructed from the theorem of Riemann-Roch by using the sufficient positivity of the canonical line bundle of $X^{(\alpha)}$ and the lower bound of the negativity of jet differential bundles of $X^{(\alpha)}$.  Such a jet differential can also be directly obtained by using the linear algebra method of solving a system of linear equations with more unknowns than independent linear equations, which is the method used here in Proposition \ref{construction_jet_differential_as_polynomial} below, as sketched on p.446 of \cite{Si02}.  This direct method of construction by linear algebra has the important advantage of better control over the form of the resulting jet differential so that the application of slanted vector fields can produce sufficiently independent jet differentials vanishing on an ample divisor of $X^{(\alpha)}$ for a generic point $\alpha$ of ${\mathbb P}_N$ (see Proposition \ref{basepoint_freeness_jet_differential}, and the proof of Theorem \ref{main_theorem} given in \ref{proof_main_theorem}
below).

\medbreak Of course, the use of the theorem of Riemann-Roch also uses the linear algebra technique of counting the dimension of sections modules and the dimension of obstructional higher cohomology groups, but the process of going through a labyrinth of exact sequences so obscures the eventual form of the resulting jet differential that not enough control can be retained to get beyond the weaker conclusion that holomorphic maps from ${\mathbb C}$ to $X^{(\alpha)}$ is contained in some proper subvariety of $X^{(\alpha)}$.

\medbreak Recently Diverio, Merker, and Rousseau in \cite{DMR10} used the theorem of Riemann-Roch to construct a holomorphic jet differential on $X^{(\alpha)}$ vanishing on an ample divisor and then used Merker's work \cite{Me09} involving our method of slanted vector fields to arrive at the conclusion that holomorphic maps from ${\mathbb C}$ to $X^{(\alpha)}$ is contained in some proper subvariety of $X^{(\alpha)}$.

\begin{nodesignation}\label{simplification_last_step}{\it Simplified Treatment in Going from Non Zariski Density of Entire Curves to Hyperbolicity.}\end{nodesignation}  In the original sketch of the proof of Theorem \ref{main_theorem} in \cite{Si02}, for the last step discussed on p.447 of \cite{Si02} of going from the non Zariski density of entire curves to hyperbolicity, the method of construction of jet differentials is applied to a hypersurface $\hat X$ in ${\mathbb P}_{\hat n}$ constructed from the hypersurface $X^{(\alpha)}$ in ${\mathbb P}_n$ with a larger $\hat n$ so that more jet differentials on $X^{(\alpha)}$ can be obtained from jet differentials on $\hat X$.  The idea is that the zero-set of the jet differentials constructed on $X^{(\alpha)}$ from linear algebra would be defined by the vanishing of polynomials of low degree in low-order partial derivatives of the polynomial $f^{(\alpha)}$ defining $X^{(\alpha)}$ and, in order to take care of such zero-set, the low-order partial derivatives of $f$ are introduced as additional new variables.  The genericity condition of $f$ enters in a certain form of independence of the low-order partial derivatives of $f$.

\medbreak In this paper we use a simplified treatment of this step which just uses the fact that our construction of holomorphic jet differentials depends on the choice of an affine coordinate system of the affine part ${\mathbb C}^n$ of ${\mathbb P}_n$ so that different choices of the affine coordinate system in the construction would give us sufficient independent holomorphic jet differentials to conclude hyperbolicity of a generic hypersurface of high degree.  An earlier version of this paper uses the alternative argument that a meromorphic $(n-1)$-jet differential on ${\mathbb P}_n$ defined by a low-degree polynomials of the inhomogeneous coordinates of ${\mathbb P}_n$ and their differentials have only low vanishing order at every point of $X^{(\alpha)}$.  The current simplified treatment is used here, because the complete rigorous details of the alternative argument on low vanishing order in the earlier version of this paper turn out to be quite tedious.  Moreover, the current argument is related to the technique of slanted vector fields so that the two arguments of generating sufficient holomorphic vector fields are two just different aspects of the same idea.  The relation with the technique of slanted vector fields is that the technique of slanted vector fields is actually the infinitesimal or differential version of the current argument which can be regarded as using affine coordinate transformations to pull back holomorphic jet differentials on neighboring fibers to reduce the common zero-set of holomorphic jet differentials.

\begin{nodesignation}\label{Techniques Parallel to Those in Gelfond-Schneider-Bombieri Theory}{\it Techniques Parallel to Those in Gelfond-Schneider-Lang-Bombieri Theory.}\end{nodesignation}  Paul Vojta presented in \cite{Voj87} a formal parallelism between the results in diophantine approximation and those in value distribution theory.  Along this line, the techniques presented here for the proof of the hyperbolicity of generic hypersurfaces of sufficiently high degree are, in certain ways, quite parallel to the techniques used for the theory of Gelfond-Schneider-Lang-Bombieri (\cite{Sc34}, \cite{Ge34}, \cite{La62}, \cite{La65}, \cite{La66}, \cite{Bo70}, and \cite{BL70}).

\medbreak\noindent(i) The construction of holomorphic jet differentials in Proposition \ref{construction_jet_differential_as_polynomial} by solving a system of linear equations with more unknowns than equations is parallel to the use of Siegel's lemma in the theory of Gelfond-Schneider-Lang-Bombieri to construct a polynomial with estimates on its degree and the heights of its coefficients.

\medbreak\noindent(ii) The requirement that the constructed jet differential vanishing on an ample divisor of high degree in Proposition \ref{construction_jet_differential_as_polynomial} is parallel to the requirement of the vanishing of the constructed polynomial in the theory of Gelfond-Schneider-Lang-Bombieri to high order at certain points.

\medbreak\noindent(iii) Lemma \ref{injectivity_pullback_map_jet_differential} concerning the injectivity of the pullback map for certain jet differentials is parallel to the constructed polynomial in the theory of Gelfond-Schneider-Lang-Bombieri being not identically zero due to the assumption of the degree of transcendence of the given functions.

\medbreak\noindent(iv) The use of Nevanlinna's logarithmic derivative lemma and the use of logarithms of global meromorphic functions as local coordinates in the Schwarz lemma to estimate the contribution from the differentials to be of lower order is parallel to the use of the differential equations in the theory of Gelfond-Schneider-Lang-Bombieri.

\medbreak Such a parallelism between the techniques used in this paper and those in
theory of Gelfond-Schneider-Lang-Bombieri lends support to the preferability of the approach used in this paper for the hyperbolicity problem of hypersurfaces.

\begin{nodesignation}\label{notations_and_terminology}{\it Notations and Terminology.}\end{nodesignation}  For $r>0$ we use $\Delta_r$ to denote the open unit disk in ${\mathbb C}$ of radius $r$ centered at the origin.  When $r=1$, we simply use $\Delta$ to denote $\Delta_1$ when there is no confusion.

\medbreak For a real number $\lambda$ denote by $\lfloor\lambda\rfloor$ the round-down of $\lambda$ which means the largest integer $\leq\lambda$ and denote by $\lceil\lambda\rceil$ be the round-up of $\lambda$ which means the smallest integer $\geq\lambda$.

\medbreak We use $[z_0,\cdots,z_n]$ to
denote the homogeneous coordinates of ${\mathbb P}_n$ and we use $(x_1,\cdots,x_n)$ to denote
the inhomogeneous coordinates of ${\mathbb P}_n$ with $x_j=\frac{z_j}{z_0}$ for $1\leq j\leq n$.  Sometimes we also go to the inhomogeneous coordinates by fixing $z_0\equiv 1$ in the homogeneous coordinates when notationally it is more advantageous to do so.

\medbreak Denote by ${\mathbb N}$ the set of all positive integers.  For an $(n+1)$-tuple
$\nu\in\left({\mathbb N}\cup\{0\}\right)^{n+1}$ of nonnegative integers, we write
$\nu=(\nu_0,\nu_1,\cdots,\nu_n)$ and
$|\nu|=\nu_0+\nu_1+\cdots+\nu_n$ and let
$$
z^\nu=z_0^{\nu_0}z_1^{\nu_1}\cdots z_n^{\nu_n}.
$$
For $0\leq p\leq n$, let $e_p$ denote the unit vector in ${\mathbb C}^{n+1}$ such that all components are zero except that the
component in the $p$-th place is $1$. We use
${\bf\delta}_{\nu,\mu}$ to denote the Kronecker delta for the
indices $\nu,\mu\in\left({\mathbb N}\cup\{0\}\right)^{n+1}$, which assumes the value $1$ for
$\nu=\mu$ and assumes the value $0$ when $\nu\not=\mu$.

\medbreak If from the context there is no risk of confusion, we use $N$ to denote ${\delta+n\choose n}-1$ so that ${\mathbb P}_N$ is the moduli space for all hypersurface of degree $\delta$, without further explicit mention.  The homogeneous coordinates of ${\mathbb P}_N$ will be denoted by
$\alpha=\left[\alpha_{\nu_0,\cdots,\nu_n}\right]_{\nu_0+\cdots+\nu_n=\delta}$.  The hypersurface defined by
$$
f^{(\alpha)}=\sum_{\nu_0+\nu_1+\cdots+\nu_n=\delta}\alpha_{\nu_0,\cdots,\nu_n}z_0^{\nu_0}z_1^{\nu_1}\cdots z_n^{\nu_n}
$$
is denoted by $X^{(\alpha)}$.  For notational simplicity, sometimes the superscript $(\alpha)$ in $f^{(\alpha)}$ and $X^{(\alpha)}$ is dropped when there is no risk of confusion.  Also sometimes we simply use $f^{(\alpha)}\left(x_1,\cdots,x_n\right)$ or $f\left(x_1,\cdots,x_n\right)$ to mean $\frac{1}{z_0^\delta}f^{(\alpha)}\left(z_0,\cdots,z_n\right)$ when the context makes it clear what is being meant.  This notational simplification by dropping superscript $(\alpha)$ applies also to other symbols such as replacing $Q^{(\alpha)}$ by $Q$ (respectively $\omega^{(\alpha)}$ by $\omega$) when there is no risk of confusion or replacing $Q$ by $Q^{(\alpha)}$ (respectiverly $\omega$ by $\omega^{(\alpha)}$) when there is a need to keep track of the dependence on the parameter $\alpha\in{\mathbb P}_N$.

\medbreak When we present the main ideas of an argument and refer to high vanishing order without explicitly giving a precise number, we mean a quantity of the order of $\delta$.  In such a situation, when we refer to low pole order without explicitly giving a precise number, we mean a quantity of the order of $\delta^{1-\varepsilon}$ for some appropriate $0<\varepsilon<1$.

\medbreak The notation at the end of the inequality
$$A(r)\leq B(r)\ \ \|$$
means that there exist $r_0>0$ and a subset $E$ of ${\mathbb R}\cap\{r>r_0\}$ with finite Lebesgue measure such that the inequality holds for $r>r_0$ and not in $E$.  This is the condition needed for the logarithmic derivative lemma of Nevanlinna as given at the bottom of p.51 of \cite{Ne25}.

\medbreak For a meromorphic function $F$ on ${\mathbb C}$ and $c\in{\mathbb C}\cup\{\infty\}$ with $F(0)\not=c$ the {\it counting function} is
$$
N\left(r,F,c\right)=\int_{\rho=0}^r n(\rho,F,c)\frac{d\rho}{\rho},
$$
where $n(\rho,F,c)$ is the number of roots of $F=c$ in $\Delta_\rho$ with multiplicities counted.  The {\it characteristic function} is
$$
T(r,F)=N(r,F,\infty)+\frac{1}{4\pi}\int_{\theta=0}^{2\pi}\log^+\left|F\left(re^{i\theta}\right)\right|d\theta
$$
under the assumption that $0$ is not a pole of $F$, where $\log^+$ means the maximum of $\log$ and $0$.

\medbreak For a complex manifold $Y$ with a $(1,1)$-form $\eta$ and for a holomorphic map $\varphi:{\mathbb C}\to Y$, the characteristic function of $\varphi$ with respect to $\eta$ is
$$
T(r,\varphi,\eta)=\int_{\rho=0}^r\left(\int_{\Delta_\rho}\varphi^*\eta\right)\frac{d\rho}{\rho}.
$$

\begin{nodesignation}\label{twice_integration}{\it Twice Integration of Laplacian in Nevanlinna Theory.}
\end{nodesignation}The technique of twice integrating the Laplacian of a function introduced by Nevanlinnna for his theory of value distribution will be used a number of times in this paper.  We put it down here for reference later.  For any smooth function $g(\zeta)$, from the divergence theorem
$$
\int _{|\zeta|<r} \Delta g = \int ^{2\pi }_{\theta =0} \left({\partial \over \partial r}
g\left(re^{i\theta }\right)\right) r d\theta
$$
and $\Delta  = 4\partial _\zeta\partial
_{\overline\zeta}$
it follows that
$$
\displaylines{4\int_{\rho=r_1}^r\left(\int _{|\zeta|<\rho} \partial_\zeta\partial
_{\overline\zeta}g\right)\frac{d\rho}{\rho}
=
\int_{\rho=r_1}^r\left(\int _{|z|<\rho} \Delta g\right)\frac{d\rho}{\rho}\cr=\int ^{2\pi }_{\theta =0}g(re^{i\theta }) d\theta-\int ^{2\pi }_{\theta =0}g(r_1e^{i\theta }) d\theta.\cr}
$$

\begin{nodesignation}\label{pullback_function}{\it Function Associated to Pullback of Jet Differential to Part of Complex Line.}
\end{nodesignation}
Let $\omega$ be a holomorphic $k$-jet differential on a complex manifold $Y$ of complex dimension $n$ and $\varphi$ be a holomorphic map from an open subset $U$ of ${\mathbb C}$ (with coordinate $\zeta$) to $Y$.  The map $\varphi$ induces a map $J_{k,\varphi}$ from the space $J_k(U)$ of $k$-jets on $U$ to the space $J_k(Y)$ of $k$-jets on $Y$, which sends a $k$-jet $\eta$ on $U$ at $\zeta_0$ represented by a parametrized complex curve germ $\gamma:\Delta\to U$ with $\gamma(0)=\zeta_0$ to the $k$-jet represented by the parametrized complex curve germ $\varphi\circ\gamma:\Delta\to Y$ at $\varphi(\zeta_0)$.  The pullback $\varphi^*\omega$ of $\omega$ by $\varphi$ means the holomorphic $k$-jet differential on $U$ whose value at a $k$-jet $\eta$ of $U$ at $\zeta_0$ is the value of $\omega$ at the $k$-jet $J_{k,\varphi}\left(\eta\right)$ of $J_k(Y)$ at $\varphi(\zeta_0)$.

\medbreak The crucial tool in the study of the hyperbolicity problem is the result, usually referred to as the Schwarz lemma, of the vanishing of the pullback of a holomorphic jet differential on a compact complex manifold vanishing on an ample divisor by a holomorphic map from ${\mathbb C}$.  Its proof, by Bloch's technique of using the logarithmic derivative lemma of Nevanlinna (p.51 of \cite{Ne25}) with logarithms of global meromorphic functions as local coordinates, first shows the vanishing of a function associated to the pullback of the jet differential and then obtains the vanishing of the pullback of the jet differential by composing the map from ${\mathbb C}$ with appropriate holomorphic maps ${\mathbb C}\to{\mathbb C}$.

\medbreak In the later part of this article, when the analogues of the Big Picard Theorem are introduced for generic hypersurfaces $X$ of high degree to extend holomorphic maps from ${\mathbb C}-\overline{\Delta_{r_0}}\to X$ to holomorphic maps from ${\mathbb C}\cup\{\infty\}-\overline{\Delta_{r_0}}\to X$, appropriate holomorphic maps ${\mathbb C}-\overline{\Delta_{r_0}}$ to itself are unavailable for proof the full Schwarz lemma.  Instead only the vanishing of the function associated to the pullback of the jet differential can be obtained.  We now introduce a notation for this function.  The function on $U$, denoted by ${\rm eval}_{id_{\mathbb C}}(\varphi^*\omega)$, at the point $\zeta_0$ is the value of the $k$-jet $\varphi^*\omega$ evaluated at the $k$-jet of $U$ at $\zeta_0$ represented by the parametrized curve defined by the identity map of ${\mathbb C}$.  In other words, the value of ${\rm eval}_{id_{\mathbb C}}(\varphi^*\omega)$ at $\zeta_0\in U$ is the value of $\omega$ at the $k$-jet on $Y$ represented by the parametized complex curve germ $\varphi:U\to Y$ at $\varphi(\zeta_0)$.

\medbreak When for some local coordinates $y_1,\cdots,y_n$ of $Y$ the $k$-jet differential $\omega$ is written as
$$
\sum_{\boldsymbol\nu}
G_{\boldsymbol\nu}\left(y_1,\cdots,y_n\right)
\prod_{1\leq j\leq n,\,1\leq\ell\leq k}\left(d^\ell y_j\right)^{\nu_{\ell,j}}
$$
where ${\boldsymbol\nu}=\left(\nu_{\ell,j}\right)_{1\leq j\leq n,\,1\leq\ell\leq k}$.  The function ${\rm eval}_{id_{\mathbb C}}(\varphi^*\omega)$ at $\zeta\in U$ is given by
$$
\sum_{\boldsymbol\nu}
G_{\boldsymbol\nu}\left(\varphi_1(\zeta),\cdots,\varphi_n(\zeta)\right)
\prod_{1\leq j\leq n,\,1\leq\ell\leq k}\left(\frac{d^\ell}{d\zeta^\ell}\varphi_j(\zeta)\right)^{\nu_{\ell,j}},
$$
where $\varphi$ is represented by $\left(\varphi_1,\cdots,\varphi_n\right)$ with respect to the local coordinates $y_1,\cdots,y_n$ of $Y$, so that if $y_j$ is locally equal to $\log F_j$ for some global meromorphic function $F_j$ on $Y$ (for $1\leq j\leq n$), the logarithmic derivative lemma of Nevanlinna (p.51 of \cite{Ne25}) can be applied to
$\frac{d^\ell}{d\zeta^\ell}\varphi_j(\zeta)=\frac{d^\ell}{d\zeta^\ell}\log F_j(\varphi(\zeta))$.

\section{\sc Approach of Vector
Fields and Lie Derivatives}\label{vector_field_and_lie_derivatives}

\begin{nodesignation}{\it Moduli Space of Hypersurfaces.}\end{nodesignation}  The moduli space of all hypersurfaces of degree
$\delta$ in ${\mathbb P}_n$ is the same as the complex projective
space ${\mathbb P}_N$ of complex dimension $N={\delta+n\choose n}-1$. The defining
equation for the universal hypersurface ${\mathcal X}$ in
${\mathbb P}_n\times{\mathbb P}_N$ is
$$
f=\sum_{\nu\in\left({\mathbb N}\cup\{0\}\right)^{n+1}\atop|\nu|=\delta} \alpha_\nu z^\nu.
$$
The number of indices $\nu\in\left({\mathbb N}\cup\{0\}\right)^{n+1}$ with $|\nu|=\delta$ is
${\delta+n\choose n}=N+1$. For $\alpha\in{\mathbb P}_N$ we use $X^{(\alpha)}$ to
denote ${\mathcal X}\cap\left({\mathbb P}_n\times\{\alpha\}\right)$.

\begin{lemma}\label{Lemma(1.2)} ${\mathcal X}$ is a
nonsingular hypersurface of ${\mathbb P}_n\times{\mathbb P}_N$ of
bidegree $(\delta,1)$.
\end{lemma}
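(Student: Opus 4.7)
The plan is to verify the two assertions separately: the bidegree is immediate from inspection of the defining polynomial, while nonsingularity reduces to an elementary affine-chart computation of the partial derivatives $\partial f/\partial \alpha_\nu$.

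For the bidegree, observe that $f = \sum_{|\nu|=\delta}\alpha_\nu z^\nu$ is homogeneous of degree $\delta$ in the variables $z_0,\ldots,z_n$ and of degree $1$ in the variables $\alpha_\nu$. Hence $f$ is naturally a global section of the line bundle $\mathcal{O}_{\mathbb{P}_n}(\delta)\boxtimes\mathcal{O}_{\mathbb{P}_N}(1)$, and its zero set $\mathcal{X}$ is a hypersurface of bidegree $(\delta,1)$ in $\mathbb{P}_n\times\mathbb{P}_N$.

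For nonsingularity, I will show that at every point $(z^0,\alpha^0)\in\mathcal{X}$, the partials of $f$ with respect to the affine coordinates of a suitable chart cannot all vanish simultaneously. After permuting indices I may assume $z_0^0\neq 0$ and work in the affine chart $\{z_0=1\}$ of $\mathbb{P}_n$. On the $\mathbb{P}_N$ side I select some $\nu_*$ with $\alpha^0_{\nu_*}\neq 0$ and normalize $\alpha_{\nu_*}=1$. The crucial observation is that $\partial f/\partial\alpha_\nu = z^\nu$ for every free coordinate $\alpha_\nu$ with $\nu\neq\nu_*$. If $\nu_*\neq\delta e_0$, then $\alpha_{\delta e_0}$ is a free coordinate and $\partial f/\partial\alpha_{\delta e_0}=z_0^\delta=1\neq 0$ in this chart, and we are done. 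If instead $\nu_*=\delta e_0$, then the relation $f(z^0,\alpha^0)=0$ together with the contribution $\alpha^0_{\delta e_0}\,z_0^\delta=1$ from the normalized coefficient forces not all of $z_1^0,\ldots,z_n^0$ to vanish; picking $j\geq 1$ with $z_j^0\neq 0$, the free coordinate indexed by $\nu'=(\delta-1)e_0+e_j$ gives $\partial f/\partial\alpha_{\nu'}=z_j^0\neq 0$.

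I expect no substantial obstacle here: the argument is essentially a bookkeeping exercise, exploiting the linear dependence of $f$ on the parameters $\alpha_\nu$ so that the $\alpha$-partials alone already suffice to exhibit smoothness everywhere. The only mild care required is in the degenerate case where the normalizing index $\nu_*$ coincides with the natural test index $\delta e_0$, which is resolved immediately by using the defining equation of $\mathcal{X}$ at the point itself to produce an alternative nonvanishing monomial $z^\nu$.
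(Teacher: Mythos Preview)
Your proof is correct and uses the same core idea as the paper: the partial derivatives $\partial f/\partial\alpha_\nu=z^\nu$ with respect to the moduli coordinates already suffice to show the gradient never vanishes on $\mathcal X$.

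The only difference is in the bookkeeping of projective coordinates. The paper avoids your case split on $\nu_*$ by two devices: first, it lifts to the affine cone $(\mathbb C^{n+1}-0)\times(\mathbb C^{N+1}-0)$, so that \emph{all} of the $\alpha_\nu$ are free variables and no normalization $\alpha_{\nu_*}=1$ is needed; second, it applies a linear change of the $z$-coordinates to place the base point at $[1,0,\ldots,0]$, after which $\partial f/\partial\alpha_{\delta,0,\ldots,0}=z_0^\delta=1$ immediately. Your approach, working directly in affine charts on both factors, is equally valid but forces you to handle the degenerate case $\nu_*=\delta e_0$ separately---which you do correctly by invoking the equation $f=0$ to locate a nonzero $z_j$. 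The paper's route is marginally cleaner; yours is more explicit about what happens in genuine affine coordinates. Neither approach has any real advantage in substance.
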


\begin{proof} Take an arbitrary point
$(y,\alpha)$ of ${\mathcal X}$ with $y\in{\mathbb P}_n$ and $\alpha\in{\mathbb P}_N$.
Choose a homogeneous coordinate system
$[z_0,z_1,\cdots,z_n]$ of ${\mathbb P}_n$ so that $y$ is given
by $[z_0,z_1,\cdots,z_n]=[1,0,\cdots,0]$. In other words, $y$ is
the origin in the inhomogeneous coordinate system $$\left(x_1,\cdots,x_n\right)=\left({z_1\over
z_0},{z_2\over z_0},\cdots,{z_n\over z_0}\right)$$ associated to
the homogeneous coordinate system $[z_0,z_1,\cdots,z_n]$.

\medbreak The hypersurface ${\mathcal X}$ in ${\mathbb P}_n\times{\mathbb P}_N$ is nonsingular if and only if its
pullback $\tilde{\mathcal X}$ to $({\mathbb C}^{N+1}-0)\times({\mathbb C}^{n+1}-0)$ is nonsingular, because locally at points of
${\mathcal X}$ the pullback $\tilde{\mathcal X}$ is simply equal
to the product of ${\mathcal X}$ with $({\mathbb C}-0)\times({\mathbb C}-0)$.

\medbreak To determine whether $\tilde{\mathcal X}$ is
nonsingular, we differentiate the defining function
$$
f=\sum_{\nu_0+\cdots+\nu_n=\delta}\alpha_{\nu_0,\cdots,\nu_n}
z_0^{\nu_0}\cdots z_n^{\nu_n}
$$
with respect to each $\alpha_{\nu_0,\cdots,\nu_n}$ and each $z_j$
and evaluate the results at $z_0=1,\, z_1=\cdots=z_n=0$ to see
whether we get a nonzero
$\left(\left(N+1\right)+\left(n+1\right)\right)$-vector. We choose
$\nu_0=\delta,\, \nu_1=\cdots=\nu_n=0$ and get
$$
\left({\partial f\over\partial\alpha_{\delta,0,\cdots,0}}\right)_{z_0=1,z_1=\cdots=z_n=0}=1
$$
and conclude that the
$\left(\left(N+1\right)+\left(n+1\right)\right)$-vector is
nonzero. Thus ${\mathcal X}$ is nonsingular at every point
$(y,\alpha)\in{\mathbb P}_n\times{\mathbb P}_N$ which belongs to
${\mathcal X}$.
\end{proof}

\begin{remark}\label{remark(1.3)}  Though ${\mathcal X}$ is
nonsingular at every point $(y,\alpha)\in{\mathbb P}_n\times{\mathbb
P}_N$ which belongs to ${\mathcal X}$, the hypersurface $X^{(\alpha)}$ in
${\mathbb P}_n$ which corresponds to $\alpha$ and is equal to
${\mathcal X}\cap\left({\mathbb P}_n\times\{\alpha\}\right)$ may have
singularities.
\end{remark}

\begin{lemma}\label{lemma(1.4)} Let $\ell$ be a
positive integer and let $L$ be a homogeneous polynomial of degree
$\ell$ in the variables $\{\alpha_\nu\}_{|\nu|=\delta}$. Let
$0\leq p\not=q\leq n$ and $\nu,\mu\in\left({\mathbb N}\cup\{0\}\right)^{n+1}$ such that
$\nu+e_q=\mu+e_p$. Then the ${\mathcal O}_{{\mathbb P}_n}(1)\times {\mathcal O}_{{\mathbb
P}_N}(\ell-1)$-valued global
holomorphic vector field
$$
L\left(z_q\left({\partial\over\partial\alpha_\nu}\right)
-z_p\left({\partial\over\partial\alpha_\mu}\right)\right)
$$
on ${\mathbb P}_n\times{\mathbb P}_N$ is tangential to ${\mathcal
X}$.
\end{lemma}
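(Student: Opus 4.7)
The plan is to prove tangentiality directly, by computing the action of the vector field on the defining equation $f$ of $\mathcal{X}$. Since $\mathcal{X}$ is the hypersurface cut out by $f=0$, a (possibly twisted) vector field $\xi$ on ${\mathbb P}_n\times{\mathbb P}_N$ is tangential to $\mathcal{X}$ precisely when $\xi(f)$ vanishes along $\mathcal{X}$; in fact it will suffice to show that $\xi(f)\equiv 0$ as a polynomial in $(z,\alpha)$, which is the cleanest way to produce a tangent vector field.

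First I would carry out the symbolic computation. Since $f=\sum_\lambda\alpha_\lambda z^\lambda$ is linear in each coefficient $\alpha_\lambda$, one has $\partial f/\partial\alpha_\lambda=z^\lambda$, so only the two terms indexed by $\lambda=\nu$ and $\lambda=\mu$ contribute, yielding
$$
\xi(f)\;=\;L\bigl(z_q\,z^\nu-z_p\,z^\mu\bigr)\;=\;L\bigl(z^{\nu+e_q}-z^{\mu+e_p}\bigr).
$$
The hypothesis on the multi-indices (which for the cancellation of the two monomials must read $\nu+e_q=\mu+e_p$) then forces $z^{\nu+e_q}=z^{\mu+e_p}$, so $\xi(f)$ vanishes identically and the tangentiality is immediate. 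This is the heart of the argument and amounts to one line once the bookkeeping is set up.

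The secondary task is to verify the asserted line-bundle twist. Here I would appeal to the Euler sequences on the two factors: on ${\mathbb P}_N$, a combination $P(\alpha)\,\partial/\partial\alpha_\nu$ with $P$ homogeneous of degree $\ell$ descends (modulo the Euler vector field) to a section of $T_{{\mathbb P}_N}\otimes\mathcal{O}_{{\mathbb P}_N}(\ell-1)$, and similarly a factor $z_q$ or $z_p$ of degree one in the ${\mathbb P}_n$ coordinates supplies the extra $\mathcal{O}_{{\mathbb P}_n}(1)$ twist. The particular combination $L(z_q\,\partial/\partial\alpha_\nu-z_p\,\partial/\partial\alpha_\mu)$ is chosen precisely so that the ambiguity in passing from homogeneous to inhomogeneous coordinates is absorbed consistently on both factors of ${\mathbb P}_n\times{\mathbb P}_N$, and the resulting object is a genuine global section of $T_{{\mathbb P}_n\times{\mathbb P}_N}\otimes \mathcal{O}_{{\mathbb P}_n}(1)\otimes\mathcal{O}_{{\mathbb P}_N}(\ell-1)$.

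I do not foresee any real obstacle: the argument is an explicit symbolic verification together with a standard Euler-sequence check on the bidegree. The only points that need attention are the multi-index arithmetic $z_q\,z^\nu=z^{\nu+e_q}$ and a careful reading of the hypothesis so that the two monomials produced by $\partial/\partial\alpha_\nu$ and $\partial/\partial\alpha_\mu$ coincide; once these are in hand, the factor $L$ plays no role in the cancellation and only serves to shift the cohomological twist by $\mathcal{O}_{{\mathbb P}_N}(\ell-1)$.
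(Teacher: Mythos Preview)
Your proposal is correct and follows essentially the same approach as the paper: both compute $\partial f/\partial\alpha_\lambda=z^\lambda$, apply the vector field to $f$ to get $L(z_qz^\nu-z_pz^\mu)=L(z^{\nu+e_q}-z^{\mu+e_p})$, and invoke the multi-index hypothesis for the cancellation; the line-bundle twist is handled in both cases via the Euler sequence description of $T_{\mathbb{P}_N}$. You also correctly spotted that the hypothesis must read $\nu+e_q=\mu+e_p$ (the paper's statement has a typo, but its proof uses the corrected version).
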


\begin{proof}
The expression
$$
L\left(z_q\left({\partial\over\partial\alpha_\nu}\right)
-z_p\left({\partial\over\partial\alpha_\mu}\right)\right)
$$
is a ${\mathcal
O}_{{\mathbb P}_n}(1)\times {\mathcal O}_{{\mathbb P}_N}(\ell-1)$-valued global holomorphic vector field on
${\mathbb P}_n\times{\mathbb P}_N$, because the tangent bundle of
${\mathbb P}_N$ is generated by global holomorphic vector fields
of the form
$$
\sum_{|\mu|=|\nu|=\delta}A_{\mu,\nu}\alpha_\mu\frac{\partial}{\partial\alpha_\nu}
$$
with $A_{\mu,\nu}\in{\mathbb C}$.

\medbreak The hypersurface ${\mathcal X}$ is defined by
$f=\sum_{|\nu|=\delta}\alpha_\nu z^\nu$.  From
$$
\frac{\partial f}{\partial\alpha_\nu}=z^\nu
$$
and $\nu+e_q=\mu+e_p$ it follows that
$$
\left(L\left(z_q\left({\partial\over\partial\alpha_\nu}\right)
-z_p\left({\partial\over\partial\alpha_\mu}\right)\right)\right)
f= L\left(z_q z^\nu-z_p z^\mu\right)=0.
$$  Hence the ${\mathcal O}_{{\mathbb
P}_n}(1)\times{\mathcal O}_{{\mathbb P}_N}(\ell-1)$-valued global holomorphic vector field
$$
L\left(z_q\left({\partial\over\partial\alpha_\nu}\right)
-z_p\left({\partial\over\partial\alpha_\mu}\right)\right)
$$
on ${\mathbb P}_n\times{\mathbb P}_N$ is tangential to the
hypersurface ${\mathcal X}$ of ${\mathbb P}_n\times{\mathbb P}_N$
\end{proof}

\begin{remark}\label{remark(1.5)}  The use of $L$ is to
make sure that we have a line-bundle-valued global holomorphic
tangent vector field on ${\mathbb P}_n\times{\mathbb P}_N$. We
will only use the case $\ell=1$.
\end{remark}

\begin{lemma}\label{lemma(1.6)} For any global
holomorphic vector field $\xi$ on ${\mathbb P}_n$ there exists a
global holomorphic vector field $\tilde\xi$ on ${\mathbb P}_n\times{\mathbb P}_N$ such that

\medbreak \noindent (i) $\tilde\xi$ is tangential to ${\mathcal
X}$, and

\medbreak \noindent (ii) $\tilde\xi$ is projected to $\xi$ under
the natural projection from ${\mathbb P}_n\times{\mathbb P}_N$
onto the second factor ${\mathbb P}_n$.
\end{lemma}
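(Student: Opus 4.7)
The plan is to write $\tilde\xi = \xi + \eta$, where $\xi$ is interpreted as a global vector field on the product $\mathbb{P}_n\times\mathbb{P}_N$ via pullback under the first projection, and $\eta$ is a vertical correction, i.e.\ a global holomorphic vector field on $\mathbb{P}_N$ pulled back under the second projection. The Künneth-type decomposition
$$
H^0\bigl(\mathbb{P}_n\times\mathbb{P}_N,\,T_{\mathbb{P}_n\times\mathbb{P}_N}\bigr)
\;=\;H^0\bigl(\mathbb{P}_n,T_{\mathbb{P}_n}\bigr)\oplus H^0\bigl(\mathbb{P}_N,T_{\mathbb{P}_N}\bigr)
$$
tells me that every global holomorphic vector field on the product decomposes this way, so the only question is whether I can choose $\eta$ to cancel $\xi(f)$ modulo $f$.

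To do this, I first compute $\xi(f)$. Since $f=\sum_{|\nu|=\delta}\alpha_\nu z^\nu$ has bidegree $(\delta,1)$ and $\xi$ differentiates only in the $z$-variables, $\xi(f)$ is a polynomial of degree $\delta$ in $z$ that is linear in $\alpha$. Expanding in the basis $\{z^\nu\}_{|\nu|=\delta}$ of degree-$\delta$ monomials,
$$
\xi(f)\;=\;\sum_{|\nu|=\delta} g_\nu(\alpha)\,z^\nu,
$$
where each coefficient $g_\nu(\alpha)$ is a homogeneous linear form in the $\alpha_\mu$'s. I then set
$$
\eta\;:=\;-\sum_{|\nu|=\delta} g_\nu(\alpha)\,\frac{\partial}{\partial\alpha_\nu}.
$$
Because the $g_\nu$ are linear in $\alpha$, this expression has the form $\sum_{\mu,\nu}c_{\mu,\nu}\alpha_\mu(\partial/\partial\alpha_\nu)$ on $\mathbb{C}^{N+1}\setminus 0$ and therefore descends, modulo the Euler field, to a genuine global holomorphic vector field on $\mathbb{P}_N$. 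By construction $\eta(f)=-\sum_\nu g_\nu(\alpha)z^\nu=-\xi(f)$, so $\tilde\xi:=\xi+\eta$ annihilates $f$ identically on the affine cone, hence is tangent to $\mathcal{X}$ everywhere. The image of $\tilde\xi$ under the projection to the $\mathbb{P}_n$-factor is exactly $\xi$, since $\eta$ has no $z$-components.

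There is no real obstacle in this argument: it goes through because $f$ is linear in $\alpha$, which is precisely what forces the coefficients $g_\nu(\alpha)$ to be linear in $\alpha$ — exactly the homogeneity condition needed for $\eta$ to descend to an honest (scalar-valued) global vector field on $\mathbb{P}_N$. The same mechanism — exploiting the $1$ in the bidegree $(\delta,1)$ of $\mathcal{X}$ — is what made Lemma \ref{lemma(1.4)} produce tangential vector fields of low pole order, and it is the algebraic heart of the slanted-vector-field technique highlighted in the introduction.
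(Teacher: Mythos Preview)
Your proof is correct and follows essentially the same approach as the paper: compute $\xi(f)$, expand it as $\sum_{|\nu|=\delta} g_\nu(\alpha)\,z^\nu$ with $g_\nu$ linear in $\alpha$, and correct by the vertical field $\eta=-\sum_\nu g_\nu(\alpha)\,\partial/\partial\alpha_\nu$. The paper carries out exactly this construction, only it first invokes the Euler sequence to write $\xi=\sum_{j,k}a_{j,k}z_j\,\partial/\partial z_k$ explicitly and then gives closed formulas for the coefficients $g_\nu$ (called $-\beta_\nu$ there) in terms of the $a_{j,k}$ and certain index-shift maps; your version suppresses these formulas but captures the same mechanism and the same reason it works (linearity of $f$ in $\alpha$).
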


\begin{proof} Consider the Euler sequence
$$
0\rightarrow{\mathcal O}_{{\mathbb
P}_n}\stackrel{\phi}{\rightarrow} {\mathcal O}_{{\mathbb
P}_n}(1)^{\oplus(n+1)}\stackrel{\psi}{\rightarrow} T_{{\mathbb
P}_n}\rightarrow 0,
$$
where
$$
\displaylines{
\psi\left(\sum_{j=0}^n a_{j,0}z_j,\sum_{j=0}^n a_{j,1}z_j,\cdots,
\sum_{j=0}^n a_{j,n}z_j\right)
=\sum_{j,k=0}^n a_{j,k}z_j{\partial\over\partial z_k},\cr
\phi(1)=(z_0,\cdots,z_n).\cr
}
$$
Since $H^1({\mathbb P}_n,{\mathcal O}_{{\mathbb P}_n})$ vanishes,
it follows from the exact cohomology sequence of the Euler
sequence that $\xi$ is of the form
$\sum_{j,k=0}^na_{j,k}z_j{\partial\over\partial z_k}$ for some
complex numbers $a_{j,k}$.

\medbreak For $0\leq j,k\leq n$ with $j\not=k$ we define
$$
\Phi_{j,k}:\left({\mathbb N}\cup\{0\}\right)^{n+1}\rightarrow\left({\mathbb N}\cup\{0\}\right)^{n+1}
$$
as follows. For $\nu\in\left({\mathbb N}\cup\{0\}\right)^{n+1}$ we set
$$
\displaylines{
\left(\Phi_{j,k}(\nu)\right)_\ell=\nu_\ell\quad\forall\ell\not=j,k,\cr
\left(\Phi_{j,k}(\nu)\right)_j=\nu_j-1,\cr
\left(\Phi_{j,k}(\nu)\right)_k=\nu_k+1.\cr }
$$
For $\{\alpha_\nu\}_{\nu\in\left({\mathbb N}\cup\{0\}\right)^{n+1}}$
we define
$$
\beta_\nu=
-\sum_{0\leq j,k\leq n,j\not=k}
\alpha_{
\Phi_{j,k}(\nu)}
a_{j,k}(\nu_k+1)
-\sum_{j=0}^n\alpha_{\nu_0,\nu_1,\cdots,\nu_n}a_{j,j}\nu_j.
$$
Then for $f=\sum_{|\nu|=\delta}\alpha_\nu z^\nu$,
we have
$$
\xi(f)=\left(\sum_{j,k=0}^n
a_{j,k}z_j{\partial\over\partial z_k}\right)
f=
-\sum_{|\nu|=\delta}
\beta_\nu z^\nu.
$$
The verification is as follows.
Since
$$
\left({\partial\over\partial z_k}\right)f=
\sum_{|\nu|=\delta}\alpha_\nu\,\nu_k\, z_0^{\nu_0}\cdots
z_{k-1}^{\nu_{k-1}}z_k^{\nu_k-1}z_{k+1}^{\nu_{k+1}}\cdots
z_n^{\nu_n},
$$
it follows that
$$
\qquad\left(\sum_{j,k=0}^n a_{j,k}z_j{\partial\over\partial
z_k}\right) f\leqno{(\ref{lemma(1.6)}.1)}
$$
$$
=\sum_{j,k=0}^n a_{j,k}z_j
\sum_{|\nu|=\delta}\alpha_\nu\nu_k z_jz_0^{\nu_0}\cdots
z_{k-1}^{\nu_{k-1}} z_k^{\nu_k-1}z_{k+1}^{\nu_{k+1}}\cdots
z_n^{\nu_n}.
$$
So, the term on the right-hand side of (\ref{lemma(1.6)}.1) when $j=k$
is
$$
a_{j,j}\alpha_{\nu_0,\nu_1,\cdots,\nu_n}\nu_k
z_0^{\nu_o}z_1^{\nu_1}\cdots z_n^{\nu_n}.
$$
This means that the net effect is multiplication by $\nu_j$ when
$j=k$. The contribution to the term on the right-hand side of
(\ref{lemma(1.6)}.1) with $j\not=k$ is
$$
a_{jk}\nu_k
\alpha_{\nu_0,\nu_1,\cdots,\nu_n}
z_0^{\nu_0}\cdots z_{j-1}^{\nu_{j-1}}z_j^{\nu_j+1}
z_{j+1}^{\nu_{j+1}}\cdots
z_{k-1}^{\nu_{k-1}}
z_k^{\nu_k-1}z_{k+1}^{\nu_{k+1}}\cdots z_n^{\nu_n}.
$$
We now change the dummy indices $\nu_j$ and $\nu_k$
to look at the coefficient of the monomial
$$
z_0^{\nu_0}z_1^{\nu_1}\cdots z_n^{\nu_n}.
$$
We change the dummy index $\nu_j$ to $\nu_j-1$ and change the
dummy index $\nu_k$ to $\nu_k+1$ to get
$$
(\nu_k+1)a_{j,k}
\alpha_{\nu_0,\cdots\nu_{j-1},\nu_j-1,\nu_{j+1},\cdots,
\nu_{k-1},\nu_k+1,\nu_{k+1},\cdots,\nu_n}
z_0^{\nu_0}z_1^{\nu_1}\cdots z_n^{\nu_n}.
$$
This concludes the verification.

\medbreak It now suffices to set
$$
\tilde\xi=\sum_{j,k=0}^na_{j,k}z_j{\partial\over\partial z_k}
+\sum_{|\nu|=\delta}\beta_\nu{\partial\over\partial\alpha_\nu}.
$$
\end{proof}

\begin{lemma}\label{lemma(1.7)} $T_{\mathcal X}\otimes{\mathcal O}_{{\mathbb P}_n}(1)$ is globally generated.
\end{lemma}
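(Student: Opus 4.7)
The plan is to check the surjectivity of the evaluation map
$$H^{0}\bigl(\mathcal{X},T_{\mathcal X}\otimes\mathcal{O}_{\mathbb{P}_n}(1)\bigr)\longrightarrow \bigl(T_{\mathcal X}\otimes\mathcal{O}_{\mathbb{P}_n}(1)\bigr)\big|_{(y,\alpha)}$$
at every point $(y,\alpha)\in\mathcal{X}$. Because $\mathrm{PGL}_{n+1}$ acts transitively on $\mathbb{P}_n$ and preserves the whole universal setup (acting correspondingly on $\mathbb{P}_N$ and sending $\mathcal{X}$ to itself), it suffices to treat the case $y=[1:0:\cdots:0]$. Then I trivialize $\mathcal{O}_{\mathbb{P}_n}(1)$ at $y$ by the local frame $z_0$ (which does not vanish at $y$), which reduces the task to showing that the fiber $T_{\mathcal X}|_{(y,\alpha)}$ is spanned by the $z_0$-trivialized values of global sections.

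I would assemble a spanning set from two supplies already constructed. For the horizontal directions, I apply Lemma \ref{lemma(1.6)} to the $n$ global holomorphic vector fields $\xi_{i}=z_{0}\,\partial/\partial z_{i}$ on $\mathbb{P}_n$ (which restrict to $\partial/\partial x_{i}$ at $y$) to obtain lifts $\tilde\xi_{i}$ tangent to $\mathcal{X}$. Multiplying by $z_{0}$ turns each $\tilde\xi_{i}$ into a global section of $T_{\mathcal X}\otimes\mathcal{O}_{\mathbb{P}_n}(1)$, and in the $z_{0}$-trivialization its value at $(y,\alpha)$ is $\tilde\xi_{i}|_{(y,\alpha)}$, which projects onto the basis vector $\partial/\partial x_{i}$ of $T_{\mathbb{P}_n,y}$.

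For the vertical directions I apply Lemma \ref{lemma(1.4)} with $\ell=1$, taking $q=0$, $p\in\{1,\ldots,n\}$, and $\mu=\nu+e_{0}-e_{p}$ (which is a legal multi-index whenever $\nu_{p}\geq 1$). For any linear form $L$ in $\alpha$ this produces a global section
$$L\Bigl(z_{0}\,\tfrac{\partial}{\partial\alpha_{\nu}}-z_{p}\,\tfrac{\partial}{\partial\alpha_{\mu}}\Bigr)\;\in\;H^{0}\bigl(\mathcal{X},T_{\mathcal X}\otimes\mathcal{O}_{\mathbb{P}_n}(1)\bigr),$$
whose $z_{0}$-trivialized value at $(y,\alpha)$ is $L(\alpha)\,\partial/\partial\alpha_{\nu}$, i.e.\ the class $L(\alpha)\,[e_{\nu}]$ in $T_{\mathbb{P}_{N},\alpha}\cong\mathbb{C}^{N+1}/\mathbb{C}\alpha$ via the Euler sequence. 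Picking $L=\alpha_{\mu_{0}}$ for any index with $\alpha_{\mu_{0}}\neq 0$ ensures $L(\alpha)\neq 0$, so as $\nu$ ranges over all indices having $\nu_{p}\geq 1$ for some $p\neq 0$ (equivalently, all $\nu\neq(\delta,0,\ldots,0)$) we obtain every class $[e_{\nu}]$ with $\nu\neq(\delta,0,\ldots,0)$.

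The concluding verification is linear-algebraic. Since $(y,\alpha)\in\mathcal{X}$ forces the coefficient $\alpha_{(\delta,0,\ldots,0)}=f(y,\alpha)=0$, the vertical tangent space of $\mathcal{X}$ at $(y,\alpha)$, inside $T_{\mathbb{P}_{N},\alpha}$, is the hyperplane $V=\{w:w(\alpha_{(\delta,0,\ldots,0)})=0\}$; under the Euler identification $V$ is the image of $W=\mathrm{span}\{e_{\nu}:\nu\neq(\delta,0,\ldots,0)\}$, and because $\alpha\in W$ (that coefficient being zero) this image is $W/\mathbb{C}\alpha$, of dimension $N-1$, and is visibly spanned by the classes $[e_{\nu}]$ produced above. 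Combined with the $n$ horizontal directions from Lemma \ref{lemma(1.6)}, this gives the full $(n+N-1)$-dimensional fiber $T_{\mathcal X}|_{(y,\alpha)}$. The one delicate point, and the only place where bookkeeping must be watched carefully, is confirming that the one direction $[e_{(\delta,0,\ldots,0)}]$ which the Lemma \ref{lemma(1.4)} sections cannot reach is precisely the direction transverse to $\mathcal{X}$, so it is not needed in the spanning set.
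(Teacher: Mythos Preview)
Your proof is correct and follows essentially the same route as the paper: reduce to $y=[1:0:\cdots:0]$, use Lemma~\ref{lemma(1.4)} with $q=0$ to hit $\partial/\partial\alpha_\nu$ for every $\nu\neq(\delta,0,\ldots,0)$, and use Lemma~\ref{lemma(1.6)} for the $\mathbb{P}_n$-directions. The only difference is in the final sentence: the paper observes that these sections span a codimension-one subspace of $T_{\mathbb{P}_n\times\mathbb{P}_N}|_{(y,\alpha)}$ and then invokes the nonsingularity of $\mathcal{X}$ (Lemma~\ref{Lemma(1.2)}) to conclude this subspace must equal $T_{\mathcal{X}}|_{(y,\alpha)}$, whereas you explicitly identify the vertical tangent space as $W/\mathbb{C}\alpha$ and check the span directly---same content, slightly different packaging.
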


\begin{proof} Take an arbitrary point
$(y,\alpha)$ of ${\mathcal X}$ with $y\in{\mathbb P}_n$ and $\alpha\in{\mathbb P}_N$.  Again choose a homogeneous coordinate
system $[z_0,z_1,\cdots,z_n]$ of ${\mathbb P}_n$ so that $y$ is
given by $[z_0,z_1,\cdots,z_n]=[1,0,\cdots,0]$.  We choose a
homogeneous linear polynomial $L$ of the variables
$\left\{\alpha_\nu\right\}_{\left|\nu\right|=\delta}$ such that
$L\left(s\right)\not=0$.

\medbreak It is equivalent to look at the tangent bundle of the
pullback $\tilde{\mathcal X}$ of ${\mathcal X}\subset\left({\mathbb C}^{n+1}-0\right) \times\left({\mathbb C}^{N+1}-0\right)$ under the
natural projection $\left({\mathbb C}^{n+1}-0\right) \times\left({\mathbb C}^{N+1}-0\right) \rightarrow{\mathbb P}_n\times{\mathbb P}_N$.
Take $\nu$ with $\nu_p>0$ for some $1\leq p\leq n$. Then there
exists a unique $\mu\in\left({\mathbb N}\cup\{0\}\right)^{n+1}$ such that $\nu+e_0=\mu+e_p$.
At $(y,\alpha)$ the value of
$$
L\left(z_0\left({\partial\over\partial\alpha_\nu}\right)
-z_p\left({\partial\over\partial\alpha_\mu}\right)\right)
$$
is equal to $L(s)\left({\partial\over\partial\alpha_\nu}\right)$.
Thus we conclude that the global holomorphic sections of
$T_{\mathcal X}\times{\mathcal O}_{{\mathbb P}_n}(1)$ generate
${\partial\over\partial\alpha_\nu}$ for
$\nu\not=(\delta,0,\cdots,0)$.  By Lemma~\ref{lemma(1.6)} the
global holomorphic sections of $T_{\mathcal X}\times{\mathcal
O}_{{\mathbb P}_n}(1)$ also generate ${\partial\over\partial z_j}$
for $0\leq j\leq n$. We thus conclude that global holomorphic
sections of $T_{\mathcal X}\times{\mathcal O}_{{\mathbb P}_n}(1)$
generate a codimension $1$ vector subspace of the tangent space of
${\mathbb P}_n\times{\mathbb P}_N$ at $(y,\alpha)$. Since ${\mathcal
X}$ is nonsingular at $(\alpha,y)$, it follows that global
holomorphic sections of $T_{\mathcal X}\otimes{\mathcal
O}_{{\mathbb P}_n}(1)$ generate $T_{\mathcal X}\otimes{\mathcal
O}_{{\mathbb P}_n}(1)$.
\end{proof}

\begin{lemma}\label{lemma(1.8)} Let $q$ be a
nonnegative integer. Global holomorphic sections of $T_{\mathcal
X}\otimes{\mathcal O}_{{\mathbb P}_n}(q+1)\otimes{\mathcal
O}_{{\mathbb P}_N}(q)$ generate all of its $q$-jets of ${\mathcal
X}$.
\end{lemma}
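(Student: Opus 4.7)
The plan is to factor the bundle so that Lemma~\ref{lemma(1.7)} handles the ``tangent-bundle'' factor and a simple twist on ${\mathbb P}_n\times{\mathbb P}_N$ handles the generation of $q$-jets. Specifically, I would write
$T_{\mathcal X}\otimes{\mathcal O}_{{\mathbb P}_n}(q+1)\otimes{\mathcal O}_{{\mathbb P}_N}(q)=E\otimes L$, where $E=T_{\mathcal X}\otimes{\mathcal O}_{{\mathbb P}_n}(1)$ and $L={\mathcal O}_{{\mathbb P}_n}(q)\otimes{\mathcal O}_{{\mathbb P}_N}(q)$. By Lemma~\ref{lemma(1.7)}, $E$ is globally generated, and by direct inspection $L$ will be $q$-jet ample.

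First, at a fixed $P\in{\mathcal X}$ I would choose global sections $s_1,\cdots,s_r\in H^0({\mathcal X},E)$, with $r=\mathrm{rank}\,E=\dim{\mathcal X}$, whose values at $P$ form a basis of $E_P$. By continuity of the determinant in any local trivialization of $E$, these sections remain fiberwise linearly independent on a Zariski open neighborhood $U$ of $P$, so they form a holomorphic frame of $E|_U$. Any prescribed $q$-jet $J$ of $E\otimes L$ at $P$ then expands uniquely in this frame as $J=\sum_{i=1}^r\sigma_i\cdot s_i$, with each $\sigma_i$ a $q$-jet of $L$ at $P$.

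Next, I would argue that it is enough to lift each $\sigma_i$ to a global section $\tilde\sigma_i\in H^0({\mathcal X},L)$ whose $q$-jet at $P$ matches $\sigma_i$: the global section $\sum_i\tilde\sigma_i\otimes s_i\in H^0({\mathcal X},E\otimes L)$ will then realize $J$ at $P$, since expansion in the frame $\{s_i\}$ reduces the $q$-jet of an $E\otimes L$-valued section to the tuple of $q$-jets of its $L$-valued coefficient functions. For the lifting, it suffices to show that $H^0({\mathbb P}_n\times{\mathbb P}_N,L)$ surjects onto the ambient $q$-jet space $L\otimes{\mathcal O}_{{\mathbb P}_n\times{\mathbb P}_N,P}/{\mathfrak m}_P^{q+1}$, because the surjection ${\mathcal O}_{{\mathbb P}_n\times{\mathbb P}_N,P}/{\mathfrak m}_P^{q+1}\twoheadrightarrow{\mathcal O}_{{\mathcal X},P}/{\mathfrak m}_P^{q+1}$ then transports this to the desired surjectivity on ${\mathcal X}$. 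In affine coordinates $(u_1,\cdots,u_n;v_1,\cdots,v_N)$ centered at $P$, bihomogeneous polynomials of bidegree $(q,q)$ in the ambient homogeneous coordinates descend to arbitrary polynomials of bidegree $(\leq q,\leq q)$ in $(u,v)$, and in particular realize every monomial of total degree $\leq q$; this gives the required surjectivity.

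The only step demanding any genuine care is the local-frame construction, where one must pass from pointwise global generation of $E$ at $P$ to a frame on a neighborhood of $P$; this is the short linear-algebra observation above. After that, everything is routine bookkeeping, and I do not anticipate any serious obstacle.
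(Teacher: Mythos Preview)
Your proposal is correct and follows essentially the same approach as the paper: factor the bundle as $\bigl(T_{\mathcal X}\otimes{\mathcal O}_{{\mathbb P}_n}(1)\bigr)\otimes\bigl({\mathcal O}_{{\mathbb P}_n}(q)\otimes{\mathcal O}_{{\mathbb P}_N}(q)\bigr)$, invoke Lemma~\ref{lemma(1.7)} for global generation of the first factor, and use the evident $q$-jet ampleness of the second factor. The paper states this in two sentences; you have simply written out the local-frame bookkeeping that makes the product argument precise.
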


\begin{proof} Global holomorphic section
of ${\mathcal O}_{{\mathbb P}_n}(q)\otimes{ \cal O}_{{\mathbb
P}_N}(q)$ generate all of its $q$-jets. Thus we can use the
product of a global holomorphic section of ${\mathcal O}_{{\mathbb
P}_n}(q)\otimes{ \cal O}_{{\mathbb P}_N}(q)$ and a global
holomorphic section of $T_{\mathcal X}\otimes{\mathcal
O}_{{\mathbb P}_n}(1)$ to generate any prescribed $q$-jet of
${\mathcal X}$.
\end{proof}

\begin{nodesignation}\label{lie_derivatives} {\it Lie Derivatives}\end{nodesignation} Let $X$
be a complex manifold and $\xi$ be a holomorphic vector field on
$X$. Let $\varphi_{\xi,t}$ be a $1$-parameter local biholomorphism
defined by the vector field $\xi$ so that
$$\left.{\partial\over\partial
t}\varphi_{\xi,t}^*g\right|_{t=0}=\xi(g)$$ for any local
holomorphic function $g$ on $X$.  For any $k$-jet differential
$\omega$ on $X$, we define the Lie derivative ${{\mathcal
L}ie}_\xi(\omega)$ of $\omega$ with respect to $\xi$ by
$$
\left.{{\mathcal L}ie}_\xi(\omega)={\partial\over\partial t}
\varphi_{X,t}^*\omega\right|_{t=0}.
$$
Since
$$
\displaylines{d\left(\left.{\partial\over\partial
t}\varphi_{X,t}^*\omega\right|_{t=0}\right)
=d\left(\lim_{t\rightarrow 0} {1\over
t}\left(\varphi_{X,t}^*\omega - \omega\right)\right)\cr
=\lim_{t\rightarrow 0} {1\over t}\left(d\varphi_{X,t}^*\omega
-d\omega\right) =\lim_{t\rightarrow 0} {1\over
t}\left(\varphi_{X,t}^*d\omega -d\omega\right) \cr
=\left.{\partial\over\partial
t}\left(\varphi_{X,t}^*\left(d\omega\right)\right)\right|_{t=0},\cr
}
$$
it follows that
$$
d\left({{\mathcal L}ie}_\xi(\omega)\right)= {{\mathcal
L}ie}_\xi\left(d\omega\right).
$$
Let $\eta$ be a holomorphic $\ell$-jet differential
on $X$.  The Leibniz product formula holds for the Lie derivatives of
the product of $\omega$ and $\eta$ so that
$$
{{\mathcal L}ie}_\xi(\omega\eta) ={{\mathcal
L}ie}_\xi(\omega)\eta+ \omega{{\mathcal L}ie}_\xi(\eta).
$$
Let $(w_1,\cdots,w_n)$ be a local coordinate system of $X$. Fix
some $1\leq i\leq n$. If $\omega=d^k w_i$ and $\xi=\sum_{j=1}^n
g_j(w){\partial\over\partial w_j}$, then
$$
{{\mathcal L}ie}_\xi(\omega) =d^k\left(\sum_{j=1}^n
g_j(w)\left({\partial\over\partial w_j}\right)w_i\right) =d^k
g_i(w).
$$

\begin{lemma}\label{lemma(1.10)} Let $k$ be a positive
integer. Let $X$ be a complex manifold and $D$ be a complex
hypersurface in $X$.  Let $\omega$ be a holomorphic $k$-jet
differential on $X$ which vanishes at points of $D$ to order $p$.
Let $\xi$ be a meromorphic vector field on $X$ whose only possible
poles are those of order at most $q$ at $D$.  If $p\geq q+k$, then
${{\mathcal L}ie}_\xi(\omega)$ is a holomorphic $k$-jet
differential on $X$ which vanishes at points of $D$ to order
$p-(q+k)$.
\end{lemma}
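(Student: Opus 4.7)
The statement is local along $D$, so I would work in a local coordinate chart $(w_1,\dots,w_n)$ on $X$ in which $D = \{w_1 = 0\}$. In such a chart $\omega$ is a polynomial in the jet variables $d^\ell w_j$ (for $1 \leq \ell \leq k$ and $1 \leq j \leq n$) whose coefficients have the form $w_1^{p} H_{\boldsymbol\nu}(w)$ with $H_{\boldsymbol\nu}$ holomorphic, while $\xi = \sum_{j=1}^n g_j\,\partial/\partial w_j$ with $g_j = h_j/w_1^{q}$ for some holomorphic $h_j$. The goal reduces to showing that each coefficient of $\mathcal{L}ie_\xi(\omega)$, written in the jet-variable basis, vanishes to order at least $p - (q+k)$ in $w_1$; together with the hypothesis $p \geq q+k$ this gives both holomorphicity and the asserted vanishing.

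The computation rests on the Leibniz rule for $\mathcal{L}ie_\xi$ and the identity $\mathcal{L}ie_\xi(d^\ell w_j) = d^\ell g_j$, both recorded in \ref{lie_derivatives}. The technical core is the pole-order estimate $\mathrm{ord}_D(d^\ell g_j) \geq -(q+\ell)$, which I would establish by induction on $\ell$. The base case $\ell=1$ is the direct computation $d g_j = w_1^{-q} dh_j - q\, h_j w_1^{-q-1} dw_1$, and the inductive step amounts to observing that applying $d$ to any term of the form $w_1^{-q-m} P$, with $P$ a polynomial in the jet variables $d^i w_s$, increases the $w_1$-pole order by at most one, since the only way to make the prefactor more singular is to differentiate it.

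With these in hand I would apply the Leibniz rule to a single monomial summand $F \cdot \prod_\alpha d^{\ell_\alpha} w_{j_\alpha}$ of $\omega$, where $\mathrm{ord}_D F \geq p$ and each $\ell_\alpha \leq k$, and bound the two kinds of resulting terms separately. The term $\xi(F) \cdot \prod_\alpha d^{\ell_\alpha} w_{j_\alpha}$ has $w_1$-order at least $p - q - 1$, the worst case being $g_1 \,\partial_{w_1} F$, which has order exactly $p - 1 - q$. The term $F \cdot \bigl(\prod_{\alpha \neq \beta} d^{\ell_\alpha} w_{j_\alpha}\bigr) \cdot d^{\ell_\beta} g_{j_\beta}$ has $w_1$-order at least $p - (q + \ell_\beta) \geq p - (q + k)$ by the pole-order estimate just proved. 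Taking the minimum of $p-q-1$ and $p-(q+k)$ yields $p-(q+k)$ since $k \geq 1$, which gives the claim. The main delicate point is the pole-order induction for $d^\ell g_j$: because $d$ acts both by differentiating the holomorphic coefficients and by raising the jet order of jet variables, careful bookkeeping is needed to rule out hidden singular contributions beyond the advertised bound.
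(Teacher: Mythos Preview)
Your proposal is correct and follows essentially the same approach as the paper's proof: write $\omega$ locally as a polynomial in the jet variables with coefficients vanishing to order $p$, expand $\mathcal{L}ie_\xi(\omega)$ via the Leibniz rule, and separately bound the term where $\xi$ hits the coefficient (order $\geq p-(q+1)$) and the terms where $\xi$ hits a factor $d^\ell w_j$, producing $h_\lambda \, d^\ell g_j$ (order $\geq p-(q+\ell)\geq p-(q+k)$). The paper simply asserts that $d^\ell g_j$ has pole order at most $q+\ell$ along $D$, whereas you supply the easy induction on $\ell$; otherwise the arguments are identical.
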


\begin{proof} Locally we can write
$$
\omega=\sum_ {\lambda_{1,1},\cdots,\lambda_{1,n},\cdots,
\lambda_{k,1},\cdots,\lambda_{k,n}\geq 0}
h_{\lambda_{1,1},\cdots,\lambda_{1,n},\cdots,
\lambda_{k,1},\cdots,\lambda_{k,n}}(w) \prod_{1\leq\ell\leq
k,1\leq j\leq n}\left(d^\ell w_j\right)^{\lambda_{\ell,j}}
$$
with $h_{\lambda_{1,1},\cdots,\lambda_{1,n},\cdots,
\lambda_{k,1},\cdots,\lambda_{k,n}}(w)$ vanishing at points of $D$
to order $p$.

\medbreak Locally we can write $\xi=\sum_{j=1}^n
g_j(w){\partial\over\partial w_j}$ with the pole order of $g_j$ at
most $q$ at $D$. When we apply ${{\mathcal L}ie}_\xi$, by the
Leibniz product rule we apply it only to one factor of each term
separately and sum up. When it is applied to
$h_{\lambda_{1,1},\cdots,\lambda_{1,n},\cdots,
\lambda_{k,1},\cdots,\lambda_{k,n}}$, we end up with
$$
\sum_{j=1}^n g_j(w){\partial\over\partial w_j} \left(
h_{\lambda_{1,1},\cdots,\lambda_{1,n},\cdots,
\lambda_{k,1},\cdots,\lambda_{k,n}}(w) \right)
$$
which vanishes at points of $D$ to order $p-(q+1)\geq p-(q+k)$.
Since the pole order of $d^\ell g_j$ is at most $q+\ell$ at $D$,
when it is applied to $d^\ell w_j$ and then multiplied by
$h_{\lambda_{1,1},\cdots,\lambda_{1,n},\cdots,
\lambda_{k,1},\cdots,\lambda_{k,n}}$, we end up with
$h_{\lambda_{1,1},\cdots,\lambda_{1,n},\cdots,
\lambda_{k,1},\cdots,\lambda_{k,n}} d^\ell g_j$ which vanishes at
points of $D$ to order $p-(q+\ell)\geq p-(q+k)$. \end{proof}

\bigbreak
\section{\sc Construction of Slanted Vector Fields for Jet Space}\label{construction_slanted_vector_fields_jet_space}

\bigbreak In the preceding \S\ref{vector_field_and_lie_derivatives} we constructed global vector fields on ${\mathcal X}$ of low vertical pole order.  The construction in Lemma \ref{lemma(1.4)} involves the use of two indices $\nu,\mu\in\left({\mathbb N}\cup\{0\}\right)^{n+1}$ with
$\nu+e_q=\mu+e_p$ and the construction in Lemma \ref{lemma(1.7)} involves also the two indices $\mu$ and $\nu$ for the case of $q=0$.  In this \S\ref{construction_slanted_vector_fields_jet_space} we are going to carry out a similar construction with ${\mathcal X}$ replaced by the space $J^{\rm{\scriptstyle (vert)}}_k\left({\mathcal X}\right)$ of all vertical $k$-jets on ${\mathcal X}$ for $k\geq 1$ by induction on $k$, with the similarity interpreted from identifying ${\mathcal X}$ with the space
the space $J^{\rm{\scriptstyle (vert)}}_0\left({\mathcal X}\right)$ of all vertical $0$-jets on ${\mathcal X}$. For each step of this construction by induction, just as in the construction in Lemma \ref{lemma(1.4)} and Lemma \ref{lemma(1.7)} a pair of indices will be used, leading to some binary tree of indices whose precise definition will be given in
\ref{binary_tree_of_indices}.  Because of the number of indices involved in the construction given in this \S\ref{construction_slanted_vector_fields_jet_space}, the details seem to be complicated, but the key argument is simply a natural extension of what is used in the preceding \S\ref{vector_field_and_lie_derivatives} and all the steps are straightforward.

\begin{nodesignation}{\it Binary Trees of Indices}.\label{binary_tree_of_indices}\end{nodesignation}
In order to conveniently describe vector fields on the total space
of $k$-jets, we now introduce the binary trees of indices.  A {\it
binary tree of indices of order} $k$, which we denote by
${\mathfrak p}^{(k)}$, is a collection
$$
\left\{p_{\gamma_1,\gamma_2,\cdots,\gamma_j}\,\Bigg| \,1\leq j\leq
k,\hbox{ each of }\gamma_1,\cdots,\gamma_j=0,1\right\}
$$
of indices, where each $p_{\gamma_1,\gamma_2,\cdots,\gamma_j}$ is
an integer satisfying $0\leq
p_{\gamma_1,\gamma_2,\cdots,\gamma_j}\leq n$. We will use the
interpretation of this collection of indices as a tree as follows.
The binary tree starts with two nodes $p_0, p_1$ at its root and
each of these two notes $p_0, p_1$ branches out into a pair of
nodes. On top of the node $p_0$ there are two nodes $p_{0,0}$ and
$p_{0,1}$. On top of the node $p_1$ there are two nodes $p_{1,0}$
and $p_{1,1}$.  Each of the four nodes $p_{0,0}, p_{0,1}, p_{1,0},
p_{1,1}$ again branches out into a pair of nodes.  On top of the
node $p_{\gamma_1,\gamma_2}$ ($\gamma_1=0,1;\,\gamma_2=0,1$) there
are two nodes $p_{\gamma_1,\gamma_2,0}$ and
$p_{\gamma_1,\gamma_2,1}$. At the $j$-th branching into a pair of
two nodes for each node, we have two nodes
$$
p_{\gamma_1,\gamma_2,\cdots,\gamma_j,0},\,
p_{\gamma_1,\gamma_2,\cdots,\gamma_j,1}$$ on top of the node
$p_{\gamma_1,\gamma_2,\cdots,\gamma_j}$ for
$$\gamma_1=0,1;\,\gamma_2=0,1;\cdots,\gamma_j=0,1.$$
At the top the tree, after the $(k-1)$-th branching we have the
notes
$$
p_{\gamma_1,\gamma_2,\cdots,\gamma_k}\qquad\left(
\gamma_1=0,1;\,\gamma_2=0,1;\cdots,\gamma_k=0,1\right).
$$
We will use the convention that the binary tree, as a collection
of indices, will be denoted by a lower case Gothic letter and its
indices are denoted by the corresponding lower case Latin letter.
When $k=0$, we use the convention that ${\mathfrak p}^{(0)}$ is
just the empty set.

\medbreak We now introduce the {\it truncation} of a binary tree
of order $k$ to form a binary subtree of order $k-j$. We denote by
${\mathfrak p}^{(k;\tilde\gamma_1,\cdots,\tilde\gamma_j)}$ the
binary tree
$$
\left\{
p_{\tilde\gamma_1,\tilde\gamma_2,\cdots,\tilde\gamma_j,\gamma_{j+1},\cdots,\gamma_i}\,\Bigg|
\,j+1\leq i\leq k,\hbox{ each of
}\gamma_{j+1},\cdots,\gamma_i=0,1\right\}
$$
of order $k-j$.  We call ${\mathfrak
p}^{(k;\tilde\gamma_1,\cdots,\tilde\gamma_j)}$ the {\it
truncation} of ${\mathfrak p}^{(k)}$ at its node
$p_{\tilde\gamma_1,\tilde\gamma_2,\cdots,\tilde\gamma_j}$.

\medbreak In this paper we work only with the following special
kind of binary trees.  A binary tree ${\mathfrak p}^{(k)}$ of
order $k$ is said to {\it have level-wise homogeneous branches} if
for every $1\leq j<l$ and for any pairs
$$\left(\tilde\gamma_1,\tilde\gamma_2,\cdots,\tilde\gamma_j\right)
{\rm \ \ and\ \ }
\left(\hat\gamma_1,\hat\gamma_2,\cdots,\hat\gamma_j\right)$$ of
$j$-tuples of $0$'s and $1$'s, the two truncations $${\mathfrak
p}^{(k;\tilde\gamma_1,\tilde\gamma_2,\cdots,\tilde\gamma_j)},\ \
{\mathfrak
p}^{(k;\hat\gamma_1,\hat\gamma_2,\cdots,\hat\gamma_j)}$$ of
${\mathfrak p}^{(k)}$ are identical binary trees of order $k-j$.

\setcounter{subsection}{0}
\begin{lemma}\label{lemma(2.2)} Let
$$
\xi^{(i)}_j=d^i\log z_j\quad(i\geq 1,\ 0\leq j\leq n)
$$
and
$$
\xi^{(i)}=\left(\xi^{(i)}_0,\xi^{(i)}_1,\cdots,\xi^{(i)}_n\right).
$$
Let $f=\sum_\nu\alpha_\nu z^\nu$ be a polynomial of homogeneous
degree $\delta$. Let $\Phi^{(0)}_\nu\equiv 1$ and inductively for
$k\geq 0$,
$$
\Phi^{(k+1)}_\nu\left(\xi^{(1)},\cdots,\xi^{(k+1)}\right)
=\left(\sum_{\ell=0}^n \nu_\ell\xi^{(1)}_\ell\right)\Phi^{(k)}_\nu
+\sum_{\ell=0}^n\sum_{i=1}^k \xi^{(i+1)}_\ell {\partial
\Phi^{(k)}_\nu\over\partial\xi^{(i)}_\ell}.\leqno{(\ref{lemma(2.2)}.1)_{k+1}}
$$
Then
$$
d^k f=\sum_\nu\alpha_\nu
\Phi^{(k)}_\nu\left(\xi^{(1)},\cdots,\xi^{(k)}\right)z^\nu,\leqno{(\ref{lemma(2.2)}.2)}
$$
where for the differentiation $\alpha_\nu$ is regarded as a
constant and $\Phi_\nu^{(k)}$ is of homogeneous weight $k$ when
$\xi^{(j)}_\ell$ is given the weight $j$ and is independent of
$\left(z_0,\cdots,z_n\right)$. Moreover, the coefficients of
$\Phi_\nu^{(k)}$ in $\xi^{(1)},\cdots,\xi^{(k)}$ are polynomials
in $\nu=\left(\nu_0,\cdots,\nu_n\right)$ of degree at most $k$
with universal coefficients. As a polynomial in
$\nu=\left(\nu_0,\cdots,\nu_n\right)$, the degree of
$$\Phi_\nu^{(k)}-\left(\sum_{\ell=0}^n\nu_\ell\xi^{(1)}_\ell\right)^k$$
is at most $k-1$.
\end{lemma}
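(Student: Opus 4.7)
The plan is to argue by induction on $k$, relying on two elementary identities: $d(z^\nu)=z^\nu\sum_{\ell=0}^n \nu_\ell\,\xi^{(1)}_\ell$, which follows from logarithmic differentiation, and $d\xi^{(i)}_\ell=d\bigl(d^i\log z_\ell\bigr)=\xi^{(i+1)}_\ell$, which is the definition. The base case $k=0$ is immediate: $\Phi^{(0)}_\nu\equiv 1$, so $(\ref{lemma(2.2)}.2)$ reduces to $f=\sum_\nu\alpha_\nu z^\nu$, and all of the auxiliary assertions hold trivially (weight $0$, constant in $\nu$, matching $\bigl(\sum_\ell\nu_\ell\xi^{(1)}_\ell\bigr)^0=1$).

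For the inductive step, assume $(\ref{lemma(2.2)}.2)$ for $k$ together with the stated structural properties. Applying $d$ termwise and using the Leibniz rule (with $\alpha_\nu$ treated as a constant as stipulated),
$$
d^{k+1}f=\sum_\nu\alpha_\nu\bigl(z^\nu\, d\Phi^{(k)}_\nu+\Phi^{(k)}_\nu\, d(z^\nu)\bigr).
$$
By the inductive hypothesis $\Phi^{(k)}_\nu$ is a polynomial in the variables $\xi^{(i)}_\ell$ for $1\leq i\leq k$ and is independent of $z_0,\dots,z_n$; hence the chain rule and $d\xi^{(i)}_\ell=\xi^{(i+1)}_\ell$ give
$$
d\Phi^{(k)}_\nu=\sum_{\ell=0}^n\sum_{i=1}^k\frac{\partial\Phi^{(k)}_\nu}{\partial\xi^{(i)}_\ell}\,d\xi^{(i)}_\ell=\sum_{\ell=0}^n\sum_{i=1}^k\xi^{(i+1)}_\ell\,\frac{\partial\Phi^{(k)}_\nu}{\partial\xi^{(i)}_\ell}.
$$
Combined with $d(z^\nu)=z^\nu\sum_{\ell}\nu_\ell\xi^{(1)}_\ell$, this reproduces exactly the defining recursion $(\ref{lemma(2.2)}.1)_{k+1}$ inside the sum, yielding $d^{k+1}f=\sum_\nu\alpha_\nu\Phi^{(k+1)}_\nu z^\nu$.

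The remaining structural claims are then read off directly from the recursion. For homogeneous weight, multiplication by $\xi^{(1)}_\ell$ raises weight by $1$, and each operator $\xi^{(i+1)}_\ell\,\partial/\partial\xi^{(i)}_\ell$ also raises weight by $1$ (replacing a weight-$i$ factor with a weight-$(i+1)$ factor), so $\Phi^{(k+1)}_\nu$ has weight $k+1$. Independence of $z_0,\dots,z_n$ is clear since nothing on the right side of $(\ref{lemma(2.2)}.1)_{k+1}$ involves $z$. For the dependence on $\nu$: the first summand in $(\ref{lemma(2.2)}.1)_{k+1}$ multiplies by the linear-in-$\nu$ factor $\sum_\ell\nu_\ell\xi^{(1)}_\ell$, raising the $\nu$-degree by one, while the differential operator in the second summand is $\nu$-free, so by induction the $\nu$-degree of $\Phi^{(k+1)}_\nu$ is at most $k+1$ with universal coefficients. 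Finally, isolating the top-degree-in-$\nu$ part: only the first summand contributes, and by induction its top-degree part equals $\bigl(\sum_\ell\nu_\ell\xi^{(1)}_\ell\bigr)\bigl(\sum_\ell\nu_\ell\xi^{(1)}_\ell\bigr)^{k}=\bigl(\sum_\ell\nu_\ell\xi^{(1)}_\ell\bigr)^{k+1}$, establishing the asserted bound on the degree of $\Phi^{(k+1)}_\nu-\bigl(\sum_\ell\nu_\ell\xi^{(1)}_\ell\bigr)^{k+1}$.

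There is no real obstacle here; the entire argument is bookkeeping once the two identities above are in hand. The only point requiring care is the identification of $d\Phi^{(k)}_\nu$ with the second summand of $(\ref{lemma(2.2)}.1)_{k+1}$, which is legitimate precisely because the inductive hypothesis guarantees that $\Phi^{(k)}_\nu$ is a polynomial in the $\xi^{(i)}_\ell$ with $1\leq i\leq k$ and independent of $z$, so the chain rule closes up inside the $\xi$-variables and no extra $z$-derivatives appear.
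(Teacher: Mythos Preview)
Your proof is correct and follows essentially the same approach as the paper: induction on $k$, applying $d$ to both sides of $(\ref{lemma(2.2)}.2)$ via the Leibniz rule, computing $d(z^\nu)=z^\nu\sum_\ell\nu_\ell\xi^{(1)}_\ell$ and $d\Phi^{(k)}_\nu$ by the chain rule using $d\xi^{(i)}_\ell=\xi^{(i+1)}_\ell$, and then reading off the weight and $\nu$-degree claims directly from the recursion. The only cosmetic difference is that the paper phrases the induction as defining $\Phi^{(k)}_\nu$ by $(\ref{lemma(2.2)}.2)$ and verifying $(\ref{lemma(2.2)}.1)_{k+1}$, whereas you take $(\ref{lemma(2.2)}.1)_{k+1}$ as given and verify $(\ref{lemma(2.2)}.2)$; the computations are identical.
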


\begin{proof} To prove the Lemma, we
define $\Phi_\nu^{(k)}$ by (\ref{lemma(2.2)}.2) and verify
$(\ref{lemma(2.2)}.1)_1$ and $(\ref{lemma(2.2)}.1)_{k+1}$ for $k\geq 1$. The
verification is as follows. Clearly,
$$
\Phi^{(1)}_\nu=\sum_{\ell=0}^n\nu_\ell\xi^{(1)}_\ell
$$
and is homogeneous of weight $1$. To verify
(\ref{lemma(2.2)}.2) when $k$ is replaced by $k+1$, we apply
$d$ to both sides of (\ref{lemma(2.2)}.2).  The effect of
applying $d$ to $z^\nu$ is to replace the factor $z^\nu$ by
$\Phi^{(1)}_\nu$ which is $\sum_{\ell=0}^n
\nu_\ell\xi_\ell^{(1)}$. The effect of applying $d$ to the other
factor $\Phi^{(k)}_\nu\left(\xi^{(1)},\cdots,\xi^{(k)}\right)$ is
$$\sum_{\ell=0}^n\sum_{i=1}^k \xi^{(i+1)}_\ell {\partial
\Phi^{(k)}_\nu\over\partial\xi^{(i)}_\ell}$$ by the chain rule. We
now consider the question of weights and homogeneous degrees. For
$k\geq 1$ the first term on the right-hand side of
$(\ref{lemma(2.2)}.1)_{k+1}$ is the product of the factors
$\Phi^{(1)}_\nu\Phi^{(k)}_\nu$ which are respectively homogeneous
of weights $1$ and $k$ by induction hypothesis.  The second term
on the right-hand side of $(\ref{lemma(2.2)}.1)_{k+1}$ is the sum of a
product of two factors
$$\xi^{(i+1)}{\partial
\Phi^{(k)}_\nu\over\partial\xi^{(i)}_\ell}$$ which are
respectively homogeneous of weights $i+1$ and $k-i$ by induction
hypothesis. This finishes the verification of $(\ref{lemma(2.2)}.1)_{k+1}$
and the homogeneity of $\Phi^{(k+1)}_\nu$ of weight $k+1$.  From
$(\ref{lemma(2.2)}.1)_{k+1}$ it is clear by induction on $k$ that the
coefficients of $\Phi_\nu^{(k)}$ in $\xi^{(1)},\cdots,\xi^{(k)}$
are polynomials in $\nu$ of degree at most $k$ with universal
coefficients.

\medbreak Finally, in $(\ref{lemma(2.2)}.1)_{k+1}$ the term
$$\sum_{\ell=0}^n\sum_{i=1}^k \xi^{(i+1)}_\ell {\partial
\Phi^{(k)}_\nu\over\partial\xi^{(i)}_\ell}$$ on the right-hand
side as a polynomial in $\nu$ is of degree no higher than that of
$\Phi^{(k)}_\nu$ which is no higher than $k$.  Thus for the
induction process of going from Step $k$ to Step $k+1$, if the
degree of
$$\Phi_\nu^{(k)}-\left(\sum_{\ell=0}^n\nu_\ell\xi^{(1)}_\ell\right)^k$$
is at most $k-1$ in $\nu=\left(\nu_0,\cdots,\nu_n\right)$, then by
$(\ref{lemma(2.2)}.1)_{k+1}$ the degree of
$$\Phi_\nu^{(k+1)}-\left(\sum_{\ell=0}^n\nu_\ell\xi^{(1)}_\ell\right)^{k+1}$$
is at most $k$ in
$\nu=\left(\nu_0,\cdots,\nu_n\right)$.\end{proof}

\medbreak\noindent\begin{nodesignation}\label{2.3}
{\it Construction by Induction.}
\end{nodesignation} Let ${\mathfrak p}^{(k)}$ be a binary
tree of indices of order $k$. We denote by $\lambda^{(k)}$ a
multi-index of $n+1$ components with total degree $\delta-k$. For
$1\leq j\leq k$ and for the choice of each
$\gamma_1,\cdots,\gamma_j$ being $0$ or $1$, we denote by
$\lambda^{(k;\gamma_1,\cdots,\gamma_j)}$ the multi-index
$$
\lambda^{(k)}+\sum_{i=1}^j e_{p_{\gamma_1,\cdots,\gamma_i}}
$$
with total degree
$\left|\lambda^{\left(k;\gamma_1,\cdots,\gamma_j\right)}\right|=\delta-k+j$.
Recall that $e_{p_{\gamma_1,\cdots,\gamma_i}}$ is the index of
$n+1$ components whose only nonzero component is the
$p_{\gamma_1,\cdots,\gamma_i}$-th component which is $1$.

\bigbreak For $0\leq k\leq n-1$, for any multi-index
$\lambda^{(k)}$ of $n+1$ components with total degree $\delta-k$,
and for any binary tree ${\mathfrak p}^{(k)}$ of order $k$ which
has level-wise homogeneous branches, we are going to explicitly
construct by induction on $k$,
$$
\Theta^{(j)}_{\lambda^{(k)},{\mathfrak p}^{(k)}},\,
\Psi^{(j)}_{\lambda^{(k)},{\mathfrak p}^{(k)}}\qquad\left(k\leq
j\leq n\right)
$$
such that

\medbreak\noindent (1) $\Psi^{(j)}_{\lambda^{(k)},{\mathfrak
p}^{(k)}}$ is a rational function of the entries of
$\lambda^{(k)}$ and $\xi_q^{(\ell)}$ with $q$ equal to some
$p_{\gamma_1,\gamma_2,\cdots,\gamma_\ell}$ for $1\leq\ell\leq k$,

\medbreak\noindent (2) $\Theta^{(k)}_{\lambda^{(k)},{\mathfrak
p}^{(k)}}$ is a meromorphic vector field on the parameter space
with coordinates $\alpha_\nu$ (for multi-indices $\nu$ of $n+1$
components of total degree $\delta$) which is a linear combination
of $\partial\over\partial\alpha_\nu$ (for
$\left|\nu\right|=\delta$) whose coefficients are rational
functions of
$$z_0,\cdots,z_n,\ \xi^{(j)}_\ell\quad
(1\leq j\leq k,\ 0\leq\ell\leq n)$$ and which satisfies
$$ \displaylines{
\Theta^{(k)}_{\lambda^{(k)},{\mathfrak p}^{(k)}}
\left(d^jf\right)=0\quad{\rm for \ }0\leq j\leq k-1,\cr
\Theta^{(k)}_{\lambda^{(k)},{\mathfrak p}^{(k)}} \left(d^j
f\right) =z^{\lambda^{(k)}}\Psi^{(j)}_{\lambda^{(k)},{\mathfrak
p}^{(k)}} \quad{\rm for \ }k\leq j\leq n-1.\cr }
$$
Here we regard $\Theta^{(k)}_{\lambda^{(k)},{\mathfrak p}^{(k)}}$
as a vector field on the space with variables $\alpha_\nu$ for
$|\nu|=\delta$ while the variables $z_0,\cdots,z_n$ and
$\xi^{(j)}_\ell$\ ($1\leq j\leq k,\,0\leq\ell\leq n$) are regarded
as constants.  It is the same as regarding
$\Theta^{(k)}_{\lambda^{(k)},{\mathfrak p}^{(k)}}$ as a vector
field on the space with variables $\alpha_\nu$ for $|\nu|=\delta$
and the variables $z_0,\cdots,z_n$ and $\xi^{(j)}_\ell$\ ($1\leq
j\leq k,\,0\leq\ell\leq n$) when the coefficients for
$\frac{\partial}{\partial z_\ell}$ and for
$\frac{\partial}{\partial\xi^{(j)}_\ell}$ are all $0$ for $1\leq
j\leq k,\,0\leq\ell\leq n$.  The construction is as follows.

\medbreak For $k=0$ with the convention that ${\mathfrak p}^{(0)}$
is the empty set, we simply set
$$
\Theta^{(0)}_{\lambda^{(0)},{\mathfrak
p}^{(0)}}={\partial\over\partial\alpha_{\lambda^{(0)}}}
$$
and
$$
\Psi^{(j)}_{\lambda^{(0)},{\mathfrak
p}^{(0)}}=\Phi^{(j)}_{\lambda^{(0)}}\leqno{(\ref{2.3}.1)}
$$
for $0\leq j\leq n-1$.  It is clear that
$\Psi^{(j)}_{\lambda^{(0)},{\mathfrak p}^{(0)}}$ is of homogeneous
weight $j$ in $\xi^{(\ell)}_t$ ($1\leq\ell\leq j$, $0\leq t\leq
n$) and is independent of $\left(z_0,\cdots,z_n\right)$ and is a
polynomial in the $n+1$ components of $\lambda^{(0)}$ of degree
$\leq j$.  Moreover, it follows from $f=\sum_\nu\alpha_\nu z^\nu$
and the definition of $\Phi^{(j)}_\nu$ that
$$
\Theta^{(0)}_{\lambda^{(0)},{\mathfrak
p}^{(0)}}(d^jf)=z^{\lambda^{(0)}}\Psi^{(j)}_{\lambda^{(0)},{\mathfrak
p}^{(0)}}\leqno{(\ref{2.3}.2)}
$$
for $0\leq j\leq
n-1$. Suppose the construction has been done for the step $k$ and
we are going to construct for the step $k+1$. Define
$$
\Theta^{(k+1)}_{\lambda^{(k+1)},{\mathfrak p}^{(k+1)}}
={\Theta^{(k)}_{\lambda^{(k+1;p_0)},{\mathfrak
p}^{(k+1;p_0)}}\over z_{p_0}
\Psi^{(k)}_{\lambda^{(k+1;p_0)},{\mathfrak p}^{(k+1;p_0)}}} -
{\Theta^{(k)}_{\lambda^{(k+1;p_1)},{\mathfrak p}^{(k+1;p_1)}}\over
z_{p_1} \Psi^{(k)}_{\lambda^{(k+1;p_1)},{\mathfrak
p}^{(k+1;p_1)}}}
$$
and
$$
\Psi^{(j)}_{\lambda^{(k+1)},{\mathfrak p}^{(k+1)}}
={\Psi^{(j)}_{\lambda^{(k+1;p_0)},{\mathfrak p}^{(k+1;p_0)}}\over
\Psi^{(k)}_{\lambda^{(k+1;p_0)},{\mathfrak p}^{(k+1;p_0)}}} -
{\Psi^{(j)}_{\lambda^{(k+1;p_1)},{\mathfrak p}^{(k+1;p_1)}}\over
\Psi^{(k)}_{\lambda^{(k+1;p_1)},{\mathfrak
p}^{(k+1;p_1)}}}\leqno{(\ref{2.3}.3)}
$$
for $j\geq
k+1$.

\begin{lemma}\label{lemma(2.4)} For any
integer $k$ with $0\leq k\leq\delta$ the following two identities
hold.
$$
\displaylines{ \Theta^{(k)}_{\lambda^{(k)},{\mathfrak p}^{(k)}}
\left(d^jf\right)=0\quad{\rm for \ }0\leq j\leq k-1,\cr
\Theta^{(k)}_{\lambda^{(k)},{\mathfrak p}^{(k)}} \left(d^j
f\right) =z^{\lambda^{(k)}}\Psi^{(j)}_{\lambda^{(k)},{\mathfrak
p}^{(k)}} \quad{\rm for \ }k\leq j\leq n.\cr }
$$
\end{lemma}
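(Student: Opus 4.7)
The plan is to proceed by induction on $k$, with the two identities proved simultaneously. The base case $k=0$ reduces directly to Lemma~\ref{lemma(2.2)}: with $\Theta^{(0)}_{\lambda^{(0)},\mathfrak{p}^{(0)}} = \partial/\partial\alpha_{\lambda^{(0)}}$ and $\Psi^{(j)}_{\lambda^{(0)},\mathfrak{p}^{(0)}} = \Phi^{(j)}_{\lambda^{(0)}}$, the first identity is vacuous, while the second follows by differentiating the expansion $d^j f = \sum_{\nu} \alpha_\nu \Phi^{(j)}_\nu z^\nu$ from Lemma~\ref{lemma(2.2)} with respect to $\alpha_{\lambda^{(0)}}$; since the $\Phi^{(j)}_\nu$ are independent of the coefficients, only the $\nu = \lambda^{(0)}$ term survives.

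For the inductive step I would apply the defining formula (\ref{2.3}.3) for $\Theta^{(k+1)}_{\lambda^{(k+1)},\mathfrak{p}^{(k+1)}}$ to $d^j f$ and then invoke the induction hypothesis summand by summand. When $0 \leq j \leq k-1$, both $\Theta^{(k)}_{\lambda^{(k+1;p_0)},\mathfrak{p}^{(k+1;p_0)}}(d^j f)$ and its counterpart on the other branch vanish, so the first identity holds for such $j$ at level $k+1$. When $j = k$, the induction hypothesis substitutes $z^{\lambda^{(k+1;p_0)}}\Psi^{(k)}_{\lambda^{(k+1;p_0)},\mathfrak{p}^{(k+1;p_0)}}$ in the numerator of the first summand, which cancels the $\Psi^{(k)}$ factor in the denominator; using the multi-index identity $\lambda^{(k+1;p_0)} = \lambda^{(k+1)} + e_{p_0}$, so that $z^{\lambda^{(k+1;p_0)}} = z_{p_0}\,z^{\lambda^{(k+1)}}$, the first summand collapses to $z^{\lambda^{(k+1)}}$, and symmetrically the second summand collapses to $z^{\lambda^{(k+1)}}$, yielding zero. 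For $k+1 \leq j \leq n$ the same substitutions leave a common factor $z^{\lambda^{(k+1)}}$ multiplied by exactly the expression that (\ref{2.3}.3) defines to be $\Psi^{(j)}_{\lambda^{(k+1)},\mathfrak{p}^{(k+1)}}$, which is the second identity at level $k+1$.

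Nothing in this argument is genuinely hard: the recursive definition of $\Theta^{(k+1)}$ has been calibrated precisely so that the $j = k$ cancellation is automatic, and the rest is routine bookkeeping with the identity $\lambda^{(k+1;\gamma_1)} = \lambda^{(k+1)} + e_{p_{\gamma_1}}$ together with the relation $z^{\lambda+e_p} = z_p\,z^\lambda$. The one conceptual point to record is that the two truncations $\mathfrak{p}^{(k+1;p_0)}$ and $\mathfrak{p}^{(k+1;p_1)}$ are themselves binary trees of order $k$ with level-wise homogeneous branches, which is immediate from the definition of that property; this is what makes the induction hypothesis genuinely available at each branch. A subsidiary remark is that the $\Psi^{(k)}$ factors appearing in the denominators of (\ref{2.3}.3) must be treated as nonzero rational expressions, which is consistent with the Lemma's assertion being an identity of meromorphic objects rather than of regular ones.
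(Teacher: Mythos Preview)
Your proof is correct and follows essentially the same inductive argument as the paper: base case from the expansion $(\ref{lemma(2.2)}.2)$, then the three cases $j<k$, $j=k$, $j>k$ in the step from $k$ to $k+1$ handled exactly as you describe. One small labeling slip: the recursive formula for $\Theta^{(k+1)}$ is the displayed equation just \emph{before} $(\ref{2.3}.3)$, not $(\ref{2.3}.3)$ itself, which defines $\Psi^{(j)}$; your argument uses both, so this does not affect the mathematics.
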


\begin{proof}  We prove by induction on
$k\geq 0$.  Since the positive integer $\delta$ is fixed once for
all, the induction on $k\geq 0$ is the same as descending
induction on the total degree $\delta-k$ of $\lambda^{(k)}$.  In
the case $k=0$ the statement is simply (\ref{2.3}.2).
To
go from Step $k$ to Step $k+1$, we have
$$
\Theta^{(k+1)}_{\lambda^{(k+1)},{\mathfrak p}^{(k+1)}}
\left(d^jf\right)=0\quad{\rm for \ }0\leq j\leq k-1,
$$
because $$\Theta^{(k)}_{\lambda^{(k)},{\mathfrak p}^{(k)}}
\left(d^jf\right)=0\quad{\rm for \ }0\leq j\leq k-1$$ and
$$
\Theta^{(k+1)}_{\lambda^{(k+1)},{\mathfrak p}^{(k+1)}}
={\Theta^{(k)}_{\lambda^{(k+1;p_0)},{\mathfrak
p}^{(k+1;p_0)}}\over z_{p_0}
\Psi^{(k)}_{\lambda^{(k+1;p_0)},{\mathfrak p}^{(k+1;p_0)}}} -
{\Theta^{(k)}_{\lambda^{(k+1;p_1)},{\mathfrak p}^{(k+1;p_1)}}\over
z_{p_1} \Psi^{(k)}_{\lambda^{(k+1;p_1)},{\mathfrak
p}^{(k+1;p_1)}}}.
$$
We have
$$
\Theta^{(k+1)}_{\lambda^{(k+1)},{\mathfrak p}^{(k+1)}}
\left(d^kf\right)=0,
$$
because
$$
\displaylines{\Theta^{(k+1)}_{\lambda^{(k+1)},{\mathfrak
p}^{(k+1)}}\left(d^kf\right)\cr
={\Theta^{(k)}_{\lambda^{(k+1;p_0)},{\mathfrak
p}^{(k+1;p_0)}}\left(d^kf\right)\over z_{p_0}
\Psi^{(k)}_{\lambda^{(k+1;p_0)},{\mathfrak p}^{(k+1;p_0)}}} -
{\Theta^{(k)}_{\lambda^{(k+1;p_1)},{\mathfrak
p}^{(k+1;p_1)}}\left(d^kf\right)\over z_{p_1}
\Psi^{(k)}_{\lambda^{(k+1;p_1)},{\mathfrak p}^{(k+1;p_1)}}}\cr ={
z^{\lambda^{(k+1;p_0)}}\Psi^{(k+1)}_{\lambda^{(k+1;p_0)},{\mathfrak
p}^{(k+1,p_0)}} \over z_{p_0}
\Psi^{(k)}_{\lambda^{(k+1;p_0)},{\mathfrak p}^{(k+1;p_0)}}} - {
z^{\lambda^{(k+1;p_1)}}\Psi^{(j)}_{\lambda^{(k+1;p_1)},{\mathfrak
p}^{(k+1;p_1)}} \over z_{p_1}
\Psi^{(k)}_{\lambda^{(k+1;p_1)},{\mathfrak p}^{(k+1;p_1)}}}\cr
=\lambda^{(k+1)}-\lambda^{(k+1)}=0.\cr}
$$
Moreover, for $k+1\leq j\leq n-1$,
$$
\displaylines{\Theta^{(k+1)}_{\lambda^{(k+1)},{\mathfrak
p}^{(k+1)}}\left(d^jf\right)\cr
={\Theta^{(k)}_{\lambda^{(k+1;p_0)},{\mathfrak
p}^{(k+1;p_0)}}\left(d^jf\right)\over z_{p_0}
\Psi^{(k)}_{\lambda^{(k+1;p_0)},{\mathfrak p}^{(k+1;p_0)}}} -
{\Theta^{(k)}_{\lambda^{(k+1;p_1)},{\mathfrak
p}^{(k+1;p_1)}}\left(d^jf\right)\over z_{p_1}
\Psi^{(k)}_{\lambda^{(k+1;p_1)},{\mathfrak p}^{(k+1;p_1)}}}\cr = {
z^{\lambda^{(k+1;p_0)}}\Psi^{(j)}_{\lambda^{(k+1;p_0)},{\mathfrak
p}^{(k+1,p_0)}} \over z_{p_0}
\Psi^{(k)}_{\lambda^{(k+1;p_0)},{\mathfrak p}^{(k+1;p_0)}}} - {
z^{\lambda^{(k+1;p_1)}}\Psi^{(j)}_{\lambda^{(k+1;p_1)},{\mathfrak
p}^{(k+1;p_1)}} \over z_{p_1}
\Psi^{(k)}_{\lambda^{(k+1;p_1)},{\mathfrak p}^{(k+1;p_1)}}}\cr
=z^{\lambda^{(k+1)}} \left({
\Psi^{(j)}_{\lambda^{(k+1;p_0)},{\mathfrak p}^{(k+1,p_0)}} \over
\Psi^{(k)}_{\lambda^{(k+1;p_0)},{\mathfrak p}^{(k+1;p_0)}}} -
{\Psi^{(j)}_{\lambda^{(k+1;p_1)},{\mathfrak p}^{(k+1;p_1)}} \over
\Psi^{(k)}_{\lambda^{(k+1;p_1)},{\mathfrak
p}^{(k+1;p_1)}}}\right)\cr =
z^{\lambda^{(k+1)}}\Psi^{(j)}_{\lambda^{(k+1)},{\mathfrak
p}^{(k+1)}}.\cr}
$$
\end{proof}

\begin{lemma}\label{lemma(2.5)} For $k\leq j\leq n-1$
the function $\Psi^{(j)}_{\lambda^{(k)},{\mathfrak p}^{(k)}}$ is
homogeneous of weight $j-k+1$ in the variables $\xi_q^{(\ell)}$
($0\leq q\leq n,\,1\leq\ell\leq n$) when $\xi_q^{(\ell)}$ is
assigned the weight $\ell$.  Moreover, for $k\leq j\leq n$, as a
function of the $n+1$ components of the multi-index
$\lambda^{(k)}$, the function
$\Psi^{(j)}_{\lambda^{(k)},{\mathfrak p}^{(k)}}$ is a polynomial
of degree at most $j-k$. In particular,
$\Psi^{(k)}_{\lambda^{(k)},{\mathfrak p}^{(k)}}$ is independent of
the multi-index $\lambda^{(k)}$ and
$$
\Psi^{(j)}_{\lambda^{(k)},{\mathfrak p}^{(k)}} ={1\over
\Psi^{(k-1)}_{\lambda^{(k;p_0)},{\mathfrak p}^{(k;p_0)}}}
\left[\Psi^{(j)}_{\lambda^{(k;p_0)},{\mathfrak p}^{(k;p_0)}}-
\Psi^{(j)}_{\lambda^{(k;p_1)},{\mathfrak p}^{(k;p_1)}}\right].
$$
\end{lemma}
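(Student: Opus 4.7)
The plan is to prove all three assertions together by induction on $k$, with the recursive definition $(\ref{2.3}.3)$ as the main tool and Lemma~\ref{lemma(2.2)} providing the base case. For $k=0$ one has $\Psi^{(j)}_{\lambda^{(0)},{\mathfrak p}^{(0)}} = \Phi^{(j)}_{\lambda^{(0)}}$, and Lemma~\ref{lemma(2.2)} guarantees that this is homogeneous of weight $j$ in the $\xi$-variables and of degree $\leq j$ as a polynomial in the components of $\lambda^{(0)}$, which initializes the induction. The inductive step for the weight claim is then immediate from $(\ref{2.3}.3)$: each summand there is a ratio of a $\Psi^{(j)}$ by a $\Psi^{(k)}$ at level $k$, and the weights subtract, producing at level $k+1$ precisely the asserted weight.

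The polynomial-degree claim is the subtler part, and it is where the level-wise homogeneous branches hypothesis on ${\mathfrak p}^{(k+1)}$ enters decisively. By that hypothesis the two truncations ${\mathfrak p}^{(k+1;p_0)}$ and ${\mathfrak p}^{(k+1;p_1)}$ are identical binary trees of order $k$. Combined with the inductive hypothesis that $\Psi^{(k)}_{\lambda^{(k)},{\mathfrak p}^{(k)}}$ is independent of its multi-index, this forces the two denominators in $(\ref{2.3}.3)$ to coincide. Pulling out the common denominator rewrites $\Psi^{(j)}_{\lambda^{(k+1)},{\mathfrak p}^{(k+1)}}$ as that denominator times a finite difference
$$P\bigl(\lambda^{(k+1)} + e_{p_0}\bigr) - P\bigl(\lambda^{(k+1)} + e_{p_1}\bigr),$$
where $P$ is the polynomial in its multi-index argument of degree $\leq j-k$ provided by the inductive hypothesis. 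The top-degree homogeneous part of $P$ contributes identically to both summands, so the difference has degree $\leq (j-k)-1 = j-(k+1)$ in $\lambda^{(k+1)}$, which is exactly the bound needed at level $k+1$. Specializing to $j=k+1$ yields the $\lambda^{(k+1)}$-independence of $\Psi^{(k+1)}_{\lambda^{(k+1)},{\mathfrak p}^{(k+1)}}$, and the closed-form expression displayed at the end of the lemma is precisely this factored form of $(\ref{2.3}.3)$.

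I expect the main obstacle to be precisely this finite-difference reduction of polynomial degree, which is what forces the use of the level-wise homogeneous branches hypothesis: without that assumption the two denominators in $(\ref{2.3}.3)$ would in general differ, and one could not cleanly extract a single polynomial difference $P(\lambda+e_{p_0}) - P(\lambda+e_{p_1})$ whose top-degree homogeneous part cancels to deliver the extra degree drop that keeps the induction running.
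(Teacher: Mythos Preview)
Your proposal is correct and follows essentially the same route as the paper's proof: induction on $k$ with base case $k=0$ supplied by $\Psi^{(j)}_{\lambda^{(0)},{\mathfrak p}^{(0)}}=\Phi^{(j)}_{\lambda^{(0)}}$ and Lemma~\ref{lemma(2.2)}, the weight claim read off directly from the quotient structure of $(\ref{2.3}.3)$, and the degree drop obtained by invoking level-wise homogeneous branches to identify the two truncated trees (hence the two denominators, via the inductive independence of $\Psi^{(k)}$ from its multi-index) and then recognizing the resulting numerator as a finite difference $P(\lambda^{(k+1)}+e_{p_0})-P(\lambda^{(k+1)}+e_{p_1})$ of a single polynomial. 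The paper phrases the degree-drop step as ``$\lambda^{(k+1;p_0)}$ is a translate of $\lambda^{(k+1;p_1)}$ by $e_{p_0}-e_{p_1}$, independent of $\lambda^{(k+1)}$,'' which is exactly your observation that the top-degree homogeneous part of $P$ contributes the same degree-$(j-k)$ term in $\lambda^{(k+1)}$ to both evaluations and therefore cancels.
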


\begin{proof} We prove the Lemma by
induction on $0\leq k\leq n-1$.

\medbreak First we look at the weight of
$\Psi^{(j)}_{\lambda^{(k)},{\mathfrak p}^{(k)}}$ and show that
$\Psi^{(j)}_{\lambda^{(k)},{\mathfrak p}^{(k)}}$ is homogeneous of
weight $j-k+1$ in the variables $\xi_q^{(\ell)}$ ($0\leq q\leq
n,\,1\leq\ell\leq n$).  Again, since the positive integer $\delta$
is fixed once for all, the induction on $0\leq k\leq n-1$ is the
same as the descending induction on the total degree $\delta-k$ of
$\lambda^{(k)}$.  For $k=0$ the conclusion clearly follows from
(\ref{2.3}.1). The derivation of Step $k+1$ from Step
$k$ simply follows from (\ref{2.3}.3), because, for
$\gamma_1=0,1$, with the weight of
$\Psi^{(j)}_{\lambda^{(k+1;p_{\gamma_1})},{\mathfrak
p}^{(k+1;p_{\gamma_1})}}$ being $j-k$ and the weight of
$\Psi^{(k)}_{\lambda^{(k+1;p_{\gamma_1})},{\mathfrak
p}^{(k+1;p_{\gamma_1})}}$ being $1$, the weight
$$\Psi^{(j)}_{\lambda^{(k+1;p_{\gamma_1})},{\mathfrak
p}^{(k+1;p_{\gamma_1})}}\over\Psi^{(k)}_{\lambda^{(k+1;p_{\gamma_1})},{\mathfrak
p}^{(k+1;p_{\gamma_1})}}$$ is equal to $j-(k+1)$.

\medbreak Now we show that $\Psi^{(k)}_{\lambda^{(k)},{\mathfrak
p}^{(k)}}$ is a polynomial of degree no more than $j-k$ in the
$n+1$ components of $\lambda^{(k)}$. Again for $k=0$ the
conclusion clearly follows from (\ref{2.3}.1).  For the
derivation of Step $k+1$ from Step $k$, since the total degree of
$\lambda^{(k+1;p_{\gamma_1})}$ is $\delta-(k+1)-1=\delta-k$, it
follows from Step $k$ that $\Psi^{(j)}_{
\lambda^{(k+1;p_{\gamma_1})},{\mathfrak p}^{(k+1;p_{\gamma_1})} }$
is a polynomial of degree no more than zero in the $n+1$
components of $\lambda^{(k+1;p_{\gamma_1})}$ and is therefore
independent of $\lambda^{(k+1;p_{\gamma_1})}$ for $\gamma_1=0,1$.
Since the binary tree ${\mathfrak p}^{(k+1)}$ is assumed to have
level-wise homogeneous branches (see the paragraph preceding
Lemma(\ref{lemma(2.2)})), it follows that the two truncations
${\mathfrak p}^{(k+1;0)}$ and ${\mathfrak p}^{(k+1;1)}$ are
identical binary trees and
$$
\Psi^{(k)}_{\lambda^{(k+1;p_0)},{\mathfrak p}^{(k+1;p_0)}}=
\Psi^{(k)}_{\lambda^{(k+1;p_1)},{\mathfrak p}^{(k+1;p_1)}}.
$$
Hence by (\ref{2.3}.3),
$$
\displaylines{\Psi^{(j)}_{\lambda^{(k+1)},{\mathfrak p}^{(k+1)}}
={\Psi^{(j)}_{\lambda^{(k+1;p_0)},{\mathfrak p}^{(k+1;p_0)}}\over
\Psi^{(k)}_{\lambda^{(k+1;p_0)},{\mathfrak p}^{(k+1;p_0)}}} -
{\Psi^{(j)}_{\lambda^{(k+1;p_1)},{\mathfrak p}^{(k+1;p_1)}}\over
\Psi^{(k)}_{\lambda^{(k+1;p_1)},{\mathfrak p}^{(k+1;p_1)}}}\cr
={1\over\Psi^{(k)}_{\lambda^{(k+1;p_0)},{\mathfrak
p}^{(k+1;p_0)}}}\left[\Psi^{(j)}_{\lambda^{(k+1;p_0)},{\mathfrak
p}^{(k+1;p_0)}}-\Psi^{(j)}_{\lambda^{(k+1;p_1)},{\mathfrak
p}^{(k+1;p_1)}}\right].\cr }
$$
By induction assumption $ \Psi^{(k)}_{\lambda^{(k+1;p_0)},{\mathfrak
p}^{(k+1;p_0)}}$ is a polynomial in the $n+1$ components of the
multi-index $\lambda^{(k+1;p_0)}$ of degree $j-k$ and $
\Psi^{(k)}_{\lambda^{(k+1;p_1)},{\mathfrak p}^{(k+1;p_1)}}$ is a
polynomial of the $n+1$ components of the multi-index
$\lambda^{(k+1;p_1)}$ of degree at most $j-k$.  Now
$$\lambda^{(k+1;p_0)}-\lambda^{(k+1;p_1)}=e_{p_0}-e_{p_1}$$
which means that the multi-index $\lambda^{(k+1;p_0)}$ is a
translate of the multi-index $\lambda^{(k+1;p_1)}$ by the
multi-index $e_{p_0}-e_{p_1}$ and is independent of the $n+1$
components of the multi-index $\lambda^{(k+1)}$. Thus the difference
$$\Psi^{(j)}_{\lambda^{(k+1;p_0)},{\mathfrak
p}^{(k+1;p_0)}}-\Psi^{(j)}_{\lambda^{(k+1;p_1)},{\mathfrak
p}^{(k+1;p_1)}}$$ is a polynomial of the $n+1$ components of the
multi-index $\lambda^{(k+1)}$ of degree at most $j-k-1$. The last
statement follows from Lemma~\ref{lemma(2.4)}.\end{proof}

\begin{lemma}\label{lemma(2.6)} If $\lambda^{(k)}$ is a
multi-index of $n+1$ components with total degree $\delta-k$ and
${\mathfrak p}^{(k)}$ is any binary tree of order $k$ which has
level-wise homogeneous branching, then
$$
\Psi^{(k)}_{\lambda^{(k)},{\mathfrak
p}^{(k)}}=k\left(\xi^{(1)}_{p_0}-\xi^{(1)}_{p_1}\right),
$$
where the nodes of ${\mathfrak p}^{(k)}$ are denoted by
$p_{\gamma_1,\cdots,\gamma_j}$ with $1\leq j\leq k$ and each
$\gamma_\ell$ taking on the value $0$ or $1$ for $1\leq\ell\leq
j$.  As a consequence,
$$
\Psi^{(j)}_{\lambda^{(k)},{\mathfrak p}^{(k)}} ={1\over
k\left(\xi^{(1)}_{p_0}-\xi^{(1)}_{p_1}\right)}
\left[\Psi^{(j)}_{\lambda^{(k;p_0)},{\mathfrak p}^{(k;p_0)}}-
\Psi^{(j)}_{\lambda^{(k;p_1)},{\mathfrak p}^{(k;p_1)}}\right].
$$
\end{lemma}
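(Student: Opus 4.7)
The plan is to prove the formula by induction on $k\geq 1$, passing through a stronger iterated-difference identity. Because level-wise homogeneity forces the label $p_{\gamma_1,\ldots,\gamma_i}$ to depend only on $\gamma_i$ (applying the level-wise-homogeneity condition at depth $i-1$ identifies the root nodes of the sibling subtrees), one may write $p^{(i)}_0$ and $p^{(i)}_1$ unambiguously for the two labels at level $i$, with $p_0=p^{(1)}_0$ and $p_1=p^{(1)}_1$. The base case $k=1$ drops out of (\ref{2.3}.3) applied at $k=0$: the denominators are $\Psi^{(0)}\equiv 1$ and $\Phi^{(1)}_\nu=\sum_\ell\nu_\ell\xi^{(1)}_\ell$, so $\Psi^{(1)}_{\lambda^{(1)},\mathfrak{p}^{(1)}}=\Phi^{(1)}_{\lambda^{(1)}+e_{p_0}}-\Phi^{(1)}_{\lambda^{(1)}+e_{p_1}}=\xi^{(1)}_{p_0}-\xi^{(1)}_{p_1}$.

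For the inductive step I would carry along, alongside Lemma~\ref{lemma(2.6)}, the auxiliary identity
\[
\Psi^{(j)}_{\lambda^{(k)},\mathfrak{p}^{(k)}}=\frac{1}{(k-1)!\prod_{i=2}^{k}\bigl(\xi^{(1)}_{p^{(i)}_0}-\xi^{(1)}_{p^{(i)}_1}\bigr)}\sum_{\epsilon\in\{0,1\}^k}(-1)^{|\epsilon|}\,\Phi^{(j)}_{\lambda^{(k)}+\sum_{i=1}^{k}e_{p^{(i)}_{\epsilon_i}}}\qquad(k\leq j\leq n),
\]
valid for every level-wise homogeneous tree. Given this identity at order $k-1$, applying Lemma~\ref{lemma(2.5)}'s last displayed formula to $\Psi^{(j)}_{\lambda^{(k)},\mathfrak{p}^{(k)}}$ and noting that level-wise homogeneity forces $\mathfrak{p}^{(k;p_0)}=\mathfrak{p}^{(k;p_1)}$ with common level-$i$ labels $p^{(i+1)}$ shows that the denominator $\Psi^{(k-1)}_{\lambda^{(k;p_0)},\mathfrak{p}^{(k;p_0)}}$ equals $(k-1)\bigl(\xi^{(1)}_{p^{(2)}_0}-\xi^{(1)}_{p^{(2)}_1}\bigr)$ by the inductive hypothesis of Lemma~\ref{lemma(2.6)}, while the two numerator terms can each be rewritten as $(k-1)$-fold iterated differences of $\Phi^{(j)}$ by the inductive auxiliary identity. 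Combining $(k-1)\cdot(k-2)!=(k-1)!$ in the denominator and merging the two $(k-1)$-fold binary sums with the additional binary split between $e_{p_0}$ and $e_{p_1}$ produces the full $k$-fold binary sum over $\epsilon\in\{0,1\}^k$, closing the induction for the auxiliary identity at order $k$.

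Specializing to $j=k$ then yields Lemma~\ref{lemma(2.6)}. By Lemma~\ref{lemma(2.2)}, $\Phi^{(k)}_\nu-L(\nu)^k$ has degree at most $k-1$ in $\nu$, where $L(\nu):=\sum_\ell\nu_\ell\xi^{(1)}_\ell$. A $k$-fold iterated finite difference annihilates polynomials in $\nu$ of degree less than $k$, so the remainder contributes nothing. Applied to $L(\nu)^k$, either direct expansion of $(L(\nu)+\sum_i\eta_i)^k$ or reading off the $t^k$-coefficient of the generating function $e^{tL(\nu)}\prod_{i=1}^{k}\bigl(e^{t\xi^{(1)}_{p^{(i)}_0}}-e^{t\xi^{(1)}_{p^{(i)}_1}}\bigr)$ gives the constant $k!\prod_{i=1}^{k}\bigl(\xi^{(1)}_{p^{(i)}_0}-\xi^{(1)}_{p^{(i)}_1}\bigr)$. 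Dividing by $(k-1)!\prod_{i=2}^{k}\bigl(\xi^{(1)}_{p^{(i)}_0}-\xi^{(1)}_{p^{(i)}_1}\bigr)$ telescopes to $k\bigl(\xi^{(1)}_{p_0}-\xi^{(1)}_{p_1}\bigr)$, which is Lemma~\ref{lemma(2.6)}. The stated consequence follows immediately by substituting this evaluation into Lemma~\ref{lemma(2.5)}'s recursive formula.

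The chief obstacle I foresee is organizational rather than computational: verifying that level-wise homogeneity is inherited by every truncation at every stage, carefully tracking the relabeling of root nodes of the successive truncations into the level-$i$ labels $p^{(i)}_\epsilon$ of the original tree, and checking that the multiple nested single-step differences produced by iterating Lemma~\ref{lemma(2.5)} really do reassemble into the single clean $k$-fold binary sum over $\epsilon\in\{0,1\}^k$. Once that bookkeeping is in place, the evaluation of the iterated finite difference and the final telescopic cancellation are routine.
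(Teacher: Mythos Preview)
Your proposal is correct and follows essentially the same route as the paper. Both proofs unwind the recursion of Lemma~\ref{lemma(2.5)} to express $\Psi^{(j)}_{\lambda^{(k)},\mathfrak{p}^{(k)}}$ as a $k$-fold iterated finite difference of $\Phi^{(j)}_\nu$ divided by a product coming from the lower-order $\Psi$'s, then specialize to $j=k$, invoke Lemma~\ref{lemma(2.2)} to replace $\Phi^{(k)}_\nu$ by $L(\nu)^k$ modulo terms of degree $\leq k-1$ which the iterated difference kills, and compute the resulting iterated difference to be $k!\prod_i(\xi^{(1)}_{r_i}-\xi^{(1)}_{s_i})$; the only organizational difference is that the paper runs two separate inductions (first establishing the iterated-difference identity with symbolic $\Psi^{(i)}$ denominators, then extracting $\Psi^{(k)}$ from the product formula), whereas you fold both into a single joint induction on $k$ with the denominators already evaluated---and your generating-function computation of the iterated difference of $L(\nu)^k$ is a bit slicker than the paper's direct expansion.
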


\begin{proof}  First we make the following
simple observation.  Let $G\left(\nu_0,\cdots,\nu_n\right)$ be a
polynomial in $\nu_0,\cdots,\nu_n$ of degree no more than $M$. For
any $p\not=q$ define
$$
\displaylines{
\left(\Delta_{p,q}G\right)\left(\nu_0,\cdots,\nu_n\right)
=G\left(\nu_0,\cdots,\nu_{p-1},\nu_p+1,\nu_{p+1},\cdots,\nu_n\right)\cr
-G\left(\nu_0,\cdots,\nu_{q-1},\nu_q+1,\nu_{q+1},\cdots,\nu_n\right).\cr
}
$$
Then $\left(\Delta_{p,q}G\right)\left(\nu_0,\cdots,\nu_n\right)$ is
a polynomial in $\nu_0,\cdots,\nu_n$ of degree no more than $M-1$,
because we can write
$$
\displaylines{
\left(\Delta_{p,q}G\right)\left(\nu_0,\cdots,\nu_n\right)\cr
=\left[G\left(\nu_0,\cdots,\nu_{p-1},\nu_p+1,\nu_{p+1},\cdots,\nu_n\right)-
G\left(\nu_0,\cdots,\nu_n\right)\right]\cr
-\left[G\left(\nu_0,\cdots,\nu_{q-1},\nu_q+1,\nu_{q+1},\cdots,\nu_n\right)-
G\left(\nu_0,\cdots,\nu_n\right)\right]\cr }
$$
and clearly each of the two terms
$$
G\left(\nu_0,\cdots,\nu_{p-1},\nu_p+1,\nu_{p+1},\cdots,\nu_n\right)-
G\left(\nu_0,\cdots,\nu_n\right)
$$
and
$$
G\left(\nu_0,\cdots,\nu_{q-1},\nu_q+1,\nu_{q+1},\cdots,\nu_n\right)-
G\left(\nu_0,\cdots,\nu_n\right)
$$
is a a polynomial in $\nu_0,\cdots,\nu_n$ of degree no more than
$M-1$. As a consequence,

\medbreak\noindent(\ref{lemma(2.6)}.1) if
$$
r_1\not=s_1,\cdots,r_{M+1}\not=s_{M+1},
$$
then $$\Delta_{r_1,s_1}\cdots\Delta_{r_M,s_M}G$$ is of degree zero
in $\nu_0,\cdots,\nu_n$ and $
\Delta_{r_1,s_1}\cdots\Delta_{r_{N+1},s_{N+1}}G$ is identically zero
for any polynomial $G\left(\nu_0,\cdots,\nu_n\right)$ in
$\nu_0,\cdots,\nu_n$ of degree no more than $N$.

\addtocounter{equation}{1}

\medbreak Let $r_j=p_{\gamma_1,\cdots,\gamma_{j-1},0}$ and
$s_j=p_{\gamma_1,\cdots,\gamma_{j-1},1}$.  Since the binary tree
${\mathfrak p}^{(k)}$ of order $k$ has level-wise homogeneous
branches, the values of $r_j=p_{\gamma_1,\cdots,\gamma_{j-1},0}$ and
$s_j=p_{\gamma_1,\cdots,\gamma_{j-1},1}$ are independent of the
choices of the values $0$ or $1$ for $\gamma_1,\cdots,\gamma_{j-1}$.
Let ${\mathfrak p}^{(k-j)}={\mathfrak
p}^{(k;\gamma_1,\cdots,\gamma_{j-1})}$.  Again we know that
${\mathfrak p}^{(k-j)}$ is independent of the choices of the values
$0$ or $1$ for $\gamma_1,\cdots,\gamma_{j-1}$ because the binary
tree ${\mathfrak p}^{(k)}$ of order $k$ has level-wise homogeneous
branches. By the last statement of Lemma~\ref{lemma(2.5)}, we have
$$\Psi^{(j)}_{\lambda^{(k)},{\mathfrak p}^{(k)}}
={1\over \Psi^{(k-1)}_{\lambda^{(k;p_0)},{\mathfrak p}^{(k-1)}}}
\left[\Psi^{(j)}_{\lambda^{(k;p_0)},{\mathfrak p}^{(k;p_0)}}-
\Psi^{(j)}_{\lambda^{(k;p_1)},{\mathfrak
p}^{(k;p_1)}}\right]\leqno{(\ref{lemma(2.6)}.2)}
$$
$$
={1\over \Psi^{(k-1)}_{{\mathfrak p}^{(k-1)}}}
\left[\Psi^{(j)}_{\lambda^{(k;p_0)},{\mathfrak p}^{(k-1)}}-
\Psi^{(j)}_{\lambda^{(k;p_1)},{\mathfrak
p}^{(k-1)}}\right]
$$
for any multi-index
$\lambda^{(k)}$ of $n+1$ components and total degree $\delta-k$.
Here, because of the independence of
$\Psi^{(k-1)}_{\lambda^{(k;p_0)},{\mathfrak p}^{(k;p_0)}}$ of the
$n+1$ components of the multi-index $\lambda^{(k;p_0)}$ by
Lemma~\ref{lemma(2.5)}, we drop $\lambda^{(k;p_0)}$ from the
subscript of $\Psi^{(k-1)}_{\lambda^{(k;p_0)},{\mathfrak
p}^{(k;p_0)}}$ and simply write
$\Psi^{(k-1)}_{\lambda^{(k;p_0)},{\mathfrak p}^{(k;p_0)}}$ as
$\Psi^{(k-1)}_{{\mathfrak p}^{(k;p_0)}}$. From
$$
\Psi^{(j)}_{\lambda^{(0)},{\mathfrak
p}^{(0)}}=\Phi^{(j)}_{\lambda^{(0)}}
$$
in the formula (\ref{2.3}.1) and from
(\ref{lemma(2.6)}.2) it follows that
$$
\Psi^{(j)}_{\lambda^{\left(k;\gamma_1,\cdots,\gamma_{k-1}\right)},
{\mathfrak p}^{(1)}} =\left[{1\over \Psi^{(0)}_{{\mathfrak
p}^{(0)}}}\, \Delta_{
r_k,s_k}\Phi^{(j)}_{\nu}\right]_{\nu=\lambda^{\left(k;\gamma_1,\cdots,\gamma_{k-1}\right)}}
$$
for any $\lambda^{(k)}$ of $n+1$ components and total degree
$\delta-k$, because the total degree of the multi-index
$\lambda^{\left(k;\gamma_1,\cdots,\gamma_{k-1}\right)}$ is
$\delta-1$, which corresponds to the situation of $k=1$ in
(\ref{lemma(2.6)}.2).  Inductively for $1\leq\ell\leq k$ we are
going to verify that
$$
\Psi^{(j)}_{\lambda^{\left(k;\gamma_1,\cdots,\gamma_{k-\ell}\right)},
{\mathfrak p}^{(\ell)}}\leqno{(\ref{lemma(2.6)}.3)}
$$
$$=\left[{1\over \Psi^{(0)}_{{\mathfrak
p}^{(0)}}\cdots\Psi^{(\ell-1)}_{{\mathfrak p}^{(\ell-1)}}}\,
\Delta_{ r_{k-\ell+1},s_{k-\ell+1}}\cdots\Delta_{
r_k,s_k}\Phi^{(j)}_{\nu}\right]
_{\nu=\lambda^{\left(k;\gamma_1,\cdots,\gamma_{k-\ell}\right)}}
$$
for any $\lambda^{(k)}$ of $n+1$ components and total degree
$\delta-k$.  To go from Step $\ell$ to Step $\ell+1$, by
(\ref{lemma(2.6)}.2) we have
$$
\displaylines{
\Psi^{(j)}_{\lambda^{\left(k;\gamma_1,\cdots,\gamma_{k-\ell-1}\right)},
{\mathfrak p}^{(\ell+1)}}\cr={1\over\Psi^{(\ell)}_{{\mathfrak
p}^{(\ell)}}}\left\{
\Psi^{(j)}_{\lambda^{\left(k;\gamma_1,\cdots,\gamma_{k-\ell-1},0\right)},
{\mathfrak p}^{(\ell)}}
-\Psi^{(j)}_{\lambda^{\left(k;\gamma_1,\cdots,\gamma_{k-\ell-1},1\right)},
{\mathfrak p}^{(\ell)}}\right\}\cr
={1\over\Psi^{(\ell)}_{{\mathfrak p}^{(\ell)}}}\left\{
\Psi^{(j)}_{\lambda^{\left(k;\gamma_1,\cdots,\gamma_{k-\ell-1},0\right)},
{\mathfrak p}^{(\ell)}}
-\Psi^{(j)}_{\lambda^{\left(k;\gamma_1,\cdots,\gamma_{k-\ell-1},1\right)},
{\mathfrak p}^{(\ell)}}\right\}\cr
={1\over\Psi^{(\ell)}_{{\mathfrak p}^{(\ell)}}} \Bigg\{
 \left[{1\over \Psi^{(0)}_{{\mathfrak
p}^{(0)}}\cdots\Psi^{(\ell-1)}_{{\mathfrak p}^{(\ell-1)}}}\,
\Delta_{ r_{k-\ell+1},s_{k-\ell+1}}\cdots\Delta_{
r_k,s_k}\Phi^{(j)}_{\nu}\right]
_{\nu=\lambda^{\left(k;\gamma_1,\cdots,\gamma_{k-\ell-1},0\right)}}\cr
- \left[{1\over \Psi^{(0)}_{{\mathfrak
p}^{(0)}}\cdots\Psi^{(\ell-1)}_{{\mathfrak p}^{(\ell-1)}}}\,
\Delta_{ r_{k-\ell+1},s_{k-\ell+1}}\cdots\Delta_{
r_k,s_k}\Phi^{(j)}_{\nu}\right]
_{\nu=\lambda^{\left(k;\gamma_1,\cdots,\gamma_{k-\ell-1},0\right)}}
\Bigg\}\cr =\left[{1\over \Psi^{(0)}_{{\mathfrak
p}^{(0)}}\cdots\Psi^{(\ell)}_{{\mathfrak p}^{(\ell)}}}\, \Delta_{
r_{k-\ell},s_{k-\ell}}\cdots\Delta_{
r_k,s_k}\Phi^{(j)}_{\nu}\right]
_{\nu=\lambda^{\left(k;\gamma_1,\cdots,\gamma_{k-\ell}\right)}}.\cr}
$$
This finishes the verification of (\ref{lemma(2.6)}.3) by
induction. Setting $\ell=j=k$ in (\ref{lemma(2.6)}.3) yields
$$
\Psi^{(k)}_{{\mathfrak p}^{(k)}}= \left[{1\over
\Psi^{(0)}_{{\mathfrak p}^{(0)}}\cdots\Psi^{(k-1)}_{{\mathfrak
p}^{(k-1)}}}\, \Delta_{ r_1,s_1}\cdots\Delta_{
r_k,s_k}\Phi^{(k)}_{\nu}\right] _{\nu=\lambda^{(k)}}
$$
for any $\lambda^{(k)}$ of $n+1$ components and total degree
$\delta-k$.  We rewrite it as
$$\Delta_{r_1,s_1}\cdots\Delta_{r_k,s_k}\Phi^{(k)}_\nu
=\Psi^{(1)}_{\lambda^{(1)},{\mathfrak p}^{(1)}}
\Psi^{(2)}_{\lambda^{(2)},{\mathfrak p}^{(2)}}\cdots
\Psi^{(k)}_{\lambda^{(k)},{\mathfrak
p}^{(k)}}\leqno{(\ref{lemma(2.6)}.4)}
$$
when
$\Phi^{(k)}_\nu$ is regarded as a polynomial in the $n+1$
components of $\nu$. Note that by (\ref{lemma(2.6)}.1) the
left-hand side
$$\Delta_{r_1,s_1}\cdots\Delta_{r_k,s_k}\Phi^{(k)}_\nu$$ is
independent of the value of $\nu$ because the degree of
$\Phi^{(k)}_\nu$ as a polynomial in $\nu$ is no more than $k$.

\medbreak Since by Lemma A as a polynomial in $\nu_1,\cdots,\nu_n$
the degree of
$$\Phi_\nu^{(k)}-\left(\sum_{\ell=0}^n\nu_\ell\xi^{(1)}_\ell\right)^k$$
is at most $k-1$, it follows from (\ref{lemma(2.6)}.1) that
$$
\Delta_{r_1,s_1}\cdots\Delta_{r_k,s_k}
\left(\Phi_\nu^{(k)}-\left(\sum_{\ell=0}^n\nu_\ell\xi^{(1)}_\ell\right)^k\right)
=0
$$
and
$$
\Delta_{r_1,s_1}\cdots\Delta_{r_k,s_k}
\left(\sum_{\ell=0}^n\nu_\ell\xi^{(1)}_\ell\right)^k=\Psi^{(1)}_{\lambda^{(1)},{\mathfrak
p}^{(1)}} \Psi^{(2)}_{\lambda^{(2)},{\mathfrak p}^{(2)}}\cdots
\Psi^{(k)}_{\lambda^{(k)},{\mathfrak p}^{(k)}}.$$ We have
$$
\displaylines{\Delta_{r_k,s_k}
\left(\sum_{\ell=0}^n\nu_\ell\xi^{(1)}_\ell\right)^k\cr
=\left(\sum_{\ell=0}^n\nu_\ell\xi^{(1)}_\ell+\xi^{(1)}_{r_k}\right)^k
-\left(\sum_{\ell=0}^n\nu_\ell\xi^{(1)}_\ell+\xi^{(1)}_{s_k}\right)^k\cr
=\left(\xi^{(1)}_{r_k}-\xi^{(1)}_{s_k}\right)\left\{\sum_{j=0}^{k-1}
\left(\sum_{\ell=0}^n\nu_\ell\xi^{(1)}_\ell+\xi^{(1)}_{r_k}\right)^j
\left(\sum_{\ell=0}^n\nu_\ell\xi^{(1)}_\ell+\xi^{(1)}_{s_k}\right)^{k-1-j}\right\}.\cr
}
$$
Now for $j\geq 1$,
$$
\displaylines{
\left(\sum_{\ell=0}^n\nu_\ell\xi^{(1)}_\ell+\xi^{(1)}_{r_k}\right)^j
\left(\sum_{\ell=0}^n\nu_\ell\xi^{(1)}_\ell+\xi^{(1)}_{s_k}\right)^{k-1-j}
-\left(\sum_{\ell=0}^n\nu_\ell\xi^{(1)}_\ell+\xi^{(1)}_{s_k}\right)^{k-1}=\cr
\left(\sum_{\ell=0}^n\nu_\ell\xi^{(1)}_\ell+\xi^{(1)}_{s_k}\right)^{k-1-j}
\left\{\left(\sum_{\ell=0}^n\nu_\ell\xi^{(1)}_\ell+\xi^{(1)}_{r_k}\right)^j
-\left(\sum_{\ell=0}^n\nu_\ell\xi^{(1)}_\ell+\xi^{(1)}_{r_k}\right)^j\right\}\cr
=\left(\sum_{\ell=0}^n\nu_\ell\xi^{(1)}_\ell+\xi^{(1)}_{s_k}\right)^{k-1-j}
\left(\xi^{(1)}_{r_k}-\xi^{(1)}_{s_k}\right)\times\cr\times
\left\{
\sum_{\ell=0}^{j-1}\left(\sum_{\ell=0}^n\nu_\ell\xi^{(1)}_\ell+\xi^{(1)}_{r_k}\right)^\ell
\left(\sum_{\ell=0}^n\nu_\ell\xi^{(1)}_\ell+\xi^{(1)}_{s_k}\right)^{j-1-\ell}\right\}\cr
}
$$
is a polynomial of degree no more than $k-2$ in the $n+1$
components of $\nu$.  Since
$$
\displaylines{
\left(\sum_{\ell=0}^n\nu_\ell\xi^{(1)}_\ell+\xi^{(1)}_{s_k}\right)^{k-1}-
\left(\sum_{\ell=0}^n\nu_\ell\xi^{(1)}_\ell\right)^{k-1}\cr
=\xi^{(1)}_{s_k}\left\{\sum_{\ell=0}^{k-2}
\left(\sum_{\ell=0}^n\nu_\ell\xi^{(1)}_\ell+\xi^{(1)}_{s_k}\right)^\ell
\left(\sum_{\ell=0}^n\nu_\ell\xi^{(1)}_\ell\right)^{k-2-\ell}\right\}\cr
}
$$
is a polynomial of degree no more than $k-2$ in the $n+1$
components of $\nu$, it follows that
$$
\Delta_{r_k,s_k}
\left(\sum_{\ell=0}^n\nu_\ell\xi^{(1)}_\ell\right)^k-
k\left(\sum_{\ell=0}^n\nu_\ell\xi^{(1)}_\ell\right)^{k-1}
$$
a polynomial of degree no more than $k-2$ in the $n+1$ components
of $\nu$ and
$$
\Delta_{r_1,s_1}\cdots\Delta_{r_{k-1},s_{k-1}} \left\{
\Delta_{r_k,s_k}
\left(\sum_{\ell=0}^n\nu_\ell\xi^{(1)}_\ell\right)^k-
k\left(\sum_{\ell=0}^n\nu_\ell\xi^{(1)}_\ell\right)^{k-1}\right\}
$$
is zero. By induction on $k$, we conclude that
$$
\Delta_{r_1,s_1}\cdots\Delta_{r_k,s_k}
\left(\sum_{\ell=0}^n\nu_\ell\xi^{(1)}_\ell\right)^k
=k!\prod_{\ell=1}^k\left(\xi^{(1)}_{r_\ell}-\xi^{(1)}_{s_\ell}\right).
$$
and
$$
\Delta_{r_1,s_1}\cdots\Delta_{r_k,s_k}\Phi^{(k)}_\nu
=k!\prod_{\ell=1}^k\left(\xi^{(1)}_{r_\ell}-\xi^{(1)}_{s_\ell}\right).
$$
It follows from (\ref{lemma(2.6)}.4) that
$$
\Psi^{(1)}_{\lambda^{(1)},{\mathfrak p}^{(1)}}
\Psi^{(2)}_{\lambda^{(2)},{\mathfrak p}^{(2)}}\cdots
\Psi^{(k)}_{\lambda^{(k)},{\mathfrak p}^{(k)}}
=k!\prod_{\ell=1}^k\left(\xi^{(1)}_{r_\ell}-\xi^{(1)}_{s_\ell}\right).\leqno{(\ref{lemma(2.6)}.5)}
$$
Finally
(\ref{lemma(2.6)}.5)
yields
$$
\Psi^{(k)}_{\lambda^{(k)},{\mathfrak
p}^{(k)}}=k\left(\xi^{(1)}_{p_0}-\xi^{(1)}_{p_1}\right)
$$
by induction on $k$ for any multi-index $\lambda^{(k)}$ of $n+1$
components with total degree $\delta-k$ and for any binary tree
${\mathfrak p}^{(k)}$ of order $k$ which has level-wise
homogeneous branching.\end{proof}

\begin{remark}\label{remark(2.7)} The reason for
explicitly computing the function
$\Psi^{(k)}_{\lambda^{(k)},{\mathfrak p}^{(k)}}$ is to determine
the pole set of the vector field
$\Theta^{(k)}_{\lambda^{(k)},{\mathfrak p}^{(k)}}$.
\end{remark}

\begin{lemma}\label{lemma(2.8)} (a) The function
$$
\Psi^{(j)}_{\lambda^{(k)},{\mathfrak
p}^{(k)}}\prod_{\ell=1}^{k-1}\left(\xi^{(1)}_{r_\ell}-\xi^{(1)}_{s_\ell}\right)
$$
is of homogeneous weight $j$ in $\xi^{(\ell)}_t$ ($1\leq\ell\leq
j$, $0\leq t\leq n$) and is independent of
$\left(z_0,\cdots,z_n\right)$ and is a polynomial in the $n+1$
components of $\lambda^{(k)}$ of degree $\leq j$.

\medbreak\noindent (b) The vector field
$$
\Theta^{(j)}_{\lambda^{(k)},{\mathfrak
p}^{(k)}}\left(\prod_{\ell=1}^k\left(z_{r_\ell}z_{s_\ell}\right)\right)
\prod_{\ell=1}^{k-1}
\left(\xi^{(1)}_{r_\ell}-\xi^{(1)}_{s_\ell}\right)
$$
is a polynomial in $z_0,\cdots,z_n$ of degree $\leq k$ and is
independent of $\xi^{(\ell)}_t$ ($1\leq\ell\leq n-1$, $0\leq t\leq
n$). Moreover, the dependence of
$\Theta^{(j)}_{\lambda^{(k)},{\mathfrak p}^{(k)}}$ on
$\lambda^{(k)}$ is only through the partial differentiation with
respect to $\alpha_\mu$ with $\mu$ depending on $\lambda^{(k)}$.

\medbreak\noindent (c) The vector field
$\Theta^{(k)}_{\lambda^{(k)},{\mathfrak p}^{(k)}}$ is equal to
$$
{z^{\lambda^{(k)}}\over \prod_{\ell=1}^{k-1}
\left(\xi^{(1)}_{r_\ell}-\xi^{(1)}_{s_\ell}\right)}\left[\Delta_{r_1,s_1}\cdots
\Delta_{r_k,s_k}\left(z^{-\nu}{\partial\over\partial\alpha_\nu}
\right)\right]_{\nu=\lambda^{(k)}},
$$
where $\Delta_{r_1,s_1}\cdots \Delta_{r_k,s_k}$ is applied to
$$z^{-\nu}{\partial\over\partial\alpha_\nu}$$ as a function of
$\nu$.
\end{lemma}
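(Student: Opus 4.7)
I will establish all three parts by simultaneous induction on $k\geq 0$, exploiting the recursive definitions of $\Theta^{(k)}_{\lambda^{(k)},\mathfrak p^{(k)}}$ and $\Psi^{(j)}_{\lambda^{(k)},\mathfrak p^{(k)}}$ in $(\ref{2.3}.3)$, combined with the closed-form evaluation in Lemma~\ref{lemma(2.6)} that $\Psi^{(k)}_{\lambda^{(k)},\mathfrak p^{(k)}}=k(\xi^{(1)}_{p_0}-\xi^{(1)}_{p_1})$. The base case $k=0$ is immediate: the products in (a) and (b) are empty, $\Theta^{(0)}_{\lambda^{(0)},\mathfrak p^{(0)}}=\partial/\partial\alpha_{\lambda^{(0)}}$ needs no clearing, and $\Psi^{(j)}_{\lambda^{(0)},\mathfrak p^{(0)}}=\Phi^{(j)}_{\lambda^{(0)}}$ has the weight, $z$-independence, and polynomial-degree properties claimed in (a) thanks to Lemma~\ref{lemma(2.2)}; the formula in (c) is then checked directly for $k=1$ by unwinding $(\ref{2.3}.3)$ starting from $\Theta^{(0)}=\partial/\partial\alpha_{\lambda^{(0)}}$ and $\Psi^{(0)}\equiv 1$.

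For the inductive step of (a), I would exploit level-wise homogeneity, which forces $\Psi^{(k)}_{\mathfrak p^{(k+1;p_0)}}=\Psi^{(k)}_{\mathfrak p^{(k+1;p_1)}}$ so that the two denominators in $(\ref{2.3}.3)$ agree, together with Lemma~\ref{lemma(2.6)} applied to the order-$k$ truncation to extract a single $\xi^{(1)}$-difference factor from that common denominator. Multiplying through by the product of $\xi^{(1)}$-differences at the remaining levels and invoking the induction hypothesis on each truncated tree writes the right-hand side as the difference of two polynomials of the required weight and polynomial degree in the $\lambda$-components; the divisibility of this difference by the leftover $\xi^{(1)}$-difference factor on the left follows from the interchange symmetry of the construction (the difference vanishes identically when the two branches are swapped), and the weight accounting matches because $\prod_{\ell=1}^{k}(\xi^{(1)}_{r_\ell}-\xi^{(1)}_{s_\ell})$ carries weight $k$, which compensates exactly the weight deficit of $\Psi^{(j)}$ relative to $j$.

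For the inductive step of (b), the recursion $(\ref{2.3}.3)$ for $\Theta^{(k+1)}$ introduces, beyond the denominators already present in the two branch fields $\Theta^{(k)}_{\lambda^{(k+1;p_\gamma)},\mathfrak p^{(k+1;p_\gamma)}}$, exactly one new pair of $z$-denominators ($z_{p_0}=z_{r_1}$ and $z_{p_1}=z_{s_1}$) and one new $\xi^{(1)}$-difference denominator coming from $\Psi^{(k)}_{\mathfrak p^{(k+1;p_0)}}$. By the induction hypothesis, multiplication by $\prod_{\ell=2}^{k+1}z_{r_\ell}z_{s_\ell}$ and $\prod_{\ell=2}^{k}(\xi^{(1)}_{r_\ell}-\xi^{(1)}_{s_\ell})$ clears the denominators within each branch (the index shift from $\ell$ to $\ell+1$ reflects the fact that a level-$\ell$ node of the truncated tree is a level-$(\ell+1)$ node of the parent), and the outer step supplies the missing $\ell=1$ factors to produce the full products in (b). The $z$-degree is bounded by $k+1$ since each recursive step introduces one additional $z$-factor in the numerator, and the claim that the dependence on $\lambda^{(k+1)}$ enters only through the subscript of $\partial/\partial\alpha_\mu$ is preserved because the recursion only shifts the multi-index $\mu$. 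Part (c) is then derived by substituting the inductive formula into $(\ref{2.3}.3)$, using $z^{\lambda^{(k+1;p_\gamma)}}/z_{p_\gamma}=z^{\lambda^{(k+1)}}$ to pull out a single $z^{\lambda^{(k+1)}}$ prefactor, and recognizing the resulting outer two-term subtraction as the action of $\Delta_{r_1,s_1}$ on the inner iterated difference evaluated at $\nu=\lambda^{(k+1)}$; commutativity of finite-difference operators in distinct coordinate directions then assembles the full iterated difference $\Delta_{r_1,s_1}\cdots\Delta_{r_{k+1},s_{k+1}}$.

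The main obstacle I anticipate is the careful bookkeeping of the labels $(r_\ell,s_\ell)$ under truncation, since each truncation shifts the level indexing by one, while the statements of (a)--(c) and the recursion $(\ref{2.3}.3)$ are phrased at different levels. Once this relabeling is handled consistently and combined with the identity $\Psi^{(k)}=k(\xi^{(1)}_{p_0}-\xi^{(1)}_{p_1})$ from Lemma~\ref{lemma(2.6)}, all three parts become formal consequences of $(\ref{2.3}.3)$ and Lemma~\ref{lemma(2.2)}.
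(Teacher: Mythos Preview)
The paper states Lemma~\ref{lemma(2.8)} without proof, treating it as a direct consequence of the construction in (\ref{2.3}) together with Lemmas~\ref{lemma(2.2)}, \ref{lemma(2.5)}, and \ref{lemma(2.6)}. Your inductive scheme via $(\ref{2.3}.3)$ and Lemma~\ref{lemma(2.6)} is the natural route and is correct in outline; you also correctly identify the level-relabeling under truncation as the only real bookkeeping hazard.

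Two comments. First, much of part~(a) is already contained in Lemma~\ref{lemma(2.5)} (weight, $z$-independence, polynomial degree in the components of $\lambda^{(k)}$); once Lemma~\ref{lemma(2.6)} identifies each accumulated denominator $\Psi^{(\ell)}_{\mathfrak p^{(\ell)}}$ as an explicit $\xi^{(1)}$-difference times $\ell$, clearing them is mechanical. Likewise, part~(c) is essentially the same telescoping that produces formula~$(\ref{lemma(2.6)}.3)$ in the proof of Lemma~\ref{lemma(2.6)}, rewritten with $z^{-\nu}\partial/\partial\alpha_\nu$ in place of $\Phi^{(j)}_\nu$; and once (c) is in hand, part~(b) is immediate, since the iterated difference $\Delta_{r_1,s_1}\cdots\Delta_{r_k,s_k}(z^{-\nu}\partial/\partial\alpha_\nu)$ manifestly involves only monomials in the $z_\ell$ and no $\xi^{(\ell)}_t$. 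So a slightly cleaner order than your simultaneous induction is: derive (c) first by the same telescoping as in Lemma~\ref{lemma(2.6)}, then read off (a) and (b).

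Second, the divisibility step you sketch for (a) --- ``the difference vanishes identically when the two branches are swapped'' --- is both unnecessary and not quite sound as stated: swapping $p_0\leftrightarrow p_1$ flips the sign of the recursion, but antisymmetry under an \emph{index} swap does not by itself force divisibility by $\xi^{(1)}_{p_0}-\xi^{(1)}_{p_1}$ as a polynomial in the $\xi$-variables. No such argument is needed anyway: Lemma~\ref{lemma(2.6)} tells you exactly which $\xi^{(1)}$-difference sits in each denominator, so multiplying by the product in~(a) cancels them outright, and what remains is an iterated finite difference of $\Phi^{(j)}_\nu$, which is polynomial in the $\xi^{(\ell)}_t$ by Lemma~\ref{lemma(2.2)}.
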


\bigbreak

\begin{nodesignation}{\it Generation of Vector Fields in
the Parameter
Direction.}\label{nodesignation(2.9)}\end{nodesignation} We now
look at the special case of Lemma~\ref{lemma(2.8)}(c) with $k=n$,
then
$$
\tilde\Theta^{(n)}_{\lambda^{(n)},{\mathfrak
p}^{(n)}}=z^{\lambda^{(n)}+\sum_{\ell=1}^n\left(e_{r_\ell}+e_{s_\ell}\right)}
\left[\Delta_{r_1,s_1}\cdots
\Delta_{r_n,s_n}\left(z^{-\nu}{\partial\over\partial\alpha_\nu}
\right)\right]_{\nu=\lambda^{(n)}}
$$
satisfies
$$
\tilde\Theta^{(n)}_{\lambda^{(n)},{\mathfrak
p}^{(n)}}\left(d^jf\right)=0
$$
for $0\leq j\leq n-1$.  We fix a point $y$ in $X_{\alpha}$,
where $\alpha=\left\{\alpha_\nu\right\}_{|\nu|=\delta}$.  We can
choose homogeneous coordinates in ${\mathbb P}_n$ so that
$$\left(z_0,z_1,\cdots,z_n\right)(y)=(1,0,\cdots,0).$$
We choose also $s_1=\cdots=s_n=0$ and $r_j\not=0$ for $1\leq j\leq
n$.  Then at $y$ we end up with
$$
\tilde\Theta^{(n)}_{\lambda^{(n)},{\mathfrak
p}^{(n)}}={\partial\over\partial\alpha_{\lambda^{(n)}+\sum_{\ell=1}^n
e_{r_\ell}}}.
$$
For the choice of $\lambda^{(n)}$ we can choose any multi-index of
total degree $\delta-n$.  When we worry about the generation of
the vector fields by global sections, for differentiations in the
direction of the parameters $\alpha=\{\alpha_\nu\}_{|\nu|=\delta}$
at the origin, we can capture in inhomogeneous coordinates the
differentiation with respect to all coefficients for monomials of
degree at least $n$, because we must include $\sum_{\ell=1}^n
e_{r_\ell}$ with $r_j\not=0$ for $1\leq j\leq n$ in $\nu$ which is
equal to $\lambda^{(n)}+\sum_{\ell=1}^n e_{r_\ell}$.

\bigbreak

\begin{nodesignation}{\it Example of Vector Fields on Jet
Spaces of Low
Order.}\label{nodesignation(2.9b)}\end{nodesignation} Let
$f=\sum_\nu\alpha_\nu z^\nu$. We introduce
$$
\displaylines{ \xi_j=dz_j,\cr \xi^{(1)}_j={dz_j\over
z_j}={\xi_j\over z_j}.\cr }
$$
Then
$$
df=\sum_\nu\alpha_\nu\left(\sum_j\nu_j\xi^{(1)}_j\right)z^\nu.
$$

\begin{proposition}\label{proposition(2.10)} Let $0\leq
p\not=q\leq n$ and $0\leq r\not=s\leq n$. Let $\mu$ be a
multi-index of total weight $\delta-2$. Then
$$
\displaylines{\left\{{1\over z_r}\left(\left({1\over
z_p}{\partial\over\partial\alpha_{\mu+e_r+e_p}} -{1\over
z_q}{\partial\over\partial\alpha_{\mu+e_r+e_q}}
\right)\right)\right.\cr -\left.{1\over z_r}\left(\left({1\over
z_p}{\partial\over\partial\alpha_{\mu+e_r+e_p}} -{1\over
z_q}{\partial\over\partial\alpha_{\mu+e_r+e_q}}
\right)\right)\right\}(d^jf)=0\cr }
$$
for $j=0,1$.
\end{proposition}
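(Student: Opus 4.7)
The plan is to recognize the bracketed operator as an instance of the inductively defined vector field $\Theta^{(2)}_{\mu,\mathfrak{p}^{(2)}}$ from \ref{2.3}, applied to the binary tree $\mathfrak{p}^{(2)}$ of order $2$ with root pair $(p_0,p_1)=(r,s)$ and the level-wise homogeneous second-level pair $(p_{0,0},p_{0,1})=(p_{1,0},p_{1,1})=(p,q)$, together with the multi-index $\lambda^{(2)}=\mu$. Unwinding the recursion \ref{2.3} using $\Psi^{(0)}\equiv 1$ and $\Psi^{(1)}_{\lambda^{(1)},\mathfrak{p}^{(1)}}=\xi^{(1)}_p-\xi^{(1)}_q$ (from Lemma~\ref{lemma(2.6)}) produces exactly the operator in the proposition, up to the scalar factor $\xi^{(1)}_p-\xi^{(1)}_q$. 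Hence, if one invokes Lemma~\ref{lemma(2.4)} at $k=2$, the vanishing $\Theta^{(2)}_{\mu,\mathfrak{p}^{(2)}}(d^jf)=0$ for $0\le j\le 1$ is exactly the claim.

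For transparency in this low-order example I would, however, also carry out the verification by hand using the formulas from Lemma~\ref{lemma(2.2)}, namely $\partial f/\partial\alpha_\nu=z^\nu$ and $\partial(df)/\partial\alpha_\nu=\bigl(\sum_\ell\nu_\ell\xi^{(1)}_\ell\bigr)z^\nu$. First I would apply only the inner bracket
$$
\frac{1}{z_p}\frac{\partial}{\partial\alpha_{\mu+e_r+e_p}}-\frac{1}{z_q}\frac{\partial}{\partial\alpha_{\mu+e_r+e_q}}
$$
to $f$ and to $df$. In both shifted monomials the factor $z_r z_p$ (resp.\ $z_rz_q$) coming from $e_r+e_p$ (resp.\ $e_r+e_q$) cancels the $1/z_p$ (resp.\ $1/z_q$), so each summand reduces to $z_r z^\mu$ (times the corresponding weight factor for $df$). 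For $j=0$ this already gives $0$; for $j=1$, setting $\sigma=\sum_\ell\mu_\ell\xi^{(1)}_\ell$, the two weight factors are $\sigma+\xi^{(1)}_r+\xi^{(1)}_p$ and $\sigma+\xi^{(1)}_r+\xi^{(1)}_q$, whose difference is $\xi^{(1)}_p-\xi^{(1)}_q$, so the inner bracket applied to $df$ equals $z_r z^\mu(\xi^{(1)}_p-\xi^{(1)}_q)$.

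Multiplying by $1/z_r$ removes the last dependence on $r$, yielding $z^\mu(\xi^{(1)}_p-\xi^{(1)}_q)$ for $j=1$ (and still $0$ for $j=0$). The analogous computation for the second outer summand, which has the same shape with the role of $r$ played by $s$, gives the same value $z^\mu(\xi^{(1)}_p-\xi^{(1)}_q)$. Hence the outer subtraction produces $0$ for both $j=0$ and $j=1$, as claimed. There is no real obstacle: the only conceptual point is that the level-wise homogeneous branching of $\mathfrak{p}^{(2)}$ is precisely what forces the two $\Psi^{(1)}$ factors in the two summands to coincide, which in turn is why the outer $r$-vs-$s$ subtraction cleanly kills the remainder after the inner cancellation.
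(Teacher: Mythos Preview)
Your proof is correct, and the direct hand computation in your second half is essentially identical to the paper's argument: the paper computes the inner bracket applied to $df$ for a general $\lambda$ with $|\lambda|=\delta-1$, obtains $(\xi^{(1)}_p-\xi^{(1)}_q)z^\lambda$ (up to the paper's harmless sign slip), specializes to $\lambda=\mu+e_r$, divides by $z_r$, observes the result is independent of $r$, and subtracts the $s$-version. Your additional framing via $\Theta^{(2)}_{\mu,\mathfrak{p}^{(2)}}$ and Lemma~\ref{lemma(2.4)} is a valid higher-level route that the paper does not invoke here, since the proposition is presented precisely as a worked low-order example motivating that general construction; but both routes are the same computation in different clothing.
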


\begin{proof} We have
$$
\left({1\over z_p}{\partial\over\partial\alpha_{\lambda+e_p}}
-{1\over z_q}{\partial\over\partial\alpha_{\lambda+e_q}}
\right)(df)=\left(\xi^{(1)}_q-\xi^{(1)}_p \right)z^\lambda
$$
for any $\lambda$ with $|\lambda|=\delta-1$.  Choose $\mu$ with
$|\mu|=\delta-2$.  Apply the above equation to $\lambda=\mu+e_r$
and get
$$
\left({1\over z_p}{\partial\over\partial\alpha_{\mu+e_r+e_p}}
-{1\over z_q}{\partial\over\partial\alpha_{\mu+e_r+e_q}}
\right)(df)=\left(\xi^{(1)}_q-\xi^{(1)}_p \right)z^{\mu+e_r}
$$
and
$$
{1\over z_r}\left(\left({1\over
z_p}{\partial\over\partial\alpha_{\mu+e_r+e_p}} -{1\over
z_q}{\partial\over\partial\alpha_{\mu+e_r+e_q}}
\right)\right)(df)=\left(\xi^{(1)}_q-\xi^{(1)}_p \right)z^\mu.
$$
Since the right-hand side is independent of $r$, we can replace
$r$ by $s$ and take the difference to get
$$
\displaylines{\left\{{1\over z_r}\left(\left({1\over
z_p}{\partial\over\partial\alpha_{\mu+e_r+e_p}} -{1\over
z_q}{\partial\over\partial\alpha_{\mu+e_r+e_q}}
\right)\right)\right.\cr -\left.{1\over z_r}\left(\left({1\over
z_p}{\partial\over\partial\alpha_{\mu+e_r+e_p}} -{1\over
z_q}{\partial\over\partial\alpha_{\mu+e_r+e_q}}
\right)\right)\right\}(df)=0.\cr }
$$
\end{proof}

\begin{remark}\label{remark(2.11)} We can rewrite the
vector field
$$
\displaylines{\left\{{1\over z_r}\left(\left({1\over
z_p}{\partial\over\partial\alpha_{\mu+e_r+e_p}} -{1\over
z_q}{\partial\over\partial\alpha_{\mu+e_r+e_q}}
\right)\right)\right.\cr -\left.{1\over z_r}\left(\left({1\over
z_p}{\partial\over\partial\alpha_{\mu+e_r+e_p}} -{1\over
z_q}{\partial\over\partial\alpha_{\mu+e_r+e_q}}
\right)\right)\right\}\cr }
$$
as
$$
z^\mu\left[\Delta_{r,s}\Delta_{p,q}\left({1\over
z^\nu}{\partial\over
\partial\alpha_\nu}\right)\right]_{\nu=\mu},
$$
where
$$
\Delta_{r,s}F(\nu)=F\left(\nu+e_r\right)-F\left(\nu+e_s\right).
$$
\end{remark}

\bigbreak To illustrate the situation of vector fields on jet
spaces of low order, we do the case of the next order.
$$
\displaylines{d^2f=\sum_\nu\alpha_\nu\left[
\left(\sum_j\nu_j\xi^{(1)}_j\right)^2+\sum_\nu\alpha^{(2)}_j\right]z^\nu,\cr
{1\over z^\nu}{\partial\over\partial\alpha_\nu}d^2f=
\left(\sum_j\nu_j\xi^{(1)}_j\right)^2+\sum_\nu\xi^{(2)}_j,\cr
\Delta_{p,q}\left({1\over
z^\nu}{\partial\over\partial\alpha_\nu}d^2f\right)=
\left(\sum_j\nu_j\xi^{(1)}_j+\xi^{(1)}_p\right)^2\cr
-\left(\sum_j\nu_j\xi^{(1)}_j+\xi^{(1)}_q\right)^2
+\left(\xi^{(2)}_p-\xi^{(2)}_q\right)\cr =
\left(\xi^{(1)}_p-\xi^{(1)}_q\right)\left(2\sum_j\nu_j\xi^{(1)}_j+\xi^{(1)}_p+\xi^{(2)}_q\right)
+\left(\xi^{(2)}_p-\xi^{(2)}_q\right),\cr
\Delta_{r,s}\Delta_{p,q}\left({1\over
z^\nu}{\partial\over\partial\alpha_\nu}d^2f\right)=2
\left(\xi^{(1)}_r-\xi^{(1)}_s\right)\left(\xi^{(1)}_p-\xi^{(1)}_q\right),\cr
\Delta_{u,v}\Delta_{r,s}\Delta_{p,q}\left({1\over
z^\nu}{\partial\over\partial\alpha_\nu}d^2f\right)=0.\cr }
$$
Thus for any multi-index $\mu$ of total degree $\delta-3$, the
vector field
$$
\left[\Delta_{u,v}\Delta_{r,s}\Delta_{p,q}\left({1\over
z^\nu}{\partial\over\partial\alpha_\nu}\right)\right]_{\nu=\mu}
$$
annihilates $d^jf$ for $j=0,1,2$, because
$$
\left[{\partial\over\partial\alpha_\nu}(d^jf)\right]_{\nu=\lambda}=
\left[{\partial\over\partial\alpha_\nu}\right]_{\nu=\lambda}(d^jf).
$$

\bigbreak We can now formulate the case of higher-order jets.

\begin{proposition}\label{proposition(2.12)}  Let $0\leq
r_\ell\not=s_\ell\leq n$ for $1\leq\ell\leq k$. Let $\mu$ be a
multi-index of total weight $\delta-k$. Let
$\Theta_{\mu;r_1,\cdots,r_k;s_1,\cdots,s_k}$ denote the vector
field
$$
z^\mu\left[\Delta_{r_1,s_1}\cdots\Delta_{r_k,s_k}\left({1\over
z^\nu}{\partial\over
\partial\alpha_\nu}\right)\right]_{\nu=\mu}.
$$
Then
$\Theta_{\mu;r_1,\cdots,r_k;s_1,\cdots,s_k}\left(d^jf\right)=0$
for $0\leq j\leq k-1$.
\end{proposition}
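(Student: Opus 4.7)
The plan is to evaluate the vector field directly on $d^j f$ and then exploit the polynomial-degree bound for $\Phi_\nu^{(j)}$ established in Lemma~\ref{lemma(2.2)}. By Lemma~\ref{lemma(2.2)} we have $d^j f = \sum_\nu \alpha_\nu \Phi^{(j)}_\nu z^\nu$, so that
$$
\frac{\partial}{\partial \alpha_\nu}\bigl(d^j f\bigr) = \Phi^{(j)}_\nu\, z^\nu, \qquad \text{hence}\qquad \frac{1}{z^\nu}\frac{\partial}{\partial \alpha_\nu}\bigl(d^j f\bigr) = \Phi^{(j)}_\nu.
$$
Because the difference operators $\Delta_{r_\ell,s_\ell}$ act only on the multi-index $\nu$ (and commute with substitution of the monomial $z^\mu$ in front), applying $\Theta_{\mu;r_1,\cdots,r_k;s_1,\cdots,s_k}$ to $d^jf$ yields
$$
\Theta_{\mu;r_1,\cdots,r_k;s_1,\cdots,s_k}\bigl(d^j f\bigr) = z^{\mu}\Bigl[\,\Delta_{r_1,s_1}\cdots\Delta_{r_k,s_k}\Phi^{(j)}_\nu\,\Bigr]_{\nu=\mu}.
$$

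The key fact that closes the argument is already contained in the proof of Lemma~\ref{lemma(2.6)}: each operator $\Delta_{r_\ell,s_\ell}$ with $r_\ell\neq s_\ell$ lowers by one the degree of a polynomial in $\nu_0,\dots,\nu_n$ (this was the observation preceding (\ref{lemma(2.6)}.1)). By Lemma~\ref{lemma(2.2)}, $\Phi^{(j)}_\nu$ is a polynomial of degree at most $j$ in $\nu_0,\dots,\nu_n$. Applying $k$ such difference operators therefore produces a polynomial of degree at most $j-k$. When $0\leq j\leq k-1$ this degree is negative, so the polynomial vanishes identically, and the claimed annihilation follows.

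An equivalent, more structural way to see the same conclusion is to invoke Lemma~\ref{lemma(2.8)}(c), which identifies
$$
\Theta_{\mu;r_1,\cdots,r_k;s_1,\cdots,s_k} \;=\; \Bigl(\prod_{\ell=1}^{k-1}\bigl(\xi^{(1)}_{r_\ell}-\xi^{(1)}_{s_\ell}\bigr)\Bigr)\,\Theta^{(k)}_{\mu,\,\mathfrak{p}^{(k)}}
$$
for the binary tree $\mathfrak{p}^{(k)}$ with nodes $p_{\gamma_1,\dots,\gamma_j}$ equal to $r_j$ or $s_j$ according to $\gamma_j=0$ or $1$ and satisfying the level-wise homogeneous branching hypothesis. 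Lemma~\ref{lemma(2.4)} then immediately gives $\Theta^{(k)}_{\mu,\,\mathfrak{p}^{(k)}}(d^j f)=0$ for $0\leq j\leq k-1$, and multiplication by the nonzero scalar factor $\prod_{\ell=1}^{k-1}\bigl(\xi^{(1)}_{r_\ell}-\xi^{(1)}_{s_\ell}\bigr)$ preserves this vanishing.

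There is no essential obstacle here: both routes are short, and the only subtle point to verify is the commutation between the substitution $\nu=\mu$ and the operators $\Delta_{r_\ell,s_\ell}$ acting on $\nu$, which is immediate once one notes that the partial differentiation $\frac{\partial}{\partial\alpha_\nu}$ and the substitution commute in the sense of $\bigl[\tfrac{\partial}{\partial\alpha_\nu}(d^j f)\bigr]_{\nu=\lambda} = \bigl[\tfrac{\partial}{\partial\alpha_\nu}\bigr]_{\nu=\lambda}(d^j f)$, as was used in the $k=3$ example preceding the proposition. I would therefore present the direct computation as the main proof and append the identification with $\Theta^{(k)}_{\mu,\mathfrak{p}^{(k)}}$ as a remark, since it also records the pole structure relevant for later use.
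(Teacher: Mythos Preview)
Your proposal is correct. The paper does not supply a separate proof of this proposition; it is stated immediately after the explicit $k=2$ and $k=3$ computations as their evident generalization, and your first route---compute $\Theta(d^jf)=z^\mu\bigl[\Delta_{r_1,s_1}\cdots\Delta_{r_k,s_k}\Phi^{(j)}_\nu\bigr]_{\nu=\mu}$ and then invoke the degree bound from Lemma~\ref{lemma(2.2)} together with observation~(\ref{lemma(2.6)}.1)---is exactly the clean formalization of those examples. Your second route via Lemma~\ref{lemma(2.8)}(c) and Lemma~\ref{lemma(2.4)} is also valid and is how the paper's inductive construction packages the same content; presenting the direct computation as the main argument and the identification with $\Theta^{(k)}_{\mu,\mathfrak p^{(k)}}$ as a remark is a reasonable choice.
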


\bigbreak In the above Proposition the vector field
$\Theta_{\mu;r_1,\cdots,r_k;s_1,\cdots,s_k}$ is a linear
combination of the partial differentiation operators
$$
{\partial\over\partial\alpha_{\mu+e_{r_{i_1}}+\cdots+e_{r_{i_p}}+
e_{s_{j_1}}+\cdots+e_{s_{j_{k-p}}}}}
$$
for $0\leq j\leq k$.  The process of generating such vector fields
is not independent of coordinate transformations from the general
linear group $GL\left(n+1,{\mathbb C}\right)$.  Suppose we have the
coordinate transformation
$$
z_j=\sum_{\ell=0}^n a_{j\ell}w_\ell\quad(0\leq j\leq\ell)
$$
from the element ${\bf a}=\left(a_{j\ell}\right)_{0\leq j,\ell\leq
n}$ of the general linear group $GL\left(n+1,{\mathbb C}\right)$. Then
$$
z^\nu=z_0^{\nu_0}\cdots z_n^{\nu_n}=\left(\sum_{\ell=0}^n
a_{0\ell}w_\ell\right)^{\nu_0}\cdots \left(\sum_{\ell=0}^n
a_{n\ell}w_\ell\right)^{\nu_n}
=\sum_{|\mu|=\delta}A_{\nu,\mu}w^\mu.
$$
Let $\left(B_{\mu,\nu}\right)_{|\mu|=|\nu|=\delta}$ be the inverse
matrix of the matrix
$\left(A_{\nu,\mu}\right)_{|\mu|=|\nu|=\delta}$.  Write
$$
f=\sum_{|\nu|=\delta}\alpha_\nu z^\nu=\sum_{|\mu|=\delta}\beta_\mu
w^\mu.
$$
Then
$$
\displaylines{\beta_\mu=\sum_{|\nu|=\delta}\alpha_\nu
A_{\nu,\mu},\cr \alpha_\mu=\sum_{|\mu|=\delta}\beta_\mu
B_{\mu,\nu}.\cr }
$$
When the generation of the vector field $\Theta$ in the coordinate
system $(z_0,\cdots,z_n)$ gives
$$
\Theta_z=\sum_\nu g_\nu(z){\partial\over\partial\alpha_\nu},
$$
the procedure applied to the coordinate system $(w_0,\cdots,w_n)$
gives
$$
\Theta_w=\sum_\mu g_\mu(w){\partial\over\partial\beta_\mu}.
$$
When we transform back to the coordinate system
$(z_0,\cdots,z_n)$, we get
$$
\Theta_{{\bf a};\mu;r_1,\cdots,r_n;s_1,\cdots,s_n}=\sum_{\mu,\nu}
g_\mu(w(z)){\partial\alpha_\nu\over\partial\beta_\mu}{\partial\over\partial\alpha_\nu}.
$$
Now We would like to show that when $k=n$, the dimension of the
quotient space
$$
\left(\bigoplus_{|\nu|=\delta}{\mathbb C}{\partial\over\partial\alpha_\nu}\right)\Bigg/\left(\sum_{{{{\bf
a}\in GL(n+1,{\mathbb C});|\mu|=\delta-n;}\atop{
r_1,\cdots,r_n;s_1,\cdots,s_n}}}{\mathbb C}\Theta_{{\bf a};\mu;
r_1,\cdots,r_n;s_1,\cdots,s_n}\right)
$$
is no more than $n$ over ${\mathbb C}$.  For this we need only show
that modulo the linear space generated by all such $\Theta_{{\bf
a};\mu;r_1,\cdots,r_n;s_1,\cdots,s_n}$, every generator
${\partial\over\partial\alpha_\nu}$ can be expressed as a linear
combination of $n$ fixed
$${\partial\over\partial\alpha_{\nu^{[\ell]}}}\quad(1\leq\ell\leq
n),$$ where $\nu^{[\ell]}$ is a multi-index of total weight
$\delta$.  We use the linear transformations defined by ${\bf
a}\in GL\left(n+1,{\mathbb C}\right)$ simply to make sure that, for
any given point, we are free to do the checking in an appropriate
coordinate system which depends on the point.

\medbreak For the convenience of bookkeeping we let $M$ be an
integer $>\delta$ and introduce a new weight $\|\nu\|_M$ for any
multi-index $\nu$ of total degree $\delta$ which is defined as
follows.
$$
\|\nu\|_M=\sum_{\ell=0}^n\nu_\ell M^\ell.
$$
We single out the $n$ multi-index $\nu$ of total degree $\delta$
which has the $n$ lowest weight $\|\nu\|_M$ possible, namely,
$$
\delta-\ell+(n-1-\ell)M\quad(0\leq\ell\leq n-1).
$$
These $n-1$ multi-indices are
$$
\nu^{[\ell]}=\left(\delta-\ell,n-1-\ell,0,\cdots,0\right)\quad(0\leq\ell\leq
n-1).
$$
Fix a point $P_0$ in the space $J^{\rm{\scriptstyle
vert}}_{n-1}\left({\mathcal X}\right)$ of vertical $(n-1)$-jets.
Choose a coordinate system $\left(z_0,\cdots,z_n\right)$ so that
all the coefficients of
$$
{\partial\over\partial\alpha_{\mu+e_{r_{i_1}}+\cdots+e_{r_{i_p}}+
e_{s_{j_1}}+\cdots+e_{s_{j_{k-p}}}}}
$$
occurring in
$$
\Theta_{\mu;r_1,\cdots,r_n;s_1,\cdots,s_n}=
z^\mu\left[\Delta_{r_1,s_1}\cdots\Delta_{r_k,s_k}\left({1\over
z^\nu}{\partial\over
\partial\alpha_\nu}\right)\right]_{\nu=\mu}
$$
are all nonzero.  Then modulo
$\Theta_{\mu;r_1,\cdots,r_n;s_1,\cdots,s_n}$ we can express
$$
{\partial\over\partial\alpha_{\mu+e_{r_1}+\cdots+e_{r_n}}}
$$
as a linear combination of
$$
{\partial\over\partial\alpha_{\mu+e_{r_{i_1}}+\cdots+e_{r_{i_p}}+
e_{s_{j_1}}+\cdots+e_{s_{j_{k-p}}}}}
$$
for $p<n$.  Now take any multi-index $\nu$ with total degree
$\delta$ which is different from any one of $\nu^{[0]},\cdots,
\nu^{[n-1]}$.  In other words,
$$
\|\nu\|_M>\delta-n+1+(n-1)M.
$$
Then for some $1\leq r_1,\cdots,r_n\leq n$, all the $n+1$
components of
$$
\nu-\sum_{\ell=1}^n e_{r_\ell}
$$
are nonnegative.  Let
$$
\displaylines{\mu=\nu-\sum_{\ell=1}^n e_{r_\ell},\cr
s_1=\cdots=s_n=0.\cr }
$$
Then modulo $\Theta{\mu;r_1,\cdots,r_n;s_1,\cdots,s_n}$ we can
express
$$
{\partial\over\partial\alpha_\nu}
$$
in terms of
$$
{\partial\over\partial\alpha_{\mu+e_{r_{i_1}}+\cdots+e_{r_{i_p}}+
e_{s_{j_1}}+\cdots+e_{s_{j_{k-p}}}}}
$$
for $p<n$ with
$$
\left\|\mu+e_{r_{i_1}}+\cdots+e_{r_{i_p}}+
e_{s_{j_1}}+\cdots+e_{s_{j_{k-p}}}\right\|_M<\|\nu\|_M.
$$
We thus conclude that modulo $$\sum_{{{{\bf a}\in GL(n+1,{\mathbb C});|\mu|=\delta-n;}\atop{ r_1,\cdots,r_n;s_1,\cdots,s_n}}}{\mathbb C}\Theta_{{\bf a};\mu; r_1,\cdots,r_n;s_1,\cdots,s_n},$$ the space
$$ \bigoplus_{|\nu|=\delta}{\mathbb C}{\partial\over\partial\alpha_\nu}$$ is generated by
$$
{\partial\over\partial\alpha_{\nu^{[0]}}},\cdots,{\partial\over\partial\alpha_{\nu^{[n-1]}}}
$$
and we conclude that the dimension of the quotient space
$$
\left(\bigoplus_{|\nu|=\delta}{\mathbb C}{\partial\over\partial\alpha_\nu}\right)\Bigg/\left(\sum_{{{{\bf
a}\in GL(n+1,{\mathbb C});|\mu|=\delta-n;}\atop{
r_1,\cdots,r_n;s_1,\cdots,s_n}}}{\mathbb C}\Theta_{{\bf a};\mu;
r_1,\cdots,r_n;s_1,\cdots,s_n}\right)
$$
is no more than $n$ over ${\mathbb C}$.  Note that the pole order of
each of the meromorphic vector field
$$
\Theta_{{\bf a};\mu; r_1,\cdots,r_n;s_1,\cdots,s_n}
$$
is no more than $2n$ along the infinity hyperplane of ${\mathbb
P}_n$.

\begin{remark}\label{remark(2.13)} The reason why in the
above argument we fail to get generation of all vectors in
parameter space is that we can only expect to get generation all
vectors in parameter space up to codimension $n$ for $(n-1)$-jets.
The vector fields have to be tangential to the space
$J^{\rm{\scriptstyle (vert)}}_{n-1}\left({\mathcal X}\right)$ of
vertical $(n-1)$-jets of ${\mathcal X}$ which is of codimension
$n$ in the product
$$J_{n-1}\left({\mathbb P}_n\right)\times {\mathbb P}_N$$ of the space $J_{n-1}\left({\mathbb P}_n\right)$ of
$(n-1)$-jets of ${\mathbb P}_n$ and the parameter space ${\mathbb
P}_N$.
\end{remark}

\bigbreak

\begin{nodesignation}\label{construct_generation_vertical_direction}{\it Generation of Vectors in Vertical
Directions}\label{nodesignation(2.14)}\end{nodesignation} Now we
construct holomorphic vector fields which generate the vertical
directions modulo the horizontal directions.
$$
\displaylines{z_j{\partial\over\partial z_j}\left(d^k f\right) =
\sum_\nu
\alpha_\nu\nu_j\Phi^{(k)}_\nu\left(\xi^{(1)},\cdots,\xi^{(k)}\right)z^\nu,\cr
{\partial\over\partial\xi_j^\ell}\left(d^k f\right) = \sum_\nu
\alpha_\nu\left[{\partial\over\partial\xi_j^{(\ell)}}\,
\Phi^{(k)}_\nu\left(\xi^{(1)},\cdots,\xi^{(k)}\right)\right]z^\nu.\cr
}
$$
To unify the notations, we use $T$ to denote any one of
$$
z_j{\partial\over\partial z_j},\ \
{\partial\over\partial\xi_j^\ell}\quad(0\leq j\leq
n,\,1\leq\ell\leq n)
$$
and write
$$
T\left(d^k f\right)=-\sum_\nu\Xi^{(k)}_\nu z^\nu.
$$
This means that
$$
\Xi^{(k)}_\nu=-\alpha_\nu\nu_j\Phi^{(k)}_\nu\left(\xi^{(1)},\cdots,\xi^{(k)}\right)
\quad{\rm when\ }T=z_j{\partial\over\partial z_j}
$$
and
$$
\Xi^{(k)}_\nu=-\alpha_\nu\left[{\partial\over\partial\xi_j^{(\ell)}}\,
\Phi^{(k)}_\nu\left(\xi^{(1)},\cdots,\xi^{(k)}\right)\right]
\quad{\rm when\ }T={\partial\over\partial\xi_j^\ell}.
$$
The function $\Xi^{(k)}_\nu$ is homogeneous of weight $k-\ell$ and
is independent of $z_0,\cdots,z_n$.

\medbreak We also unify the notations for the vector fields
$\Theta^{(k)}_{\lambda^{(k)},{\mathfrak p}^{(k)}}$ and write
$\Theta^{(k)}_\nu$ as the vector field with effective low pole
order such that
$$
\displaylines{ \Theta^{(k)}_\nu\left(d^jf\right)=0{\rm\ for \
}0\leq j<k,\cr\Theta^{(k)}_\nu\left(d^kf\right)= z^\nu,\cr
\Theta^{(k)}_\nu\left(d^jf\right)=-\Psi^{(k,j)}_\nu z^\nu{\rm\ for
\ }j>k.\cr}
$$
This means the following. Choose a binary tree ${\mathfrak
p}^{(n-1)}$ of order $n-1$ which has level-wise homogeneous
branches.  Let $r_j=p_{\gamma_1,\cdots,\gamma_{j-1},0}$ and
$s_j=p_{\gamma_1,\cdots,\gamma_{j-1},1}$ for $1\leq j\leq n-1$.
Since the binary tree ${\mathfrak p}^{(n-1)}$ of order $n-1$ has
level-wise homogeneous branches, the values of
$r_j=p_{\gamma_1,\cdots,\gamma_{j-1},0}$ and
$s_j=p_{\gamma_1,\cdots,\gamma_{j-1},1}$ are independent of the
choices of the values $0$ or $1$ for
$\gamma_1,\cdots,\gamma_{j-1}$ and $1\leq j\leq n-1$. Let
${\mathfrak p}^{(n-1-j)}={\mathfrak
p}^{(n-1;\gamma_1,\cdots,\gamma_j)}$ for $1\leq j\leq n-1$. We
know that ${\mathfrak p}^{(n-1-j)}$ is independent of the choices
of the values $0$ or $1$ for $\gamma_1,\cdots,\gamma_j$ and $0\leq
j\leq n$ because the binary tree ${\mathfrak p}^{(n-1)}$ of order
$n-1$ has level-wise homogeneous branches.

\medbreak Given any multi-index $\nu$ of $n+1$ components and
total degree $\delta$, we choose a multi-index
$\lambda_\nu^{(n-1)}$ of $n+1$ components and total degree
$\delta-n+1$ such that $\lambda_\nu^{(n-1)}\leq\nu$ in the sense
that the $j$-th component of $\lambda_\nu^{(n-1)}\leq\nu$ is no
more than $\nu_j$ for $0\leq j\leq n$.  Though the binary tree
${\mathfrak p}^{(n-1)}$ of order $n-1$ has level-wise homogeneous
branches, yet $\lambda_\nu^{(n-1;\gamma_1,\cdots,\gamma_j)}$ does
depend on the choices of the values $0$ or $1$ for
$\gamma_1,\cdots,\gamma_j$ and $1\leq j\leq n$.  The dependence is
as follows.  If we denote $r_j$ by $r_{j,0}$ and $s_j=r_{j,1}$,
then
$$
\lambda_\nu^{(n-1;\gamma_1,\cdots,\gamma_j)}=
\lambda_\nu^{(n-1-j)}=\lambda_\nu^{(n-1)}
+\sum_{\ell=1}^je_{r_{\ell,\gamma_\ell}}
$$
By Lemma~\ref{lemma(2.4)},
$$
\Theta^{(k)}_{\lambda^{(k)},{\mathfrak p}^{(k)}} \left(d^j
f\right) =z^{\lambda^{(k)}}\Psi^{(j)}_{\lambda^{(k)},{\mathfrak
p}^{(k)}} \quad{\rm for \ }k\leq j\leq n
$$
for any multi-index $\lambda^{(k)}$ of $n+1$ components and total
degree $\delta-k$. So we can set
$$
\displaylines{\Theta^{(k)}_\nu={-z^{\nu-\lambda_\nu^{(n-1-k)}}
\over\Psi^{(k)}_{{\mathfrak p}^{(k)}}}\,\Theta^{(k)}
_{\lambda_\nu^{(n-1-k;\gamma_1,\cdots,\gamma_k)},{\mathfrak
p}^{(k)}},\cr \Psi^{(k,j)}_\nu= {-z^{\nu-\lambda_\nu^{(n-1-k)}}
\over\Psi^{(k)}_{{\mathfrak
p}^{(k)}}}\,\Psi^{(j)}_{\lambda_\nu^{(n-1-k;\gamma_1,\cdots,\gamma_k)},
{\mathfrak p}^{(k)}}.\cr }
$$
By Lemma~\ref{lemma(2.8)}, The function
$$
\Psi^{(k,j)}_\nu\prod_{\ell=1}^k\left(\xi^{(1)}_{r_\ell}-\xi^{(1)}_{s_\ell}\right)
$$
is of homogeneous weight $j$ in $\xi^{(\ell)}_t$ ($1\leq\ell\leq
j$, $0\leq t\leq n$) and is a polynomial in the variables
$z_0,\cdots,z_n$ of degree $\leq n-1-k$ and in the $n+1$
components of $\nu$ of degree $\leq j$. The vector field
$$
\Theta^{(k)}_\nu \prod_{\ell=1}^k\left(z_{r_\ell}z_{s_\ell}
\left(\xi^{(1)}_{r_\ell}-\xi^{(1)}_{s_\ell}\right)\right)
$$
is a polynomial in $z_0,\cdots,z_n$ of degree $\leq n-1$ and is
independent of $\xi^{(\ell)}_t$ ($1\leq\ell\leq n-1$, $0\leq t\leq
n$). Moreover, the dependence of
$\Theta^{(j)}_{\lambda^{(k)},{\mathfrak p}^{(k)}}$ on $\nu$ is
only through the partial differentiation with respect to
$\alpha_\mu$ with $\mu$ depending on $\nu$.

\medbreak To make sure that a modification of $T$ annihilates $f$,
we modify $T$ to $T+\sum_\nu\Xi^{(0)}_\nu\Theta^{(0)}_\nu$. To
make sure that our constructed vector field annihilates $df$, we
use
$$
\left(T+\sum_\nu\Xi^{(0)}_\nu\Theta^{(0)}_\nu\right)(df) =
-\sum_\nu\Xi^{(0)}_\nu\Psi^{(0,1)}_\nu z^\nu-\sum_\nu\Xi^{(1)}_\nu
z^\nu.
$$
This means that we have to modify
$T+\sum_\nu\Xi^{(0)}_\nu\Theta^{(0)}_\nu$ to
$$T+\sum_\nu\Xi^{(0)}_\nu\Theta^{(0)}_\nu+\sum_\nu\Xi^{(0)}_\nu\Psi^{(0,1)}_\nu\Theta^{(1)}_\nu
+\sum_\nu\Xi^{(1)}_\nu \Theta^{(1)}_\nu.
$$
To go one step further to make sure that our constructed vector
field annihilates $d^2f$, we use
$$
\displaylines{
\left(T+\sum_\nu\Xi^{(0)}_\nu\Theta^{(0)}_\nu+\sum_\nu\Xi^{(0)}_\nu\Psi^{(0,1)}_\nu\Theta^{(1)}_\nu
+\sum_\nu\Xi^{(1)}_\nu \Theta^{(1)}_\nu\right)\left(d^2f\right)\cr
=-\sum_\nu\Xi^{(0)}_\nu\Psi^{(0,2)}_\nu
z^\nu-\sum_\nu\Xi^{(0)}_\nu\Psi^{(0,1)}_\nu\Psi^{(1,2)}_\nu
z^\nu-\sum_\nu\Xi^{(1)}_\nu\Psi^{(1,2)}_\nu
z^\nu-\sum_\nu\Xi^{(2)}_\nu z^\nu.\cr }
$$
Thus we have to modify
$$T+\sum_\nu\Xi^{(0)}_\nu\Theta^{(0)}_\nu+\sum_\nu\Xi^{(0)}_\nu\Psi^{(0,1)}_\nu\Theta^{(1)}_\nu
+\sum_\nu\Xi^{(1)}_\nu \Theta^{(1)}_\nu
$$
to
$$
\displaylines{T+\sum_\nu\Xi^{(0)}_\nu\Theta^{(0)}_\nu+\sum_\nu\Xi^{(0)}_\nu\Psi^{(0,1)}_\nu\Theta^{(1)}_\nu
\cr+\sum_\nu\Xi^{(0)}_\nu\Psi^{(0,2)}_\nu\Theta^{(2)}_\nu
+\sum_\nu\Xi^{(0)}_\nu\Psi^{(0,1)}_\nu\Psi^{(1,2)}_\nu\Theta^{(2)}_\nu\cr
+\sum_\nu\Xi^{(1)}_\nu \Theta^{(1)}_\nu
+\sum_\nu\Xi^{(1)}_\nu\Psi^{(1,2)}_\nu\Theta^{(2)}_\nu
+\sum_\nu\Xi^{(2)}_\nu\Theta^{(2)}_\nu.\cr}
$$
In general, to make sure that we have the annihilation of all
$d^jf$ for $0\leq j\leq n-1$, we need to write
$$
T+\sum_\nu\sum_{0\leq j_0\leq k\leq n-1} \Xi^{(j_0)}_\nu
\left(\sum_{\ell=0}^{k-j_0-1}\sum_{j_0<\cdots<j_\ell<k}
\Psi^{(j_\ell,k)}_\nu\prod_{q=0}^{\ell-1}\Psi^{(j_q,j_{q+1})}_\nu\right)
\Theta^{(k)}_\nu.
$$
The main point is to control the pole order of the vector fields
and make the pole order bounded and independent of $\delta$.  That
is the reason why we want to remove $z^\nu$ by using the vector
fields $\Theta^{(k)}_{\lambda^{(k)},{\mathfrak p}^{(k)}}$. We now
count the degree in $z_0,\cdots,z_n$ and the weight in
$\xi^{(\ell)}_j$ after we clear the denominators.  We need to
multiply
$$\Psi^{(j_\ell,k)}_\nu\left(\prod_{q=0}^{\ell-1}\Psi^{(j_q,j_{q+1})}_\nu
\right)\Theta^{(k)}_\nu$$ by
$$
\left[\prod_{q=0}^\ell\left(\prod_{i=1}^{j_q}\left(\xi^{(1)}_{r_i}-\xi^{(1)}_{s_i}\right)\right)
\right]\prod_{i=1}^k\left(\xi^{(1)}_{r_i}-\xi^{(1)}_{s_i}\right)
$$
to get rid of the denominator involving $\xi^{(\ell)}_j$.  The
worst that can occur is
$$
\prod_{\ell=1}^k\left(\xi^{(1)}_{r_\ell}-\xi^{(1)}_{s_\ell}\right)^{n-\ell},
$$
whose weight is $n(n-1)\over 2$.  Since
$$
\Psi^{(k,j)}_\nu\prod_{\ell=1}^k\left(\xi^{(1)}_{r_\ell}-\xi^{(1)}_{s_\ell}\right)
$$
is of homogeneous weight $j$ in $\xi^{(\ell)}_t$ ($1\leq\ell\leq
j$, $0\leq t\leq n$), it follows that the worst situation is that
after multiplication by the above factor to clear the denominator
we end up with a weight of $j_0+j_1+\cdots+j_\ell$ plus the weight
of the factor which is no greater than the weight
$$
j_0+j_1+\cdots+j_\ell+{n(n-1)\over 2}\leq n(n-1).
$$
When it comes to the degree in $z_0,\cdots,z_n$, we have degree
$1$ from $T$, multiplication by the factor
$$
\prod_{\ell=1}^k\left(z_{r_\ell}z_{s_\ell}\right)
$$
to clear the denominator of $\Theta^{(k)}_\nu$ to yield degree
$\leq n-1$, and the degree of
$$
\Psi^{(k,j)}_\nu\prod_{\ell=1}^k\left(\xi^{(1)}_{r_\ell}-\xi^{(1)}_{s_\ell}\right)
$$
no more than $n-1-k$.  So after clearing the denominators, we have
no than $2(n-1)+1$ in degree for $T$ and no more than
$$
\displaylines{(n-1-j_0)+\cdots+(n-1-j_k)+(n-1)+2n \cr\leq
n(n-1)+(n-1)+2n\leq n^2-1+2n\cr}
$$
for
$$\Psi^{(j_\ell,k)}_\nu\left(\prod_{q=0}^{\ell-1}\Psi^{(j_q,j_{q+1})}_\nu\right)
\Theta^{(k)}_\nu.$$ Finally we conclude that, after clearing the
denominators, we end up with weight no more than $n(n-1)$ in
$\xi^{(\ell)}_t$ ($1\leq\ell\leq j$, $0\leq t\leq n$) and degree
no more than $n^2-1+2n$ in $z_0,\cdots,z_n$.

\bigbreak

\begin{nodesignation}{\it Vector Fields in Terms of
Differentiation with Respect to Inhomogeneous
Coordinates}.\label{nodesignation(2.15)}\end{nodesignation}  The
introduction of homogeneous coordinates is simply for the
notational convenience of our discussion.  We now return to
inhomogeneous coordinates by specializing to $z_0\equiv 1$.  First
of all we would like to go back to the coordinates
$$
d^jz_\ell\quad(0\leq j\leq n-1,\,0\leq\ell\leq n)
$$
from the coordinates
$$
\displaylines{z_0,z_1,\cdots,z_n,\cr d^j\log z_\ell\quad(1\leq
j\leq n-1,\,0\leq\ell\leq n)\cr }
$$
which is the same as
$$
\displaylines{z_0,z_1,\cdots,z_n,\cr \xi^{(j)}_\ell\quad(1\leq
j\leq n-1,\,0\leq\ell\leq n)\cr}
$$
(because $\xi^{(k)}_j=d^k\log z_j$). We are going to use the chain
rule for the transformation of vector fields.
$$
{\partial\over\partial\left(d^\ell
z_p\right)}=\sum_{p,k}{\partial\xi^{(k)}\over\partial\left(d^\ell
z_p\right)}{\partial\over\partial\xi^{(k)}_j}.
$$
Since $\xi^{(k)}_j=d^{k-1}\left({d z_j\over z_j}\right)$, it
follows that
$$
\xi^{(k)}_j=\xi^{(k)}_j\left(z_0,\cdots,z_n,dz_0,\cdots,dz_n,\cdots,d^nz_0,\cdots,d^nz_n\right)
$$
is a rational function which is homogeneous of weight $0$ when
$d^\ell z_p$ is assigned weight $1$ and is homogeneous of weight
$k$ when $d^\ell z_p$ is assigned weight $\ell$. Thus
$$
{\partial\xi_j^{(\ell)}\over\partial\left(d^\ell z_p\right)}
$$
is of weight $-1$ when $d^\ell z_p$ is assigned weight $1$ and is
homogeneous of weight $k-\ell$ when $d^\ell z_p$ is assigned
weight $\ell$.  It follows from weight considerations that
$$
{\partial\xi_j^{(\ell)}\over\partial\left(d^\ell
z_p\right)}=\delta_{p,j}\left({1\over z_j}\right),
$$
where $\delta_{p,j}$ is the Kronecker delta.  We conclude that, so
far as the independence of the constructed vector fields are
concerned, it makes no difference whether we are using the
coordinate system
$$
\displaylines{z_0,z_1,\cdots,z_n,\cr d^j\log z_\ell\quad(1\leq
j\leq n-1,\,0\leq\ell\leq n)\cr }
$$
or the coordinate system
$$
\displaylines{z_0,z_1,\cdots,z_n,\cr \xi^{(j)}_\ell\quad(1\leq
j\leq n-1,\,0\leq\ell\leq n).\cr}
$$
Now we pass from the homogeneous coordinates to the inhomogeneous
coordinates.  It is equivalent to restricting all the objects to
the linear subspace
$$
z_0=1,dz_0=d^2z_0=\cdots=d^n z_0=0.
$$
So far as
$$
{\partial\over\partial\xi_j^{(\ell)}},\ \
{\partial\over\partial\left(d^\ell z_p\right)}
$$
are concerned, the linear subspace is part of the line defined by
setting some coordinates equal to constant and the argument is not
affected. Since the pole order of $d^\ell z_j$ is no more than
$\ell+1$, we have the following proposition.  A point
of the space $J^{\rm{\scriptstyle vert}}_{n-1}\left({\mathcal
X}\right)$ of vertical $(n-1)$-jets is {\it represented by a nonsingular complex curve
germ in ${\mathbb P}_n$} precisely when the value of $z_1dz_0-z_0dz_1$ is nonzero at it
for some homogeneous coordinate system $z_0,\cdots,z_n$ of ${\mathbb P}_n$.

\begin{proposition}\label{sufficient_slanted_vector_fields} {\rm (Global Generation on Jet Space by Slanted Vector Fields at Points Representable by Regular Curve Germs)}\ \ Let $P_0$ be a
point of the space $J^{\rm{\scriptstyle
vert}}_{n-1}\left({\mathcal X}\right)$ of vertical $(n-1)$-jets
such that $P_0$ can be represented by a nonsingular complex curve germ in ${\mathbb P}_n$.
Then the meromorphic vector
fields on ${\mathbb P}_n\times{\mathbb P}_N$ tangential to
$$
\left\{f=df=\cdots=d^nf=0\right\}
$$
of pole order $\leq n^2+2n+n(n-1)=n(2n+1)$ (along the infinity
hyperplane of ${\mathbb P}_n$) generate at $P_0$ the tangent space
of the total space of fiber-direction $(n-1)$-jets, where $d^k f$
is taken with $\alpha_\nu$ regarded as constants. In terms of
inhomogeneous coordinates, the statement is equivalent to that of
the formulation in terms of homogeneous coordinates on the
restriction to
$$
\left\{z_j=1, dz_j=d^2z_j=\cdots=d^nz_j=0\right\}
$$
for each $0\leq j\leq n$.
\end{proposition}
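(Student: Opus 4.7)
The plan is to assemble the proposition from the two separate generation results worked out in the preceding paragraphs: generation of the parameter directions $\partial/\partial\alpha_\nu$ up to a codimension-$n$ subspace (done around Proposition \ref{proposition(2.12)} and the subsequent discussion of coordinate transformations by $GL(n+1,{\mathbb C})$), and generation of the vertical jet coordinates $z_j \partial/\partial z_j$ and $\partial/\partial\xi^{(\ell)}_j$ (done in \ref{construct_generation_vertical_direction}). Since $P_0$ is representable by a nonsingular curve germ, I can first choose homogeneous coordinates $z_0,\dots,z_n$ so that $z_0\neq 0$ and $z_1 dz_0 - z_0 dz_1 \neq 0$ at $P_0$, which is exactly the nondegeneracy needed for the denominators $z^\nu$, $z_{r_\ell}z_{s_\ell}$ and $\xi^{(1)}_{r_\ell}-\xi^{(1)}_{s_\ell}$ appearing in the construction to be nonzero at $P_0$ after a generic choice of the auxiliary indices.

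For the parameter directions I would invoke the vector fields $\Theta_{\mu;r_1,\dots,r_n;s_1,\dots,s_n}$ of Proposition \ref{proposition(2.12)} with $k=n$; they annihilate $f,df,\dots,d^{n-1}f$ by construction, and the descending-weight argument following Remark \ref{remark(2.11)} (using the weight $\|\cdot\|_M$ to order the multi-indices) shows that their ${\bf a}$-conjugates span $\bigoplus_\nu {\mathbb C}\,\partial/\partial\alpha_\nu$ modulo a subspace of dimension $\leq n$. For the vertical directions I would apply the iterative correction scheme of \ref{construct_generation_vertical_direction}: starting from either $T = z_j\partial/\partial z_j$ or $T = \partial/\partial\xi^{(\ell)}_j$, add the cascade of correction terms
\[
T + \sum_\nu\sum_{0\leq j_0\leq k\leq n-1} \Xi^{(j_0)}_\nu
\left(\sum_{\ell=0}^{k-j_0-1}\sum_{j_0<\cdots<j_\ell<k}
\Psi^{(j_\ell,k)}_\nu\prod_{q=0}^{\ell-1}\Psi^{(j_q,j_{q+1})}_\nu\right)\Theta^{(k)}_\nu
\]
so that $d^j f$ for $0\leq j\leq n-1$ is annihilated. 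Because the projection of the cascade onto the $z$- and $\xi$-directions agrees with $T$ itself, combining these vertical vector fields with the parameter-direction fields above generates the full tangent space of $J^{\rm(vert)}_{n-1}({\mathcal X})$ at $P_0$; the at-most-$n$ ungenerated parameter directions $\partial/\partial\alpha_{\nu^{[0]}},\dots,\partial/\partial\alpha_{\nu^{[n-1]}}$ are precisely accounted for by the codimension $n$ of $J^{\rm(vert)}_{n-1}({\mathcal X})$ inside $J_{n-1}({\mathbb P}_n)\times{\mathbb P}_N$ (see Remark \ref{remark(2.13)}), so nothing is missed.

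The main obstacle, already analyzed at the end of \ref{construct_generation_vertical_direction}, is bounding the pole order along the hyperplane at infinity of ${\mathbb P}_n$. Here the explicit denominator $\Psi^{(k)}_{{\mathfrak p}^{(k)}} = k(\xi^{(1)}_{p_0} - \xi^{(1)}_{p_1})$ given by Lemma \ref{lemma(2.6)}, together with the factor $\prod_\ell z_{r_\ell} z_{s_\ell}$ hidden in $\Theta^{(k)}_{\lambda^{(k)},{\mathfrak p}^{(k)}}$ from Lemma \ref{lemma(2.8)}, lets me clear denominators after multiplying by the single factor $\prod_{\ell=1}^{k}(\xi^{(1)}_{r_\ell}-\xi^{(1)}_{s_\ell})^{n-\ell}$ of weight $\leq \binom{n}{2}$ and by $\prod_\ell(z_{r_\ell}z_{s_\ell})$ of degree $\leq n-1$. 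Summing the contributions across all terms of the cascade yields a total $\xi$-weight $\leq n(n-1)$ and a $z$-degree $\leq n^2 - 1 + 2n$; adding the lone $z$-factor contributed by $T$ itself gives the claimed pole order $\leq n^2 + 2n + n(n-1) = n(2n+1)$. Finally, the transition to inhomogeneous coordinates is innocuous: by the chain-rule computation $\partial\xi^{(\ell)}_j/\partial(d^\ell z_p) = \delta_{p,j}/z_j$ and its weight-based uniqueness, restricting to the slice $\{z_0=1,\,dz_0=\cdots=d^n z_0 = 0\}$ preserves the linear independence of the constructed vector fields, giving the inhomogeneous-coordinate version of the statement.
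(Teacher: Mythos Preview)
Your proposal is correct and follows essentially the same approach as the paper. The paper's own proof is very terse---it simply arranges coordinates so that $z_0\neq 0$, $dx_1\neq 0$, all $x_\ell\neq 0$, and $dx_1\neq dx_2$, then fixes $r_j=1$, $s_j=2$ for all $j$ and says ``the above construction now gives the generation''; what you have written is an explicit unpacking of that phrase, correctly identifying the two ingredients (parameter directions via Proposition \ref{proposition(2.12)} and the $\|\cdot\|_M$-weight argument, vertical directions via the cascade of \ref{construct_generation_vertical_direction}) and the pole-order bookkeeping already carried out at the end of \ref{construct_generation_vertical_direction}.
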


\begin{proof}  Since $P_0$ can be represented by a nonsingular complex curve germ in ${\mathbb P}_n$, there exists some homogeneous
coordinate system $z_0,\cdots,z_n$ such that $z_0\not=0$ and
$z_1dz_0-z_0dz_1\not=0$ at it.  Since one of $z_0$ and $z_1$ must
be nonzero at $P_0$, we assume without loss of generality that $z_0$
is nonzero at $P_0$.  Let $x_j={z_j\over z_0}$ for $1\leq
j\leq n$.  Then $dx_1={z_0dz_1-z_1dz_0\over z_0^2}\not=0$ at
$P_0$.  We then apply a translation to the the affine coordinates
$\left(x_1,\cdots,x_n\right)$ to make sure that $x_\ell\not=0$ at
$P_0$ for $1\leq\ell\leq n$.  Then we apply a linear
transformation to the affine coordinates
$\left(x_1,\cdots,x_n\right)$ that $x_\ell\not=0$ at $P_0$ for
$1\leq\ell\leq n$ and $dx_1\not=dx_2$ at $P_0$. We are going to
set $r_j=1$ and $s_j=2$ for $1\leq j\leq n$.  Also we will
restrict the vector fields to $z_0=1$ and $z_\ell=x_\ell$ for
$1\leq\ell\leq n$ so that
$$\xi^{(1)}_{r_j}-\xi^{(1)}_{s_j}=\xi^{(1)}_1-\xi^{(1)}_2\not=0
$$
at $P_0$ and $z_\ell\not=0$ at $P_0$ for $0\leq\ell\leq n$.  The
above construction now gives the generation of the tangent bundle
of $J^{\rm{\scriptstyle vert}}_{n-1}\left({\mathcal X}\right)$ at
$P_0$.\end{proof}

\begin{remark}\label{remark(2.16.1)}  In the global generation of the tangent
bundle of ${\mathcal X}$ in Lemma~\ref{lemma(1.7)} there is no reference to the
tangent vector being representable by nonsingular complex curve germ, because a tangent
vector which is not representable by a nonsingular complex curve germ must be zero and
the identically zero global vector field already generates the zero tangent vector.
However, a higher-order jet which cannot be represented by a nonsingular complex curve germ
need not be zero.  The condition of representability by a nonsingular complex curve germ can
be technically suppressed by formulating global generation over some suitably defined projectivization of the jet space which includes only those jets which have well-defined images in the projectivization of the tangent bundle.

\medbreak For the hyperbolicity of generic hypersurface $X$ of sufficiently high degree, the generation by slanted vector fields of low vertical pole order only at jets representable by nonsingular complex curve germs offers no difficulty, because any nonconstant holomorphic map $\varphi$ from the affine complex line ${\mathbb C}$ to $X$ must have a nonzero tangent vector at some point $\zeta_0$ of ${\mathbb C}$ and we need only use slanted vector fields of low vertical pole order at the jet represented by $\varphi$ at $\varphi\left(\zeta_0\right)$.
\end{remark}

\bigbreak The slanted vector fields on $J^{\rm{\scriptstyle vert}}_{n-1}\left({\mathcal X}\right)$ of low vertical pole order constructed in (\ref{construct_generation_vertical_direction}) for the the proof of Proposition \ref{sufficient_slanted_vector_fields} start out with the vector field $T$ which is any one of
$$
z_j{\partial\over\partial z_j},\ \
{\partial\over\partial\xi_j^\ell}\quad(0\leq j\leq
n,\,1\leq\ell\leq n).
$$
The constructed slanted vector fields on $J^{\rm{\scriptstyle vert}}_{n-1}\left({\mathcal X}\right)$ of low vertical pole order are actually restrictions to
$J^{\rm{\scriptstyle vert}}_{n-1}\left({\mathcal X}\right)$ of vector fields on $J_{n-1}\left({\mathbb P}_n\right)\times{\mathbb P}_N$ of low vertical pole order which are tangential to $J^{\rm{\scriptstyle vert}}_{n-1}\left({\mathcal X}\right)$.  We formulate below a proposition about the slanted vector fields on
$J_{n-1}\left({\mathbb P}_n\right)\times{\mathbb P}_N$ of low vertical pole order which are tangential to $J^{\rm{\scriptstyle vert}}_{n-1}\left({\mathcal X}\right)$ before their restrictions to $J^{\rm{\scriptstyle vert}}_{n-1}\left({\mathcal X}\right)$.  This formulation is needed in \ref{proof_hyperbolicity_complement_hypersurface} where the proof of Theorem \ref{main_theorem} is modified to give a proof of Theorem \ref{hyperbolicity_complement_hypersurface}.

\begin{proposition}\label{vector_field_projective_space} Let $\alpha\in{\mathbb P}_N$ and  $P_0$ be a
point of the space $J_{n-1}\left({\mathbb P}_n\right)\times{\mathbb P}_N$
such that $P_0$ can be represented by a nonsingular complex curve germ in ${\mathbb P}_n\times\{\alpha\}$ which lies in ${\mathcal X}$.
Then the meromorphic vector
fields on $J_{n-1}\left({\mathbb P}_n\right)\times{\mathbb P}_N$
of pole order $\leq n^2+2n+n(n-1)=n(2n+1)$ (along the infinity
hyperplane of ${\mathbb P}_n$) which are tangential to
$J^{\rm{\scriptstyle vert}}_{n-1}\left({\mathcal X}\right)$ generate at $P_0$ the tangent space
of $J_{n-1}\left({\mathbb P}_n\right)\times{\mathbb P}_N$.
\end{proposition}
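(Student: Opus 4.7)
The plan is to mimic the construction in the proof of Proposition \ref{sufficient_slanted_vector_fields}, but with the defining function $f$ replaced throughout by $fg$ for a suitably chosen auxiliary polynomial $g$ of degree $\leq n$ in the affine coordinates $x_1,\ldots,x_n$. Since $P_0$ is represented by a nonsingular complex curve germ in $\mathbb{P}_n \times \{\alpha\}$ disjoint from $\mathcal X$, we have $f^{(\alpha)}(P_0) \neq 0$, so the defining function itself carries none of the vanishing data exploited in Proposition \ref{sufficient_slanted_vector_fields}; the polynomial $g$ is introduced precisely to supply that vanishing and, in the process, to free up the $n$ parameter directions that were unreachable there.

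First I would carry out the coordinate preparation exactly as at the beginning of the proof of Proposition \ref{sufficient_slanted_vector_fields}: choose homogeneous coordinates $[z_0,\ldots,z_n]$, pass to affine coordinates $x_j = z_j/z_0$, and apply translation and linear changes so that at $P_0$ we have $z_0, x_1,\ldots,x_n$ all nonzero and $dx_1 \neq dx_2$. Then I would choose $g \in \mathbb{C}[x_1,\ldots,x_n]$ of degree $\leq n$ satisfying $g = dg = d^2 g = \cdots = d^n g = 0$ at $P_0$ (evaluated on the curve germ representing $P_0$). This amounts to $n+1$ linear conditions on the $\binom{2n}{n}$-dimensional space of such polynomials and is therefore solvable. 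Using the Leibniz expansion
\[
d^j(fg) = \sum_{\ell=0}^j \binom{j}{\ell}\, d^\ell f \cdot d^{j-\ell} g,
\]
we then have $fg = d(fg) = \cdots = d^n(fg) = 0$ at $P_0$, so that $P_0$ satisfies, with respect to $fg$, the same vanishing conditions that drove the construction in Proposition \ref{sufficient_slanted_vector_fields}.

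Next I would run the construction of Proposition \ref{sufficient_slanted_vector_fields} verbatim with $fg$ (a polynomial of degree $\delta+n$ linear in the parameters $\alpha_\nu$) replacing $f$. Because the pole-order, weight, and degree bounds supplied by Lemmas \ref{lemma(2.4)}--\ref{lemma(2.8)} and by (\ref{construct_generation_vertical_direction}) depend only on $n$ and not on the degree of the defining polynomial, the resulting meromorphic vector fields on $J_{n-1}(\mathbb{P}_n) \times \mathbb{P}_N$ retain the pole-order bound $n(2n+1)$ along the infinity hyperplane of $\mathbb{P}_n$ and globally annihilate $d^j(fg)$ for $0 \leq j \leq n-1$. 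The parameter-direction components take the form $\sum_\nu c_\nu(z,\xi)\,\partial/\partial\alpha_\nu$ with coefficients independent of $\alpha$, so $\partial/\partial\alpha_\nu$ acts trivially on $g$; plugging this into the Leibniz expansion above should deliver the tangency of these vector fields to $J^{(\mathrm{vert})}_{n-1}(\mathcal X)$ itself, and not merely to the auxiliary locus defined by $fg$.

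For generation of the full tangent space of $J_{n-1}(\mathbb{P}_n) \times \mathbb{P}_N$ at $P_0$, I would follow the proof of Proposition \ref{sufficient_slanted_vector_fields}: the $T \in \{z_j \partial/\partial z_j,\, \partial/\partial\xi^{(\ell)}_j\}$ together with their parameter-direction corrections span everything except a codimension-$n$ quotient in the parameter direction, represented (as in the argument preceding Remark \ref{remark(2.13)}) by the $n$ classes $\partial/\partial\alpha_{\nu^{[\ell]}}$ for $0 \leq \ell \leq n-1$. The missing $n$ directions now come from the depth-$n$ vector fields $\tilde\Theta^{(n)}_{\lambda^{(n)},\mathfrak p^{(n)}}$ built from $fg$ as in Subsection \ref{nodesignation(2.9)}: because $g, dg, \ldots, d^n g$ all vanish at $P_0$, the obstructions that forced the codimension-$n$ defect in Proposition \ref{sufficient_slanted_vector_fields} dissolve at $P_0$, and these $\tilde\Theta^{(n)}$ supply the previously unreachable parameter directions. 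The hard part will be the tangency verification: confirming that the vector fields which globally annihilate $d^j(fg)$ do preserve the ideal $(f, df, \ldots, d^{n-1}f)$ of $J^{(\mathrm{vert})}_{n-1}(\mathcal X)$ — the vanishing of $g$ to high order is only a pointwise condition at $P_0$ while tangency is an ideal-theoretic one, so the bridge must come from the Leibniz expansion combined with the $\alpha$-independence of $g$, organized so that the non-vanishing of $f$ at $P_0$ plays the role formerly played by the non-vanishing of certain $\xi^{(1)}_p - \xi^{(1)}_q$.
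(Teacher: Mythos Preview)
Your overall approach matches the paper's own sketch exactly: the paragraph preceding the proposition says only that ``in some steps we need to modify the proof of Proposition~\ref{sufficient_slanted_vector_fields} by replacing the defining function $f$ of $\mathcal X$ by its product with a polynomial $g$ of degree $n$'' so that $fg=d(fg)=\cdots=d^n(fg)=0$ at $P_0$, and gives no further detail. Your elaboration --- choosing $g$ of degree $\leq n$ with $g=dg=\cdots=d^ng=0$ at $P_0$, then rerunning the construction --- is the intended route, and your identification of the tangency check as the crux is correct.

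There is, however, a genuine tension in your last two paragraphs. The triangular Leibniz argument you sketch is valid: for a pure $\alpha$-direction field $\Theta=\sum_\nu c_\nu(z,\xi)\,\partial/\partial\alpha_\nu$ with $\alpha$-independent coefficients, the identities $\Theta(d^j(fg))\equiv 0$ for $0\le j\le n-1$ unwind (since $g\not\equiv 0$) to $\Theta(d^jf)\equiv 0$ for the same range. But this cuts both ways. It shows that the space of such fields annihilating $d^j(fg)$ coincides with the space annihilating $d^jf$; the $\tilde\Theta^{(n)}$ of \S\ref{nodesignation(2.9)} have exactly this form (their coefficients are polynomials in $z$ alone), so the $fg$-construction produces no new pure $\alpha$-direction fields of this type, and the codimension-$n$ defect in the parameter directions cannot ``dissolve'' via these $\tilde\Theta^{(n)}$ alone. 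Moreover the recursion of \S\ref{2.3} does not run verbatim with $fg$, since $\partial(fg)/\partial\alpha_\nu=g\,z^\nu$ is no longer $z^\nu$ times a $z$-independent factor.

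What does change under $f\mapsto fg$ is the vertical-direction construction of \S\ref{construct_generation_vertical_direction}: there the corrections added to $T$ carry $\alpha$-\emph{dependent} coefficients (through $\Xi^{(k)}_\nu=-\alpha_\nu\nu_j\Phi^{(k)}_\nu$, etc.), so your triangular argument no longer forces agreement with the $f$-based corrections. The fields $T+C_{fg}$ thus obtained are tangential to $J^{\rm(vert)}_{n-1}(\mathcal X)$ (your Leibniz identity, restricted to $\{d^\ell f=0\}$, gives a lower-triangular system with diagonal entries $g$, hence $(T+C_{fg})(d^\ell f)=0$ there where $g\neq 0$, and tangency extends by continuity), yet they differ from $T+C_f$. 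The differences $C_{fg}-C_f$ are new pure $\alpha$-direction fields, tangential to $J^{\rm(vert)}_{n-1}(\mathcal X)$ but not annihilating $d^jf$ identically; it is through these, not through the $\tilde\Theta^{(n)}$, that the missing $n$ directions at $P_0$ should be recovered. The vanishing of $g,dg,\ldots,d^ng$ at $P_0$ serves to make the $fg$-based computation at $P_0$ mimic the $f$-based computation at a point of $\mathcal X$, so that the pole-order bookkeeping of \S\ref{construct_generation_vertical_direction} goes through unchanged.
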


\bigbreak\begin{nodesignation}{\it Use of Slanted Vector Fields to Lower Vanishing Order of Jet Differentials and to Generate Linearly Independent Jet Differentials.}\label{nodesignation(2.17)}\end{nodesignation}  First we would
like to make a remark about the weight of a jet differential
after the application of the vector fields which we have
constructed. The weight of $d^jx_\ell$ is $j$.  The coordinates
$x_\ell$ has weight zero and does not contribute at all to the
computation of weights. When we consider the vector field which
starts with
$${\partial\over\partial\left(d^kx_\ell\right)},$$ to clear the
denominator we have to multiply the result by the factor
$$\left({dx_1\over x_1}-{dx_2\over x_2}\right)^k$$ so that one
ends up with
$$
\left({dx_1\over x_1}-{dx_2\over x_2}\right)^k
{\partial\over\partial\left(d^kx_\ell\right)}
$$
which means that the action of the vector field after clearing out
the denominator preserves the weight of the jet differential.  Moreover, by
the explicit construction of the slanted vector fields of low vertical pole order, we cannot apply them
to $(n-1)$-jet differentials to lower their orders to get
$k$-jet differentials for some $k<n-1$.

\medbreak For the hyperbolicity problem there are two ways to apply the constructed slanted vector fields of low vertical pole order.  One is to lower the vanishing order of an $(n-1)$-jet differential on a generic hypersurface at a prescribed point by applying slanted vector fields to the extensions of the $(n-1)$-jet differential on neighboring hypersurfaces.

\medbreak The other is, from a given $(n-1)$-jet differential on a generic hypersurface of a given weight which is nonzero at a prescribed point, to generate more $(n-1)$-jet differentials so that the resulting $(n-1)$-jet differentials at the prescribed point span the finite-dimensional vector space of all $(n-1)$-jet differentials of that particular weight defined only at the prescribed point.  Again the slanted vector fields have to be applied to the extensions of the given $(n-1)$-jet differential to neighboring hypersurfaces.  For the hyperbolicity problem, in both applications the given $(n-1)$-jet differential to which slanted vector fields are applied, as well as its extension on neighboring hypersurfaces, is assumed to be holomorphic and vanish to sufficiently high order on some ample divisor in order that the resulting $(n-1)$-jet differential is holomorphic and still vanishes on some ample divisor, after part of the vanishing order on the ample divisor of the given $(n-1)$-jet differential (as well as its extensions to neighboring hypersurfaces) is used to cancel the low vertical pole orders of the constructed slanted vector fields.

\medbreak Note that for a generic hypersurface a holomorphic $(n-1)$-jet differential vanishing to a sufficiently high order on an ample divisor is automatically extendible to a holomorphic $(n-1)$-jet differential on a neighboring hypersurface vanishing also to a sufficiently high order on an ample divisor (see Proposition \ref{extendibility_to_neighborhing_fiber} below).

\medbreak The following proposition is a precise formulation of the applications of the constructed slanted vector fields. The jet differentials to which the slanted vector fields are applied will be constructed in \S\ref{section_construction_jet_differential} below.  The polynomials $g^{(\alpha)}$ are introduced in the proposition in order to use the coefficients of $g^{(\alpha)}$ to control the linear independence of the resulting jet differentials, because of other Lie differentiations coming after the Lie differentiation by $g^{(\alpha)}$ times a slanted vector field.

\begin{proposition}\label{lower_vanishing_order_generate_independent_jet_differential} {\it( Slanted Vector Fields to Reduce Vanishing Order and to Generate Independent Jet Differentials).} Let $\hat\alpha\in{\mathbb P}_N$ and $U$ be an open neighborhood of $\hat\alpha$ such that $X^{(\alpha)}$ is nonsingular for $\alpha\in U$. Let $\hat y\in X^{(\hat\alpha)}$ and $C$ be a nonsingular curve germ in $X^{(\hat\alpha)}$ at $\hat y$.  Let $q_0$ and $m$ be positive integers.  For $\alpha\in U$ let $\omega^{(\alpha)}$ be a holomorphic $(n-1)$-jet differential on $X^{(\alpha)}$ of weight $m$ which vanishes to order $\geq q_0$ at the intersection of $X^{(\hat\alpha)}$ and some hyperplane section of ${\mathbb P}_n$ and which varies holomorphically as $\alpha$ varies in $U$.  Assume that the pullback of $\omega^{(\hat\alpha)}$ to $C$ as an $(n-1)$-jet differential on $C$ has a coefficient with vanishing order $\leq r_0$ at $\hat y$ for some nonnegative integer $r_0$ and assume also that $q_0>(r_0+1)(m+1)(n-1)n(2n+1)$. Then for some $J\in{\mathbb N}$ there exist holomorphic $(n-1)$-jet differentials $\omega_j^{(\hat\alpha)}$ on $X^{(\hat\alpha)}$ for $1\leq j\leq J$ which vanish on an ample divisor of $X^{(\hat\alpha)}$ and which have no common zeroes, as functions of homogeneous weight, on $J_{n-1}\left(X^{(\hat\alpha)}\right)_{\hat y}$ other than the zero $(n-1)$-jet of $X^{(\hat\alpha)}$ at $\hat y$, where $J_{n-1}\left(X^{(\hat\alpha)}\right)_{\hat y}$ is the finite-dimensional ${\mathbb C}$-vector space of all $(n-1)$-jets of $X^{(\hat\alpha)}$ at $\hat y$.  Moreover, each of the $(n-1)$-jet differentials $\omega_j^{(\hat\alpha)}$ for $1\leq j\leq J$ can be given as the restriction to $X^{(\hat\alpha)}$ of the $r_0$-times iterated Lie derivative of $\omega_j^{(\alpha)}$ with respect to $r_0$ slanted vector fields, each of which is a slanted vector field of the kind constructed in Proposition \ref{sufficient_slanted_vector_fields} and Proposition \ref{vector_field_projective_space} multiplied by some polynomial $g_j^{(\alpha)}$ of degree $\leq r_0$ in the inhomogeneous coordinates of ${\mathbb P}_n$ whose coefficients are holomorphic in $\alpha\in U$.
\end{proposition}

\bigbreak As long as $\hat\alpha$ is a generic point of the parameter space ${\mathbb P}_N$, in Proposition \ref{lower_vanishing_order_generate_independent_jet_differential} it suffices to assume the existence of one single $\omega^{(\hat\alpha)}$ on $X^{(\hat\alpha)}$ instead of a family of $\omega^{(\alpha)}$ on $X^{(\alpha)}$ for $\alpha\in U$ which is holomorphic in $\alpha\in U$, because of the following general abstract statement.

\begin{proposition}\label{extendibility_to_neighborhing_fiber}{\it (Extendibility of Jet Differentials on Generic Fiber to Neighborhing Fibers).}  Let $\tilde\pi:{\mathcal Y}\to S$ be a flat
holomorphic family of compact complex spaces and ${\mathcal
L}\to{\mathcal Y}$ be a holomorphic vector bundle.  Then there exists a
proper subvariety $Z$ of $S$ such that for $s\in S-Z$ the
restriction map
$$
\Gamma\left(U_s,{\mathcal L}\right)\to
\Gamma\left(\tilde\pi^{-1}(s),{\mathcal
L}|_{\tilde\pi^{-1}(s)}\right)
$$
is surjective for some open neighborhood $U_s$ of $s$ in $S$.
\end{proposition}

\bigbreak In Proposition \ref{lower_vanishing_order_generate_independent_jet_differential}, if for some point $\hat\alpha$ of ${\mathbb P}_N$ and for every point $\hat y$ of $X^{(\hat\alpha)}$ the assumption of Proposition \ref{lower_vanishing_order_generate_independent_jet_differential} is satisfied, then the hypersurface $X^{(\hat\alpha)}$ is hyperbolic in the sense that there is no nonconstant holomorphic map from ${\mathbb C}$ to $X^{(\hat\alpha)}$.  The following proposition formulates precisely this result and will be applied to prove Theorem \ref{main_theorem} after the construction of the required holomorphic jet differentials in \S\ref{section_construction_jet_differential} below and after the analysis in Proposition \ref{basepoint_freeness_jet_differential} of the effect on them from the change of inhomogeneous coordinates of ${\mathbb P}_n$ used in the construction.

\begin{proposition}
\label{hyperbolicity_after_construction_jet_differential} {\it (\it Hyperbolicity from Existence of Appropriate Jet Differentials).} Let $\hat\alpha\in{\mathbb P}_N$ such that the hypersurface $X^{(\hat\alpha)}$ is nonsingular.  Suppose, for every $\hat y\in X^{(\hat\alpha)}$ and for every $C$ whose $(n-1)$-jet at $\hat y$ is generic, the assumption of Proposition \ref{lower_vanishing_order_generate_independent_jet_differential} is satisfied for some $C$, $U$, $\omega_1^{(\alpha)},\cdots,\omega_J^{(\alpha)}$, $q_0$, $r_0$ which may depend on the point $\hat y$ of $X^{(\hat\alpha)}$.  Then the hypersurface $X^{(\hat\alpha)}$ is hyperbolic in the sense that there is no nonconstant holomorphic map from ${\mathbb C}$ to $X^{(\hat\alpha)}$.
\end{proposition}

\begin{proof} Suppose the contrary and there is some nonconstant holomorphic map $\varphi$ from ${\mathbb C}$ to $X^{(\hat\alpha)}$.  For some $\zeta_0\in{\mathbb C}$ where the differential $d\varphi$ of $\varphi$ at $\zeta_0$ is nonzero.  We let $\hat y=\varphi(\zeta_0)$.  By the argument of Proposition \ref{lower_vanishing_order_generate_independent_jet_differential} there exist holomorphic $(n-1)$-jet differentials $\omega_j^{(\hat\alpha)}$ on $X^{(\hat\alpha)}$ for $1\leq j\leq J$ which vanish on an ample divisor of $X^{(\hat\alpha)}$ and which, as $J$ functions of homogeneous weight on the finite-dimensional Euclidean space $J_{n-1}\left(X^{(\hat\alpha)}\right)_{\hat y}$ of all $(n-1)$-jets of $X^{(\hat\alpha)}$ at the point $\hat y$, have no common zeroes other than the zero $(n-1)$-jet of $X^{(\hat\alpha)}$ at $\hat y$.  By applying the Schwarz lemma of the vanishing of pullbacks, by a holomorphic map from ${\mathbb C}$ to a compact algebraic manifold, of jet differentials vanishing on an ample divisor of $X^{(\hat\alpha)}$, we conclude that the pullbacks of each of $\omega_1^{(\hat\alpha)},\cdots,\omega_J^{(\hat\alpha)}$ by $\varphi$ is identically zero on  ${\mathbb C}$.  This means that the nonzero $(n-1)$-jet of $X^{(\hat\alpha)}$ at $\hat y$ defined by the map $\varphi$ is a common zero of the $J$ functions of homogeneous weight on $J_{n-1}\left(X^{(\hat\alpha)}\right)_{\hat y}$ defined by $\omega_1^{(\hat\alpha)},\cdots,\omega_J^{(\hat\alpha)}$ at $\hat y$.  This is a contradiction.\end{proof}

\begin{remark}\label{remark(2.24)} In the application of Proposition \ref{hyperbolicity_after_construction_jet_differential} for the proof of Theorem \ref{main_theorem} given in \ref{proof_main_theorem}, only the special case of $r_0=0$ is used.
\end{remark}

\section{\sc Construction
of Holomorphic Jet Differentials}\label{section_construction_jet_differential}

\bigbreak We are going to construct holomorphic jet differentials.
One crucial ingredient is the use of the Koszul complex to show
that a homogeneous polynomial of low degree in $n+1$ homogeneous
coordinates and their differentials up to order $n-1$ cannot
locally belong to the ideal generated by a second homogeneous
polynomial and its differentials up to order $n-1$ when the second
homogeneous polynomial is a homogeneous polynomial of high degree
in the $n+1$ homogeneous coordinates (see Lemma \ref{injectivity_pullback_map_jet_differential} below).  The jet differentials are constructed by using the linear algebra method of solving a system of linear equations with more unknowns than independent linear equations (see Proposition \ref{construction_jet_differential_as_polynomial} below).

\begin{lemma}\label{lemma(3.1)} Let $Y$ be a compact
complex manifold and $Z$ be a subvariety of pure codimension at
least $2$ in $Y$. Let ${\mathcal F}$ be a locally free sheaf on
$Y$. Then the restriction map
$$
H^q\left(Y,{\mathcal F}\right)\to H^q\left(Y-Z,{\mathcal F}\right)
$$
is an isomorphism for $0\leq q\leq{\rm codim}_Y Z-2$.
\end{lemma}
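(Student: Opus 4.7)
The plan is to use local cohomology with support in $Z$ to compare $H^q(Y,\mathcal{F})$ with $H^q(Y-Z,\mathcal{F})$. Writing $c=\operatorname{codim}_Y Z$, the restriction map sits in the long exact sequence
$$
\cdots \to H^q_Z(Y,\mathcal{F}) \to H^q(Y,\mathcal{F}) \to H^q(Y-Z,\mathcal{F}) \to H^{q+1}_Z(Y,\mathcal{F}) \to \cdots,
$$
so the map will be an isomorphism in degree $q$ as soon as both $H^q_Z(Y,\mathcal{F})$ and $H^{q+1}_Z(Y,\mathcal{F})$ vanish. Thus the lemma reduces to establishing $H^j_Z(Y,\mathcal{F})=0$ for $j\leq c-1$; the range $0\leq q\leq c-2$ asserted in the lemma is exactly what this delivers.

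To obtain this local-cohomology vanishing, I would first pass to a sheaf-level statement via the local-to-global spectral sequence
$$
E_2^{p,q}=H^p\bigl(Y,\mathcal{H}^q_Z(\mathcal{F})\bigr)\Longrightarrow H^{p+q}_Z(Y,\mathcal{F}),
$$
reducing the problem to showing $\mathcal{H}^q_Z(\mathcal{F})=0$ for $q<c$. This is a purely local question at points of $Z$: after shrinking to a trivializing Stein polydisk $U$ around a point $y\in Z$, I may assume $\mathcal{F}=\mathcal{O}_Y$. On such a $U$, Cartan's Theorem~B makes $H^p(U,\mathcal{O}_Y)$ vanish for $p\geq 1$, so the long exact sequence of local cohomology collapses to $H^q_{Z\cap U}(U,\mathcal{O}_Y)\cong H^{q-1}(U-Z,\mathcal{O}_Y)$ for $q\geq 2$, together with the Hartogs-type identifications in degrees $0$ and $1$.

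The core remaining input is then a Scheja-type local vanishing: for a Stein polydisk $U$ and an analytic subset $Z\cap U$ of codimension $\geq c$, the sheaf cohomology $H^j(U-Z,\mathcal{O}_Y)$ vanishes in the required range. The standard route is the depth versus local-cohomology identity: because $Y$ is smooth, $\mathcal{O}_{Y,y}$ is regular of dimension $\dim_y Y$ and hence Cohen--Macaulay, so its $\mathcal{I}_{Z,y}$-adic depth equals $\operatorname{ht}(\mathcal{I}_{Z,y})\geq c$, which forces $(\mathcal{H}^q_Z(\mathcal{O}_Y))_y=0$ for $q<c$.

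The main obstacle is making this depth statement rigorous in the analytic category. I would handle it either by faithfully flat descent to the $\mathcal{I}_{Z,y}$-adic completion of $\mathcal{O}_{Y,y}$, where the algebraic depth formalism applies verbatim, or more concretely by shrinking $U$ so that there exist germs $f_1,\dots,f_c\in\mathcal{I}_{Z,y}$ forming a regular sequence---available because $\mathcal{O}_{Y,y}$ is regular and $Z$ has codimension $\geq c$ at $y$, so one can take coordinate functions transverse to $Z$---and then computing $(\mathcal{H}^q_Z(\mathcal{O}_Y))_y$ through the stable Koszul (\v{C}ech) complex on $f_1,\dots,f_c$, whose cohomology vanishes below degree $c$ precisely because the $f_i$ form a regular sequence on $\mathcal{O}_{Y,y}$.
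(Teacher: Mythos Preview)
Your argument is correct, but it runs through heavier machinery than the paper's. The paper argues directly with \v{C}ech cohomology: take a finite Stein cover $\{U_j\}$ of $Y$; on each finite intersection $V=\bigcap U_{j_\ell}$ one has $H^r(V-Z,\mathcal{F})=0$ for $1\leq r\leq\operatorname{codim}_Y Z-2$ (the Scheja-type vanishing you also invoke), so by Leray's theorem the cover $\{U_j-Z\}$ computes $H^q(Y-Z,\mathcal{F})$ via \v{C}ech cohomology in that range; and since $\Gamma(V,\mathcal{F})\to\Gamma(V-Z,\mathcal{F})$ is bijective by Hartogs, the two \v{C}ech complexes coincide at the cochain level, giving the isomorphism. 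Your route---the long exact sequence of local cohomology plus the local-to-global spectral sequence $H^p(Y,\mathcal{H}^q_Z(\mathcal{F}))\Rightarrow H^{p+q}_Z(Y,\mathcal{F})$---lands on exactly the same local input, but packages the globalization differently. The paper's approach buys elementarity (no spectral sequences, no local-cohomology formalism), while yours is more conceptual and makes the role of depth explicit; you also go further than the paper in sketching \emph{why} the local vanishing holds, via the Cohen--Macaulay property of $\mathcal{O}_{Y,y}$ and the Koszul complex on a regular sequence, whereas the paper simply asserts the Stein-minus-analytic-set vanishing as known.
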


\begin{proof} This is a standard
removability result for cohomology groups.  Let $\{U_j\}_j$ be a
finite cover of $Y$ by Stein open subsets $U_j$.  Since
$$
H^r\left(\bigcap_{\ell=0}^p\left(U_{j_\ell}-Z\right),{\mathcal
F}\right)=0
$$
for $1\leq r\leq{\rm codim}_Y Z-2$ and for any $j_0,\cdots,j_p$,
by Leray's theorem the following natural isomorphism
$$
H^q\left(Y-Z,{\mathcal F}\right)\approx
H^q\left(\left\{U_j-Z\right\}_j,{\mathcal F}\right)
$$
gives the computation of the sheaf cohomology by Cech cohomology.
Since the restriction map
$$
\Gamma\left(\bigcap_{\ell=0}^p U_{j_\ell},{\mathcal
F}\right)\to\Gamma\left(\bigcap_{\ell=0}^p\left(U_{j_\ell}-Z\right),{\mathcal
F}\right)
$$
is bijective for any $j_0,\cdots,j_p$, it follows that the map
$$
H^q\left(\left\{U_j\right\}_j,{\mathcal F}\right)\approx
H^q\left(\left\{U_j-Z\right\}_j,{\mathcal F}\right)
$$
defined by restriction is an isomorphism. The lemma follows from
the following natural isomorphism
$$
H^q\left(Y,{\mathcal F}\right)\approx
H^q\left(\left\{U_j\right\}_j,{\mathcal F}\right)
$$
gives the computation of the sheaf cohomology by Cech cohomology.
\end{proof}

\begin{lemma}\label{lemma(3.2)} Let $\ell$ and $a\leq N$
be positive integers. Let $Z$ be a linear subspace of ${\mathbb
P}_N$ and let
$$
F_1,\cdots,F_a\in\Gamma({\mathbb P}_N,{\mathcal O}_{{\mathbb
P}_N}(\ell))
$$
such that the zero-set of $F_1,\cdots,F_a$ in ${\mathbb P}_N-Z$ is
a submanifold of codimension $a$ in ${\mathbb P}_N-Z$ which is a
complete intersection. Assume that
$$
H^q\left({\mathbb P}_N-Z,{\mathcal O}_{{\mathbb
P}_N}(r)\right)=0\quad\hbox{for } 1\leq q<a$$ for any integer $r$.
Then
$$
\Gamma\left({\mathbb P}_N-Z,{\mathcal O}_{{\mathbb P}_N}^{\oplus
a}(-\ell+p)\right) \rightarrow\Gamma\left({\mathbb
P}_N-Z,\sum_{j=1}^a{\mathcal O}_{{\mathbb P}_N}(p)F_j\right)
$$
induced by
$$
\left(g_1,\cdots,g_a\right)\mapsto\sum_{j=1}^a g_j F_j
$$ is
surjective.
\end{lemma}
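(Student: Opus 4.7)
The plan is to resolve the image subsheaf $\mathcal{J} := \sum_{j=1}^a \mathcal{O}_{\mathbb{P}_N}(p)F_j$ on $\mathbb{P}_N - Z$ by the Koszul complex of $F_1,\ldots,F_a$ and then extract the desired surjectivity on global sections from a diagram chase through long exact sequences. Because the common zero set of $F_1,\ldots,F_a$ in $\mathbb{P}_N - Z$ is a complete intersection of the expected codimension $a$, the sections $F_1,\ldots,F_a$ form a locally regular sequence on $\mathbb{P}_N - Z$, and the standard Koszul theorem then furnishes the exact resolution
\[
0 \longrightarrow \mathcal{O}(-a\ell+p) \longrightarrow \cdots \longrightarrow \mathcal{O}(-j\ell+p)^{\oplus \binom{a}{j}} \longrightarrow \cdots \longrightarrow \mathcal{O}(-\ell+p)^{\oplus a} \longrightarrow \mathcal{J} \longrightarrow 0
\]
of sheaves on $\mathbb{P}_N - Z$, in which the rightmost map is precisely $(g_1,\ldots,g_a)\mapsto\sum_j g_j F_j$. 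Every intermediate term is a direct sum of twists $\mathcal{O}(-j\ell+p)$, so by the hypothesized cohomology vanishings each such term is acyclic in cohomological degrees $q = 1,\ldots,a-1$.

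I would then split this resolution into the short exact sequences
\[
0 \longrightarrow \mathcal{K}_j \longrightarrow \mathcal{O}(-j\ell+p)^{\oplus \binom{a}{j}} \longrightarrow \mathcal{K}_{j-1} \longrightarrow 0 \qquad (1 \le j \le a-1),
\]
with $\mathcal{K}_0 := \mathcal{J}$ and $\mathcal{K}_{a-1} = \mathcal{O}(-a\ell+p)$. The associated long exact sequences, combined with the acyclicity just noted, yield injections $H^q(\mathcal{K}_q) \hookrightarrow H^{q+1}(\mathcal{K}_{q+1})$ for $1 \le q \le a - 2$, and concatenating them gives
\[
H^1(\mathcal{K}_1) \hookrightarrow H^2(\mathcal{K}_2) \hookrightarrow \cdots \hookrightarrow H^{a-1}(\mathcal{K}_{a-1}) = H^{a-1}\bigl(\mathbb{P}_N - Z,\,\mathcal{O}(-a\ell+p)\bigr),
\]
and the rightmost group vanishes by hypothesis (since $a - 1 < a$). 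Hence $H^1(\mathcal{K}_1) = 0$, and the cohomology sequence attached to the initial short exact sequence
\[
\Gamma\bigl(\mathbb{P}_N - Z,\, \mathcal{O}(-\ell+p)^{\oplus a}\bigr) \longrightarrow \Gamma\bigl(\mathbb{P}_N - Z,\,\mathcal{J}\bigr) \longrightarrow H^1(\mathcal{K}_1) = 0
\]
delivers the required surjectivity.

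The one step that really needs care --- the main obstacle, such as it is --- is the exactness of the Koszul complex on $\mathbb{P}_N - Z$. This is precisely where the complete-intersection hypothesis enters: at a point of the common zero set, exactness is the classical fact that a regular sequence of the expected length yields an acyclic Koszul complex, while at a point where some $F_{j_0}$ is a unit the Koszul complex is contractible. Everything after that is purely formal bookkeeping of long exact sequences against the given cohomology vanishings.
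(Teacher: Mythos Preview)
Your argument is correct and essentially identical to the paper's: both set up the Koszul resolution of $\mathcal{J}=\sum_j \mathcal{O}(p)F_j$ on $\mathbb{P}_N-Z$, invoke the complete-intersection hypothesis for its exactness, and then chase the cohomology vanishing through the short exact sequences of syzygies to obtain $H^1(\mathcal{K}_1)=0$ and hence the surjectivity on global sections. The only cosmetic difference is that the paper phrases the chase as a descending induction proving $H^q(\ker\phi_b)=0$ for $1\le q\le b$, whereas you record it as a chain of injections terminating in $H^{a-1}(\mathcal{O}(-a\ell+p))=0$.
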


\begin{proof} Consider the Koszul complex
$$
\displaylines{ 0\rightarrow{\mathcal O}_{{\mathbb
P}_N}^{\oplus{a\choose a}}(-a\ell+p) \rightarrow\cdots\rightarrow
{\mathcal O}_{{\mathbb P}_N}^{\oplus{a\choose
k}}(-k\ell+p)\stackrel{\phi_k}{\longrightarrow}{\mathcal
O}_{{\mathbb P}_N}^{\oplus{a\choose
k-1}}(-(k-1)\ell+p)\cr\rightarrow \cdots\rightarrow{\mathcal
O}_{{\mathbb P}_N}^{\oplus{a\choose 2}}(-2\ell+p)\rightarrow
{\mathcal O}_{{\mathbb P}_N}^{\oplus{a\choose
1}}(-\ell+p)\stackrel{\phi_1}{\longrightarrow}{\mathcal
O}_{{\mathbb P}_N}^{\oplus{a\choose 0}}(p).\cr }
$$
The homomorphisms in the Koszul complex is defined as follows.
Take symbols $e_1,\cdots,e_a$. We use
$$
e_{i_1}\wedge\cdots\wedge e_{i_k}\quad(1\leq i_1<\cdots<i_k\leq a)
$$
as a local basis for ${\mathcal O}_{{\mathbb
P}_N}^{\oplus{a\choose k}}(-k\ell+p)$ to represent an element
$$
\left(g_{i_1,\cdots,i_k}\right)_{1\leq i_1<\cdots<i_k\leq a}
$$
of ${\mathcal O}_{{\mathbb P}_N}^{\oplus{a\choose k}}(-k\ell+p)$
as
$$
\sum_{1\leq i_1<\cdots<i_k\leq a}g_{i_1,\cdots,i_k}
\left(e_{i_1}\wedge\cdots\wedge e_{i_k}\right) $$ and define
$$\varphi_k: {\mathcal O}_{{\mathbb P}_N}^{\oplus{a\choose
k}}(-k\ell+p) \rightarrow{\mathcal O}_{{\mathbb
P}_N}^{\oplus{a\choose k-1}}(-(k-1)\ell+p)$$ by
$$
\varphi_k\left(e_{i_1}\wedge\cdots\wedge
e_{i_k}\right)=\sum^k_{\nu =1} (-1)^{\nu
-1}F_{i_\nu}\left(e_{i_1}\wedge\cdots\wedge e_{i_{\nu -1}}\wedge
e_{i_{\nu +1}}\cdots\wedge e_{i_k}\right)
$$
in such a representation. Since the zero-set of $F_1,\cdots,F_a$
in ${\mathbb P}_N-Z$ is a submanifold of codimension $a$ in
${\mathbb P}_N-Z$ which is a complete intersection, it follows
that the Koszul complex is exact on ${\mathbb P}_N-Z$.

\medbreak We are going to prove by descending induction on $b$ for
$1\leq b\leq a-1$ that
$$
H^q\left({\mathbb P}_N-Z,{\rm Ker\,}\phi_b\right)=0
$$
for $1\leq q\leq b$.  The case $b=a-1$ follows from the assumption
of the lemma and
$$
{\rm Ker\,}\phi_{a-1}={\mathcal O}_{{\mathbb
P}_N}^{\oplus{a\choose a}}(-a\ell+p).
$$
For $1\leq b<a-1$ the exact sequence
$$
0\to {\rm Ker\,}\phi_b\to {\mathcal O}_{{\mathbb
P}_N}^{\oplus{a\choose b}}(-b\ell+p)\to {\rm Ker\,}\phi_{b-1}\to 0
$$
yields the exactness of
$$
\displaylines{H^q\left({\mathbb P}_N-Z,{\mathcal O}_{{\mathbb
P}_N}^{\oplus{a\choose b}}(-b\ell+p)\right)\to H^q\left({\mathbb
P}_N-Z,{\rm Ker\,}\phi_{b-1}\right)\cr\to H^{q+1}\left({\mathbb
P}_N-Z,{\rm Ker\,}\phi_b\right)\cr }
$$
and for $1\leq q\leq b-1$ we conclude, from
$$
H^q\left({\mathbb P}_N-Z,{\mathcal O}_{{\mathbb
P}_N}^{\oplus{a\choose b}}(-b\ell+p)\right)=0
$$
in the assumption of the lemma and
$$
H^{q+1}\left({\mathbb P}_N-Z,{\rm Ker\,}\phi_b\right)=0
$$
in the induction hypothesis that
$$
H^q\left({\mathbb P}_N-Z,{\rm Ker\,}\phi_{b-1}\right)=0,
$$
which completes the induction argument.

\medbreak For $b=1$ we have
$$
H^1\left( {\mathbb P}_N-Z,{\rm Ker\,}\phi_1\right)=0,
$$
and the short exact sequence
$$
0\to {\rm Ker\,}\phi_1\to {\mathcal O}_{{\mathbb P}_N}^{\oplus a
}(-\ell+p)\to {\rm Im\,}\phi_1\to 0
$$
yields the surjectivity of
$$
\Gamma\left({\mathbb P}_N-Z,{\mathcal O}_{{\mathbb P}_N}^{\oplus a
}(-\ell+p)\right)\to \Gamma\left({\mathbb P}_N-Z,{\rm
Im\,}\phi_1\right).
$$
Hence
$$
\Gamma\left({\mathbb P}_N-Z,{\mathcal O}_{{\mathbb P}_N}^{\oplus a
}(-\ell+p)\right) \rightarrow\Gamma\left({\mathbb
P}_N-Z,\sum_{j=1}^a{\mathcal O}_{{\mathbb P}_N}(p)F_j\right)
$$
induced by $\varphi_1$ is surjective.\end{proof}

\begin{lemma}\label{lemma(3.3)} Let $w_0,w_1$ be two
transcendental variables (representing two local holomorphic
functions). Then
$$
w_0^{j+1}d^j\left({w_1\over w_0}\right)
$$
is a polynomial in the variables
$$d^\ell w_k\quad(0\leq\ell\leq j,\,k=0,1)$$ which is homogeneous of degree $j+1$ in all the
variables and of total weight $j$ in the differentials $d^\ell
w_k$ for $0\leq\ell\leq j$ and $k=0,1$ when the weight of $d^\ell
w_k$ is assigned to be $\ell$.
\end{lemma}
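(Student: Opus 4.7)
The plan is to prove this by induction on $j$, setting
$$P_j := w_0^{j+1} d^j\!\left(\tfrac{w_1}{w_0}\right)$$
and establishing a clean one-step recursion for $P_{j+1}$ in terms of $P_j$.

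\textbf{Base case.} For $j=0$ one has $P_0 = w_0\cdot(w_1/w_0) = w_1$, which is a polynomial of degree $1 = 0+1$ and weight $0$ in the differentials $d^\ell w_k$, confirming the claim.

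\textbf{Inductive step.} Assuming $P_j$ is a polynomial in $\{d^\ell w_k : 0\le\ell\le j,\,k=0,1\}$ that is homogeneous of degree $j+1$ in all variables and of weight $j$ in the differentials, apply $d$ to the defining identity $P_j = w_0^{j+1} d^j(w_1/w_0)$. The Leibniz rule gives
$$dP_j \;=\; (j+1)\,w_0^{j}\,dw_0\cdot d^j\!\left(\tfrac{w_1}{w_0}\right) \;+\; w_0^{j+1}\,d^{j+1}\!\left(\tfrac{w_1}{w_0}\right),$$
so multiplying through by $w_0$ and substituting $w_0^{j+1}d^j(w_1/w_0)=P_j$ yields the recursion
$$P_{j+1} \;=\; w_0\,dP_j \;-\; (j+1)\,(dw_0)\,P_j.$$
In particular $P_{j+1}$ is again a polynomial in the differentials $d^\ell w_k$ with $0\le\ell\le j+1$.

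\textbf{Verification of degree and weight.} By the chain rule, $dP_j = \sum_{\ell,k}\bigl(\partial P_j/\partial(d^\ell w_k)\bigr)\,d^{\ell+1}w_k$, so each monomial in $P_j$ has one of its factors $d^\ell w_k$ replaced by $d^{\ell+1}w_k$; this preserves the total degree $j+1$ and raises the weight from $j$ to $j+1$. Thus $w_0\,dP_j$ has total degree $j+2$ and weight $j+1$, and $(dw_0)\,P_j$ likewise has degree $1+(j+1) = j+2$ and weight $1+j = j+1$. Hence $P_{j+1}$ is homogeneous of degree $(j+1)+1$ and weight $j+1$, closing the induction.

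\textbf{Main obstacle.} There is no genuine obstacle here; the content is purely bookkeeping. The only small point to be careful about is to use $d$ consistently as the total differential on the polynomial ring in the $d^\ell w_k$ (raising the differentiation order by one on each factor) so that the recursion $P_{j+1} = w_0\,dP_j - (j+1)(dw_0)\,P_j$ and the degree/weight tracking agree.
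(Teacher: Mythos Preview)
Your proof is correct and follows essentially the same approach as the paper: both establish the recursion $P_{j+1} = w_0\,dP_j - (j+1)(dw_0)\,P_j$ by applying $d$ to the defining relation and using Leibniz, and both verify the degree and weight claims by noting that $d$ preserves total degree and raises the weight by one.
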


\begin{proof} The case $j=0$ of the claim is
clear. The induction process of the claim going from Step $j$ to
Step $j+1$ simply follows from
$$
\displaylines{w_0^{j+2}d^{j+1}\left({w_1\over w_0}\right)\cr=
w_0\left(d\left(w_0^{j+1}d^j\left({w_1\over
w_0}\right)\right)\right)-
(j+1)\left(dw_0\right)\left(w_0^{j+1}d^j\left({w_1\over
w_0}\right)\right)\cr }
$$
and the observations that

\medbreak\noindent (i) the differential of a homogeneous
polynomial in the variables $$d^\ell w_k\quad(0\leq\ell\leq
j,\,k=0,1)$$ is a homogeneous polynomial in the variables
$$d^\ell w_k\quad(0\leq\ell\leq
j,\,k=0,1)$$ of the same degree, and

\medbreak\noindent (ii) the differential of a polynomial in the
variables
$$d^\ell w_k\quad(0\leq\ell\leq
j,\,k=0,1)$$ which is of homogeneous weight $a$ is a polynomial in
the variables
$$d^\ell w_k\quad(0\leq\ell\leq
j,\,k=0,1)$$ which is of homogeneous weight $a+1$ when the weight
of $d^\ell w_k$ is assigned to be $\ell$.\end{proof}

\begin{lemma}\label{injectivity_pullback_map_jet_differential} {\rm (Injectivity of Pullback Map for Jet Differentials)}\ \  Let $1\leq k\leq n-1$ and let $f$ be a
polynomial of degree $\delta$ in inhomogeneous coordinates
$x_1,\cdots,x_n$ of ${\mathbb P}_n$ so that the zero-set of $f$
defines a complex manifold $X$ in ${\mathbb P}_n$. Let $Q$ be a
non identically zero polynomial in the variables $d^jx_\ell$
($0\leq j\leq k$,\,$1\leq\ell\leq n$). Assume that $Q$ is of
degree $m_0$ in $x_1,\cdots,x_n$ is $m_0$ and is of homogeneous
weight $m$ in the variables $d^jx_\ell$ ($1\leq j\leq
k$,\,$1\leq\ell\leq n$) when the weight of $d^jx_\ell$ is assigned
to be $j$. If $m_0+2m<\delta$, then $Q$ is not identically zero on
the space of $k$-jets of $X$. \end{lemma}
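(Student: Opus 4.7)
The plan is to derive a contradiction via ideal membership followed by a graded degree count after homogenization. Suppose $Q$ vanishes identically on $J_k(X)$. In the affine chart $\{z_0 \neq 0\}$, $J_k(X)$ is cut out of the polynomial ring $R = \mathbb{C}[x_1,\ldots,x_n,dx_1,\ldots,d^k x_n]$ by $f = df = \cdots = d^k f = 0$. The smoothness of $X$ forces $\nabla f \neq 0$ on $X$, and the linear appearance of the top-order differentials $\sum_\ell f_{x_\ell}\,d^j x_\ell$ inside $d^j f$ makes $f, df, \ldots, d^k f$ a regular sequence of length $k+1$ in $R$, cutting out $J_k(X)$ as a generically reduced complete intersection. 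Hence the ideal is radical and the vanishing hypothesis yields $Q = a_0 f + a_1\, df + \cdots + a_k\, d^k f$ for some $a_j \in R$.

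I would then homogenize via the ring embedding $\Phi : R \hookrightarrow A[z_0^{-1}]$, $x_\ell \mapsto z_\ell/z_0$, into the standard-graded polynomial ring $A = \mathbb{C}[z_0,\ldots,z_n,dz_0,\ldots,d^k z_n]$; since $\Phi$ commutes with $d$, Lemma~\ref{lemma(3.3)} forces $\Phi(d^j x_\ell) = z_0^{-(j+1)}\, p_{j,\ell}$ with $p_{j,\ell}$ homogeneous of total degree $j+1$. For each $P \in R$ let $\tilde P = z_0^{D(P)}\Phi(P)$ be the minimal-denominator homogenization, a homogeneous polynomial of degree $D(P)$ in $A$; a monomial-by-monomial count using Lemma~\ref{lemma(3.3)} yields $D(Q) \leq m_0 + 2m$ and $D(d^j f) = \delta + j$. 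The technical heart of the argument is the inductive verification that $\widetilde{d^j f}$ lies in the homogeneous ideal $I := (F, dF, \ldots, d^k F) \subset A$, where $F = z_0^\delta f \in A_\delta$ is the homogenization of $f$; the base case $\widetilde{df} = z_0\, dF - \delta F\, dz_0 \in (F, dF)$ follows directly from $dF = \delta z_0^{\delta - 1} f\, dz_0 + z_0^\delta\, df$, and the inductive step uses $\Phi \circ d = d \circ \Phi$ together with Euler's relation on $F$ and its partial derivatives.

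With these ingredients in place, multiplying the Koszul identity $Q = \sum_j a_j\, d^j f$ through by a sufficiently large power of $z_0$ and applying $\Phi$ produces a polynomial identity
\[
z_0^{M - D(Q)}\, \tilde Q \;=\; \sum_j z_0^{M - D(a_j) - \delta - j}\, \tilde a_j\, \widetilde{d^j f}
\]
in $A$ whose right-hand side lies in $I$. Smoothness of $X$ together with the transversality of $X$ to the hyperplane at infinity ensures that $z_0$ is a non-zero-divisor on $A/I$, whence $\tilde Q \in I$. But $I$ is generated by elements of degree exactly $\delta$, so every nonzero homogeneous element of $I$ has degree at least $\delta$; since $\tilde Q$ is homogeneous of degree $D(Q) \leq m_0 + 2m < \delta$, we must have $\tilde Q = 0$ and hence $Q = 0$, the desired contradiction. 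The main obstacle will be the inductive verification that $\widetilde{d^j f} \in I$, which reduces via Lemma~\ref{lemma(3.3)} and iterated Euler relations to an explicit algebraic bookkeeping calculation; a secondary technical point is the non-zero-divisor property of $z_0$ modulo $I$, which follows from smoothness of $X$ once the regularity of the homogeneous sequence $F, dF, \ldots, d^k F$ in $A$ is established.
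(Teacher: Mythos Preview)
Your overall strategy is sound and lands on the same contradiction as the paper's proof (a nonzero homogeneous element of degree $m_0+2m<\delta$ lying in a homogeneous ideal generated in degree $\delta$), but your route differs from the paper's and one of your justifications invokes a hypothesis that is not there.

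The paper does not pass through an affine identity at all. It homogenizes $Q$ directly to $P=z_0^{m_0+2m}Q$, works on $\mathbb{P}_N$ with the linear subspace $Z=\{z_0=\cdots=z_n=0\}$ removed, and invokes Lemma~\ref{lemma(3.2)} (the cohomological Koszul lemma) with $a=k+1$. The hypothesis $\operatorname{codim}Z-2=n-1\ge k$ supplies the needed cohomology vanishing, and the smoothness of $X$ in $\mathbb{P}_n$ makes $F,dF,\ldots,d^kF$ a smooth complete intersection on $\mathbb{P}_N-Z$, so the ideal is radical there and $P$ lies in it locally; Lemma~\ref{lemma(3.2)} then produces homogeneous coefficients $g_j$ of degree $m_0+2m-\delta<0$, which must vanish. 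Your approach instead obtains $Q=\sum a_j\,d^jf$ affinely, homogenizes, and must then cancel the extraneous powers of $z_0$.

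The gap is in that last step. You justify the non-zero-divisor property of $z_0$ on $A/I$ by appealing to ``transversality of $X$ to the hyperplane at infinity,'' but the lemma assumes only that $X$ is smooth in $\mathbb{P}_n$; it need not meet $\{z_0=0\}$ transversally. The claim is nonetheless true, and the correct reason is a dimension count that uses the hypothesis $k\le n-1$: away from $Z=\{z_0=\cdots=z_n=0\}$ the sequence $z_0,F,dF,\ldots,d^kF$ is regular because $\nabla F\neq 0$ on $V(F)$ (this is smoothness of $X$ in all of $\mathbb{P}_n$, not transversality at infinity); and $Z$ itself has dimension $k(n+1)$, which is strictly less than $\dim V(I)=(k+1)n$ precisely because $k\le n-1$, so $Z$ contributes no component of $V(I)$. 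Equivalently, $V(I)-Z$ fibers over the irreducible variety $X$ with connected fibers, hence is irreducible, and $\{z_0\neq 0\}$ is a nonempty open in it. Either way your cancellation goes through, but the argument you wrote does not. The paper's use of Lemma~\ref{lemma(3.2)} packages exactly this codimension inequality into the cohomology vanishing on $\mathbb{P}_N-Z$, which is why it never has to speak of transversality or non-zero-divisors.
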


\begin{proof} Suppose $Q$ is identically
zero on the space of $k$-jets of $X$. We are going to derive a
contradiction.

\medbreak Since $Q$ is of homogeneous weight $m$ in the variables
$d^jx_\ell$ ($1\leq j\leq k$,\,$1\leq\ell\leq n$) when the weight
of $d^jx_\ell$ is assigned to be $j$, it follows that the degree
of $Q$ in the variables $d^jx_\ell$ ($1\leq j\leq
k$,\,$1\leq\ell\leq n$) is at most $m$. We introduce the
homogeneous coordinates
$$z_0,z_1,\cdots,z_n$$ of ${\mathbb P}_n$ so that
$$
x_j={z_j\over z_0}\quad(1\leq j\leq n).$$  Let $N=(k+1)(n+1)-1$
and relabel the variables $$d^j z_\ell\quad(0\leq j\leq
k,\,0\leq\ell\leq n)$$ as the $N+1$ homogeneous coordinates
$w_0,\cdots,w_N$ of ${\mathbb P}_N$. Let $P=z_0^{m_0+2m}Q$. Since
the degree of $Q$ in the variables $d^jx_\ell$ ($1\leq j\leq
k$,\,$1\leq\ell\leq n$) is at most $m$, by Lemma~\ref{lemma(3.3)}
we conclude that $P$ is a polynomial in the variables
$w_0,\cdots,w_N$ and is homogeneous of degree $m_0+2m$.

\medbreak We are going to apply Lemma~\ref{lemma(3.2)}. In our
application We set $m=n$.  The homogeneous polynomials
$F_1,\cdots,F_m$ of degree $\delta$ in the $N+1$ homogeneous
coordinates $w_0,\cdots,w_N$ of ${\mathbb P}_N$ are
$$z_0^\delta f, d\left(z_0^\delta
f\right),\cdots,d^k\left(z_0^\delta f\right).$$ The linear
subspace $Z$ in ${\mathbb P}_N$ is defined by
$z_0=z_1=\cdots=z_n=0$ which is of complex codimension $n+1$ in
${\mathbb P}_N$ and is therefore of complex dimension
$N-(n+1)=k(n+1)-1$.

\medbreak We know that, if $\hat Z$ is a subvariety of ${\mathbb P}_N$,
then for any Stein open subset $U$ of ${\mathbb P}_N$ the
cohomology group $H^q\left(U-\hat Z,{\mathcal O}_{{\mathbb
P}_N}\right)$ vanishes for $0\leq q\leq{\rm codim}_{{\mathbb
P}_N}\hat Z-2=n-1$, where ${\rm codim}_{{\mathbb
P}_N}\hat Z$ means the complex codimension of $Z$ in ${\mathbb P}_N$. Thus,
$$
H^q\left({\mathbb P}_N-Z,{\mathcal O}_{{\mathbb
P}_N}(\ell)\right)=0\quad\hbox{for }0\leq q\leq n-1.
$$
Since $Q$ is identically zero on the space of $k$-jets of $X$, it
follows that $P$ locally belongs to the ideal generated by
$$z_0^\delta f,
d\left(z_0^\delta f\right),\cdots,d^k\left(z_0^\delta f\right).$$
By Lemma~\ref{lemma(3.2)} with $a=k+1$, since
$$
{\rm codim}_{{\mathbb
P}_N}Z-2=n-1\geq k=a-1,
$$
we can write
$$
P=\sum_{j=0}^kg_j\,d^j\left(z_0^\delta f\right)
$$
for some homogeneous polynomials $g_0,\cdots,g_k$ of the variables
$$w_0,\cdots,w_N,$$ where the total degree of $g_j$ is
$m_0+2m-\delta$. We arrive at a contradiction, because
$m_0+2m-\delta$ is negative and a polynomial cannot have a
negative degree.\end{proof}

\bigbreak Now we count the number of unknowns and the number of
equations.

\begin{lemma}\label{lemma(3.5)} Let $X$ be a
hypersurface of degree $\delta$ in in ${\mathbb P}_n$. Let $S$ be
a hypersurface in $X$ defined by a homogeneous polynomial $g$ of
degree $s$ in the homogeneous coordinates of ${\mathbb P}_n$. Then
for $q\geq\delta+s+n$,
$$
{\rm dim\,}\Gamma\left(S,{\mathcal O}_S(q)\right)=
\sum_{j=1}^\delta\sum_{k=1}^s{n+q-j-k\choose n-2}.
$$
In particular,
$$
{\rm dim\,}\Gamma\left(S,{\mathcal O}_S(q)\right)\leq
{s\,\delta\,\left(n+q-2\right)^{n-2}\over(n-2)!}
$$
for $q\geq\delta+s+n$.
\end{lemma}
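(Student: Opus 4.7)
The plan is to realize $S$ as a complete intersection in ${\mathbb P}_n$ and use the Koszul resolution to compute $\dim\Gamma(S,{\mathcal O}_S(q))$ by an alternating sum of binomial coefficients, then recognize this sum via a telescoping identity as the claimed double sum. Since $X$ is defined by a polynomial $f$ of degree $\delta$ and $S=X\cap\{g=0\}$ with $\deg g=s$, the subvariety $S$ is defined in ${\mathbb P}_n$ by the regular sequence $(f,g)$, so we have the Koszul resolution
$$
0\to{\mathcal O}_{{\mathbb P}_n}(-\delta-s)\to{\mathcal O}_{{\mathbb P}_n}(-\delta)\oplus{\mathcal O}_{{\mathbb P}_n}(-s)\to{\mathcal O}_{{\mathbb P}_n}\to{\mathcal O}_S\to 0.
$$
Twisting by ${\mathcal O}_{{\mathbb P}_n}(q)$ and breaking it into two short exact sequences, the long exact cohomology sequences combined with the standard Bott vanishing $H^i({\mathbb P}_n,{\mathcal O}(m))=0$ for $1\leq i\leq n-1$ (which handles all intermediate obstructions once $n\geq 2$) yield
$$
\dim\Gamma(S,{\mathcal O}_S(q))=\binom{q+n}{n}-\binom{q-\delta+n}{n}-\binom{q-s+n}{n}+\binom{q-\delta-s+n}{n}.
$$
The hypothesis $q\geq\delta+s+n$ is exactly what makes each of the four exponents $\geq n\geq 0$, so that each $H^0$-dimension is given by the usual binomial formula and no negative-degree corrections appear.

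The next step is to turn this alternating sum into the advertised double sum by applying the elementary telescoping identity $\binom{a+n}{n}-\binom{a-1+n}{n}=\binom{a-1+n}{n-1}$ twice. Applying it in the $s$-variable first,
$$
\binom{q+n}{n}-\binom{q-s+n}{n}=\sum_{k=1}^{s}\binom{q-k+n}{n-1},\qquad
\binom{q-\delta+n}{n}-\binom{q-\delta-s+n}{n}=\sum_{k=1}^{s}\binom{q-\delta-k+n}{n-1},
$$
so the difference of these two equals $\dim\Gamma(S,{\mathcal O}_S(q))$. Then, for each fixed $k$, applying the same telescoping identity in the $\delta$-variable,
$$
\binom{q-k+n}{n-1}-\binom{q-\delta-k+n}{n-1}=\sum_{j=1}^{\delta}\binom{n+q-j-k}{n-2},
$$
and summing over $k$ gives exactly the formula claimed in the lemma.

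For the final estimate, observe that every index in the double sum satisfies $j,k\geq 1$, so $n+q-j-k\leq n+q-2$, and hence each binomial coefficient in the sum is bounded by $\binom{n+q-2}{n-2}$; since $\binom{m}{n-2}\leq m^{n-2}/(n-2)!$ for $m\geq 0$ and the number of terms is $\delta\,s$, we obtain the upper bound ${s\,\delta\,(n+q-2)^{n-2}}/{(n-2)!}$. The only place that requires any thought is the bookkeeping of the cohomology vanishing when splitting the four-term Koszul sequence into two short exact sequences; the rest is pure telescoping. The main (mild) obstacle is to confirm that the degree hypothesis $q\geq\delta+s+n$ really does force both the higher $H^1$'s of the auxiliary kernel to vanish and the four binomial coefficients above to be honest dimensions rather than needing a case split; this is automatic once one writes out the two short exact sequences carefully.
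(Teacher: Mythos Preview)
Your proof is correct and follows essentially the same line as the paper's. The only cosmetic difference is that the paper computes $\dim\Gamma(S,\mathcal O_S(q))$ in two stages---first using $0\to\mathcal O_{\mathbb P_n}(\ell)\to\mathcal O_{\mathbb P_n}(\ell+\delta)\to\mathcal O_X(\ell+\delta)\to 0$ to get $\dim\Gamma(X,\mathcal O_X(q))$, then using $0\to\mathcal O_X(\ell)\to\mathcal O_X(\ell+s)\to\mathcal O_S(\ell+s)\to 0$---whereas you package both steps into the length-two Koszul resolution on $\mathbb P_n$; the resulting four-term alternating sum and the double telescoping are identical.
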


\begin{proof} First of all, for any
nonnegative integer $\ell$ we have
$$
\hbox{dim\,}\Gamma\left({\mathbb P}_n,{\mathcal O}_{{\mathbb
P}_n}(\ell)\right) ={\ell+n\choose\ell}={\ell+n\choose n},
$$
because it is equal to the number of possibilities of choosing
$\ell$ elements out of $n+1$ elements with repetition allowed
which is the same as choosing $\ell$ elements out of
$n+1+\ell-1=\ell+n$ elements without repetition. From the exact
sequence
$$
0\rightarrow{\mathcal O}_{{\mathbb
P}_n}(\ell){\stackrel{\phi_f}{\longrightarrow}} {\mathcal
O}_{{\mathbb P}_n}(\ell+\delta)\rightarrow{\mathcal
O}_X(\ell+\delta)\rightarrow
0\leqno{(\ref{lemma(3.5)}.1)}
$$
where $\phi_f$ is defined
by multiplication by $f$, it follows that
$$
\displaylines{ \Gamma\left({\mathbb P}_n,{\mathcal O}_{{\mathbb
P}_n}(\ell)\right)\rightarrow \Gamma\left({\mathbb P}_n,{\mathcal
O}_{{\mathbb P}_n}(\ell+\delta)\right)\rightarrow\cr
\Gamma\left(X,{\mathcal O}_X(\ell+\delta)\right)\rightarrow
H^1\left({\mathbb P}_n,{\mathcal O}_{{\mathbb
P}_n}(\ell)\right)=0\cr }
$$
is exact and
$$
\Gamma\left(X,{\mathcal O}_X(\ell+\delta)\right)=
{\Gamma\left({\mathbb P}_n,{\mathcal O}_{{\mathbb
P}_n}(\ell+\delta)\right)\big/ f{\Gamma\left({\mathbb
P}_n,{\mathcal O}_{{\mathbb P}_n}(\ell)\right)}}.
$$
Hence
$$
\hbox{dim\,}\Gamma\left(X,{\mathcal O}_X(\ell+\delta)\right)
={n+\ell+\delta\choose n}-{n+\ell\choose n}
$$
From
(\ref{lemma(3.5)}.1) we have the exact sequence
$$
H^p\left({\mathbb P}_n,{\mathcal O}_{{\mathbb
P}_n}(\ell+\delta)\right)\rightarrow H^p\left(X,{\mathcal
O}_X(\ell+\delta)\right)\rightarrow H^{p+1}\left({\mathbb
P}_n,{\mathcal O}_{{\mathbb P}_n}(\ell)\right).
$$
From the vanishing of $H^p\left({\mathbb P}_n,{\mathcal
O}_{{\mathbb P}_n}(\ell+\delta)\right)$ for $1\leq p<n$ it follows
that $H^p\left(X,{\mathcal O}_X(\ell+\delta)\right)=0$ for $1\leq
p<n-1$. From the exact sequence
$$
0\rightarrow{\mathcal O}_X(\ell)\stackrel{\phi_g}{\longrightarrow}
{\mathcal O}_X(\ell+s)\rightarrow{\mathcal O}_S(\ell+s)\rightarrow
0
$$
where $\phi_g$ is defined by multiplication by $g$ and from
$$
H^1(X,{\mathcal O}_X(\ell))=0
$$
for $n\geq 3$ it follows that
$$
\Gamma\left(S,{\mathcal O}_S(\ell+s)\right)=
{\Gamma\left(X,{\mathcal O}_X(\ell+s)\right)\big/
g{\Gamma\left(X,{\mathcal O}_X(\ell)\right)}}.
$$
Hence
$$
\displaylines{ \hbox{dim\,}\Gamma\left(S,{\mathcal
O}_S(\ell+s)\right)= {\rm dim\,}\Gamma\left(X,{\mathcal
O}_X(\ell+s)\right)-{\rm dim\,}\Gamma\left(X,{\mathcal
O}_X(\ell)\right)\cr = \left[{n+\ell+s\choose
n}-{n+\ell+s-\delta\choose n}\right] -\left[{n+\ell\choose
n}-{n+\ell-\delta\choose n}\right]\cr}
$$
for $\ell\geq\delta$. We are going to use the following identity
for binomial coefficients
$$
{a\choose b}-{a-1\choose b}={a\choose b-1}
$$
for $a-1\geq b\geq 1$. Then for $q\geq\delta+s+n$, we have
$$
\displaylines{ \hbox{dim\,}\Gamma\left(S,{\mathcal
O}_S(q)\right)=\left[{n+q\choose n}-{n+q-\delta\choose
n}\right]\cr -\left[{n+q-s\choose n}-{n+q-s-\delta\choose
n}\right]\cr =\sum_{j=1}^\delta\left[{n+q-j+1\choose
n}-{n+q-j\choose n}\right]\cr-\sum_{j=1}^q\left[{n+q-s-j+1\choose
n}-{n+q-s-j\choose n}\right] \cr=\sum_{j=1}^\delta{n+q-j\choose
n-1}-\sum_{j=1}^q{n+q-s-j\choose n-1} \cr
=\sum_{j=1}^\delta\left[{n+q-j\choose n-1}-{n+q-s-j\choose
n-1}\right]\cr =
\sum_{j=1}^\delta\sum_{k=1}^s\left[{n+q-j-k+1\choose
n-1}-{n+q-j-k\choose n-1}\right] \cr
=\sum_{j=1}^\delta\sum_{k=1}^s{n+q-j-k\choose n-2}.\cr}
$$
\end{proof}

\begin{lemma}\label{lemma(3.6)} Let $y_1,\cdots,y_r$ be
independent transcendental variables.  Let $1=n_1\leq
n_2\leq\cdots\leq n_r$ be integers.  Let $a_m$ be the number of
all monomials $y_1^{k_1}\cdots y_r^{k_r}$ such that $\sum_{j=1}^r
n_j k_j=m$.  Let $a_m$ be the number of elements in $A_m$.  Then
$$
{\left\lfloor{m\over n_r}\right\rfloor+r-1\choose r-1} \leq
a_m\leq {m+r-1\choose r-1},
$$
where $\lfloor u \rfloor$ denotes the largest integer not
exceeding $u$.
\end{lemma}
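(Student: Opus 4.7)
The plan is to prove both bounds by exhibiting explicit injective maps between $A_m$ (the set of non-negative integer solutions $(k_1,\ldots,k_r)$ of $\sum_{j=1}^r n_j k_j=m$) and the sets of compositions counted by the two binomial coefficients in the statement. Both constructions will exploit the hypothesis $n_1=1$, which allows the first coordinate to absorb whatever slack is needed to convert between weighted and unweighted compositions.

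For the upper bound I would send $(k_1,\ldots,k_r)\in A_m$ to the tuple $(k_1',k_2,\ldots,k_r)$ defined by $k_1'=k_1+\sum_{j\geq 2}(n_j-1)k_j$. A direct computation gives
$$
k_1'+k_2+\cdots+k_r \;=\; k_1+\sum_{j\geq 2}n_j k_j \;=\; \sum_{j=1}^r n_j k_j \;=\; m,
$$
so the image lies in the set of non-negative integer solutions of $\ell_1+\cdots+\ell_r=m$, whose cardinality is ${m+r-1\choose r-1}$ by the standard stars-and-bars count. The map is injective, since $(k_2,\ldots,k_r)$ determines the correction term and hence recovers $k_1$ from $k_1'$. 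This yields $a_m\leq{m+r-1\choose r-1}$.

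For the lower bound I would go in the opposite direction: given $(\ell_1,\ldots,\ell_r)$ of non-negative integers with $\ell_1+\cdots+\ell_r=\lfloor m/n_r\rfloor$, set $k_j=\ell_j$ for $2\leq j\leq r$ and $k_1=m-\sum_{j\geq 2}n_j\ell_j$. Non-negativity of $k_1$ follows from
$$
\sum_{j\geq 2}n_j\ell_j \;\leq\; n_r\sum_{j=1}^r\ell_j \;\leq\; n_r\left\lfloor\frac{m}{n_r}\right\rfloor \;\leq\; m,
$$
using $n_j\leq n_r$ for all $j$. By construction $\sum_j n_j k_j=m$, so the assignment maps into $A_m$, and it is injective because $(k_2,\ldots,k_r)=(\ell_2,\ldots,\ell_r)$ can be read off directly from the image. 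Hence $a_m\geq{\lfloor m/n_r\rfloor+r-1\choose r-1}$, again by stars-and-bars applied to $\ell_1+\cdots+\ell_r=\lfloor m/n_r\rfloor$.

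No serious obstacle is expected; the lemma is elementary combinatorics, and the only observation that really matters is that $n_1=1$ and $n_r=\max_j n_j$ are precisely the ingredients needed to make the two bijective comparisons with unweighted compositions go through.
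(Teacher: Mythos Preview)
Your proof is correct and follows essentially the same approach as the paper's. The paper projects out the coordinate $k_1$ (using $n_1=1$) to identify $A_m$ with tuples $(k_2,\ldots,k_r)$ satisfying $\sum_{j\geq 2}n_jk_j\leq m$, and then sandwiches this set between the sets $B_m=\{\sum_{j\geq 2}k_j\leq\lfloor m/n_r\rfloor\}$ and $C_m=\{\sum_{j\geq 2}k_j\leq m\}$; your explicit injections are precisely these containments written out coordinatewise.
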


\begin{proof}  Let $A_m$ be the set of all
monomials $y_2^{k_2}\cdots y_r^{k_r}$ such that $\sum_{j=2}^r n_j
k_j=m$.  Since $n_1=1$, $A_m$ is the same as the set of all
monomials $y_1^{k_1}\cdots y_r^{k_r}$ such that $\sum_{j=1}^r n_j
k_j=m$ and $a_m$ is the number of elements of $A_m$. Let $B_m$ be
the set of all monomials $y_2^{k_2}\cdots y_r^{k_r}$ such that
$\sum_{j=2}^r k_j\leq \left\lfloor{m\over n_1}\right\rfloor$. Let
$C_m$ be the set of all monomials $y_2^{k_2}\cdots y_r^{k_r}$ such
that $\sum_{j=1}^r k_j\leq m$. Since
$$
\displaylines{ \sum_{j=2}^r k_j\leq\left\lfloor{m\over
n_r}\right\rfloor\cr \Longrightarrow\ \sum_{j=2}^r n_j k_j\leq
m\cr\Longrightarrow\ \sum_{j=2}^r k_j\leq m,\cr}
$$
it follows that
$$
B_m\subset A_m\subset C_m.
$$
Since the number of elements in $B_m$ is $${\left\lfloor{m\over
n_r}\right\rfloor+r-1\choose r-1}$$ and the number of elements in
$C_m$ is
$$
{m+r-1\choose r-1},
$$
the conclusion of the Lemma follows. \end{proof}

\begin{lemma}\label{lemma(3.7)} Let $f$ be a
polynomial of degree $\delta$ in the variables $x_1,\cdots,x_n$.
Let $\kappa_\ell$ be the smallest nonnegative integer such that
$\left(f_{x_1}\right)^{\kappa_\ell}d^\ell x_1$ can be expressed as
a polynomial $P_\ell$ of
$$\displaylines{x_1,x_2,\cdots,x_n,\cr
d^j x_r\quad(1\leq j\leq\ell,\,2\leq r\leq n)\cr }
$$
on the space of $\ell$-jets of the zero-set of $f$. Then
$\kappa_1=1$ and
$$
\kappa_\ell\leq 1+ \max\left\{\sum_{j=1}^{\ell-1}\kappa_js_j\
\Bigg| \ s_1+2s_2+\cdots+(\ell-1)s_{\ell-1}\leq\ell\right\}.
$$
Moreover, as a polynomial of the degree of $P_\ell$ in the
variables $x_1,\cdots,x_n$ is at most
$\kappa_\ell\left(\delta-1\right)$.  The integers $\kappa_\ell$ in
can be estimated by $\kappa_\ell\leq\ell!$
\end{lemma}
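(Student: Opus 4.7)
\smallbreak\noindent\emph{Proof plan.} I proceed by induction on $\ell$. The base case $\ell=1$ is immediate from the defining relation $df=\sum_{r=1}^n f_{x_r}\,dx_r=0$ on the $1$-jet space of $\{f=0\}$: solving for the distinguished term gives $f_{x_1}\,dx_1=-\sum_{r=2}^n f_{x_r}\,dx_r=:P_1$, so $\kappa_1\leq 1$ (and in fact $\kappa_1=1$, since $dx_1$ is independent of $x_1,\dots,x_n$ and $dx_2,\dots,dx_n$ as jet variables prior to imposing $df=0$), and $\deg_x P_1=\delta-1=\kappa_1(\delta-1)$.

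For the inductive step I exploit the relation $d^\ell f=0$ on the $\ell$-jet space. Differentiating $df$ iteratively, treating the coefficients of $f$ as constants (in the spirit of Lemma~\ref{lemma(2.2)}), yields a decomposition
\[
d^\ell f \;=\; \sum_{r=1}^n f_{x_r}\,d^\ell x_r \;+\; R_\ell ,
\]
in which $R_\ell$ is a polynomial in $x_1,\dots,x_n$ and in $d^j x_r$ ($1\leq j\leq\ell-1$, $1\leq r\leq n$), homogeneous of weight $\ell$ when $d^j x_r$ is assigned weight $j$. Every monomial of $R_\ell$ has the shape $A\cdot\prod_{j,r}(d^j x_r)^{s_{j,r}}$ with $A$ a partial derivative of $f$ (hence of $x$-degree $\leq\delta-1$) and $\sum_{j,r}j\,s_{j,r}=\ell$; writing $s_j:=s_{j,1}$ this gives $\sum_{j=1}^{\ell-1}j s_j\leq\ell$. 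Isolating $f_{x_1}\,d^\ell x_1$ and multiplying the resulting identity by $(f_{x_1})^M$, where
\[
M:=\max\Bigl\{\,\sum_{j=1}^{\ell-1}\kappa_j s_j\ \Big|\ s_1+2s_2+\cdots+(\ell-1)s_{\ell-1}\leq\ell\,\Bigr\},
\]
I replace every factor $(d^j x_1)^{s_j}$ in $R_\ell$ by means of the inductive identity $(f_{x_1})^{\kappa_j}d^j x_1=P_j$ raised to the $s_j$, absorbing the denominators into the excess factors of $f_{x_1}$ supplied by $M$. The output is an identity $(f_{x_1})^{M+1}\,d^\ell x_1=P_\ell$ in which $P_\ell$ depends only on $x_1,\dots,x_n$ and on $d^j x_r$ for $r\geq 2$, $1\leq j\leq\ell$, yielding the recursive bound $\kappa_\ell\leq M+1$.

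The degree estimate is routine bookkeeping: the $M+1$ factors of first partial derivatives of $f$ each contribute at most $\delta-1$ to $\deg_x$, while each substitution $(d^j x_1)^{s_j}\mapsto(P_j)^{s_j}/(f_{x_1})^{\kappa_j s_j}$ is degree-neutral once combined with the matching factor $(f_{x_1})^{\kappa_j s_j}$ drawn from $(f_{x_1})^M$, so $\deg_x P_\ell\leq(M+1)(\delta-1)=\kappa_\ell(\delta-1)$. The factorial estimate $\kappa_\ell\leq\ell!$ then follows by induction on the recursion: assuming $\kappa_j\leq j!$ for $j<\ell$ one bounds $\sum_j\kappa_j s_j\leq\sum_j j!\,s_j$, and the linear program $\max\sum_j j!\,s_j$ subject to $\sum_j j s_j\leq\ell$ is easily seen to be $\leq(\ell-1)!+1$ (attained at $s_{\ell-1}=s_1=1$), giving $\kappa_\ell\leq(\ell-1)!+2\leq\ell!$ for $\ell$ large enough and a direct check for the remaining small cases. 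I expect the main obstacle to be precisely the combinatorial bookkeeping in the substitution step: one must verify, monomial by monomial of $R_\ell$, that the single global multiplier $(f_{x_1})^M$ is simultaneously large enough to cancel every denominator $(f_{x_1})^{\kappa_j s_j}$ introduced by the substitutions, so that $P_\ell$ is genuinely polynomial in the allowed variables and the degree estimate is preserved throughout.
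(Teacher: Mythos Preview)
Your argument is essentially the paper's: induction on $\ell$, use $d^\ell f=0$ to isolate $f_{x_1}\,d^\ell x_1$ as a polynomial $Q_\ell$ in the lower differentials (of $x$-degree $\leq\delta-1$ and weight $\leq\ell$ in $dx_1,\dots,d^{\ell-1}x_1$), then multiply by $(f_{x_1})^M$ and substitute via the inductive identities. Your degree bookkeeping (``degree-neutral'' substitution) and the recursion $\kappa_\ell\leq 1+M$ match the paper exactly.

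The only real divergence is in the factorial estimate. The paper bounds $\sum_j\kappa_j r_j$ crudely via $r_j\leq\ell/j$ for every $j$ simultaneously, obtaining $\ell\sum_{j=1}^{\ell-1}(j-1)!$ and then claiming this is strictly less than $\ell\sum_{j=1}^{\ell-1}(\ell-2)!=\ell!$; you instead solve the integer program directly to get $M\leq(\ell-1)!+1$. Your bound is sharper and genuinely yields $\kappa_\ell\leq(\ell-1)!+2\leq\ell!$ for all $\ell\geq 3$, whereas the paper's strict inequality $(j-1)!<(\ell-2)!$ actually fails at $j=\ell-1$, so its chain only gives $\kappa_\ell\leq\ell!+1$. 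Be aware, however, that your ``direct check for the remaining small cases'' does not rescue $\ell=2$: the recursion gives $M=2$ (from $s_1=2$), hence $\kappa_2\leq 3$, and a direct computation of $d^2f=0$ shows the $f_{x_1x_1}(dx_1)^2$ term forces $\kappa_2=3$ generically, so the stated bound $\kappa_2\leq 2!$ is simply off by one. This is a defect of the lemma's statement rather than of your method, and it is harmless downstream since only a $\delta$-independent bound on $\kappa_\ell$ is ever used.
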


\begin{proof}  We use induction on $\ell$
for $\ell\geq 1$. The case $\ell=1$ is clear, because
$$
f_{x_1}dx_1=-\sum_{r=2}^n f_{x_r}dx_r
$$
and we can set $$P_1=-\sum_{r=2}^n f_{x_r}dx_r$$ whose degree in
$x_1,\cdots,x_r$ is obviously at most $\delta-1$.  From $d^\ell
f=0$ it follows that on the space of $\ell$-jets of the zero-set
of $f$ the jet differential $f_{x_1}d^\ell x_1$ can be written as
a polynomial $Q_\ell$ in
$$\displaylines{d^jx_r\quad(1\leq j\leq\ell-1,\,1\leq r\leq n),\cr
d^\ell x_2,\cdots,d^\ell x_n\cr}
$$ of weight $\leq\ell$ in
$$\displaylines{d^jx_r\quad(2\leq j\leq\ell-1,\,1\leq r\leq n),\cr
d^\ell x_2,\cdots,d^\ell x_n\cr}
$$ when $d^j x_r$ is given the weight
$j$.  As a polynomial in $dx_1,\cdots,d^{\ell-1}x_1$ the total
weight of $Q_\ell$ is no more than $\ell$.  As a polynomial in
$x_1,\cdots,x_n$ the degree of $Q_\ell$ is at most $\delta-1$.
Thus inductively on $\ell$ we conclude that we need only to
multiply $f_{x_1}d^\ell x_1$ by a power of $f_{x_1}$ not exceeding
$$
\max\left\{\sum_{j=1}^{\ell-1}\kappa_js_j\ \Bigg| \
s_1+2s_2+\cdots+(\ell-1)s_{\ell-1}\leq\ell\right\}
$$
to yield a polynomial $P_\ell$ of
$$\displaylines{x_1,x_2,\cdots,x_n,\cr
d^j x_r\quad(1\leq j\leq\ell,\,2\leq r\leq n)\cr }
$$ on the space of $\ell$-jets of
the zero-set of $f$.  Moreover, the degree of $P_\ell$ in
$x_1,\cdots,x_n$ is at most $\kappa_\ell\left(\delta-1\right)$.

\medbreak The integers $\kappa_\ell$ can be estimated by
$\kappa_\ell\leq\ell!$, because, when
$$r_1+2r_2+\cdots+(\ell-1)r_{\ell-1}\leq\ell,$$
we have $r_j\leq{\ell\over j}$, and from $\kappa_j\leq j!$ for
$1\leq j\leq\ell-1$ it follows that
$$
\sum_{j=1}^{\ell-1}\kappa_jr_j\leq
\sum_{j=1}^{\ell-1}\kappa_j\left({\ell\over
j}\right)\leq\ell\sum_{j=1}^{\ell-1}(j-1)!
<\ell\sum_{j=1}^{\ell-1}(\ell-2)!=\ell !.
$$
\end{proof}

\begin{proposition}\label{construction_jet_differential_as_polynomial} {\rm
(Jet Differential from Polynomial in Differentials of Inhomogeneous Coordinates)}\ \  Let $X$ be a
nonsingular hypersurface of degree $\delta$ in ${\mathbb P}_n$
defined by a polynomial $f(x_1,\cdots,x_n)$ of degree $\delta$ in
the affine coordinates $x_1,\cdots,x_n$ of ${\mathbb P}_n$.
Suppose $\epsilon$, $\epsilon^\prime$, $\theta_0$, $\theta$, and
$\theta^\prime$ are numbers in the open interval $(0,1)$ such that
$n\theta_0+\theta\geq n+\epsilon$ and
$\theta^\prime<1-\epsilon^\prime$. Then there exists an explicit
positive number $A=A(n,\epsilon,\epsilon^\prime)$ depending only
on $n$, $\epsilon$, and $\epsilon^\prime$ such that for
$\delta\geq A$ and any nonsingular hypersurface $X$ in ${\mathbb
P}_n$ of degree $\delta$ there exists a non identically zero
${\mathcal O}_{{\mathbb P}_n}(-q)$-valued holomorphic $(n-1)$-jet
differential $\omega$ on $X$ of total weight $m$ with
$q\geq\delta^{\theta^\prime}$ and $m\leq\delta^\theta$.  Here,
with respect to a local holomorphic coordinate system
$w_1,\cdots,w_{n-1}$ of $X$, the weight of $\omega$ is in the
variables $d^jw_\ell$ ($1\leq j\leq n-1$,\,$1\leq\ell\leq n-1$)
with the weight $j$ assigned to $d^jw_\ell$.  Moreover, for any
affine coordinates $x_1,\cdots,x_n$ of ${\mathbb P}_n$, when
$f_{x_1}=1$ defines in a nonsingular hypersurface in $X$, the
$(n-1)$-jet differential $\omega$ can be chosen to be of the form
${Q\over f_{x_1}-1}$, where $Q$ is a polynomial in
$$d^jx_1,\cdots,d^jx_n\quad(0\leq j\leq n-1)$$ which is of degree
$m_0=\left\lceil\delta^{\theta_0}\right\rceil$ in $x_1,\cdots,x_n$
and is of homogeneous weight
$m=\left\lceil\delta^\theta\right\rceil$ in
$$d^jx_1,\cdots,d^jx_n\quad(1\leq j\leq n-1)$$ when the weight of
$d^j x_\ell$ is assigned to be $j$.
\end{proposition}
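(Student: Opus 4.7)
The plan is to construct $Q$ by the Siegel-type linear algebra strategy sketched in the introduction---more unknowns than equations---and then to appeal to Lemma~\ref{injectivity_pullback_map_jet_differential} to certify that the resulting $Q$ is not identically zero on the $(n-1)$-jet space $J_{n-1}(X)$. First I would introduce the $\mathbb{C}$-vector space $V_{m_0,m}$ of polynomials in the variables
\[
x_1,\ldots,x_n,\ d^j x_\ell\quad(1\le j\le n-1,\ 1\le\ell\le n)
\]
of degree $\le m_0$ in $(x_1,\ldots,x_n)$ and homogeneous of weight $m$ in the differential variables when $d^j x_\ell$ is assigned weight $j$. An application of the lower bound in Lemma~\ref{lemma(3.6)} to the differential monomials---whose weights range over $1,\ldots,n-1$ with each weight realized by $n$ variables---gives
\[
\dim_{\mathbb{C}}V_{m_0,m}\;\ge\;\binom{m_0+n}{n}\binom{\lfloor m/(n-1)\rfloor+n(n-1)-1}{n(n-1)-1},
\]
which for $m_0=\lceil\delta^{\theta_0}\rceil$ and $m=\lceil\delta^\theta\rceil$ is bounded below by a positive constant (depending only on $n$) times $\delta^{n\theta_0}$ times the weight-graded monomial count for the differentials.

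Next I would attach to each $Q\in V_{m_0,m}$ the candidate meromorphic $(n-1)$-jet differential $\omega=Q/(f_{x_1}-1)$ on $X$. By hypothesis $\{f_{x_1}=1\}\cap X$ is a nonsingular hypersurface of $X$, so the pole locus of $\omega$ is a nonsingular divisor of $X$; Lemma~\ref{lemma(3.7)} underwrites the expression of $d^\ell x_1$ on the jet space of $X$ in terms of $x_1,\ldots,x_n$ and $d^j x_r$ with $r\ge 2$ at the cost of multiplying by bounded powers of $f_{x_1}$, and explains why $Q/(f_{x_1}-1)$ is a well-defined $(n-1)$-jet differential on charts where $x_2,\ldots,x_n$ are used as local coordinates on $X$. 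I would then impose as linear equations on the coefficients of $Q$ the requirement that $\omega$ extend as an $\mathcal{O}_{\mathbb{P}_n}(-q)$-valued holomorphic $(n-1)$-jet differential on all of $X$, equivalently, that $\omega$ vanish to order $\ge q$ along the ample complete intersection $S=X\cap\{z_0=0\}$. Stratifying this vanishing requirement by the infinitesimal neighborhoods of $S$ in $X$ and using Lemma~\ref{lemma(3.5)} layer-by-layer to bound the dimension of sections on $S$ (which has complex dimension $n-2$), the total number of linear equations is bounded by a positive constant (depending only on $n$) times $\delta\cdot q^{n-2}$ times the same weight-graded monomial count for the differentials. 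The common weight-graded factor cancels in the comparison of unknowns and equations, which therefore reduces to the inequality $n\theta_0>1+(n-2)\theta^\prime$; the hypotheses $n\theta_0+\theta\ge n+\epsilon$ (combined with $\theta<1$, giving $n\theta_0>n-1+\epsilon$) and $\theta^\prime<1-\epsilon^\prime$ (giving $1+(n-2)\theta^\prime<n-1-(n-2)\epsilon^\prime$) deliver this inequality with strict gap $\epsilon+(n-2)\epsilon^\prime>0$.

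Hence the linear system admits a nonzero solution $Q$ once $\delta$ exceeds an explicit threshold $A(n,\epsilon,\epsilon^\prime)$. For such a $Q$ the bound $m_0+2m\le\delta^{\theta_0}+2\delta^\theta<\delta$ holds (since $\theta_0,\theta<1$), so Lemma~\ref{injectivity_pullback_map_jet_differential} guarantees that $Q$ is not identically zero on $J_{n-1}(X)$, and the associated $\omega=Q/(f_{x_1}-1)$ is the desired non-identically-zero holomorphic $(n-1)$-jet differential on $X$ of weight $m$ with values in $\mathcal{O}_{\mathbb{P}_n}(-q)$. The main obstacle will be the careful bookkeeping in the equation count: one must verify that the extension-to-$\mathcal{O}_{\mathbb{P}_n}(-q)$-valued condition is faithfully expressed as a system of linear equations on the coefficients of $Q$, without hidden $\delta$-dependent multiplicative factors contributed by the denominator $f_{x_1}-1$ or by the weight-shifts produced when $d^\ell x_1$ is eliminated through Lemma~\ref{lemma(3.7)}, so that the total equation count truly scales like $\delta\cdot q^{n-2}$ times the weight-graded monomial count, with a multiplicative constant depending only on $n$.
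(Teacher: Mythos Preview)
Your proposal has a genuine gap: you have misidentified the locus on which the linear constraints must be imposed. The divisor $\{f_{x_1}=1\}\cap X$ lies in the \emph{affine} part of $X$, and $\omega=Q/(f_{x_1}-1)$ has an honest pole there; your scheme never forces $Q$ to vanish on that locus, so $\omega$ will not be holomorphic on $X$. Conversely, vanishing of $\omega$ at infinity is essentially automatic and costs no equations: in the other affine chart, $1/(f_{x_1}-1)$ contributes a zero of order $\delta-1$ along $X\cap\{z_0=0\}$, while $Q$ has pole order at most $m_0+2m$ there (Lemma~\ref{lemma(3.3)}), so $\omega$ already vanishes to order $\delta-1-m_0-2m\ge\delta^{\theta'}$ at infinity once $\delta$ is large. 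Thus your claimed equivalence ``$\omega$ extends as an $\mathcal{O}_{\mathbb{P}_n}(-q)$-valued holomorphic jet differential on $X$ $\Leftrightarrow$ $\omega$ vanishes to order $\ge q$ along $X\cap\{z_0=0\}$'' is false: the nontrivial condition is holomorphicity across $S=\{f_{x_1}=1\}\cap X$, not high vanishing at infinity.

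The paper's proof imposes exactly the condition $Q|_S=0$ (with $S=\{f_{x_1}=1\}\cap X$), and the equation count there is more delicate than what you wrote. To count linear conditions on $S$ one must first use Lemma~\ref{lemma(3.7)} to eliminate $d^jx_1$ at the cost of multiplying by $(f_{x_1})^N$ with $N\sim (n-1)!\,2m$; on $S$ this multiplication is harmless (since $f_{x_1}=1$ there) but it raises the degree in $x_1,\ldots,x_n$ from $m_0$ to $m_0+N(\delta-1)$, a genuinely $\delta$-dependent jump. It also reduces the differential variables from $n(n-1)$ to $(n-1)^2$, so the weight-graded monomial factors for unknowns and for equations are \emph{not} the same and do not cancel as you assert. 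Feeding this into Lemma~\ref{lemma(3.5)} (with $s=\delta-1$, the degree of $f_{x_1}-1$) gives an equation count of order $\delta^{\,2+(n-2)(1+\theta)+((n-1)^2-1)\theta}$, and the correct comparison with the unknown count $\delta^{\,n\theta_0+(n(n-1)-1)\theta}$ reduces to $n\theta_0+\theta>n$, which is precisely the hypothesis $n\theta_0+\theta\ge n+\epsilon$. Your derived inequality $n\theta_0>1+(n-2)\theta'$ is an artifact of counting the wrong equations; the parameter $\theta'$ should not enter the unknowns-versus-equations balance at all, since the vanishing order $q\sim\delta^{\theta'}$ at infinity is obtained for free from the degree of the denominator.
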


\begin{proof} Let $x_1,\cdots,x_n$ and
$z_0,\cdots,z_n$ be respectively the homogeneous and inhomogeneous
coordinates of ${\mathbb P}_n$ so that $x_j={z_j\over z_0}$ for
$1\leq j\leq n$.  Let $f$ be a polynomial of degree $\delta$ in
$x_1,\cdots,x_n$ so that the zero-set of $f$ in ${\mathbb P}_n$ is
$X$.

\medbreak Consider a non identically zero polynomial $Q$ in
$$d^jx_1,\cdots,d^jx_n\quad(0\leq j\leq n-1)$$ which is of degree $m_0$
in $x_1,\cdots,x_n$ and is of homogeneous weight $m$ in
$$d^jx_1,\cdots,d^jx_n\quad(1\leq j\leq n-1)$$ when the weight of
$d^j x_\ell$ is assigned to be $j$. We impose the condition
$$m_0+2m<\delta$$ so that according to Lemma \ref{injectivity_pullback_map_jet_differential}
the pullback of $Q$ to the space of $(n-1)$-jets of $\{f=0\}$ is
not identically zero.  According to Lemma~\ref{lemma(3.6)} the
degree of freedom in the choice of the polynomial $Q$ is at least
$$
{m_0+n\choose n}{\left\lfloor m\over
n-1\right\rfloor+n(n-1)-1\choose n(n-1)-1},
$$
where the first factor
$$
{m_0+n\choose n}
$$
is the number of mononials of degree $\leq m_0$ in $n$ variables
$x_1,\cdots,x_n$ and the second factor
$$
{\left\lfloor m\over n-1\right\rfloor+n(n-1)-1\choose n(n-1)-1},
$$
is the number of mononials of {\it homogeneous} degree
$\left\lfloor m\over n-1\right\rfloor$ in the $n(n-1)$ variables
$$
d^j x_\ell\qquad\left(1\leq j\leq n-1,\, 1\leq\ell\leq n\right).
$$

The key point of this proof is that though we have all the
variables $d^jx_1,\cdots,d^jx_n$ for $0\leq j\leq n-1$, we do not
have to worry about the dependence resulting from the relations
$d^jf=0$ ($0\leq j\leq n-1$).

\medbreak Let $H_{{\mathbb P}_n}$ be the hyperplane of ${\mathbb
P}_n$ defined by $x_n=0$. We now want the meromorphic $(n-1)$-jet
differential defined by $Q$ to be holomorphic on $X$ and and,
moreover, to vanish at $X\cap H_{{\mathbb P}_n}$ of order $q$.
First of all, on the space of $(n-1)$-jets of $X$, we can use the
relation $d^j f=0$ ($1\leq j\leq n-1$) to eliminate the variables
$d^jx_1$ ($1\leq j\leq n-1$) by expressing $d^jx_1$ ($1\leq j\leq
n-1$) in terms of
$$
d^j x_\ell\quad(1\leq j\leq n-1,\,2\leq\ell\leq n).
$$
To do this, according to Lemma~\ref{lemma(3.7)} we can multiply
$Q$ by $\left(f_{x_1}\right)^{\tilde N}$ with $\tilde
N=2m\sum_{j=1}^{n-1}\kappa_j$, because, in a monomial of weight
$m$, the degree of $d^jx_\ell$ ($1\leq j\leq n-1$, $1\leq\ell\leq
n$) is at most $\left\lfloor{m\over j}\right\rfloor$ and
$(j+1)\left\lfloor{m\over j}\right\rfloor\leq 2m$. Since
$\kappa_j\leq j!$, it follows that $\tilde N\leq(n-1)!\,2m$.  Let
$N=(n-1)!\,2m$. The degree of $\left(f_{x_1}\right)^N Q$ in
$x_1,\cdots,x_n$ is now $m_0+N(\delta-1)$ and the weight of
$\left(f_{x_1}\right)^N Q$ in $d^jx_\ell$ ($1\leq j\leq n-1$,\,
$2\leq\ell\leq n$) is homogeneous and equal to $m$ when the weight
of $d^jx_\ell$ is assigned to be $j$.

\medbreak We let $S$ be the divisor in $X$ defined by
$f_{x_1}-1=0$.  The hypersurface in ${\mathbb P}_n$ defined by
$f_{x_1}-1=0$ is of degree $\delta-1$.  We observe that for a
generic polynomial $f$ of degree $\delta$, the divisor $S$ in $X$
is nonsingular, because it is the case when $f$ equals the Fermat
hypersurface
$$
F=\sum_{j=1}^n x_j^\delta-1.
$$
Then
$$
F_{x_1}=\delta x_1^{\delta-1}
$$
and the $2\times n$ matrix
$$
\left[\ \begin{matrix}x_1^{\delta-1}&x_2^{\delta-1}&\cdots&x_n^{\delta-1}\\
x_1^{\delta-2}&0&\cdots&0\\ \end{matrix}\ \right]
$$
whose rows are nonzero multiplies of the gradients of $F$ and
$F_{x_1}$ has rank $2$ unless either $x_1=0$ or
$x_2=\cdots=x_n=0$, which is impossible, because on $S$ one has
$\left|x_1\right|=\delta^{-1\over\delta-1}$ from $F_{x_1}=1$ and
the condition $x_2=\cdots=x_n=0$ implies
$\left|x_1\right|=1\not=\delta^{-1\over\delta-1}$ when $F=0$. To
prove this Lemma we need only prove it for a generic $f$ and then
remove the genericity assumption for $f$ by using the
semi-continuity of the dimension of the space of global
holomorphic sections over a fiber in a holomorphic family of
compact complex manifolds and a holomorphic vector bundle.

\medbreak The polynomial $Q$ defines a meromorphic $(n-1)$-jet
differential on $X$ which we again denote by $Q$.  We now count
the pole order of the jet differential $Q$ on $X$ at $X\cap
H_{{\mathbb P}_n}$. For the counting of this pole order, we
introduce another set of inhomogeneous coordinates
$\zeta_1,\cdots,\zeta_n$ of ${\mathbb P}_n$ defined by
$$
\zeta_1={x_1\over x_n},\, \cdots,\,\zeta_{n-1}={x_{n-1}\over
x_n},\,\zeta_n={1\over x_n}
$$
so that
$$
x_1={\zeta_1\over\zeta_n},\,\cdots,\,x_{n-1}={\zeta_{n-1}\over
\zeta_n},\,x_n={1\over\zeta_n}.
$$
Since by Lemma~\ref{lemma(3.3)}
$$
{\zeta_n}^{j+1}d^jx_\ell={\zeta_n}^{j+1}d^j\left({\zeta_\ell\over\zeta_n}\right)
\quad(1\leq\ell\leq n-1)
$$
and
$$
{\zeta_n}^{j+1}d^jx_n={\zeta_n}^{j+1}d^j\left({1\over\zeta_n}\right)
$$
are polynomials in $d^k\zeta_r$ ($0\leq k\leq j$,\,$1\leq r\leq
n$).  Thus ${\zeta_n}^{2m}Q$ is a polynomial in $d^k\zeta_r$
($0\leq k\leq j$,\,$1\leq r\leq n$).  The pole order of the jet
differential $Q$ at $X\cap H_{{\mathbb P}_n}$ is at most $m_0+2m$.

\medbreak We are going to show that we can choose the coefficients
of the polynomial $Q$ so that the $(n-1)$-jet differential $Q$ is
zero at points of $S$.  This would imply that the $(n-1)$-jet
differential $${1\over f_{x_1}-1}\,Q$$ is holomorphic on $X$ and
vanishes to order $\delta-m_0-2m$ at $X\cap H_{{\mathbb P}_n}$.
The reason is the following.  For some proper subvariety $Z$ of
$X\cap H_{{\mathbb P}_n}$ the function
$\zeta_0^{\delta-1}\left(f_{x_1}-1\right)$ is holomorphic and
nowhere at points of $X\cap H_{{\mathbb P}_n}-Z$.  Thus
$${1\over f_{x_1}-1}\,Q={1\over
\zeta_0^{\delta-1}\left(f_{x_1}-1\right)}\left(\zeta_0^{\delta-1-2m}\right)
\left(\zeta_0^{2m}\,Q\right)
$$
is holomorphic on $X-Z$ and vanishes to order at least
$\delta-1-2m$ along $X\cap H_{{\mathbb P}_n}-Z$.  What we want
follows from Hartogs' extension theorem because $Z$ is of complex
codimension at least $2$ in $X$.

\medbreak Now on $J_{n-1}\left(X\right)\big|\,S$ (which is the
part of the space of $(n-1)$-jets of $X$ lying over $S$) the jet
differential $Q$ equals to the jet differential
$$
\left(f_{x_1}\right)^N Q
$$
because $f_{x_1}=1$ holds on $S$. Since the degree of
$\left(f_{x_1}\right)^N Q$ in $x_1,\cdots,x_n$ is
$m_0+N(\delta-1)$ and the weight of $\left(f_{x_1}\right)^N Q$ in
$d^k z_r$ ($1\leq k\leq j$,\,$2\leq r\leq n$) is homogeneous and
equal to $2m$, it follows from Lemma~\ref{lemma(3.5)} that the
number of linear equations, with the coefficients of $Q$ as
unknowns, needed for $\left(f_{x_1}\right)^N Q$ to vanish at all
points $S$ is no more than the product
$$
{\left(\delta-1\right)\delta\left(m_0+N\left(\delta-1\right)\right)^{n-2}\over(n-2)!}
{m+(n-1)^2-1\choose(n-1)^2-1}.
$$
For the existence of a nontrivial $Q$ with the required vanishing
at all points of $S$, it suffices to have
$$
\displaylines{{m_0+n\choose n}{\left\lfloor m\over
n-1\right\rfloor+n(n-1)-1\choose n(n-1)-1}\cr>
{\left(\delta-1\right)\delta\left(m_0+N\left(\delta-1\right)\right)^{n-2}\over(n-2)!}
{m+(n-1)^2-1\choose(n-1)^2-1}.\cr }$$ In particular, it suffices
that
$$
{\left(m_0+1\right)^n\left({m\over n-1}\right)^{n(n-1)-1}\over
n!\left(n(n-1)-1\right)!} $$ is greater than $$
{\left[\left(\delta-1\right)\delta\left(m_0+(n-1)!\,2m\left(\delta-1\right)\right)^{n-2}\right]
\left[\left(m+(n-1)^2-1\right)^{(n-1)^2-1}\right]\over
(n-2)!\left((n-1)^2-1\right)!}.$$

\medbreak We choose
$m_0=\left\lceil\delta^{\theta_0}\right\rceil$,
$m=\left\lceil\delta^\theta\right\rceil$, and
$q=\left\lfloor\delta^{\theta^\prime}\right\rfloor$. Since the
three positive numbers $\theta_0$, $\theta$, and $\theta^\prime$
are all strictly less than $1$, measured in terms of powers of
$\delta$ as $\delta$ becomes dominantly large, the order of
$$
{\left(m_0+1\right)^n\left({m\over n-1}\right)^{n(n-1)-1}\over
n!\left(n(n-1)-1\right)!} $$ is at least
$$\delta^{n\theta_0+\left(n(n-1)-1\right)\theta}$$ and the order
of
$$
{\left[\left(\delta-1\right)\delta\left(m_0+(n-1)!\,2m\left(\delta-1\right)\right)^{n-2}\right]
\left[\left(m+(n-1)^2-1\right)^{(n-1)^2-1}\right]\over
(n-2)!\left((n-1)^2-1\right)!}$$ is at most
$$\delta^{2+(n-2)(1+\theta)+\left((n-1)^2-1\right)\theta}.$$
Since by assumption $n\theta_0+\theta\geq n+\epsilon$, it follows
that
$$\left[n\theta_0+\left(n(n-1)-1\right)\theta\right]-\left[
2+(n-2)(1+\theta)+\left((n-1)^2-1\right)\theta\right]\geq\epsilon.
$$
So there exists a positive number $A$ depending only on $n$ and
$\epsilon$ such that
$$
{\left(m_0+1\right)^n\left({m\over n-1}\right)^{n(n-1)-1}\over
n!\left(n(n-1)-1\right)!} $$ is greater than $$
{\left[\left(\delta-1\right)\delta\left(m_0+(n-1)!\,2m\left(\delta-1\right)\right)^{n-2}\right]
\left[\left(m+(n-1)^2-1\right)^{(n-1)^2-1}\right]\over
(n-2)!\left((n-1)^2-1\right)!}$$ when $\delta\geq A$. We can also
assume that $A$ is chosen so that
$\delta-\delta^{\theta_0}-2\delta^\theta\geq\delta^{\theta^\prime}$
for $\delta\geq A$ to make sure that the $(n-1)$-jet differential
$${1\over f_{x_1}-1}\,Q$$ is holomorphic on $X$ and vanishes
to order at least $q$ at $X\cap H_{{\mathbb P}_n}$.\end{proof}

\begin{remark} Proposition \ref{construction_jet_differential_as_polynomial} is the same as Proposition 4.6 on p.446 of \cite{Si02} and also the same as Proposition 2 on p.558 of \cite{Si04}.
\end{remark}

\bigbreak

\section{\sc Hyperbolicity from Slanted Vector Fields and No Common Zeroes for Jet Differentials on Generic Hypersurface}

\bigbreak In Proposition \ref{construction_jet_differential_as_polynomial} a holomorphic $(n-1)$-jet differential $\omega$ on a hypersurface $X$ vanishing on an ample divisor of $X$ is constructed as a quotient $\frac{Q}{f_{x_1}-1}$, where $Q$ is a polynomial in
$d^jx_1,\cdots,d^jx_n$ for $0\leq j\leq n-1)$ and actually is a meromorphic $(n-1)$-jet differential on the projective space ${\mathbb P}_n$ where the hypersurface $X$ lies.    The construction depends on the choice of the affine coordinate system $x_1,\cdots,x_n$ of the affine part ${\mathbb C}^n$ of ${\mathbb P}_n$.  Now we apply the construction to the hypersurface $X^{(\alpha)}$ parametrized by $\alpha\in{\mathbb P}_N$ (instead of to $X$) and we denote the polynomial $Q$ of the differentials of affine coordinates by $Q(\alpha,x,dx,\dots,d^{n-1}x)$, where $x$ means $(x_1,\cdots,x_n)$ and $d^jx$ means $(d^jx_1,\cdots,d^jx_n)$ for $1\leq j\leq n-1$.  As a function of $\alpha$, the meromorphic $(n-1)$-jet differential $Q(\alpha,x,dx,\dots,d^{n-1}x)$ on the projective space ${\mathbb P}_n$ is meromorphic in the variable $\alpha\in{\mathbb P}_N$.  We now regard $Q(\alpha,x,dx,\dots,d^{n-1}x)$ as defined over ${\mathbb P}_n\times{\mathbb P}_N$, which for fixed $\alpha\in{\mathbb P}_N$ is a meromorphic $(n-1)$-jet differential on ${\mathbb P}_n\times\{\alpha\}$.  When we replace $X$ by $X^{(\alpha)}$, we denote the function $f_{x_1}-1$ by $F(\alpha,x)$.  We regard $F(\alpha,x)$ as a meromorphic function on ${\mathbb P}_n\times{\mathbb P}_N$.  The quotient $\frac{Q(\alpha,x,dx,\dots,d^{n-1}x)}{F(\alpha,x)}$ on ${\mathcal X}$ defines on every $X^{(\alpha)}$ a holomorphic $(n-1)$-jet differential which vanishes on an ample divisor of $X^{(\alpha)}$, when $\alpha$ is outside some proper subvariety of ${\mathbb P}_N$.

\medbreak The construction of $Q(\alpha,x,dx,\dots,d^{n-1}x)$ depends on the choice of the affine coordinate system $x_1,\cdots,x_n$ of the affine part ${\mathbb C}^n$ of ${\mathbb P}_n$.  We can get different meromorphic $k$-jet differentials $Q(\alpha,x,dx,\dots,d^{n-1}x)$ by using different affine coordinate systems $x_1,\cdots,x_n$ of ${\mathbb C}^n$ in the construction of $Q(\alpha,x,dx,\dots,d^kx)$.  Equivalently, instead of doing a new construction of $Q(\alpha,x,dx,\dots,d^{n-1}x)$ by using a new affine coordinate system, we can use a biholomorphism of ${\mathbb P}_n$ to pull the original $Q(\alpha,x,dx,\dots,d^{n-1}x)$ back in the following way.

\medbreak A choice of a different affine coordinate system $x_1,\cdots,x_n$ of ${\mathbb C}^n$ is the same as choosing a corresponding biholomorphism $\sigma:{\mathbb P}_n\to{\mathbb P}_n$ (which preserves the infinity hyperplane ${\mathbb P}_{n-1}$). This biholomorphism $\sigma:{\mathbb P}_n\to{\mathbb P}_n$ induces a map $\tau_\sigma:{\mathbb P}_N\to{\mathbb P}_N$ such that $(\sigma,\tau_\sigma):{\mathbb P}_n\times{\mathbb P}_N\to{\mathbb P}_n\times{\mathbb P}_N$ maps the universal hypersurface ${\mathcal X}$ to itself with $X^{(\alpha)}$ being mapped to $X^{(\tau_\sigma(\alpha))}$ for $\alpha\in{\mathbb P}_N$.  The new $Q(\alpha,x,dx,\dots,d^{n-1}x)$ constructed by using the new affine coordinate system is the same as the $(n-1)$-jet differential $Q\left(\tau_\sigma(\alpha),\sigma(x), d\sigma(x),\cdots,d^k\sigma(x)\right)$, which is obtained by pulling back the original $Q(\alpha,x,dx,\dots,d^{n-1}x)$ by using $\sigma$ and $\tau_\sigma$.  We use $\sigma^*Q$ to denote $$Q\left(\tau_\sigma(\alpha),\sigma(x), d\sigma(x),\cdots,d^{n-1}\sigma(x)\right)$$ and use $\sigma^*F$ to denote $F\left(\tau_\sigma(\alpha),\sigma(x)\right)$.
Of course, to get a holomorphic $(n-1)$-jet differential on the fiber $X^{(\tau_\sigma(\alpha))}$ of ${\mathcal X}$ (for $\tau_\sigma(\alpha)$ outside some proper subvariety of ${\mathbb P}_N$) we have to use
$$
\frac{\sigma^*Q}{\sigma^*F}=\frac{1}{F\left(\tau_\sigma(\alpha),\sigma(x)\right)}\,\omega\left(\tau_\sigma(\alpha),\sigma(x), d\sigma(x),\cdots,d^{n-1}\sigma(x)\right).
$$In the above discussion we can also use biholomorphisms $\sigma:{\mathbb P}_n\to{\mathbb P}_n$ which may not preserve the infinity hyperplane ${\mathbb P}_{n-1}$.

\begin{proposition}\label{basepoint_freeness_jet_differential} {\it(No Common Zeroes on Generic Hypersurface for Jet Differentials Constructed from Different Affine Coordinates)}.  Let ${\mathcal Z}$ be the set of points $y$ of ${\mathcal X}$ such that $$\sigma^*Q=Q\left(\tau_\sigma(\alpha),\sigma(x), d\sigma(x),\cdots,d^{n-1}\sigma(x)\right)$$ vanishes at $y$ or has a pole at $y$ for every biholomorphism $\sigma:{\mathbb P}_n\to{\mathbb P}_n$.  Let ${\mathcal Z}^\prime$ be the set of points $y$ of ${\mathcal X}$ such that $F\left(\tau_\sigma(\alpha),\sigma(x)\right)$ vanishes at $y$ or has a pole at $y$ for every biholomorphism $\sigma:{\mathbb P}_n\to{\mathbb P}_n$. Then the image ${\rm pr}_2\left({\mathcal Z}\cup{\mathcal Z}^\prime\right)$ of ${\mathcal Z}\cup{\mathcal Z}^\prime$ under the natural projection ${\rm pr}_2:{\mathbb P}_n\times{\mathbb P}_N\to{\mathbb P}_N$ onto the second factor is a proper subvariety of ${\mathbb P}_N$.
\end{proposition}

\begin{proof}  Suppose the contrary and we going to derive a contradiction.  For technical reasons it is easier to present the proof by fixing some point in ${\mathbb P}_n$ as the origin $0$ of some inhomogeneous coordinates $x_1,\cdots,x_n$ of the affine part ${\mathbb C}^n$ of ${\mathbb P}_n$ and consider all hypersurfaces in ${\mathbb P}_n$ of degree $\delta$ which contains the origin $0$ of $x_1,\cdots,x_n$.  This means that we focus only on those hypersurfaces $X^{(\alpha)}$ whose defining functions $f^{(\alpha)}$ have zero constant terms when expressed in terms of the inhomogeneous coordinates $x_1,\cdots,x_n$.  In other words, we focus only on a hyperplane ${\mathbb P}_{N-1}^{(0)}$ of the full moduli space ${\mathbb P}_N$.  The union of all $X^{(\alpha)}$ with $\alpha\in{\mathbb P}_{N-1}^{(0)}$ is a hypersurface ${\mathcal X}_0$ of the universal hypersurface ${\mathcal X}$.

\medbreak Since every hypersurface in ${\mathbb P}_n$ can be transformed by a linear transformation of ${\mathbb P}_n$ to some hypersurface which contains the origin $0$ of $x_1,\cdots,x_n$, from the assumption of the failure of the conclusion of the Proposition it follows that for some $\alpha^{(0)}\in{\mathbb P}_{N-1}^{(0)}$ and some open neighborhood $U$ in ${\mathbb P}_{N-1}^{(0)}$ there exists some local holomorphic section $\rho:U\to{\mathbb P}_n\times U$ of the trivial bundle ${\mathbb P}_n\times{\mathbb P}_{N-1}^{(0)}\to{\mathbb P}_{N-1}^{(0)}$ over $U$ such that the image $\rho(U)$ is contained in $\left({\mathcal Z}\cup{\mathcal Z}^\prime\right)\cap{\mathcal X}_0$ and $\rho(U)$ is contained in the subset ${\mathbb C}^n\times U$ of ${\mathbb P}_n\times U$.   For $\alpha\in U$ let $\rho(\alpha)=(\hat\rho(\alpha),\alpha)$ with $\hat\rho(\alpha)\in{\mathbb P}_n$.

\medbreak Let us first do one reduction to make $\hat\rho(\alpha)$ equal to the origin $0$ of the affine part ${\mathbb C}^n$ of ${\mathbb P}_n$ for all $\alpha\in U$.  We first present this reduction from the viewpoint of analysis involving the Jacobian determinant of a change of variables.  Then we give a more geometric explanation for it in (\ref{basepoint_freeness_jet_differential}.2).

\bigbreak By straightforwardly and explicitly computing the Jacobian determinant of the variable change in the affine part of ${\mathbb C}^{N-1}$ of the moduli space ${\mathbb P}_{N-1}^{(0)}$ at a generic point of ${\mathbb P}_{N-1}^{(0)}$ in $U$ (as explained below after (\ref{basepoint_freeness_jet_differential}.1)), we can find some nonempty open subset $\tilde U$ of $U$ and an $\alpha$-dependent affine coordinate change $x=\hat\rho(\alpha)+A(\alpha)y$ in ${\mathbb C}^n$ for $\alpha\in\tilde U$ which satisfies the following property (\ref{basepoint_freeness_jet_differential}.1), where (i) $x$ denotes the column $n$-vector whose entries are the affine variables $x_1,\cdots,x_n$, (ii) $y$ denotes the column $n$-vector whose entries are the affine variables $y_1,\cdots,y_n$, and (iii) $A(\alpha)$ is a nonsingular $n\times n$ matrix depending holomorphically on $\alpha\in\tilde U$.

\bigbreak\noindent(\ref{basepoint_freeness_jet_differential}.1) If $\alpha\mapsto\beta=\Phi_A(\alpha)$ denotes the holomorphic map from $\tilde U$ to ${\mathbb P}_{N-1}^{(0)}$ (for the choice $\alpha\mapsto A(\alpha)$) such that $f^{(\alpha)}(x_1,\cdots,x_n)=f^{(\beta)}(y_1,\cdots,y_n)$, then $\Phi_A$ gives a biholomorphic map between $\tilde U$ and the open subset $\Phi_A(\tilde U)$ of ${\mathbb C}^{N-1}$.

\bigbreak We now remark on how to choose the $n\times n$ nonsingular matrix $A(\alpha)=\left(A_{jk}(\alpha)\right)_{j,k=1}^n$ with holomorphic functions $A_{jk}(\alpha)$ as entries.  Without loss of generality we can assume that $U$ is an
open subset in the affine part ${\mathbb C}^{N-1}$ of ${\mathbb P}_{N-1}^{(0)}$ given by the coefficient of $x_1^\delta$ in $f^{(\alpha)}\left(x_1,\cdots,x_n\right)$ being nonzero. Write down the coordinates $\alpha_{\nu_1,\cdots,\nu_n}$ of $\alpha\in{\mathbb C}^{N-1}$ for $1\leq\nu_1+\cdots+\nu_n\leq\delta$ with $\nu_1\not=\delta$ from
$$
f^{(\alpha)}\left(x_1,\cdots,x_n\right)=x_1^\delta+\sum_{1\leq\nu_1+\cdots+\nu_n\leq\delta,\atop\nu_1
\not=\delta}
\alpha_{\nu_1,\cdots,\nu_n}x_1^{\nu_1}\cdots x_n^{\nu_n}
$$ and similarly the coordinates $\beta_{\nu_1,\cdots,\nu_n}$ of $\beta\in{\mathbb C}^{N-1}$ for $1\leq\nu_1+\cdots+\nu_n\leq\delta$ with $\nu_1\not=\delta$.  From $f^{(\alpha)}(x_1,\cdots,x_n)=f^{(\beta)}(y_1,\cdots,y_n)$ we can explicitly express $\beta=\Phi_A(\alpha)$ in terms of the single given $n$-vector-valued holomorphic function $\hat\rho(\alpha)$ and the $n^2$ unknown holomorphic functions $A_{jk}(\alpha)$ for $1\leq j,k\leq n$.  Take a point $\alpha^*$ of $U$ such that all its $N-1$ coordinates $\alpha_{\nu_1,\cdots,\nu_n}^*$ are nonzero for $1\leq\nu_1+\cdots+\nu_n\leq\delta$ with $\nu_1\not=\delta$.  When we express the $(N-1)$-form
$$
\bigwedge_{1\leq\nu_1+\cdots+\nu_n\leq\delta,\atop\nu_1\not=\delta}d\beta_{\nu_1,\cdots,\nu_n}
$$
at $\alpha^*$ as a constant $C$ times the $(N-1)$-form
$$
\bigwedge_{1\leq\nu_1+\cdots+\nu_n\leq\delta,\atop\nu_1\not=\delta}d\alpha_{\nu_1,\cdots,\nu_n}
$$
at $\alpha^*$, it is easy to see that we can generically choose the coefficients of the $n^2$ power series $A_{jk}(\alpha)$ in the variables $\alpha_{\nu_1,\cdots,\nu_n}$ (for $1\leq\nu_1+\cdots+\nu_n\leq\delta$ with $\nu_1\not=\delta$) to achieve $C\not=0$.  Then $\tilde U$ can be chosen to be a sufficiently small open neighborhood of $\alpha^*$ in $U$.

\bigbreak\noindent(\ref{basepoint_freeness_jet_differential}.2) We now more geometrically explain the reason why a generic choice of $\alpha\mapsto A(\alpha)$ is possible to yield the statement (\ref{basepoint_freeness_jet_differential}.1).  We introduce the equivalence relation on the parameter space ${\mathbb P}_{N-1}^{(0)}$ such that $\alpha$ is equivalent to $\alpha^\prime$ if some ${\mathbb C}$-linear transformation of the affine part ${\mathbb C}^n$ of ${\mathbb P}_n$ sends $X^{(\alpha)}$ to $X^{(\alpha^\prime)}$.    Let $W$ be the space of all equivalence classes with the quotient map $\pi_0:{\mathbb P}_{N-1}^{(0)}\to W$ whose generic fiber is the general linear group $GL(n,{\mathbb C})$.  We have the following commutative diagram
$$
\begin{matrix}U&\hookrightarrow&{\mathbb P}_{N-1}^{(0)}&\stackrel{\pi_0}{\longrightarrow}&W\cr
\Phi_A\downarrow\qquad&&&&\|\cr
{\mathbb C}^{N-1}&\hookrightarrow&{\mathbb P}_{N-1}^{(0)}&\stackrel{\pi_0}{\longrightarrow}&W.\cr
\end{matrix}
$$
Since the fiber of the quotient map $\pi_0:{\mathbb P}_{N-1}^{(0)}\to W$ over a generic point of $W$ is the general linear group $GL(n,{\mathbb C})$, it follows that if $\alpha\mapsto A(\alpha)$ is replaced by $\alpha\mapsto A(\alpha)B(\alpha)$ for some generic $GL(n,{\mathbb C})$-valued holomorphic function $\alpha\mapsto B(\alpha)$, the map
$\Phi_{AB}$ from $U$ to ${\mathbb C}^{N-1}$ is locally biholomorphic at a generic point of $U$.

\bigbreak Now that we have (\ref{basepoint_freeness_jet_differential}.1) with a good generic choice of $\alpha\mapsto A(\alpha)$, by replacing $U$ by $\tilde U$ and $\alpha\mapsto\rho(\alpha)=\left(\hat\rho(\alpha),\alpha\right)$ for $\alpha\in U$ by $\alpha\mapsto\left(0,\alpha\right)$ for $\alpha\in\tilde U$, we can assume without loss of generality that $\hat\rho(\alpha)=0$ for $\alpha\in U$.

\medbreak Without loss of generality we can assume that

\medbreak\noindent(i) $U$ is the open ball $B_{N-1}\left(\alpha^{(0)},r_0\right)$ of some positive radius $r_0>0$ centered at $\alpha^{(0)}$ in some affine part ${\mathbb C}^{N-1}$ of the moduli space ${\mathbb P}_{N-1}^{(0)}$,

\medbreak\noindent(ii) some open neighborhood $W$ of $\rho(U)$ in ${\mathcal X}_0\cap\left({\mathbb C}^n\times U\right)$ is biholomorphic to $G\times U$ for some open subset $G$ of ${\mathbb C}^{n-1}$ under a biholomorphism $\phi_W$ between the fiber bundle ${\rm pr}_2:W\to U$ (where ${\rm pr}_2$ induced by the natural projection ${\mathbb P}_n\times{\mathbb P}_{N-1}^{(0)}\to{\mathbb P}_{N-1}^{(0)}$ onto the second factor) and the trivial fiber bundle ${\rm pr}_G:G\times U\to U$ with ${\rm pr}_G$ being the natural projection onto the second factor, and

\medbreak\noindent(iii) $\left({\mathcal Z}\cup{\mathcal Z}^\prime\right)\cap X^{(\alpha^{(0)})}$ is a proper subvariety of $X^{(\alpha^{(0)})}$.

\medbreak\noindent We now take a sequence of points $$y^{(j)}\in \left(W\cap X^{(\alpha^{(0)})}\right)-\left({\mathcal Z}\cup{\mathcal Z}^\prime\right)\quad{\rm for}\ \ j\in{\mathbb N}$$ which approaches $\rho\left(\alpha^{(0)}\right)$ in $X^{(\alpha^{(0)})}$. Let $0<r<r_0$.  Since the point $\rho\left(\alpha^{(0)}\right)$ of $X^{(\alpha^{(0)})}$ is represented by $\rho\left(\alpha^{(0)}\right)=\left(0,\alpha^{(0)}\right)\in{\mathbb C}^{n-1}\times U$ in terms of the affine coordinates of ${\mathbb C}^{n-1}$ and $U\subset{\mathbb C}^{N-1}$, by using the biholomorphism $\phi_U$ between the two fiber bundles ${\rm pr}_2:W\to U$ and ${\rm pr}_G:G\times U\to U$, for each $j\in{\mathbb N}$ we can construct a holomorphic section $\rho_j:U\to{\mathcal X}_0$ with the image of $\rho_j(\alpha)=\left(\hat\rho_j(\alpha),\alpha\right)\in W$ for $\alpha\in U$ such that $\rho_j\left(\alpha^{(0)}\right)=y^{(j)}$ and
$$
\varepsilon_j=\sup_{\alpha\in B_{N-1}\left(\alpha^{(0)},r\right)
}\left|\hat\rho_j(\alpha)\right|_{{\mathbb C}^{n-1}}
$$
approaches $0$ as $j\to\infty$, where $B_{N-1}\left(\alpha^{(0)},r\right)$ is the open ball of radius $r$ in ${\mathbb C}^{N-1}$ centered at $\alpha^{(0)}$ and the norm $\left|\hat\rho_j(\alpha)\right|_{{\mathbb C}^{n-1}}$ is the distance between the two points $\hat\rho_j(\alpha)$ and $0$ in ${\mathbb C}^{n-1}$ with respect to the Euclidean metric of ${\mathbb C}^{n-1}$.

\medbreak For $\alpha\in U$ and $j\in{\mathbb N}$ let $\sigma_{\alpha,j}:{\mathbb P}_n\to{\mathbb P}_n$ be the biholomorphism of ${\mathbb P}_n$ whose restriction to ${\mathbb C}^n$ is the translation in ${\mathbb C}^n$ which sends the origin $0$ of ${\mathbb C}^n$ to the point $\hat\rho_j(\alpha)$ of ${\mathbb C}^n$.  The biholomorphism $\sigma_{\alpha,j}$ of ${\mathbb P}_n$ pulls back the hypersurface $X^{(\alpha)}$ of ${\mathbb P}_n$ to the hypersurface $X^{\left(\tau_{\sigma_{\alpha,j}}^{-1}(\alpha)\right)}$ of ${\mathbb P}_n$ for $\alpha\in U$ so that the point $\hat\rho_j(\alpha)$ of the hypersurface $X^{(\alpha)}$ is pulled back to the origin point of the hypersurface $X^{\left(\tau_{\sigma_{\alpha,j}}^{-1}(\alpha)\right)}$.  From $0<r<r_0$ and from explicitly expressing $\sigma_{\alpha,j}$ in terms of $\alpha\in{\mathbb C}^{N-1}$ and $\hat\rho_j(\alpha)\in{\mathbb C}^{n-1}$ we conclude that there exists some positive number $M$ independent of $j\in{\mathbb N}$ such that
$$
\sup_{\alpha\in B_{N-1}\left(\alpha^{(0)},r\right)}\left|\tau_{\sigma_{\alpha,j}}^{-1}(\alpha)-\alpha\right|_{{\mathbb C}^{N-1}}\leq M\varepsilon_j
$$
for $j\in{\mathbb N}$, where the norm $\left|\tau_{\sigma_{\alpha,j}}^{-1}(\alpha)-\alpha\right|_{{\mathbb C}^{N-1}}$ is the distance between the two points $\tau_{\sigma_{\alpha,j}}^{-1}(\alpha)$ and $\alpha$ in ${\mathbb C}^{N-1}$ with respect to the Euclidean metric of ${\mathbb C}^{N-1}$.  Choose $\hat j\in{\mathbb N}$ such that $M\varepsilon_j<r_0-r$ for $j\geq\tilde j$.  Let $\psi_j:B_{N-1}\left(\alpha^{(0)},r\right)\to B_{N-1}\left(\alpha^{(0)},r_0\right)$ be defined by $\psi_j(\alpha)=\tau_{\sigma_{\alpha,j}}^{-1}(\alpha)$ for $\alpha\in B_{N-1}\left(\alpha^{(0)},r\right)$ and $j\in{\mathbb N}$ with $j\geq\tilde j$.

\medbreak Since the holomorphic map $\psi_j:B_{N-1}\left(\alpha^{(0)},r\right)\to B_{N-1}\left(\alpha^{(0)},r_0\right)$ approaches the inclusion map $B_{N-1}\left(\alpha^{(0)},r\right)\hookrightarrow B_{N-1}\left(\alpha^{(0)},r_0\right)$ uniformly on $B_{N-1}\left(\alpha^{(0)},r\right)$ as $j\to\infty$, it follows that the first-order partial derivatives of $\psi_j$ converges uniformly on compact subsets of $B_{N-1}\left(\alpha^{(0)},r\right)$ to the corresponding first-order partial derivatives of the inclusion map $B_{N-1}\left(\alpha^{(0)},r\right)\hookrightarrow B_{N-1}\left(\alpha^{(0)},r_0\right)$ uniformly on any compact subset of $B_{N-1}\left(\alpha^{(0)},r\right)$ as $j\to\infty$.  Hence for some $j\geq\tilde j$ there exists some nonempty open subset $U^\prime$ of $B_{N-1}(r)$ such that $\psi_j$ maps $U^\prime$ biholomorphically onto the open subset $\psi_j(U^\prime)$ of $B_{N-1}(r)$.  Since $\rho_j(U)$ contains the point $y^{(j)}$ which is not in ${\mathcal Z}\cup{\mathcal Z}^\prime$, it follows that $\rho_j(U)\cap\left({\mathcal Z}\cup{\mathcal Z}^\prime\right)$ is a proper subvariety of the connected complex manifold $\rho_j(U)$.  Let $S$ be the proper subvariety of $U$ such that $\rho_j$ maps $S$ bijectively onto $\rho_j(U)\cap\left({\mathcal Z}\cup{\mathcal Z}^\prime\right)$.

\medbreak Since the hypersurface $X^{(\alpha)}$ of ${\mathbb P}_n$ is pulled back by the biholomorphism $\sigma_{\alpha,j}$ of ${\mathbb P}_n$ to the hypersurface $X^{(\tau_{\sigma_{\alpha,j}}^{-1}(\alpha))}$ of ${\mathbb P}_n$, it follows from $\sigma_{\alpha,j}(\alpha)$ not belonging to ${\mathcal Z}\cup{\mathcal Z}^\prime$ for $\alpha\in U-S$ that both $\sigma_{\alpha,j}^*Q$ and $\sigma_{\alpha,j}^*F$ are nonzero and finite at $\rho(\tau_{\sigma_{\alpha,j}}^{-1}(\alpha))=\left(0,\tau_{\sigma_{\alpha,j}}^{-1}(\alpha)\right)$.  Since $\psi_j(\alpha)=\tau_{\sigma_{\alpha,j}}^{-1}(\alpha)$ belongs to $U$ for $\alpha\in U^\prime$, it follows from both $\sigma_{\alpha,j}^*Q$ and $\sigma_{\alpha,j}^*F$ being nonzero and finite at $\rho(\tau_{\sigma_{\alpha,j}}^{-1}(\alpha))=\left(0,\tau_{\sigma_{\alpha,j}}^{-1}(\alpha)\right)$ for $\alpha\in U^\prime-S$
that $\tau_{\sigma_{\alpha,j}}^{-1}(\alpha)$ is in $U$ and yet the point $\rho\left(\tau_{\sigma_{\alpha,j}}^{-1}(\alpha)\right)$ of $X^{\left(\tau_{\sigma_{\alpha,j}}^{-1}(\alpha)\right)}$ does not belong to ${\mathcal Z}\cup{\mathcal Z}^\prime$ for $\alpha\in U^\prime-S$, which contradicts the assumption that the point $\rho\left(\tau_{\sigma_{\alpha,j}}^{-1}(\alpha)\right)$ of the hypersurface $X^{\left(\tau_{\sigma_{\alpha,j}}^{-1}(\alpha)\right)}$ belongs to ${\mathcal Z}\cup{\mathcal Z}^\prime$.
\end{proof}

\begin{remark}\label{remark(4.2)}  The technique of slanted vector fields to reduce the vanishing orders of holomorphic jet differentials (vanishing on ample divisors) and to generate independent jet differentials (vanishing on ample divisors),
given in Proposition \ref{lower_vanishing_order_generate_independent_jet_differential}, is actually the infinitesimal or differential version of the above argument, given in Proposition \ref{basepoint_freeness_jet_differential}, of pulling back holomorphic jet differentials (vanishing on ample divisors) on neighboring fibers to reduce the common zero-set of holomorphic jet differentials (vanishing on ample divisors) on the original fiber.
\end{remark}

\bigbreak\begin{nodesignation}\label{proof_main_theorem}{\it Proof of Theorem \ref{main_theorem}.}\end{nodesignation}  For a generic hypersurface $X^{(\hat\alpha)}$ of sufficiently high degree $\delta$, by
Proposition \ref{basepoint_freeness_jet_differential} at every point $\hat y$ of $X^{(\hat\alpha)}$ and for every $P_0$ in the $(n-1)$-jet space $J_{n-1}\left(X^{(\hat\alpha)}\right)$ representable by a nonsingular complex curve germ at $\hat y$ there exists a holomorphic $(n-1)$-jet differential $$\omega^{(\hat\alpha)}=\frac{\sigma^*Q^{(\tau_\sigma(\hat\alpha))}}{\sigma^*F^{(\tau_\sigma(\hat\alpha))}}$$ of weight $m$ on $X^{(\hat\alpha)}$ which vanishes to order $>(m+1)(n-1)n(2n+1)$ on the intersection of $X^{(\hat\alpha)}$ and some hyperplane of ${\mathbb P}_n$, where $\sigma$ is a suitably chosen biholomorphism of ${\mathbb P}_n$ as described in the paragraph preceding Proposition \ref{basepoint_freeness_jet_differential}.  The condition of the jet differential $\omega^{(\hat\alpha)}$ vanishing to order $>(m+1)(n-1)n(2n+1)$ on the intersection of $X^{(\hat\alpha)}$ and some hyperplane of ${\mathbb P}_n$ is from the construction of the polynomial $Q$ in Proposition \ref{construction_jet_differential_as_polynomial} when the degree $\delta$ of $X^{(\hat\alpha)}$ is effectively sufficiently large.

\medbreak The holomorphic extension of $\omega^{(\hat\alpha)}$ to a holomorphic $(n-1)$-jet differential on $X^{(\alpha)}$ for $\alpha$ in some open neighborhood $U$ of $\hat\alpha$ in ${\mathbb P}_N$ satisfying the conditions of Proposition \ref{lower_vanishing_order_generate_independent_jet_differential} is automatic, because of the way the $(n-1)$-jet differential $\omega^{(\hat\alpha)}$ is constructed (or because of Proposition \ref{extendibility_to_neighborhing_fiber} when $\hat\alpha$ is assumed outside some subvariety of ${\mathbb P}_N$).  Now by Proposition \ref{hyperbolicity_after_construction_jet_differential} the hypersurface $X^{(\hat\alpha)}$ is hyperbolic in the sense that there is no nonconstant holomorphic map from ${\mathbb C}$ to $X^{(\hat\alpha)}$. This finishes the proof of Theorem \ref{main_theorem}.

\bigbreak\begin{nodesignation}\label{proof_hyperbolicity_complement_hypersurface}{\it Proof of Theorem \ref{hyperbolicity_complement_hypersurface}.}\end{nodesignation}  The proof is analogous to the proof of Theorem \ref{main_theorem}.  The difference is that
\begin{itemize}
\item[(i)] the holomorphic jet differential $\omega$ used in the proof of Theorem \ref{main_theorem} is replaced by the log-pole jet differential on ${\mathbb P}_n$ constructed in Theorem \ref{construction_log_pole_differential},
\item[(ii)] the use of Proposition \ref{sufficient_slanted_vector_fields} is replaced by the use of Proposition \ref{vector_field_projective_space},
\item[(iii)] the use of the Schwarz lemma of the vanishing of the pullback to ${\mathbb C}$ of a holomorphic jet differential vanishing on an ample divisor is replaced by Proposition \ref{general_schwarz_lemma} below concerning the general Schwarz lemma for log-pole jet differentials.
\end{itemize}

\bigbreak\section{\sc Essential Singularities, Varying Coefficients,
and Second Main Theorem}

\bigbreak Besides the Little Picard Theorem of nonexistence of nonconstant holomorphic maps from ${\mathbb C}$ to ${\mathbb P}_1-\{0,1,\infty\}$ there is a stronger statement which is the Big Picard Theorem
of no essential singularity at $\infty$ for any holomorphic function from ${\mathbb C}-\overline{\Delta_{r_0}}$ to ${\mathbb C}-\{0,1\}$ for $r_0>0$.  The Little Picard Theorem corresponds to our theorem on the hyperbolicity of a generic hypersurface $X^{(\alpha)}$ of sufficiently high degree in ${\mathbb P}_n$ (for $\alpha$ in ${\mathbb P}_N$ outside a proper subvariety ${\mathcal Z}$ of ${\mathbb P}_N$).  Corresponding to the Big Picard Theorem, there is a statement concerning the extendibility across a holomorphic map ${\mathbb C}-\overline{\Delta_{r_0}}\to X^{(\alpha)}$ to a holomorphic map ${\mathbb C}\cup\{\infty\}-\overline{\Delta_{r_0}}\to X^{(\alpha)}$.

\medbreak In this section we are going to prove such a theorem on removing the essential singularity at $\infty$ of a holomorphic map from ${\mathbb C}-\overline{\Delta_{r_0}}$ to a generic hypersurface of sufficiently high degree.

\medbreak The more quantitative version of the Big Picard Theorem was introduced by Nevanlinna \cite{Ne25} in his Second Main Theorem in his theory of value distribution theory.  In this section we discuss the Second Main Theorem from log-pole jet differentials which is more in the context of Cartan's generalization of Nevanlinna's Second Main Theorem to holomorphic maps from the affine complex line to ${\mathbb P}_n$ and a collection of hyperplanes in general position \cite{Ca33}.

\medbreak The hyperbolicity of a generic hypersurface of high degree $\delta$ in ${\mathbb P}_n$ can be reformulated as the nonexistence of $n+1$ entire holomorphic functions on ${\mathbb P}$ with some ratio nonconstant which satisfy a generic homogeneous polynomial of degree $\delta$ with constant coefficients.  Our solution of the hyperbolicity problem for
a generic hypersurface of high degree makes use of the universal hypersurface ${\mathcal X}$ in ${\mathbb P}_n\times{\mathbb P}_N$ and the variation of the hypersurface $X^{(\alpha)}$ in ${\mathbb P}_n$ with $\alpha\in{\mathbb P}_N$.  The variation of $X^{(\alpha)}$ corresponds to the varying of the constant coefficients of the homogeneous polynomial equation for the $n+1$ entire functions.  In this section we will discuss the problem of nonexistence of entire functions satisfying polynomial equations with slowly varying coefficients and also the more general result for removing essential singularities for holomorphic functions on ${\mathbb C}$ minus a disk for this setting.

\bigbreak\noindent{\bf I.} {\sc Removal of Essential Singularities}

\medbreak To extend our methods from maps ${\mathbb C}\to X^{(\alpha)}$ to
maps ${\mathbb C}-\overline{\Delta_{r_0}}\to X^{(\alpha)}$ for some $r_0>0$, we need a corresponding extension of Nevanlinna's logarithmic derivative lemma (p.51 of \cite{Ne25}).  For such an extension of Nevanlinna's logarithmic derivative lemma, we need the following trivial multiplicative version of the Heftungslemma \cite{AN64}.

\begin{lemma}\label{lemma(5.I.1)} {\rm (Trivial Multiplicative Version of Heftungslemma)}\ \   Let $r_0>0$ and $F$ be a meromorphic function on ${\mathbb C}-\overline{\Delta_{r_0}}$.  Let $r_0<r_1$.  Then there exists some function $G$ holomorphic and nowhere zero on ${\mathbb C}\cup\{\infty\}-\overline{\Delta_{r_1}}$ such that $FG$ is meromorphic on ${\mathbb C}$.  Moreover, when $F$ is holomorphic, $G$ can be chosen so that $FG$ is also holomorphic on ${\mathbb C}-\{0\}$.\end{lemma}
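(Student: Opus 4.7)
My plan is to construct $G$ by first matching the divisor of $F$ on $\mathbb{C}-\overline{\Delta_{r_1}}$ with a meromorphic function on $\mathbb{C}$ via Weierstrass, and then correcting the resulting quotient near $\infty$ by an entire exponential factor so that it becomes holomorphic and nowhere zero there. For the first step, let $D_2$ denote the divisor of $F$ restricted to the region $|z|>r_1$, viewed as a locally finite divisor on $\mathbb{C}$ by extension by zero inside $\overline{\Delta_{r_1}}$. By the Weierstrass theorem on $\mathbb{C}$, there exists a meromorphic function $\tilde H$ on $\mathbb{C}$ with $\mathrm{div}(\tilde H)=D_2$; when $F$ is holomorphic on $\mathbb{C}-\overline{\Delta_{r_0}}$ the divisor $D_2$ is effective and $\tilde H$ may be chosen entire. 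The quotient $G_1:=\tilde H/F$ is then holomorphic and nowhere zero on $\mathbb{C}-\overline{\Delta_{r_1}}$, since the zeros and poles of $F$ and $\tilde H$ cancel exactly there.

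For the second step I would fix the behavior at $\infty$. The winding number $n:=\frac{1}{2\pi i}\oint_{|z|=r}d\log G_1$ is a well-defined integer independent of $r>r_1$, because $G_1$ is zero-free on $\mathbb{C}-\overline{\Delta_{r_1}}$. Replacing $\tilde H$ by $z^{-n}\tilde H$, which keeps $\tilde H$ meromorphic on $\mathbb{C}$ and introduces at most a pole or zero at the origin, reduces this winding number to zero, so $G_1$ admits a single-valued holomorphic logarithm $\phi$ on $\mathbb{C}-\overline{\Delta_{r_1}}$. Expanding $\phi(z)=\sum_{k\in\mathbb{Z}}b_k z^k$ as a Laurent series convergent on the unbounded annulus $r_1<|z|<\infty$, the convergence on all of this annulus forces $\phi_+(z):=\sum_{k>0}b_k z^k$ to have infinite radius of convergence, hence to define an entire function; the complementary part $\phi_-(z):=\sum_{k\le 0}b_k z^k$ is then holomorphic on $|z|>r_1$ with finite limit $b_0$ at $\infty$. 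I would then set $G:=e^{\phi_-}$, which is visibly holomorphic and nowhere zero on $\mathbb{C}\cup\{\infty\}-\overline{\Delta_{r_1}}$.

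A direct computation then gives $FG=F\cdot G_1\cdot e^{-\phi_+}=\tilde H\cdot e^{-\phi_+}$ on $\mathbb{C}-\overline{\Delta_{r_1}}$, and since $\tilde H$ is meromorphic on $\mathbb{C}$ while $e^{-\phi_+}$ is entire and nowhere zero, the right-hand side furnishes the required meromorphic extension of $FG$ across $\overline{\Delta_{r_1}}$ to all of $\mathbb{C}$. When $F$ is holomorphic, the chosen $\tilde H$ is entire, so $FG=z^{-n}\tilde H\cdot e^{-\phi_+}$ has its only possible finite pole at $z=0$ from the winding-number adjustment, which yields the `moreover' statement. The main technical point I expect to verify carefully is the assertion that the positive part $\phi_+$ of the Laurent expansion of $\log G_1$ is entire: this uses that the Laurent series of $\phi$ must converge throughout the unbounded annulus $r_1<|z|<\infty$, and it is precisely this step that lets one trade the possible essential singularity of $F$ at $\infty$ for a modification by an entire exponential factor.
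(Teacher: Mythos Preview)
Your argument is correct. The overall architecture matches the paper's---cancel the divisor of $F$ to obtain a nowhere-zero function, kill the winding number, then additively split a branch of the logarithm into a piece holomorphic near $\infty$ (which becomes $\log G$) and a piece holomorphic on a disk---but the implementation differs in two ways worth noting.

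First, you match the divisor of $F$ on the entire exterior region $|z|>r_1$ using a Weierstrass product $\tilde H$, whereas the paper works only on a bounded annulus $\rho_1<|z|<\rho_2$ with $r_0<\rho_1<r_1<\rho_2$, where $F$ has only finitely many zeros and poles; these are cancelled by an explicit rational function $h$. Second, for the additive splitting of the logarithm the paper uses the Cauchy integral over the two boundary circles of that bounded annulus to write $\Phi=\Phi_0-\Phi_\infty$, while you take the Laurent expansion on the unbounded annulus $r_1<|z|<\infty$ and observe directly that the positive-power part $\phi_+$ must be entire. These two splittings are of course the same operation in different guises.

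The paper's route is slightly more elementary in that it avoids the Weierstrass product theorem entirely, needing only finite products and a Cauchy integral on a compact annulus; your route trades that economy for a cleaner global picture in which $\phi_+$ is visibly entire from the outset. Both give the same $G=e^{\phi_-}$ (your notation) $=e^{\Phi_\infty}$ (the paper's). One small notational wrinkle: after replacing $\tilde H$ by $z^{-n}\tilde H$ you write $FG=\tilde H\,e^{-\phi_+}$ and then $FG=z^{-n}\tilde H\,e^{-\phi_+}$ in consecutive sentences; just be consistent about whether the $z^{-n}$ factor has already been absorbed into $\tilde H$.
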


\begin{proof}  Choose $r_0<\rho_1<r_1<\rho_2$ such that $\partial\Delta_{\rho_j}$ contains no pole and no zero of $F$ for $j=1,2$.  Let
$\sum_{j=1}^{J_0} a_j$ be the zero-divisor of $F$ on $\overline{\Delta_{\rho_2}}-\Delta_{\rho_1}$ and
$\sum_{j=J_\infty} b_j$ be the pole-divisor of $F$ on $\overline{\Delta_{\rho_2}}-\Delta_{\rho_1}$.
Let
$$
h=\left(\prod_{j=J_0}\left(\zeta-a_j\right)\right)^{-1}
\left(\prod_{j=J_\infty}\left(\zeta-b_j\right)\right).
$$
Then $Fh$ is holomorphic and nowhwere zero on $\overline{\Delta_{\rho_2}}-\Delta_{\rho_1}$.  Let $\ell_0$ be the integer
$$
\frac{1}{2\pi}\int_{|\zeta|=r_1}d\log(Fh).
$$
Then
$$
\int_{|\zeta|=r_1}d\log\left(Fh\zeta^{-\ell_0}\right)=0
$$
and we can define a branch of $\log\left(Fh\zeta^{-\ell_0}\right)$ on $\overline{\Delta_{\rho_2}}-\Delta_{\rho_1}$, which we denote by $\Phi$.
From Cauchy's integral formula
$
\Phi\left(\zeta\right)=\Phi_0(\zeta)-\Phi_\infty$,
where
$$
\displaylines{
\Phi_0(\zeta)=\frac{1}{2\pi}\int_{|\hat\zeta|=\rho_2}\frac{\Phi(\hat\zeta)}{\hat\zeta-\zeta}\,d\hat\zeta,\cr
\Phi_\infty(\zeta)=\frac{1}{2\pi}\int_{|\hat\zeta|=\rho_2}\frac{\Phi(\hat\zeta)}{\hat\zeta-\zeta}\,d\hat\zeta\cr}
$$
for $\zeta\in{\Delta_{\rho_2}}-\overline{\Delta_{\rho_1}}$.
Exponentiating both sides
of $\Phi\left(\zeta\right)=\Phi_0(\zeta)-\Phi_\infty$, we get
$Fh\zeta^{-\ell_0}=e^{\Phi_0}e^{-\Phi_\infty}$ and
$Fe^{\Phi_\infty}=h^{-1}\zeta^{\ell_0}e^{\Phi_0}$.  Since the right-hand side $h^{-1}\zeta^{\ell_0}e^{\Phi_0}$ of
$Fe^{\Phi_\infty}=h^{-1}\zeta^{\ell_0}e^{\Phi_0}$ is meromorphic on $\Delta_{\rho_2}$ and the left-hand side of $Fe^{\Phi_\infty}=h^{-1}\zeta^{\ell_0}e^{\Phi_0}$ is meromorphic on ${\mathbb C}-\overline{\Delta_{\rho_1}}$, it follows $Fe^{\Phi_\infty}$ is meromorphic on all of ${\mathbb C}$.  We apply the transformation $w=\frac{1}{\zeta}$ to get
$$
\Phi_\infty=\frac{1}{2\pi}\int_{|\hat\zeta|=\rho_1}\frac{\Phi(\hat\zeta)}{\hat\zeta-\zeta}\,d\hat\zeta
=\frac{1}{2\pi}\int_{|\hat\zeta|=\rho_1}\frac{w\Phi(\zeta)}{w\zeta-1}\,d\hat\zeta
$$
which is holomorphic for $|w|<\frac{1}{\rho_1}$, that is, holomorphic for
$\zeta_0\in{\mathbb C}\cup\{\infty\}-\overline{\Delta_{\rho_1}}$.  Now the function $G(\zeta)=e^{\Phi_\infty}$ satisfies our requirement.  When $F$ is holomorphic on ${\mathbb C}-\overline{\Delta_{r_0}}$, the function $FG$ which is equal to $h^{-1}\zeta^{\ell_0}e^{\Phi_0}$ on $\Delta_{\rho_2}$ and is equal to $e^{\Phi_0}$ on ${\mathbb C}-\overline{\Delta_{\rho_1}}$ is clearly holomorphic on ${\mathbb C}-\{0\}$.
\end{proof}

\begin{nodesignation}\label{compare_characteristic_function}{\it Comparison of Characteristic Functions of Maps Defined Outside a Disk.}\end{nodesignation}  Let $r_1>r_0>0$.  For a meromorphic function $H$ on ${\mathbb C}-\overline{\Delta_{r_0}}$ we introduce for $c\in{\mathbb C}\cup\{\infty\}$ the counting function
$$
N\left(r,r_1,H,c\right)=\int_{\rho=r_1}^r n\left(\rho,r_1,H,c\right)\frac{d\rho}{\rho},
$$
where $n\left(\rho,r_1,H,c\right)$ is the number of roots of $H(\zeta)=c$ with multiplicities counted in $r_1\leq|\zeta|\leq\rho$ and also the characteristic function
$$
T(r, r_1,H)=N\left(r,r_1,H,\infty\right)+\frac{1}{2\pi}\int_{\theta=0}^{2\pi}\log^+\left|H\left(re^{i\theta}\right)\right|
\,d\theta.
$$
For a holomorphic map $\varphi$ from ${\mathbb C}-\overline{\Delta_{r_0}}$ to a complex manifold $Y$ and a $(1,1)$-form $\eta$ on $Y$, we introduce the characteristic function
$$
T(r, r_1,\varphi,\eta)=\int_{\rho=r_1}^r\left(\int_{\Delta_r-\overline{\Delta_{r_1}}}\varphi^*\eta\right)\frac{d\rho}
{\rho}.
$$
For $Y={\mathbb P}_n$ and $\eta$ being the Fubini-Study form, we drop $\eta$ in the notation $T(r, r_1,\varphi,\eta)$ and simply use $T(r, r_1,\varphi)$ when there is no confusion.  When a holomorphic map $\varphi$ from ${\mathbb C}-\overline{\Delta_{r_0}}$ to ${\mathbb P}_n$ is given by holomorphic functions $\left[F_0,\cdots,F_n\right]$ on ${\mathbb C}-\overline{\Delta_{r_0}}$ without common zeroes, its characteristic function is
$$T(r,r_1,\varphi)=\int_{\rho=r_1}^r\left(\int_{|\zeta|<\rho}\frac{1}{\pi}
\partial_\zeta\partial
_{\overline\zeta}\log\sum_{k=0}^N\left|F_k\right|^2\right)\frac{d\rho}{\rho}.$$
We would like to compare it with the characteristic function $T\left(r,r_1,\frac{F_j}{F_0}\right)$ for the meromorphic function $\frac{F_j}{F_0}$ for $1\leq j\leq n$.
For $1\leq j\leq n$ we have the inequality
$$
T\left(r,\frac{F_j}{F_0}\right)\leq T(r,\varphi)+O(1)\leq\sum_{k=1}^n T\left(r,\frac{F_k}{F_0}\right)+O(1).
$$
The verification of
$$
T\left(r,\frac{F_j}{F_0}\right)\leq T(r,\varphi)+O(1)
$$
for $1\leq j\leq n$ is as follows.

\medbreak From twice integration of Laplacian with $g=\log\sum_{k=0}^N\left|F_k\right|^2$ in (\ref{twice_integration}), we have
$$
\displaylines{\int_{\rho=r_1}^r\left(\int_{|\zeta|<\rho}\frac{1}{\pi}
\partial_\zeta\partial
_{\overline\zeta}\log\sum_{k=0}^N\left|F_k\right|^2\right)\frac{d\rho}{\rho}\cr=
\frac{1}{4\pi}\int_{\theta=0}^{2\pi}\log\sum_{k=0}^N\left|F_k\left(re^{i\theta}\right)\right|^2\,d\theta-
\frac{1}{4\pi}\int_{\theta=0}^{2\pi}\log\sum_{k=0}^N\left|F_k\left(r_1e^{i\theta}\right)\right|^2\,d\theta\cr}
$$
from which we conclude that
$$
\displaylines{\frac{1}{2\pi}\int_{\theta=0}^{2\pi}
\log^+\left|\frac{F_j}{F_0}\left(re^{i\theta}\right)\right|\,d\theta\cr
\leq\frac{1}{4\pi}\left(\int_{\theta=0}^{2\pi}\log\sum_{k=0}^N\left|F_k\right|^2\right)\frac{d\rho}{\rho}\cr
=\int_{\rho=r_1}^r\left(\int_{|\zeta|<\rho}\frac{1}{\pi}
\partial_\zeta\partial
_{\overline\zeta}\log\sum_{k=0}^N\left|F_k\right|^2\right)\frac{d\rho}{\rho}+
\frac{1}{4\pi}\int_{\theta=0}^{2\pi}\log\sum_{k=0}^N\left|F_k\left(r_1e^{i\theta}\right)\right|^2\,d\theta
\cr=T(r,\varphi)+O\left(\log r\right).}
$$
Finally from
$$
\displaylines{N\left(r,r_1,F_0,0\right)=\int_{\rho=r_1}^r\left(\frac{1}{\pi}\partial_\zeta\partial_{\bar\zeta}
\log\left|F_0\right|^2\right)\frac{d\rho}{\rho}\cr
=\int_{\rho=r_1}^r\left(\frac{1}{\pi}\partial_\zeta\partial_{\bar\zeta}
\log\left(\frac{\left|F_0\right|^2}{\sum_{j=0}^N\left|F_0\right|^2}
\cdot\sum_{j=0}^N\left|F_0\right|^2\right)\right)\frac{d\rho}{\rho}\cr
\leq\int_{\rho=r_1}^r\left(\frac{1}{\pi}\partial_\zeta\partial_{\bar\zeta}
\log\sum_{j=0}^N\left|F_0\right|^2\right)\frac{d\rho}{\rho}=T(r,\varphi)\cr}
$$
it follows that
$$\displaylines{T\left(r,r_1,\frac{F_j}{F_0}\right)=
\frac{1}{2\pi}\int_{\theta=0}^{2\pi}
\log^+\left|\frac{F_j}{F_0}\left(re^{i\theta}\right)\right|\,d\theta+N\left(r,r_1,F_0,0\right)\cr\leq O\left(T(r,r_1,\varphi)+\log r\right)\quad\|}$$

The verification of
$$
T(r,r_1,\varphi)\leq\sum_{j=k}^n T\left(r,r_1,\frac{F_k}{F_0}\right)+O(1)
$$
for $1\leq j\leq n$ is as follows.  From
$$
\displaylines{\int_{\rho=r_1}^r\left(\int_{|\zeta|<\rho}\frac{1}{\pi}
\partial_\zeta\partial
_{\overline\zeta}\log\sum_{k=0}^N\left|F_k\right|^2\right)\frac{d\rho}{\rho}\cr=
\frac{1}{4\pi}\int_{\theta=0}^{2\pi}\log\sum_{k=0}^N\left|F_k\left(re^{i\theta}\right)\right|^2\,d\theta-
\frac{1}{4\pi}\int_{\theta=0}^{2\pi}\log\sum_{k=0}^N\left|F_k\left(r_1e^{i\theta}\right)\right|^2\,d\theta\cr
=\frac{1}{4\pi}\int_{\theta=0}^{2\pi}\log\left(1+\sum_{k=1}^N\left|\frac{F_k}{F_0}\left(re^{i\theta}\right)\right|^2\right)d\theta
+\frac{1}{2\pi}\int_{\theta=0}^{2\pi}\log\left|F_0\left(re^{i\theta}\right)\right|d\theta\cr-
\frac{1}{4\pi}\int_{\theta=0}^{2\pi}\log\sum_{k=0}^N\left|F_k\left(r_1e^{i\theta}\right)\right|^2\,d\theta\cr
}
$$
we have
$$
T(r,r_1,\varphi)\leq\sum_{k=1}^n\frac{1}{2\pi}\int_{\theta=0}^{2\pi}\log^+\left|\frac{F_k}{F_0}\left(re^{i\theta}\right)\right|d\theta
+\frac{1}{2\pi}\int_{\theta=0}^{2\pi}\log\left|F_0\left(re^{i\theta}\right)\right|d\theta+O(1).
$$
The verification of the inequality comparing characteristic functions of maps and meromorphic functions is a straightforward modification of the proof of Lemma (2.1.2) on p.426 of \cite{Si95}, where the map is from ${\mathbb C}$ instead of from ${\mathbb C}-\overline{\Delta_{r_0}}$.

\bigbreak The logarithmic derivative lemma holds for meromorphic functions on ${\mathbb C}-\overline{\Delta_{r_0}}$ for $r_0>0$ in the following form.

\begin{proposition}\label{proposition(5.I.2)} {\rm (Logarithmic Derivative Lemma for Functions Meromorphic in Punctured Disk Centered at Infinity)}\ \   Let $r_1>r_0>0$ and $F$ be a meromorphic function on ${\mathbb C}-\overline{\Delta_{r_0}}$.  Then
$$
\int_{\theta=0}^{2\pi}\log^+\left|\frac{F^\prime\left(re^{i\theta}\right)}
{F\left(re^{i\theta}\right)}\right|\,d\theta
=O\left(\log T(r, r_1,F)+\log r\right)\quad\|
$$
for $r>r_1$.
\end{proposition}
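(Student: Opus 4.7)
The plan is to reduce the estimate to the classical Nevanlinna logarithmic derivative lemma on ${\mathbb C}$ (bottom of p.51 of \cite{Ne25}) by invoking the multiplicative Heftungslemma of Lemma \ref{lemma(5.I.1)}. Applying that lemma yields a function $G$ holomorphic and nowhere zero on ${\mathbb C}\cup\{\infty\}-\overline{\Delta_{r_1}}$ such that $\tilde F:=FG$ is meromorphic on all of ${\mathbb C}$. I would then decompose
$$
\frac{F'}{F}=\frac{\tilde F'}{\tilde F}-\frac{G'}{G},
$$
apply Nevanlinna's classical logarithmic derivative lemma on ${\mathbb C}$ to the meromorphic function $\tilde F$, and verify that the $G'/G$ term and the passage between $T(r,\tilde F)$ and $T(r,r_1,F)$ each contribute only an $O(\log r)$ error.

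The $G'/G$ term is harmless because $G$ is holomorphic and nonzero on $\{|\zeta|\geq r_1\}\cup\{\infty\}$, so its Laurent expansion at infinity is $G(\infty)+c_1/\zeta+c_2/\zeta^2+\cdots$ with $G(\infty)\neq 0$; consequently $G'/G$ is uniformly bounded on $\{|\zeta|\geq r_1\}$, giving $\frac{1}{2\pi}\int_0^{2\pi}\log^+\bigl|G'(re^{i\theta})/G(re^{i\theta})\bigr|\,d\theta=O(1)$ for every $r>r_1$. For the comparison of characteristic functions, the fact that $|G|$ is bounded above and away from zero on $\{|\zeta|\geq r_1\}$ yields $\log^+|\tilde F|=\log^+|F|+O(1)$ on every circle $|\zeta|=r>r_1$, while the poles of $\tilde F$ in the annulus $r_1\leq|\zeta|\leq r$ coincide with those of $F$ there (since $G$ is holomorphic and nonzero on that annulus), and $\tilde F$ acquires only finitely many additional poles inside $\overline{\Delta_{r_1}}$ (traceable to the factor $h^{-1}\zeta^{\ell_0}$ in the explicit description of $\tilde F$ on $\Delta_{\rho_2}$ from the proof of Lemma \ref{lemma(5.I.1)}). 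Together these observations give an estimate of the form
$$
T(r,\tilde F)\leq C\,T(r,r_1,F)+O(\log r)\qquad(r>r_1),
$$
so that $\log T(r,\tilde F)=O(\log T(r,r_1,F)+\log r)$. Combining the three ingredients gives the desired bound outside an exceptional set of finite Lebesgue measure inherited from Nevanlinna's classical lemma.

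The principal technical point to watch is the exceptional set in $r$ indicated by the symbol $\|$: Nevanlinna's classical lemma produces an exceptional set associated with $\tilde F$, and I need to transfer it to one associated with $F$. Because the comparison $T(r,\tilde F)\leq C\,T(r,r_1,F)+O(\log r)$ is a deterministic inequality valid for all $r>r_1$, the exceptional set transfers directly without enlargement, and the concluding inequality inherits the $\|$ from the application of the classical lemma. I do not foresee a serious obstacle; the main work is the careful bookkeeping in comparing the two characteristic functions, which differ in both the annulus of integration and the angular normalization adopted in the paper, but only by multiplicative constants and an $O(\log r)$ term, all of which are absorbed into the right-hand side of the desired inequality.
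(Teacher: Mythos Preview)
Your proposal is correct and follows essentially the same approach as the paper: apply the multiplicative Heftungslemma (Lemma \ref{lemma(5.I.1)}) to write $F=\tilde F/G$ with $\tilde F$ meromorphic on ${\mathbb C}$ and $G$ holomorphic nowhere-zero near $\infty$, decompose $F'/F=\tilde F'/\tilde F-G'/G$, bound the $G'/G$ term by $O(1)$, and invoke the classical logarithmic derivative lemma for $\tilde F$ together with the comparison $T(r,\tilde F)\leq T(r,r_1,F)+O(\log r)$. Your treatment of the $G'/G$ bound, the characteristic-function comparison, and the transfer of the exceptional set is in fact more explicit than the paper's own write-up.
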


\begin{proof}  By \ref{lemma(5.I.1)} there exists some function $G$ holomorphic and nowhere zero on ${\mathbb C}\cup\{\infty\}-\overline{\Delta_{r_1}}$ such that $FG$ is meromorphic on ${\mathbb C}$.  Let $H=FG$.  Then $\left(\log F\right)^\prime=\left(\log H\right)^\prime-\left(\log G\right)^\prime$ and
$$
\log^+\left|\frac{F^\prime\left(re^{i\theta}\right)}
{F\left(re^{i\theta}\right)}\right|\leq
\log^+\left|\frac{H^\prime\left(re^{i\theta}\right)}
{H\left(re^{i\theta}\right)}\right|+\log^+\left|\frac{G^\prime\left(re^{i\theta}\right)}
{G\left(re^{i\theta}\right)}\right|+\log 2.
$$
Thus,
$$
\displaylines{\int_{\theta=0}^{2\pi}\log^+\left|\frac{F^\prime\left(re^{i\theta}\right)}
{F\left(re^{i\theta}\right)}\right|\,d\theta
\leq
\int_{\theta=0}^{2\pi}\log^+\left|\frac{H^\prime\left(re^{i\theta}\right)}
{H\left(re^{i\theta}\right)}\right|+
\frac{1}{r}\,\int_{|\zeta|=r}\log^+\left|dG\right|
+\log 2\cr\leq \int_{\theta=0}^{2\pi}\log^+\left|\frac{H^\prime\left(re^{i\theta}\right)}
{H\left(re^{i\theta}\right)}\right|+O(1)\leq
O\left(\log T(r,H)+\log r\right)\quad\|\cr}
$$
because $G$ holomorphic and nowhere zero on ${\mathbb C}\cup\{\infty\}-\overline{\Delta_{r_1}}$.  The required statement follows from
$$
N\left(r,H,\infty\right)\leq N\left(r,r_1,F,\infty\right)+O\left(\log r\right)
$$
and
$\log^+\left|H\right|\leq\log^+\left|F\right|+O(1)$.
By \ref{lemma(5.I.1)} on the trivial multiplicative version of Heftungslemma, for some holomorphic nowhere-zero function $G_0$ on ${\mathbb C}\cup\{\infty\}-\overline{\Delta_{r_2}}$ with $r_0<r_2<r_1$ such that $H(\zeta):=\zeta^\ell F_0(\zeta)G_0(\zeta)$ is holomorphic on ${\mathbb C}$ (with coordinate $\zeta$) and is nonzero at $\zeta=0$.
$$
\displaylines{\frac{1}{2\pi}\int_{\theta=0}^{2\pi}\log\left|F_0\left(re^{i\theta}\right)\right|d\theta\cr=
\frac{1}{2\pi}\int_{\theta=0}^{2\pi}\log\left|\left(re^{i\theta}\right)^\ell\left(F_0G_0\left(re^{i\theta}\right)\right)\right|d\theta-
\frac{1}{2\pi}\int_{\theta=0}^{2\pi}\log\left|\left(re^{i\theta}\right)^{-\ell}\left(G_0\left(re^{i\theta}\right)\right)\right|d\theta\cr
=\log\left|H(0)\right|+N(r,H,0)-\frac{1}{2\pi}\int_{\theta=0}^{2\pi}\log\left|\left(re^{i\theta}\right)^{-\ell}\left(G_0\left(re^{i\theta}\right)\right)\right|d\theta\cr
=N(r,H,0)+O(1)\leq N\left(r,r_1,F_0,0\right)+O(1)\leq\sum_{k=1}^nN\left(r,r_1,\frac{F_k}{F_0},\infty\right)+O(1)}
$$
because $F_0,F_1,\cdots,F_n$ are assumed to have no common zeroes on ${\mathbb C}-\overline{\Delta_{r_0}}$.  Thus,
$$
\displaylines{
T(r,\varphi)\leq\sum_{k=1}^n\frac{1}{2\pi}\int_{\theta=0}^{2\pi}\log^+\left|\frac{F_k}{F_0}\left(re^{i\theta}\right)\right|d\theta
+\sum_{k=1}^nN\left(r,r_1,\frac{F_k}{F_0},\infty\right)+O(1)\cr=\sum_{k=1}^n T\left(r,r_1,\frac{F_k}{F_0}\right)+O(1).\cr}
$$

\end{proof}

\begin{proposition}\label{proposition(5.I.3)} {\rm (Vanishing of Pullback of Jet Differential Vanishing on an Ample Divisor by Holomorphic Map to Punctured Disk Centered at Infinity)}  Let $X$ be a compact complex manifold with a K\"ahler form $\eta$ and $\omega$ be a holomorphic jet differential on $X$ vanishing on some ample divisor $D$ of $X$.  Let $r_1>r_0>0$ and $\varphi:{\mathbb C}-\overline{\Delta_{r_0}}\to X$ be a holomorphic map.  Let ${\rm eval}_{{\rm id}_{\mathbb C}}(\varphi^*\omega)$ denote the function on ${\mathbb C}-\overline{\Delta_{r_0}}$ whose value at $\zeta$ is the evaluation of the jet differential $\varphi^*\omega$ at the jet defined by the identity map of ${\mathbb C}-\overline{\Delta_{r_0}}$ at $\zeta$.  Then either ${\rm eval}_{{\rm id}_{\mathbb C}}(\varphi^*\omega)$ is identically zero on ${\mathbb C}-\overline{\Delta_{r_0}}$ or $T\left(r,r_1,\varphi,\eta\right)=O\left(\log r\right)\ \ \|$.
\end{proposition}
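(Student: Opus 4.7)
The plan is to adapt Bloch's classical Schwarz-lemma argument, as recast in the language of Nevanlinna theory in \cite{SY97} (pp.\,1162--1164), to the annular domain ${\mathbb C}-\overline{\Delta_{r_0}}$, using the annular logarithmic derivative lemma of Proposition \ref{proposition(5.I.2)} in place of Nevanlinna's classical one and an annular Jensen-type identity derived from the Heftungslemma of Lemma \ref{lemma(5.I.1)}. For the setup, since $D$ is ample, I would pick a positive integer $p$ large enough so that ${\mathcal O}_X(pD)$ is very ample, and cover $X$ by finitely many open sets $U_{(\alpha)}$ on each of which there exist global sections $s_0^{(\alpha)},\ldots,s_n^{(\alpha)}$ of ${\mathcal O}_X(pD)$ nowhere vanishing on $U_{(\alpha)}$ such that the ratios $F_j^{(\alpha)}=s_j^{(\alpha)}/s_0^{(\alpha)}$ are meromorphic on $X$ and $y_j^{(\alpha)}=\log F_j^{(\alpha)}$ form a holomorphic coordinate system on $U_{(\alpha)}$. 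I would also fix a global section $s$ of ${\mathcal O}_X(D)$ whose divisor is exactly $D$ and a Hermitian metric $h_L$ on ${\mathcal O}_X(D)$ with positive Chern form $\theta_L$ cohomologous to a positive multiple $c_0\eta$ of $\eta$.

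For the upper-bound side, the vanishing of $\omega$ on $D$ means that $\omega/s$ extends across $D$ to a holomorphic section of $E_{k,m}^{GG}X\otimes{\mathcal O}_X(-D)$, giving the pointwise inequality $\|\omega(x)\|^2_E\leq C\,\|s(x)\|^2_{h_L}$ on $X$ for a fixed Hermitian norm $\|\cdot\|_E$ on the jet bundle $E_{k,m}^{GG}X$. Evaluating at the jet $J(\zeta)=(\varphi(\zeta),\varphi'(\zeta),\ldots,\varphi^{(k)}(\zeta))$, expressing the tangent vectors $\varphi^{(\ell)}$ in the logarithmic frames dual to $y_j^{(\alpha)}$ so that their components become $\frac{d^\ell}{d\zeta^\ell}\log(F_j^{(\alpha)}\circ\varphi)$, and using the weight-$m$ nature of $\omega$ together with the formula in \ref{pullback_function} yields
\[
|h(\zeta)|^2\leq C\,\|s(\varphi(\zeta))\|^2_{h_L}\cdot P\!\left(\frac{d^\ell\log(F_j^{(\alpha)}\circ\varphi)(\zeta)}{d\zeta^\ell}\right),
\]
for a positive polynomial $P$ of weight $2m$ in the listed log-derivatives. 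Applying Proposition \ref{proposition(5.I.2)} to each $F_j^{(\alpha)}\circ\varphi$, together with the comparison of characteristic functions in (\ref{compare_characteristic_function}) to replace $T(r,r_1,F_j^{(\alpha)}\circ\varphi)$ by $O(T(r,r_1,\varphi,\eta)+\log r)$, and the First Main Theorem applied to the section $s$, namely
\[
\frac{1}{2\pi}\int_0^{2\pi}\log\|s(\varphi(re^{i\theta}))\|_{h_L}\,d\theta=-T(r,r_1,\varphi,\theta_L)+N(r,r_1,\varphi,D)+O(1),
\]
I would obtain
\[
\frac{1}{2\pi}\int_0^{2\pi}\log|h(re^{i\theta})|\,d\theta\leq -T(r,r_1,\varphi,\theta_L)+N(r,r_1,\varphi,D)+O\bigl(\log T(r,r_1,\varphi,\eta)+\log r\bigr)\ \|.
\]

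On the lower-bound side, assume $h\not\equiv 0$. Since $h$ is holomorphic on ${\mathbb C}-\overline{\Delta_{r_0}}$ (no poles, as $\omega$ is a globally holomorphic jet differential), the Heftungslemma of Lemma \ref{lemma(5.I.1)} decomposes $h$ as a product of a meromorphic function on ${\mathbb C}$ and a holomorphic nowhere-zero function on ${\mathbb C}\cup\{\infty\}-\overline{\Delta_{r_1}}$; applying the ordinary Jensen formula to the first factor gives
\[
\frac{1}{2\pi}\int_0^{2\pi}\log|h(re^{i\theta})|\,d\theta=N(r,r_1,h,0)+O(1)\geq N(r,r_1,\varphi,D)+O(1),
\]
since $h$ vanishes whenever $\varphi(\zeta)\in D$ (because $\omega$ vanishes on $D$). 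Combining with the upper bound, the $N(r,r_1,\varphi,D)$ terms cancel and yield $T(r,r_1,\varphi,\theta_L)=O(\log T(r,r_1,\varphi,\eta)+\log r)\ \|$; since $\theta_L$ and $c_0\eta$ are cohomologous so that their characteristic functions differ by $O(\log r)$, this becomes $T(r,r_1,\varphi,\eta)=O(\log T(r,r_1,\varphi,\eta)+\log r)\ \|$, which by the standard Borel growth lemma forces $T(r,r_1,\varphi,\eta)=O(\log r)\ \|$, as required.

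The main technical obstacle will be the passage from the abstract jet-norm bound $\|\omega\|_E^2\leq C\|s\|_{h_L}^2$ to the concrete form in terms of logarithmic derivatives of $F_j^{(\alpha)}\circ\varphi$ to which Proposition \ref{proposition(5.I.2)} applies. This is accomplished by choosing frames on the tangent and jet bundles compatible with the logarithmic coordinates $y_j^{(\alpha)}=\log F_j^{(\alpha)}$, so that the coefficients of $\omega$ in these frames are globally bounded on $X$ away from $D$ and the weight-$m$ polynomial expression of $\omega$ in the jet becomes a polynomial in the log-derivatives; this is essentially the same bookkeeping as in the proof of the classical Schwarz lemma in \cite{SY97}, transferred to the annular setting via Proposition \ref{proposition(5.I.2)} and Lemma \ref{lemma(5.I.1)} without substantive modification.
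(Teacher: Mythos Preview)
Your proposal is correct and follows essentially the same route as the paper: Bloch's technique of expressing $\omega/s_D$ in logarithmic coordinates built from global meromorphic functions, combined with the annular logarithmic derivative lemma (Proposition~\ref{proposition(5.I.2)}), to bound the growth of $|\Phi|^2/\|s_D\|^2$ and conclude $T(r,r_1,\varphi,\eta_D)=O(\log r)\ \|$. The only organizational difference is that the paper packages your upper and lower bounds into a single application of the twice-integration-of-Laplacian formula (\ref{twice_integration}) to $g=\log\bigl(|\Phi|^2/(|s_D|^2e^{-\chi_D})\bigr)$, which simultaneously yields the inequality $T(r,r_1,\varphi,\eta_D)\leq\frac{1}{4\pi}\int_0^{2\pi}g(re^{i\theta})\,d\theta+O(1)$ (the zeros of $\Phi$ dominate $\varphi^*D$ since $\omega$ vanishes on $D$), whereas you separate the First Main Theorem for $s$ from a Jensen identity for $h$ obtained via the Heftungslemma; the content is identical. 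One small slip: there is no reason $\theta_L$ should be \emph{cohomologous} to a multiple of $\eta$; what you need (and what the paper uses) is simply that both are strictly positive $(1,1)$-forms on the compact $X$, hence comparable, so $T(r,r_1,\varphi,\eta)\leq C\,T(r,r_1,\varphi,\theta_L)$.
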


\begin{proof} Let $k$ be the order of jet differential $\omega$ and $m$ be its weight. Let $L_D$ be the line bundle associated to the ample divisor $D$.  Let $e^{-\chi_D}$ be a smooth metric of $L_D$ whose curvature form $\eta_D$ is strictly positive definite on $X$.  Let $s_D$ be a holomorphic section of $L_D$ whose divisor is $D$.
Let $\Phi={\rm eval}_{{\rm id}_{\mathbb C}}(\varphi^*\omega)$.  We assume that $\Phi$ is not identically zero.  We apply twice integration of Laplacian in (\ref{twice_integration}) to
$$g(\zeta)=\log\left(\frac{\left|\Phi\right|^2}{\,\left|s_D\right|^2 e^{-\chi_D}\,}\right).
$$
Since $\omega$ is holomorphic on $X$ and vanishing on $D$, it follows that
$$
T\left(r,r_1,\varphi,\eta_D\right)
\leq\frac{1}{4\pi}
\int_{\theta=0}^{2\pi}\log\left(\frac{\left|\Phi\right|^2}{\,\left|s_D\right|^2 e^{-\chi_D}\,}\right)d\theta+O(1).
$$
Here we have the inequality instead of an identity, because of possible contribution from the zero-set of $\frac{\omega}{s_D}$. At this point enters Bloch's technique of applying the logarithmic derivative lemma by using the logarithm of global meromorphic functions as local coordinates.  As functions on the $k$-jet space $J_k(X)$ of $X$ (with the right-hand side being global functions and the left-hand side being only local functions due to the transition functions of the line bundles $L_D$),
$$
\left|\frac{\omega}{s_D}\right|\leq C\sum_{\lambda=1}^\Lambda
\prod_{\nu_{j,\ell},\,j,\ell}\left|d^\ell\log F^{(\lambda)}_{j,\ell}\right|^{\nu_{j,\ell}}
$$
for some $C>0$ and a finite collection $\left\{F^{(\lambda)}_{j,\ell}\right\}$ of global meromorphic functions on $X$, where the product is taken over the indices $\nu_{j,\ell},\,j,\ell$ with the ranges $1\leq j\leq n,\,1\leq\ell\leq k$ and $\sum_{1\leq j\leq n,\,1\leq\ell\leq k}\ell\,\nu_{j,\ell}=m$.  By Nevanlinna's logarithmic derivative lemma (extended to ${\mathbb C}$ outside a disk centered at the origin) given in (\ref{proposition(5.I.2)}),
$$
\int_{\theta=0}^{2\pi}\log^+\left|d^\ell\log F^{(\lambda)}_{j,\ell}\right|\left(re^{i\theta}\right)d\theta=O\left(\log T\left(r,r_1,\varphi,\eta_D\right)+\log r\right)\ \ \|.
$$
Hence
$$
\int_{\theta=0}^{2\pi}\log\left(\frac{\left|\Phi\right|^2}{\,\left|s_D\right|^2 e^{-\chi_D}\,}\right)\left(re^{i\theta}\right)d\theta
=O\left(\log T\left(r,r_1,\varphi,\eta_D\right)+\log r\right)\ \ \|
$$
and we get
$$
T\left(r,r_1,\varphi,\eta_D\right)\leq O\left(\log T\left(r,r_1,\varphi,\eta_D\right)+\log r\right)\ \ \|
$$
from which it follows that
$$T\left(r,r_1,\varphi,\eta_D\right)=O\left(\log r\right)\ \ \|$$
and
$$T\left(r,r_1,\varphi,\eta\right)=O\left(\log r\right)\ \ \|.$$

\end{proof}

\begin{remark}\label{to_entire_affine_line_case}  The argument in Proposition (\ref{proposition(5.I.3)}) is simply a modification of the case of the usual Schwarz lemma on pullbacks of jet differentials (see {\it e.g.,} Theorem 2 on p.1140 of \cite{SY97}) when the holomorphic map $\varphi$ is from the entire affine complex line ${\mathbb C}$ to $X$.  For this case, the pullback $\varphi^*\omega$ is always identically zero on ${\mathbb C}$ for the following reason.  We can replace $\varphi$ by the composite $\psi$ of $\varphi$ with the exponential map from ${\mathbb C}$ to ${\mathcal C}$ to rule out the case of $T\left(r,\psi,\eta\right)=O\left(\log r\right)\ \ \|$ so that ${\rm eval}_{{\rm id}_{\mathbb C}}(\psi^*\omega)$ vanishes identically on ${\mathbb C}$.  Since any $k$-jet of ${\mathbb C}$ at any point $\zeta_0$ of ${\mathbb C}$ can realized by some holomorphic map $\sigma$ from ${\mathbb C}$ to itself, from the vanishing of ${\rm eval}_{{\rm id}_{\mathbb C}}(\sigma^*\omega)$ vanishes identically on ${\mathbb C}$ it follows that $\psi^*\omega$ is identically zero on ${\mathbb C}$, which implies that $\varphi^*\omega$ is identically zero on ${\mathbb C}$.
\end{remark}

\begin{lemma}\label{extend_across_infinity_point} {\rm (Extension of Holomorphic Maps with Log Order Growth Characteristic Function Across Infinity Point)} \ \  Let $\varphi$ be a holomorphic map from ${\mathbb C}-\overline{\Delta_{r_0}}$ to ${\mathbb P}_n$ given by holomorphic functions $\left[F_0,\cdots,F_n\right]$ on ${\mathbb C}-\overline{\Delta_{r_0}}$ without common zeroes.  If $T(r,r_1,\varphi)=O(\log r)\quad\|$, then $\varphi$ can be extended to a holomorphic map from ${\mathbb C}\cup\{\infty\}-\overline{\Delta_{r_0}}$ to ${\mathbb P}_n$.
\end{lemma}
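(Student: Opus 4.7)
The plan is to convert the bound on the characteristic function of $\varphi$ into one on each meromorphic coordinate ratio $F_j/F_0$ via the Heftungslemma, to show that each such ratio extends meromorphically across $\zeta=\infty$, and then to reassemble these meromorphic extensions into a holomorphic map into ${\mathbb P}_n$ near $\infty$. By the comparison of characteristic functions in (\ref{compare_characteristic_function}), the hypothesis $T(r,r_1,\varphi)=O(\log r)\ \|$ implies $T(r,r_1,F_j/F_0)=O(\log r)\ \|$ for each $1\le j\le n$. Applying Lemma \ref{lemma(5.I.1)} to each meromorphic function $F_j/F_0$ on ${\mathbb C}-\overline{\Delta_{r_0}}$, one writes $F_j/F_0=H_j\,G_j^{-1}$ with $G_j$ holomorphic and nowhere zero on ${\mathbb C}\cup\{\infty\}-\overline{\Delta_{r_1}}$ and $H_j$ meromorphic on all of ${\mathbb C}$. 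Since $\log|G_j|$ is uniformly bounded on $|\zeta|\ge r_1$ and $H_j$ picks up only finitely many additional poles inside $\overline{\Delta_{r_1}}$, a direct comparison of the counting and proximity functions yields $T(r,H_j)=O(\log r)\ \|$.

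The key step is the classical fact that a meromorphic function $H$ on ${\mathbb C}$ with $T(r,H)=O(\log r)\ \|$ must be rational. The monotonicity and continuity of $T(\cdot,H)$ let one absorb the exceptional set, so $T(r,H)\le C\log r$ for all large $r$. The First Main Theorem gives $N(r,H,\infty)\le T(r,H)+O(1)=O(\log r)$. If $n(r,H,\infty)$ were unbounded, then for each $M>0$ one would have $n(r,H,\infty)\ge M$ for $r$ past some $r_M$, whence $N(r,H,\infty)\ge M\log r+O_M(1)$ for large $r$, contradicting $N(r,H,\infty)=O(\log r)$. Thus $H$ has only finitely many poles $a_1,\dots,a_K$; clearing them by $p(\zeta)=\prod_k(\zeta-a_k)^{m_k}$ produces an entire function $\tilde H=pH$ with $T(r,\tilde H)=O(\log r)$. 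For entire $\tilde H$, the inequality $\log M(r,\tilde H)\le 3\,T(2r,\tilde H)$ gives $|\tilde H(\zeta)|\le|\zeta|^{O(1)}$, and the Cauchy estimate on Taylor coefficients then forces $\tilde H$ to be a polynomial. Consequently each $H_j$ is rational and, since $G_j$ extends holomorphically across $\infty$ without zeros, $F_j/F_0=H_j/G_j$ extends meromorphically across $\zeta=\infty$.

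To finish, set $w=1/\zeta$ near $\zeta=\infty$ and put $\nu=\min_{0\le j\le n}\mathrm{ord}_{w=0}(F_j/F_0)$, where $F_0/F_0\equiv 1$ has order $0$ so that $\nu\le 0$. The $n+1$ functions $w^{-\nu},\,w^{-\nu}F_1/F_0,\dots,w^{-\nu}F_n/F_0$ are all holomorphic at $w=0$ with no common zero there, so $[\,w^{-\nu}:w^{-\nu}F_1/F_0:\cdots:w^{-\nu}F_n/F_0\,]$ defines a holomorphic map from a neighborhood of $\infty$ in ${\mathbb C}\cup\{\infty\}$ into ${\mathbb P}_n$ that agrees with $\varphi$ on the overlap, yielding the desired extension. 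I expect the main subtlety to lie in the Heftungslemma bookkeeping that transports the $O(\log r)$ estimate on $F_j/F_0$ from the annulus to the whole plane in a form usable for the rationality argument; once that is in hand, the rationality step and the final gluing to a ${\mathbb P}_n$-valued holomorphic map are essentially classical.
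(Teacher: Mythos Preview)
Your proof is correct and follows essentially the same route as the paper: compare characteristic functions to bound $T(r,r_1,F_j/F_0)$, apply the multiplicative Heftungslemma (Lemma~\ref{lemma(5.I.1)}) to produce $H_j=G_j\cdot F_j/F_0$ meromorphic on all of ${\mathbb C}$ with $T(r,H_j)=O(\log r)\ \|$, and invoke the classical fact that such $H_j$ is rational. The paper simply asserts the rationality step and the final passage from meromorphic $F_j/F_0$ to a holomorphic extension of $\varphi$, whereas you spell both out in detail; in particular your monotonicity argument to dispose of the exceptional set and your explicit gluing via $w^{-\nu}$ at $w=0$ are welcome additions that the paper omits.
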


\begin{proof} By the comparison of characteristic functions of maps defined outside a disk given in (\ref{compare_characteristic_function}),we have
$$T\left(r,\frac{F_j}{F_0}\right)\leq T\left(r,\varphi\right)=O(\log r)\quad\|$$ for $1\leq j\leq n$.   By the trivial multiplicative version of the Heftungslemma given in (\ref{lemma(5.I.1)}), there exists some holomorphic nowhere zero function $G_j$ on ${\mathbb C}\cup\{\infty\}-\overline{\Delta_{r_1}}$ such that $G_j\frac{F_j}{F_0}$ is meromorphic on ${\mathbb C}$. From
$$T\left(r,G_j\frac{F_j}{F_0}\right)=T\left(r,\frac{F_j}{F_0}\right)+O(1)\leq T\left(r,\varphi\right)=O(\log r)\quad\|$$
for $1\leq j\leq n$
we conclude that $G_j\frac{F_j}{F_0}$ is a rational function on ${\mathbb C}$.  Hence $\frac{F_j}{F_0}$ can be extended to a meromorphic function on ${\mathbb C}\cup\{\infty\}-\overline{\Delta_{r_0}}$.  Thus $\varphi$ can be can be extended to a holomorphic map from ${\mathbb C}\cup\{\infty\}-\overline{\Delta_{r_0}}$ to ${\mathbb P}_n$.
\end{proof}

\begin{proposition}\label{extendibility_map}  Let $X$ be a compact complex manifold.  Let $\omega_1,\cdots,\omega_N$ be holomorphic $k$-jet differentials of total weight $m$ on $X$ with each vanishing on some ample divisor of $X$.   Assume that at any point $P_0$ of $J_k(X)$ which is representable by a nonsingular complex curve germ, at least one of $\omega_1,\cdots,\omega_N$ is nonzero at $P_0$ for some $1\leq j\leq N$.
Then any holomorphic map $\varphi$ from ${\mathbb C}-\overline{\Delta_{r_0}}$ to $X$ can be extended to a holomorphic map from ${\mathbb C}\cup\{\infty\}-\overline{\Delta_{r_0}}$ to $X$.\end{proposition}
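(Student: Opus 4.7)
The plan is to combine Proposition~\ref{proposition(5.I.3)} (Schwarz-type vanishing for pullbacks of jet differentials) with Lemma~\ref{extend_across_infinity_point} (extension across $\infty$ from log-order growth of the characteristic function). The argument should be quite short given the preceding machinery.

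First I would dispose of the degenerate case: if $d\varphi \equiv 0$ on ${\mathbb C}-\overline{\Delta_{r_0}}$, then $\varphi$ is constant and extends trivially. Otherwise there exists some $\zeta_0\in{\mathbb C}-\overline{\Delta_{r_0}}$ with $d\varphi(\zeta_0)\neq 0$. Let $P_0\in J_k(X)$ be the $k$-jet of $X$ at $\varphi(\zeta_0)$ represented by the parametrized complex curve germ $\varphi$; because $d\varphi(\zeta_0)\neq 0$, the jet $P_0$ is representable by a nonsingular complex curve germ in $X$. The hypothesis of the proposition then supplies some $1\leq j\leq N$ for which $\omega_j$ does not vanish at $P_0$. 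By the very definition of ${\rm eval}_{{\rm id}_{\mathbb C}}(\varphi^*\omega_j)$ recalled in \ref{pullback_function}, its value at $\zeta_0$ equals the value of $\omega_j$ at $P_0$, and in particular ${\rm eval}_{{\rm id}_{\mathbb C}}(\varphi^*\omega_j)$ is not identically zero on ${\mathbb C}-\overline{\Delta_{r_0}}$.

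Next I would pick any K\"ahler form $\eta$ on $X$ (available because the existence of an ample divisor on $X$ together with $X$ compact puts us in the projective setting via Kodaira's embedding theorem). Applying Proposition~\ref{proposition(5.I.3)} to the jet differential $\omega_j$, whose pullback evaluation is not identically zero, we obtain
\[
T(r,r_1,\varphi,\eta)=O(\log r)\quad\|
\]
for any fixed $r_1>r_0$. Now I would fix a projective embedding $\iota:X\hookrightarrow{\mathbb P}_M$. Since $\iota^*\eta_{FS}$ is a smooth positive $(1,1)$-form on the compact manifold $X$ and any two such forms are pointwise comparable, we obtain $T(r,r_1,\iota\circ\varphi)=O\bigl(T(r,r_1,\varphi,\eta)\bigr)+O(1)=O(\log r)\ \|$. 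By Lemma~\ref{extend_across_infinity_point}, $\iota\circ\varphi$ extends to a holomorphic map ${\mathbb C}\cup\{\infty\}-\overline{\Delta_{r_0}}\to{\mathbb P}_M$. Since $X$ is closed in ${\mathbb P}_M$ and the image of the original $\varphi$ lies in $X$, the extended map still takes values in $X$, giving the desired holomorphic extension $\varphi:{\mathbb C}\cup\{\infty\}-\overline{\Delta_{r_0}}\to X$.

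The argument has essentially no hard step once the earlier results are in hand; the only place that requires a moment of attention is the passage from the intrinsic characteristic $T(r,r_1,\varphi,\eta)$ on $X$ to the ${\mathbb P}_M$-characteristic needed by Lemma~\ref{extend_across_infinity_point}, which is handled by comparability of K\"ahler forms on compact $X$. The substantive content, namely that ${\rm eval}_{{\rm id}_{\mathbb C}}(\varphi^*\omega_j)\not\equiv 0$ for some $j$, is immediate from the hypothesis that the collection $\{\omega_1,\dots,\omega_N\}$ has no common zero at jets representable by nonsingular curve germs, combined with the fact that any nonconstant $\varphi$ supplies at least one such jet.
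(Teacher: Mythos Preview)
Your proof is correct and follows essentially the same approach as the paper's: dispose of the constant case, use the hypothesis at a jet $P_0$ coming from a regular point of $\varphi$ to find some $\omega_{j_0}$ with ${\rm eval}_{{\rm id}_{\mathbb C}}(\varphi^*\omega_{j_0})\not\equiv 0$, and then invoke Proposition~\ref{proposition(5.I.3)} to get $T(r,r_1,\varphi,\eta)=O(\log r)\ \|$. The only difference is that you spell out the final extension step in more detail---embedding $X$ into ${\mathbb P}_M$, comparing K\"ahler forms on the compact $X$ to transfer the growth bound, applying Lemma~\ref{extend_across_infinity_point}, and noting the extension stays in the closed submanifold $X$---whereas the paper simply asserts that the log-order growth implies extendibility; your version is a welcome clarification of what the paper leaves implicit.
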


\begin{proof}  We can assume without loss of generality that $\varphi$ is not a constant map so that at some point $\zeta_0$ of ${\mathbb C}-\overline{\Delta_{r_0}}$ the differential of $\varphi$ is nonzero at $\zeta_0$.  Let $P_0$ be the element of $J_k(X)$ at the point $\varphi(\zeta_0)$ of $X$ defined by the nonsingular complex curve germ represented by $\varphi$ at $\zeta_0$.  By assumption, there exists some $1\leq j_0\leq N$ such that $\omega_{j_0}$ is nonzero at $P_0$.  It follows that the function ${\rm eval}_{{\rm id}_{\mathbb C}}(\varphi^*\omega_{j_0})$ associated to $\varphi^*\omega_{j_0}$ as described in (\ref{pullback_function}) is nonzero at $\zeta_0$.  Let $\eta$ be a K\"ahler form of $X$ and let $r_1>r_0$.  By (\ref{proposition(5.I.3)}) we have $$T(r,r_1,\varphi,\eta)=O(\log r)\quad\|,$$
which implies that the holomorphic map $\varphi$ from ${\mathbb C}-\overline{\Delta_{r_0}}$ to $X$ can be extended to a holomorphic map from ${\mathbb C}\cup\{\infty\}-\overline{\Delta_{r_0}}$ to $X$.  \end{proof}

\begin{theorem}\label{big_picard_for_hypersurface} For any integer
$n\geq 3$ there exists a positive integer $\delta_n$ with the
following property.  For any positive integer $\delta\geq\delta$ there exists a proper subvariety ${\mathcal Z}$ in the moduli space ${\mathbb P}_N$ of all hypersurfaces of degree $\delta$ in ${\mathbb P}_n$ (where $N={n+\delta\choose n}$) such that for $\alpha\in{\mathbb P}_n-{\mathcal Z}$ and any holomorphic map $\varphi:{\mathbb C}-\overline{\Delta_{r_0}}\to X^{(\alpha)}$ (where $r_0>0$) can be extended to a holomorphic map ${\mathbb C}\cup\{\infty\}-\overline{\Delta_{r_0}}\to X^{(\alpha)}$, where $X^{(\alpha)}$ is the hypersurface of degree $\delta$ in ${\mathbb P}_n$ corresponding to the point $\alpha$ in the moduli space ${\mathbb P}_N$.
\end{theorem}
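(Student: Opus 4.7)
The plan is to prove Theorem~\ref{big_picard_for_hypersurface} in direct parallel with the proof of Theorem~\ref{main_theorem} given in~\ref{proof_main_theorem}, with the Schwarz lemma of the vanishing of pullbacks to $\mathbb{C}$ replaced by Proposition~\ref{extendibility_map}. I would take $\delta_n$ large enough that both Proposition~\ref{construction_jet_differential_as_polynomial} and Proposition~\ref{nonzero_jet_differential} apply, and set $\mathcal{Z} \subsetneq \mathbb{P}_N$ to be the proper subvariety produced by Proposition~\ref{nonzero_jet_differential}. For every $\alpha \in \mathbb{P}_N - \mathcal{Z}$ the Big Picard conclusion will then follow from a single application of Proposition~\ref{extendibility_map}, once a suitable finite family of jet differentials is produced.

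The one new ingredient is the extraction of a \emph{finite} family from the pointwise non-vanishing provided by Proposition~\ref{nonzero_jet_differential}. Fix $\alpha \in \mathbb{P}_N - \mathcal{Z}$ and let $U \subset J_{n-1}\bigl(X^{(\alpha)}\bigr)$ denote the Zariski-open subset of $(n-1)$-jets representable by a nonsingular complex curve germ in $X^{(\alpha)}$ (characterized by the non-vanishing of $z_1 dz_0 - z_0 dz_1$ in some homogeneous chart, as noted just before Proposition~\ref{sufficient_slanted_vector_fields}). Proposition~\ref{nonzero_jet_differential} produces, for each $P_0 \in U$, slanted vector fields $\xi_1,\ldots,\xi_\ell$ (with $\ell$ of order $\delta^\varepsilon$) such that
$$
\omega_{P_0} := {\mathcal L}ie_{\xi_1}\cdots{\mathcal L}ie_{\xi_\ell}\,\omega^{(\alpha)}
$$
is a holomorphic $(n-1)$-jet differential on $X^{(\alpha)}$, vanishing on an ample divisor, with $\omega_{P_0}(P_0) \neq 0$. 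Because the pole orders of the $\xi_j$ (from Proposition~\ref{sufficient_slanted_vector_fields}) and the weight of $\omega^{(\alpha)}$ (from Proposition~\ref{construction_jet_differential_as_polynomial}) are uniformly bounded, all such $\omega_{P_0}$ lie in a fixed finite-dimensional space of sections of a fixed line bundle on $J_{n-1}\bigl(X^{(\alpha)}\bigr)$. Writing $Z(\omega)$ for the zero locus in $J_{n-1}\bigl(X^{(\alpha)}\bigr)$ of a jet differential $\omega$, the construction gives
$$
U \cap \bigcap_{P_0 \in U} Z\bigl(\omega_{P_0}\bigr) = \emptyset,
$$
and Noetherianity of the algebraic variety $J_{n-1}\bigl(X^{(\alpha)}\bigr)$ (applied to the descending chain of finite intersections) yields a finite subcollection $\omega_1,\ldots,\omega_N$ with $\bigcap_{j=1}^N Z(\omega_j) \cap U = \emptyset$. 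If desired, one equalizes the weights of the $\omega_j$ by multiplying by appropriate global sections of a positive line bundle; this preserves both the ample-divisor vanishing and non-vanishing at any given $P_0$, so the hypothesis of Proposition~\ref{extendibility_map} is satisfied.

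Applying Proposition~\ref{extendibility_map} to this finite collection immediately gives that any holomorphic map $\varphi: \mathbb{C} - \overline{\Delta_{r_0}} \to X^{(\alpha)}$ extends to a holomorphic map $\mathbb{C} \cup \{\infty\} - \overline{\Delta_{r_0}} \to X^{(\alpha)}$. The main obstacle in this plan is the finite-extraction step: one must verify carefully that the iterated Lie derivatives produced by Proposition~\ref{nonzero_jet_differential} land in a single finite-dimensional section space, which relies on the explicit bounds on $\ell$ and on the vertical pole order of the slanted vector fields. Once this is checked, the argument is an entirely formal upgrade of~\ref{proof_main_theorem}, since the quantitative ingredients (log-growth of characteristic functions from Proposition~\ref{proposition(5.I.3)}, the extension Lemma~\ref{extend_across_infinity_point}, and its packaging in Proposition~\ref{extendibility_map}) have already been established earlier in Section~5.
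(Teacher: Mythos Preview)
Your proposal is correct and follows the same approach the paper intends. The paper's own proof is in fact truncated---it consists of a single incomplete sentence invoking Proposition~\ref{sufficient_slanted_vector_fields} and nothing more---so the route is clearly meant to be: use the jet differentials produced via slanted vector fields (packaged in Proposition~\ref{nonzero_jet_differential}) and feed them into Proposition~\ref{extendibility_map}, exactly as you do.

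Your Noetherian extraction of a finite subfamily is the one detail the paper omits but which Proposition~\ref{extendibility_map} genuinely requires, and your argument for it is sound. One small simplification: the weight-equalization step is unnecessary, since by Remark~\ref{nodesignation(2.17)} the slanted vector fields (after clearing denominators) preserve the weight of the jet differential, so every $\omega_{P_0}$ already has the same weight $m$ as $\omega^{(\alpha)}$.
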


\begin{proof} By Proposition (\ref{sufficient_slanted_vector_fields}) on global generation on jet space by slanted vector fields at points representable by regular curve germs,

\end{proof}

\bigbreak\noindent{\bf II.} {\sc Entire Function Solutions of Polynomial Equations of Slowly Varying Coefficients}

\begin{nodesignation}\label{II}{\it Historical Background, Osculation Condition, and Log-Pole Jet Differential.}\end{nodesignation}Before the introduction of the language of geometry of manifolds, hyperbolicity problems were formulated in terms of entire functions satisfying functional equations.  For example, a theorem of Borel states that if entire function $\varphi_1,\cdots,\varphi_n$ satisfy $e^{\varphi_1}+\cdots+e^{\varphi_n}=0$, then
$\varphi_j-\varphi_k$ is constant with $1\leq j<k\leq n$.  In the formulation in terms of functional equations satisfied by entire functions, the hyperbolicity of a generic hypersurface of high degree $\delta$ states that no $n+1$ entire holomorphic functions $\varphi_0(\zeta),\cdots,\varphi_n(\zeta)$, with the ratios $\frac{\varphi_j}{\varphi_\ell}$ not all constant, can satisfy a homogeneous polynomial equation
$$
\sum_{\nu_0+\cdots+\nu_n=\delta}\alpha_{\nu_0,\cdots,\nu_n}\varphi_1^{\nu_1}\cdots\varphi_1^{\nu_1}\equiv 0$$
of degree $\delta$ whose constant coefficients $\alpha_{\nu_0,\cdots,\nu_n}$ are generic.

\medbreak There have also been considerable investigations on the situation when the constant coefficients are allowed to vary slowly.  For example, on p.387 of his 1897 paper \cite{Bo97}, Emile Borel studied the problem of entire functions $\gamma_1(\zeta),\cdots,\gamma_n(\zeta)$ and $\varphi_1(\zeta),\cdots,\varphi_n(\zeta)$ satisfying
$\gamma_1e^{\varphi_1}+\cdots+\gamma_ne^{\varphi_n}=0$ and proved that $\gamma_1(\zeta),\cdots,\gamma_n(\zeta)$ must be all identically zero if the growth rate on $|\zeta|=r$ of
$\gamma_1(\zeta),\cdots,\gamma_n(\zeta)$ and $\varphi_1(\zeta),\cdots,\varphi_n(\zeta)$ is no more than $e^{\mu(|\zeta|)}$ while the growth rate on $|\zeta|=r$ of $\varphi_j(\zeta)-\varphi_\ell(\zeta)$ for $j\not=\ell$ is at least $\mu(r)^2$ for some function $\mu(r)$ as $r\to\infty$.

\medbreak For the hyperbolicity problem of generic hypersurface of degree $\delta$, we now study the question of entire functions satisfying a homogeneous polynomial equation of degree $\delta$ with varying coefficients.  More precisely, we ask whether there are entire functions $\varphi_0(\zeta),\cdots,\varphi_n(\zeta)$ without common zeroes and entire functions $\alpha_{\nu_0,\cdots,\nu_n}(\zeta)$ for $\nu_0+\cdots+\nu_n=\delta$ without any common zeroes satisfying
$$\sum_{\nu_0+\cdots+\nu_n=\delta}\alpha_{\nu_0,\cdots,\nu_n}(\zeta)\varphi_0(\zeta)^{\nu_0}\cdots \varphi_n(\zeta)^{\nu_n}\equiv 0
\leqno{(\ref{II}.1)_0}
$$ such that
\begin{itemize}\item[(i)] $\psi:\zeta\mapsto\alpha(\zeta)=\left(\alpha_{\nu_0,\cdots,\nu_n}(\zeta)\right)\in{\mathbb P}_N$ is nonconstant,
\item[(ii)] $\alpha\left(\zeta_0\right)=\left(\alpha_{\nu_0,\cdots,\nu_n}\left(\zeta_0\right)\right)$
is not in the exceptional set ${\mathcal Z}$ in ${\mathbb P}_N$ for some $\zeta_0\in{\mathbb C}$, and
\item[(iii)]
$T\left(r,\psi\right)=o\left(T\left(r,\varphi\right)+\log r\right)\ \ \|$, where $\varphi:{\mathbb C}\to{\mathbb P}_n$ is defined by $\left[\varphi_0,\cdots,\varphi_n\right]$.
\end{itemize}
Here we handle the simpler question which assumes in addition that
$$\sum_{\nu_0+\cdots+\nu_n=\delta}\alpha_{\nu_0,\cdots,\nu_n}(\zeta)
\frac{d^j}{d\zeta^j}\left(\varphi_0(\zeta)^{\nu_0}\cdots \varphi_n(\zeta)^{\nu_n}\right)\equiv 0
\leqno{(\ref{II}.1)_j}
$$
for $1\leq j\leq n-1$.  The additional set of $n-1$ equations $
(\ref{II}.1)_j$ for $1\leq j\leq n-1$ is equivalent to the set of $n-1$ equations
$$\sum_{\nu_0+\cdots+\nu_n=\delta}\left(\frac{d^j}{d\zeta^j}\alpha_{\nu_0,\cdots,\nu_n}(\zeta)\right)
\left(\varphi_0(\zeta)^{\nu_0}\cdots \varphi_n(\zeta)^{\nu_n}\right)\equiv 0
\leqno{(\ref{II}.2)_j}
$$
for $1\leq j\leq n-1$, because of the equation $
(\ref{II}.1)_0$ itself and the result obtained by differentiating it $j$-times with respect to $\zeta$.

\medbreak A geometric interpretation of the conditions $
(\ref{II}.1)_0$ and $
(\ref{II}.2)_j$ for $1\leq j\leq n-1$ is the following.  When $$\left\{\varphi_0(\zeta)^{\nu_0}\cdots \varphi_n(\zeta)^{\nu_n}\right\}_{\nu_0+\cdots+\nu_\nu=\delta}$$ is considered as the set of coefficients of a linear equation which defines a hyperplane
$H(\zeta)$ in ${\mathbb P}_N$, as $\zeta$ varies in ${\mathbb C}$ we have a moving hyperplane depending on $\zeta$.  Having entire functions $\varphi_\ell(\zeta)$ for $0\leq\ell\leq n$ and tneire functions $\alpha_{\nu_0,\cdots,\nu_n}(\zeta)$ for $\nu_0+\cdots+\nu_n=\delta$ satisfying $
(\ref{II}.1)_j$ for $0\leq j\leq n-1$ is equivalent to the existence of a holomorphic map $$\zeta\mapsto\alpha(\zeta)=\left(\alpha_{\nu_0,\cdots,\nu_n}\left(\zeta\right)\right)_{\nu_0+\cdots+\nu_n=\delta}$$
from ${\mathbb C}$ to ${\mathbb P}_N$ which osculates the hyperplane $H(\zeta)$ to order $n-1$ in the sense that the curve $\zeta\mapsto\alpha(\zeta)$ in ${\mathbb P}_N$ is tangential to order $n-1$ to the hyperplane $H\left(\zeta_0\right)$ of ${\mathbb P}_N$ at the point $\alpha\left(\zeta_0\right)\in{\mathbb P}_N$.    Condition (iii) of
$T\left(r,\psi\right)$ being of order $o\left(T\left(r,\varphi\right)+\log r\right)$ is the condition of slowly varying coefficients.

\medbreak For this question of polynomial equations with slowly varying coefficients under additional assumption of osculation, we present here two results.  The first one, corresponding to hyperbolicity, is that when the map $\zeta\mapsto\alpha(\zeta)$ is slowly moving compared to the map $\zeta\mapsto\varphi(\zeta)\in{\mathbb P}_n$, no such pair of curves  $\zeta\mapsto\alpha(\zeta)\in{\mathbb P}_N$ and $\zeta\mapsto\varphi(\zeta)\in{\mathbb P}_n$ exist.

\medbreak The second result, corresponding to the Big Picard Theorem, concerns extension across $\infty$ when the pair of maps
$\zeta\mapsto\varphi(\zeta)\in{\mathbb P}_n$ and $\zeta\mapsto\alpha(\zeta)\in{\mathbb P}_N$
are only defined for $\zeta\in{\mathbb C}-\overline{\Delta_{r_0}}$ instead of on ${\mathbb C}$.

\medbreak Since the Schwarz lemma is the crucial tool for the hyperbolicity problem, for the more general case of slowly varying coefficients we need a variation of the Schwarz lemma for it.  We are going to present it in the form which is more than we need by allowing log-pole jet differentials rather than just holomorphic jet differentials so that it can be used later in this article in the proof of Second Main Theorems for log-pole jet differentials (see Thoerem \ref{second_main_theorem_jet_differential} and Theorem \ref{second_main_theorem_jet_differential_moving_target} below).  A log-pole jet differential means that locally it is of the form
$$
\sum_\lambda G_\lambda\left(d^{\ell_{1,\lambda}}x_1\right)^{\nu_{1,\lambda}}\cdots
\left(d^{\ell_{1,\lambda}}x_1\right)^{\nu_{1,\lambda}}
\left(d^{\sigma_{1,\lambda}}\log F_1\right)^{\tau_{1,\lambda}}\cdots
\left(d^{\sigma_{\mu_\lambda,\lambda}}\log F_{\mu_\lambda}\right)^{\tau_{\mu_\lambda,\lambda}},
$$
where $x_1,\cdots,x_n$ are local holomorphic coordinates, $G_\lambda$ and $F_1,\cdots,F_{\mu_\lambda}$ are local holomorphic functions.  Each $\left(d^\ell\log F\right)^\nu$ contributes $\nu\ell$ times the divisor of $F$ to the log-pole divisor (with multiplicities counted) of the log-pole jet differential.

\begin{proposition}\label{general_schwarz_lemma} {\rm (General Schwarz Lemma for Log-Pole Differential on Subvariety of Jets and Map with Slow Growth for Pole Set)}\ \   Let $X$ be a compact complex algebraic manifold of complex dimension ${\, n\, }$ and $Y$ be a complex subvariety of the space $J_k(X)$ of $k$-jets of $X$.  Let $\pi_k:J_k(X)\to X$ be the natural projection map.  Let $D$ and $E$ be nonnegative divisors of $X$ whose associated line bundles $L_D$ and $L_E$ respectively have smooth metrics $e^{-\chi_D}$ and $e^{-\chi_D}$ with smooth $(1,1)$-forms $\eta_D$ and $\eta_E$ as curvature forms such that $D+E$ is an ample divisor of $X$ and its curvature form $\eta_D+\eta_E$ for the metric $e^{-\chi_D-\chi_E}$ is strictly positive on $X$.  Let $s_D$ (respectively $s_E$) be the holomorphic section of $L_D$ (respectively $L_E$) whose divisor is $D$ (respectively $E$).  Let $F$ be a nonnegative divisor of $X$ and ${\rm Supp}\,F$ be its support.  Let $\omega$ be a function on $Y$ such that $s_E\left(s_D\right)^{-1}\omega$ is holomorphic on $Y-\pi_k^{-1}\left({\rm Supp}\,F\right)$.  Assume that for some finite open cover $\{U_j\}_{j=1}^J$ of $X$, there exists a log-pole $k$-form $\omega_j$ on $U_j$ (for $1\leq j\leq J$), whose log pole is contained in $F$ with multiplicities counted, such that on $Y\cap\pi_k^{-1}\left(U_j\right)$ the function $\omega$ agrees with the function on $\pi_k^{-1}\left(U_j\right)$ defined by $s_D\left(s_E\right)^{-1}\omega_j$ for $1\leq j\le J$.  Let $r_1>r_0>0$. Let $\varphi:{\mathbb C}-\overline{\Delta_{r_0}}\to X$ be a holomorphic map such that the image of the map $J_k(\varphi):J_k\left({\mathbb C}-\overline{\Delta_{r_0}}\right)\to J_k(X)$ induced by $\varphi$ is contained in $Y$.
Let $G_j(\zeta)$ be the function ${\rm eval}_{{\rm id}_{\mathbb C}}(\varphi_j^*\omega)$ associated to $\varphi^*\omega_j$ as explained in (\ref{pullback_function}).  If $G_j(\zeta)$ is not identically zero on ${\mathbb C}-\overline{\Delta_{r_0}}$, then
$$
\begin{aligned}
T\left(r,r_1,\varphi,\eta_D\right)\leq &\ T\left(r,r_1,\varphi,\eta_E\right)+N\left(r,r_1,\varphi,F\right)\cr
&+O\left(\log T\left(r,r_1,\varphi,\eta_D+\eta_E\right)+\log r\right)\ \ \|.\cr
\end{aligned}
$$
In particular, if for some $\varepsilon>0$ one assumes that $$N\left(r,r_1,\varphi,F\right)+T\left(r,r_1,\varphi,\left|\eta_E\right|\right)\leq\left(1-\varepsilon\right)\left(T\left(r,r_1,\varphi,\eta_D\right)\right)\ \ \|,$$
then either $G_j(\zeta)$ is identically zero for all $1\leq j\leq J$ or
$$T\left(r,\varphi,\eta_D+\eta_E\right)=O\left(\log r\right)\ \ \|.$$
\end{proposition}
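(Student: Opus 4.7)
The plan is to adapt the argument of Proposition \ref{proposition(5.I.3)} by working with the single global meromorphic function $\Phi(\zeta)={\rm eval}_{{\rm id}_{\mathbb C}}(\varphi^*\omega)(\zeta)$ on ${\mathbb C}-\overline{\Delta_{r_0}}$ rather than the local $G_j$'s directly. The hypothesis that $s_E s_D^{-1}\omega$ is holomorphic on $Y-\pi_k^{-1}({\rm Supp}\,F)$ forces $\omega$ to vanish on $\pi_k^{-1}(D)\cap Y$ (to cancel the pole of $s_D^{-1}$) and to have pole of order at most $1$ along $\pi_k^{-1}(E)\cap Y$ (absorbed by the zero of $s_E$), together with log poles in $F$. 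Consequently $\Phi$ is meromorphic on ${\mathbb C}-\overline{\Delta_{r_0}}$ with $N(r,r_1,\Phi,0)\ge N(r,r_1,\varphi,D)$ and $N(r,r_1,\Phi,\infty)\le N(r,r_1,\varphi,E)+N(r,r_1,\varphi,F)$, where the $F$-contribution accounts for the poles arising from the $\zeta$-derivatives of $\log F_\tau\circ\varphi$ inside the identity-jet evaluation.

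Next I would apply the twice-integration-of-Laplacian identity (\ref{twice_integration}) to
$$g(\zeta)=\log\!\left(\frac{|\Phi(\zeta)|^2\,(|s_E|^2 e^{-\chi_E})(\varphi(\zeta))}{(|s_D|^2 e^{-\chi_D})(\varphi(\zeta))}\right).$$
Using Poincar\'e--Lelong $dd^c\log|s_D|^2=[D]$ together with $dd^c\chi_D=\eta_D$ (and analogously for $E$), the identity after integration over $|\zeta|<\rho$ and $\int_{r_1}^r d\rho/\rho$ reads
\begin{align*}
&N(r,r_1,\Phi,0)-N(r,r_1,\Phi,\infty)+N(r,r_1,\varphi,E)-N(r,r_1,\varphi,D)\\
&\qquad+T(r,r_1,\varphi,\eta_D)-T(r,r_1,\varphi,\eta_E)=\tfrac{1}{4\pi}\!\int_0^{2\pi}\!g(re^{i\theta})\,d\theta+O(1).
\end{align*}
Substituting the zero/pole bounds on $\Phi$ from the first paragraph gives
$$T(r,r_1,\varphi,\eta_D)\le T(r,r_1,\varphi,\eta_E)+N(r,r_1,\varphi,F)+\tfrac{1}{4\pi}\!\int_0^{2\pi}\!g(re^{i\theta})\,d\theta+O(1).$$

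The decisive simplification of the integral: on $\pi_k^{-1}(U_j)$ the relation $\omega=s_D s_E^{-1}\omega_j$ in a local trivialization of $L_D$ and $L_E$ yields $\Phi=(s_D/s_E)(\varphi)\,G_j$, and the metric factors in $g$ exactly cancel the $|s_D/s_E|^2$, leaving
$$g(\zeta)=\log|G_j(\zeta)|^2+(\chi_D-\chi_E)\circ\varphi(\zeta)=\log|G_j(\zeta)|^2+O(1),$$
since $\chi_D-\chi_E$ is smooth on the compact manifold $X$. Hence $\tfrac{1}{4\pi}\int g\,d\theta\le\tfrac{1}{2\pi}\int\log^+|G_j|\,d\theta+O(1)$. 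Bloch's technique now enters: the log-pole jet differential $\omega_j$ admits a representation on $U_j$ as a polynomial in the inhomogeneous coordinates and the symbols $d^\ell\log F_\tau^{(\lambda)}$ for finitely many global meromorphic functions $F_\tau^{(\lambda)}$ on $X$, with the log-pole multiplicities aggregating into $F$. Pullback by $\varphi$ and identity-jet evaluation turn each $d^\ell\log F_\tau^{(\lambda)}$ into $\tfrac{d^\ell}{d\zeta^\ell}\log F_\tau^{(\lambda)}\circ\varphi$, and Proposition \ref{proposition(5.I.2)} gives
$$\tfrac{1}{2\pi}\!\int_0^{2\pi}\!\log^+\!\bigl|G_j(re^{i\theta})\bigr|\,d\theta=O\bigl(\log T(r,r_1,\varphi,\eta_D+\eta_E)+\log r\bigr)\ \|.$$
Combining yields the principal inequality
$$T(r,r_1,\varphi,\eta_D)\le T(r,r_1,\varphi,\eta_E)+N(r,r_1,\varphi,F)+O\bigl(\log T(r,r_1,\varphi,\eta_D+\eta_E)+\log r\bigr)\ \|.$$
The ``in particular'' clause is then immediate: the hypothesis forces $\varepsilon T(r,r_1,\varphi,\eta_D)\le O(\log T+\log r)\ \|$, from which $T(r,r_1,\varphi,\eta_D+\eta_E)=O(\log r)\ \|$.

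The main obstacle lies in the third paragraph. First, I must verify that the log-pole jet differential $\omega_j$ admits the Bloch-type representation in terms of global meromorphic functions on $X$ whose log-pole data is captured precisely by the given divisor $F$, with no auxiliary divisor whose counting function is not already bounded by $T(r,r_1,\varphi,\eta_D+\eta_E)$. Secondarily, the pole accounting for $\Phi$ near $\varphi^{-1}({\rm Supp}\,F)$ requires care: higher-order derivatives $\tfrac{d^\ell}{d\zeta^\ell}\log F_\tau\circ\varphi$ produce higher-order poles of $\Phi$, and one must check that the totality of these pole orders is bounded by the log-pole multiplicities stored in $F$, so that $N(r,r_1,\Phi,\infty)\le N(r,r_1,\varphi,E)+N(r,r_1,\varphi,F)$ holds with the stated constants.
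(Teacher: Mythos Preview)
Your argument is essentially the paper's own proof, reorganized. The paper applies the twice-integration identity directly to
\[
g(\zeta)=\log\Bigl(|G_{j_0}(\zeta)|^2\,\varphi^*\tfrac{|s_E|^2e^{-\chi_E}}{|s_D|^2e^{-\chi_D}}\Bigr)
\]
for a fixed $j_0$ with $G_{j_0}\not\equiv 0$, whereas you first introduce the global $\Phi={\rm eval}_{{\rm id}_{\mathbb C}}(\varphi^*\omega)$ and then use the local relation $\Phi=(s_D/s_E)(\varphi)\,G_j$ to reduce your $g$ to $\log|G_j|^2+O(1)$. After that reduction the two arguments coincide: both invoke Bloch's device of bounding $\log^+|G_j|$ by sums of $\log^+|d^\ell\log F^{(\lambda)}_{j,\ell}(\varphi)|$ for global meromorphic $F^{(\lambda)}_{j,\ell}$, and both appeal to Proposition~\ref{proposition(5.I.2)} to control those terms by $O(\log T+\log r)\,\|$. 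The ``in particular'' clause is handled identically.

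One comment on your detour through $\Phi$: your first paragraph tracks the zero/pole divisor of $\Phi$ separately, recording $N(r,r_1,\Phi,0)\ge N(r,r_1,\varphi,D)$ and $N(r,r_1,\Phi,\infty)\le N(r,r_1,\varphi,E)+N(r,r_1,\varphi,F)$, then feeds these into the twice-integration identity. But once you observe (as you do in the third paragraph) that your $g$ collapses to $\log|G_j|^2+O(1)$, that separate accounting becomes redundant --- the $[D]$, $[E]$ contributions from $|s_D|^2$, $|s_E|^2$ cancel against those hidden in $|\Phi|^2$, and what survives is exactly the paper's inequality with only $N(r,r_1,\varphi,F)$ appearing from the poles of $G_{j_0}$. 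So your route is slightly longer but arrives at the same place. The two ``obstacles'' you flag at the end (the Bloch-type representation of $\omega_j$ in terms of $d^\ell\log$ of global meromorphic functions, and the pole-order bookkeeping for higher $d^\ell\log$ terms against the divisor $F$) are left at the same level of detail in the paper's proof; your caution there is well placed but does not indicate any divergence from the intended argument.
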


\bigbreak\noindent\begin{proof}
We assume that $G_{j_0}(\zeta)$ is not identically zero for some $1\leq j_0\leq J$.    We apply twice integration of Laplacian with in (\ref{twice_integration}) to
$$g(\zeta)=\log\left(\left|G_{j_0}(\zeta)\right|^2\varphi^*\left(\frac{\,\left|s_E\right|^2 e^{-\chi_E}\,}{\left|s_D\right|^2 e^{-\chi_D}}\right)\right).
$$
Since $\omega_j$ is holomorphic on $Y-\pi_k^{-1}\left({\rm Supp}\,F\right)$, it follows that
$$
\displaylines{T\left(r,r_1,\varphi,\eta_D\right)
-T\left(r,r_1,\varphi,\eta_E\right)-N\left(r,r_1,\varphi,F\right)\cr
\leq\frac{1}{4\pi}
\int_{\theta=0}^{2\pi}\log\left(\left|G_{j_0}(\zeta)\right|^2\varphi^*\left(\frac{\,\left|s_E\right|^2 e^{-\chi_E}\,}{\left|s_D\right|^2 e^{-\chi_D}}\right)\right)\left(re^{i\theta}\right)d\theta+O(1).}
$$
Here we have the inequality instead of an identity, because of possible contribution from the zero-set of $\frac{\omega_{j_0}}{s_D}$. At this point enters Bloch's technique of applying the logarithmic derivative lemma by using the logarithm of global meromorphic functions as local coordinates.  As functions on the space $J^{\rm{\scriptstyle vert}}_{n-1}\left({\mathcal X}\right)$ of vertical $(n-1)$-jets on ${\mathcal X}$ (with the right-hand side being global functions and the left-hand side being only local functions due to the transition functions of the line bundles $L_D$ and $L_E$),
$$
\left|{\omega_{j_0} s_E}{s_D}\right|\leq C\sum_{\lambda=1}^\Lambda
\prod_{\nu_{j,\ell},\,j,\ell}\left|d^\ell\log F^{(\lambda)}_{j,\ell}\right|^{\nu_{j,\ell}}
$$
for some $C>0$ and a finite collection $\left\{F^{(\lambda)}_{j,\ell}\right\}$ of global meromorphic functions on $X$, where the product is taken over the indices $\nu_{j,\ell},\,j,\ell$ with the ranges $1\leq j\leq n,\,1\leq\ell\leq k$ and $\sum_{1\leq j\leq n,\,1\leq\ell\leq k}\ell\,\nu_{j,\ell}=m$.  By Nevanlinna's logarithmic derivative lemma (extended to ${\mathbb C}$ outside a disk centered at the origin) given in (\ref{proposition(5.I.2)}),
$$
\int_{\theta=0}^{2\pi}\log^+\left|d^\ell\log F^{(\lambda)}_{j,\ell}\right|\left(re^{i\theta}\right)d\theta=O\left(\log T\left(r,r_1,\varphi,\eta_D+\eta_E\right)+\log r\right)\ \ \|.
$$
Hence
$$
\displaylines{\int_{\theta=0}^{2\pi}\log\left(\left|G_{j_0}(\zeta)\right|^2\varphi^*\left(\frac{\,\left|s_E\right|^2 e^{-\chi_E}\,}{\left|s_D\right|^2 e^{-\chi_D}}\right)\right)\left(re^{i\theta}\right)d\theta\cr
=O\left(\log T\left(r,r_1,\varphi,\eta_D+\eta_E\right)+\log r\right)\ \ \|}
$$
and we get
$$
\begin{aligned}
T\left(r,r_1,\varphi,\eta_D\right)\leq &\ T\left(r,r_1,\varphi,\eta_E\right)+N\left(r,r_1,\varphi,F\right)\cr
&+O\left(\log T\left(r,r_1,\varphi,\eta_D+\eta_E\right)+\log r\right)\ \ \|.\cr
\end{aligned}
$$
If now for some $\varepsilon>0$ one assumes that $$N\left(r,r_1,\varphi,F\right)+T\left(r,r_1,\varphi,\left|\eta_E\right|\right)\leq\left(1-\varepsilon\right)\left(T\left(r,r_1,\varphi,\eta_D\right)\right)\ \ \|,$$
then one obtains right away $T\left(r,\varphi,\eta_D+\eta_E\right)=O\left(\log r\right)\ \ \|$.\end{proof}

\begin{theorem}\label{solution_slow_varying_coefficient} {\rm (Entire Function Solution of Polynomial Equations with Slowing Varying Coefficients)}\ \ There exists a positive integer $\delta_n$ and for $\delta\geq\delta_n$ there exists a property subvariety ${\mathcal Z}$ of ${\mathbb P}_N$ (where $N={\delta+n\choose n}$) with the following property.  There cannot exist entire functions $\varphi_0(\zeta),\cdots,\varphi_n(\zeta)$ without common zeroes and entire functions $\alpha_{\nu_0,\cdots,\nu_n}(\zeta)$ for $\nu_0+\cdots+\nu_n=\delta$ without any common zeroes satisfying
$$\sum_{\nu_0+\cdots+\nu_n=\delta}\alpha_{\nu_0,\cdots,\nu_n}(\zeta)\frac{
d^j}{d\zeta^j}\left(\varphi_0(\zeta)^{\nu_0}\cdots \varphi_n(\zeta)^{\nu_n}\right)\equiv 0\quad{\rm for}\ \ 0\leq j\leq n-1$$ such that
\begin{itemize}\item[(i)] the map $\psi:\zeta\mapsto\alpha(\zeta)=\left(\alpha_{\nu_0,\cdots,\nu_n}(\zeta)\right)\in{\mathbb P}_N$ is nonconstant,
\item[(ii)] $\alpha\left(\zeta_0\right)=\left(\alpha_{\nu_0,\cdots,\nu_n}\left(\zeta_0\right)\right)$
is not in the exceptional set ${\mathcal Z}$ in ${\mathbb P}_N$ for some $\zeta_0\in{\mathbb C}$, and
\item[(iii)]
$T\left(r,\psi\right)=o\left(T\left(r,\varphi\right)+\log r\right)\ \ \|$, where $\varphi:{\mathbb C}\to{\mathbb P}_n$ is defined by $\left[\varphi_0,\cdots,\varphi_n\right]$.
\end{itemize}
\end{theorem}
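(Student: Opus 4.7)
\medbreak\noindent\textbf{Proof plan for Theorem \ref{solution_slow_varying_coefficient}.} The plan is to argue by contradiction, reducing the slowly-varying-coefficient situation to essentially the same jet-differential and slanted-vector-field machinery used in the proof of Theorem \ref{main_theorem}, with the general Schwarz lemma of Proposition \ref{general_schwarz_lemma} absorbing the contributions from the moving coefficients $\alpha(\zeta)$. First I would package the data as a single holomorphic map $\Phi:\mathbb{C}\to\mathcal{X}$ defined by $\Phi(\zeta)=\bigl(\varphi(\zeta),\alpha(\zeta)\bigr)$; the equation $(\ref{II}.1)_0$ says precisely that $\Phi(\mathbb{C})\subset\mathcal{X}$. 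Interpreting the remaining osculation conditions $(\ref{II}.1)_j$ for $1\le j\le n-1$, which only involve $\frac{d^j}{d\zeta^j}(\varphi^\nu)$ with the $\alpha_\nu(\zeta)$ not differentiated, one sees that they are equivalent to saying that the $(n-1)$-jet of the $\mathbb{P}_n$-component of $\Phi$ at each $\zeta_0$, together with the point $\alpha(\zeta_0)\in\mathbb{P}_N$, defines a point of the vertical jet space $J^{\rm{\scriptstyle (vert)}}_{n-1}(\mathcal{X})$. Thus the induced map $J_{n-1}(\Phi):J_{n-1}(\mathbb{C})\to J_{n-1}(\mathbb{P}_n)\times\mathbb{P}_N$ factors through $Y:=J^{\rm{\scriptstyle (vert)}}_{n-1}(\mathcal{X})$, which is exactly the setting in which our slanted vector fields and jet differentials live.

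Next, for $\alpha$ in some Zariski-open subset of $\mathbb{P}_N$, Proposition \ref{construction_jet_differential_as_polynomial} provides a holomorphic $(n-1)$-jet differential $\omega^{(\alpha)}=Q^{(\alpha)}/(f^{(\alpha)}_{x_1}-1)$ on $X^{(\alpha)}$ of weight $m\le\delta^\theta$ vanishing to order at least $q\ge\delta^{\theta'}$ on the intersection of $X^{(\alpha)}$ with the infinity hyperplane; the numerator $Q^{(\alpha)}$ is a polynomial in $d^{\boldsymbol\nu}\boldsymbol x$ whose coefficients depend algebraically on $\alpha$, so $\omega$ extends to a meromorphic function on (the open subset of) $Y$ where the denominator is regular. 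For generic $\alpha(\zeta_0)\notin\mathcal{Z}$ and for any $P_0\in J_{n-1}(X^{(\alpha(\zeta_0))})$ representable by a nonsingular curve germ, Proposition \ref{nonzero_jet_differential} supplies slanted vector fields $\xi_1,\dots,\xi_\ell$ of low vertical pole order with $\ell=O(\delta^\varepsilon)$ so that $\widetilde\omega:=\mathcal{L}ie_{\xi_1}\!\cdots\mathcal{L}ie_{\xi_\ell}\omega$ is a holomorphic (log-pole-at-infinity) $(n-1)$-jet differential on $X^{(\alpha(\zeta_0))}$, still vanishing on an ample divisor (the reduction of vanishing order $O(\ell\cdot n^2)$ from Lemma~\ref{lemma(1.10)} is negligible compared with $\delta^{\theta'}$), and nonzero at $P_0$. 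By picking $\zeta_0$ where $d\varphi/d\zeta\ne0$ and taking $P_0$ to be the $(n-1)$-jet of $\varphi$ at $\zeta_0$, viewed as a jet in $X^{(\alpha(\zeta_0))}$, we obtain a candidate jet differential $\widetilde\omega$ whose evaluation along $\varphi$ is nonzero at $\zeta_0$.

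The heart of the argument is then to apply Proposition \ref{general_schwarz_lemma} to $\Phi$ in the ambient space $\mathcal{X}$ with $Y=J^{\rm{\scriptstyle (vert)}}_{n-1}(\mathcal{X})$ and the jet differential $\widetilde\omega$. Here I would choose $D$ to be (a suitable restriction to $\mathcal{X}$ of) a multiple of the infinity hyperplane of $\mathbb{P}_n$ picking up the ample vanishing locus of $\omega^{(\alpha)}$, and $E$ to be a divisor pulled back from $\mathbb{P}_N$ whose curvature dominates the $\alpha$-dependence introduced by the Lie derivatives along slanted vector fields (which involve $\partial/\partial\alpha_\nu$); the logarithmic pole set $F$ is likewise supported over $\mathbb{P}_n\times\mathbb{P}_N$-divisors arising from clearing denominators in $f^{(\alpha)}_{x_1}-1$, the powers of $f^{(\alpha)}_{x_1}$ used in eliminating $d^jx_1$, and the poles of the slanted vector fields along the infinity hyperplane. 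Projecting to the two factors gives
\[
T(r,r_1,\Phi,\eta_D)\asymp T(r,r_1,\varphi),\qquad T(r,r_1,\Phi,\eta_E)+N(r,r_1,\Phi,F)=O\!\left(T(r,r_1,\psi)+\log r\right),
\]
so that the slow-variation hypothesis $T(r,\psi)=o(T(r,\varphi)+\log r)$ yields precisely the strict inequality required by the ``in particular'' clause of Proposition \ref{general_schwarz_lemma}. The conclusion is the dichotomy that either $\operatorname{eval}_{\mathrm{id}_{\mathbb{C}}}(\Phi^*\widetilde\omega)\equiv0$ or $T(r,\Phi)=O(\log r)\,\|$. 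The second alternative forces both $\varphi$ and $\psi$ to be rational; combined with $T(r,\psi)=o(\log r)\,\|$ from slow variation, this forces $\psi$ to be constant, contradicting hypothesis (i). The first alternative contradicts the nonvanishing of $\widetilde\omega$ at $P_0$ constructed above (at $\zeta_0$ the evaluation function is nonzero by design).

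\medbreak\noindent\textbf{Main obstacle.} The delicate point I expect to dominate the work is Step 3: setting up the bookkeeping so that Proposition \ref{general_schwarz_lemma} applies cleanly to $\Phi$ as a map into $\mathcal{X}$ rather than to $\varphi$ as a map into a single fiber. One has to isolate exactly which part of the growth of $\Phi$ charges to $\eta_D$ (the honest $\varphi$-direction contribution that one wants to kill) and which part charges to $\eta_E$ or to $N(r,r_1,\Phi,F)$ (the $\psi$-direction contribution and the polar contributions from both the constructed denominator $f^{(\alpha)}_{x_1}-1$ and from the low-pole-order slanted vector fields used in Proposition \ref{nonzero_jet_differential}). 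A subsidiary but not serious issue is the trivial case where $\varphi$ is constant, in which $(\ref{II}.1)_0$ degenerates to a linear constraint on $\alpha(\zeta)$ lying in a fixed hyperplane of $\mathbb{P}_N$; this is disposed of by enlarging the exceptional set $\mathcal{Z}$ to contain all such hyperplanes along which the common point $y_0$ would have to lie in $X^{(\alpha)}$ for generic $\alpha$, which is a proper subvariety of $\mathbb{P}_N$.
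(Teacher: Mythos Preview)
Your approach is essentially the paper's own: package the data as a map $\Phi=(\varphi,\alpha)$ into $\mathcal X$, observe that the osculation conditions $(\ref{II}.1)_j$ force the induced $(n-1)$-jet to land in $Y=J^{\rm (vert)}_{n-1}(\mathcal X)$, invoke Proposition~\ref{nonzero_jet_differential} to produce a vertical jet differential nonzero at the jet $P_0$ of $\varphi$ at a point where $d\varphi\ne 0$, and then apply Proposition~\ref{general_schwarz_lemma} with $D$ coming from the $\mathbb P_n$-direction and $E$ pulled back from $\mathbb P_N$, so that the slow-variation hypothesis (iii) kills the $E$-contribution. The contradiction you extract from $T(r,\Phi)=O(\log r)\ \|$ via $T(r,\psi)=o(\log r)\ \|\Rightarrow\psi$ constant is exactly the paper's endgame.

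One correction, however, on the edge case you flag at the end. Your proposed fix for constant $\varphi$---enlarging $\mathcal Z$ to swallow the hyperplanes $\{a:y_0\in X^{(a)}\}$---cannot work: as $y_0$ ranges over $\mathbb P_n$ those hyperplanes sweep out all of $\mathbb P_N$, so no proper $\mathcal Z$ contains them. Fortunately the case is already excluded by the hypotheses and needs no enlargement of $\mathcal Z$: if $\psi$ is nonconstant then $\limsup_{r\to\infty}T(r,\psi)/\log r>0$, while if $\varphi$ were constant then $T(r,\varphi)=O(1)$ and hypothesis (iii) would force $T(r,\psi)=o(\log r)\ \|$, a contradiction. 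This is precisely how the paper disposes of it (and is also how you yourself argue in the $T(r,\Phi)=O(\log r)$ branch), so simply invoke that observation up front to guarantee the existence of $\zeta_0$ with $d\varphi(\zeta_0)\ne 0$.
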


\begin{proof} We apply Proposition \ref{general_schwarz_lemma} with $k=n-1$ to the the space $J_{n-1}({\mathcal X})$ of $(n-1)$-jets of the universal hypersurface ${\mathcal X}$ with  subvariety $Y$ equal to the space $J^{\rm{\scriptstyle vert}}_{n-1}\left({\mathcal X}\right)$ of vertical $(n-1)$-jets of ${\mathcal X}$.  The assumption
$$\sum_{\nu_0+\cdots+\nu_n=\delta}\alpha_{\nu_0,\cdots,\nu_n}(\zeta)\frac{
d^j}{d\zeta^j}\left(\varphi_0(\zeta)^{\nu_0}\cdots \varphi_n(\zeta)^{\nu_n}\right)\equiv 0\quad{\rm for}\ \ 0\leq j\leq n-1$$
implies that for every $\zeta\in{\mathbb C}$ the element of $J_{n-1}({\mathcal X})$ represented by the parametrized complex curve germ $\varphi$ at $\zeta$ belongs to $Y=J^{\rm{\scriptstyle vert}}_{n-1}\left({\mathcal X}\right)$.

\medbreak By Proposition \ref{basepoint_freeness_jet_differential} we have a proper subvariety ${\mathcal Z}$ of ${\mathbb P}_N$ and for $\alpha\in{\mathbb P}_N-{\mathcal Z}$ and $1\leq j\leq J$ a holomorphic family of $(n-1)$-jet differentials $\omega^{(\alpha)}_j$ on $X^{(\alpha)}$ vanishing on the infinity hyperplane of ${\mathbb P}_n$ (extendible to a meromorphic family over all of ${\mathbb P}_N$) such that, at any point $P_0$ of $J_{n-1}\left(X^{(\alpha)}\right)$ with $\alpha\in{\mathbb P}_N-{\mathcal Z}$ which is representable by a nonsingular complex curve germ, at least one $\omega^{(\alpha)}_j$ is nonzero at $P_0$ for some $1\leq j\leq J$.

\medbreak Since for each $1\leq j\leq J$ the holomorphic family $\omega_j^{(\alpha)}$ for $\alpha\in{\mathbb P}_N-{\mathcal Z}$ can be extended to a meromorphic family for $\alpha$ varying in all of ${\mathbb P}_N$, we can find a divisor $E_j$ in ${\mathbb P}_N$ such that for all $1\leq j\leq J$ the pole-set of $\omega_j^{(\alpha)}$ as a meromorphic vertical $(n-1)$-jet differential on ${\mathcal X}$ is contained in the intersection of ${\mathcal X}$ and ${\mathbb P}_n\times E_j$ with multiplicities counted.  For $1\leq j\leq J$, because $\omega_j^{(\alpha)}$ vanishes on an ample divisor of $X^{(\alpha)}$ for $\alpha\in{\mathbb P}_N-{\mathcal Z}$, we can find a divisor $D_j$ in ${\mathcal X}$ such that $D_j+E_j$ is an ample divisor of ${\mathcal X}$ and the zero-set of $\omega_j^{(\alpha)}$ as a meromorphic vertical $(n-1)$-jet differential on ${\mathcal X}$ contains $D_j$ with multiplicities counted.

\medbreak Since $\psi:{\mathbb C}\to{\mathbb P}_N$ is nonconstant and
$T\left(r,\psi\right)=o\left(T\left(r,\varphi\right)+\log r\right)\ \ \|$, it follows that the differential $d\varphi$ is nonzero for some $\zeta_0\in{\mathbb C}$.  Denote by $P_0$ the point in $J_{n-1}\left(X^{(\alpha)}\right)$ represented by the nonsingular complex curve germ $\varphi$ at $\zeta_0$.  Some $\omega_{j_0}^{(\alpha)}$ has nonzero value at $P_0$.  From Proposition \ref{general_schwarz_lemma} applied to $\omega_{j_0}^{(\alpha)}$, it follows that $$T(r,\varphi)=O(\log r)\ \ \|,$$
which would contradict   $$\limsup_{r\to\infty}\frac{T(r,\psi)}{\log r}>0$$ from the nonconstancy of the map $\psi$ and the assumption $$T\left(r,\psi\right)=o\left(T\left(r,\varphi\right)+\log r\right)\ \ \|.$$
\end{proof}

\medbreak The analogue of the Big Picard Theorem about removable essential singularities is the following result.

\begin{theorem} {\rm (Removing Essential Singularity for Holomorphic Solution of Polynomial Equations with Slowing Varying Coefficients)}\ \ There exists a positive integer $\delta_n$ and for $\delta\geq\delta_n$ there exists a proper subvariety ${\mathcal Z}$ of ${\mathbb P}_N$ (where $N={\delta+n\choose n}$) with the following property.  For some $r>r_0>0$ let $\varphi_0(\zeta),\cdots,\varphi_n(\zeta)$ be holomorphic functions on ${\mathbb C}-\overline{\Delta_{r_0}}$ without common zeroes and let  $\alpha_{\nu_0,\cdots,\nu_n}(\zeta)$ for $\nu_0+\cdots+\nu_n=\delta$ be holomorphic functions on ${\mathbb C}-\overline{\Delta_{r_0}}$ without common zeroes.  Assume that
$$\sum_{\nu_0+\cdots+\nu_n=\delta}\alpha_{\nu_0,\cdots,\nu_n}(\zeta)\frac{
d^j}{d\zeta^j}\left(\varphi_0(\zeta)^{\nu_0}\cdots \varphi_n(\zeta)^{\nu_n}\right)\equiv 0\quad{\rm for}\ \ 0\leq j\leq n-1$$ on ${\mathbb C}-\overline{\Delta_{r_0}}$ such that
\begin{itemize}\item[(i)] the map $\psi:\zeta\mapsto\alpha(\zeta)=\left(\alpha_{\nu_0,\cdots,\nu_n}(\zeta)\right)\in{\mathbb P}_N$ is nonconstant,
\item[(ii)] $\alpha\left(\zeta_0\right)=\left(\alpha_{\nu_0,\cdots,\nu_n}\left(\zeta_0\right)\right)$
is not in the exceptional set ${\mathcal Z}$ in ${\mathbb P}_N$ for some $\zeta_0\in{\mathbb C}-\overline{\Delta_{r_0}}$, and
\item[(iii)]
$T\left(r,r_1,\psi\right)=o\left(T\left(r,r_1,\varphi\right)+\log r\right)\ \ \|$.
\end{itemize}
Then $\varphi_0(\zeta),\cdots,\varphi_n(\zeta)$ and $\alpha_{\nu_0,\cdots,\nu_n}(\zeta)$ for
for $\nu_0+\cdots+\nu_n=\delta$ can be extended to meromorphic functions on ${\mathbb C}-\overline{\Delta_{r_0}}$.
\end{theorem}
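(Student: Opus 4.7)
The plan is to mimic the proof of Theorem \ref{solution_slow_varying_coefficient} but use the ``outside a disk'' versions of the logarithmic derivative lemma and of the Schwarz lemma so as to conclude log-order growth of the characteristic functions of $\varphi$ and $\psi$, and then invoke Lemma \ref{extend_across_infinity_point} to deduce meromorphic extension across $\infty$.

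First I would package the data into a single holomorphic map $\Phi\colon{\mathbb C}-\overline{\Delta_{r_0}}\to{\mathcal X}\subset{\mathbb P}_n\times{\mathbb P}_N$ by $\Phi(\zeta)=(\varphi(\zeta),\psi(\zeta))$. The osculation identities $(\ref{II}.1)_j$ for $0\leq j\leq n-1$ say precisely that the induced map on $(n-1)$-jets lands inside the vertical $(n-1)$-jet space $Y:=J^{\rm{\scriptstyle (vert)}}_{n-1}({\mathcal X})$, since the differentiations $\frac{d^j}{d\zeta^j}(\varphi_0^{\nu_0}\cdots\varphi_n^{\nu_n})$ treat $\alpha(\zeta)$ as constant. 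May assume $\varphi$ is not constant, so that at some $\zeta_1\in{\mathbb C}-\overline{\Delta_{r_0}}$ the differential $d\varphi$ is nonzero, and by the openness of the non-exceptional locus together with hypothesis (ii) we may further assume that $\psi(\zeta_1)\in{\mathbb P}_N-{\mathcal Z}$ where ${\mathcal Z}$ is the proper subvariety from Proposition \ref{nonzero_jet_differential}.

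Next I would invoke Proposition \ref{nonzero_jet_differential} to obtain a finite collection $\{\omega^{(\alpha)}_j\}_{j=1}^J$ of holomorphic $(n-1)$-jet differentials on $X^{(\alpha)}$ vanishing on an ample divisor, depending meromorphically on $\alpha\in{\mathbb P}_N$, such that for every $\alpha\in{\mathbb P}_N-{\mathcal Z}$ and every $P_0\in J_{n-1}(X^{(\alpha)})$ representable by a nonsingular complex curve germ, some $\omega^{(\alpha)}_{j_0}$ is nonzero at $P_0$. Letting $P_0$ be the $(n-1)$-jet represented by $\varphi$ at $\zeta_1$, I pick the corresponding $j_0$ so that the associated function ${\rm eval}_{{\rm id}_{\mathbb C}}(\varphi^*\omega^{(\psi)}_{j_0})$ is not identically zero on ${\mathbb C}-\overline{\Delta_{r_0}}$. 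As in the proof of Theorem \ref{solution_slow_varying_coefficient}, let $D_{j_0}$ be a divisor capturing the vanishing on the ample divisor (contributing ${\mathcal O}_{{\mathbb P}_n}(q)$ with $q\geq\delta^{\theta^\prime}$), and $E_{j_0}=\pi^*E'_{j_0}$ the divisor, pulled back from ${\mathbb P}_N$, that bounds the poles of the meromorphic family in the parameter direction.

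I would then apply Proposition \ref{general_schwarz_lemma} to the map $\Phi$ and the subvariety $Y$, with metrics on $L_{D_{j_0}}$ and $L_{E_{j_0}}$ chosen so that $D_{j_0}+E_{j_0}$ is ample on ${\mathcal X}$. The Proposition delivers
\begin{equation*}
T\!\left(r,r_1,\Phi,\eta_{D_{j_0}}\right)\leq T\!\left(r,r_1,\Phi,\eta_{E_{j_0}}\right)+O\!\left(\log T\!\left(r,r_1,\Phi,\eta_{D_{j_0}}+\eta_{E_{j_0}}\right)+\log r\right)\ \ \|.
\end{equation*}
Using the comparison of characteristic functions in (\ref{compare_characteristic_function}) and the bidegree-$(\delta,1)$ structure of ${\mathcal X}$, the left-hand side dominates $\delta\cdot T(r,r_1,\varphi)$ (minus lower-order terms in $T(r,r_1,\psi)$), while the right-hand side is controlled by $T(r,r_1,\psi)$, which by hypothesis (iii) is $o\bigl(T(r,r_1,\varphi)+\log r\bigr)\ \|$. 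Absorbing the $o(T)$ term on the left gives $T(r,r_1,\varphi)=O(\log r)\ \|$, and hence $T(r,r_1,\psi)=o(\log r)\ \|$ as well.

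Finally, Lemma \ref{extend_across_infinity_point} applied to $\varphi$ and to $\psi$ separately shows that both extend to holomorphic maps ${\mathbb C}\cup\{\infty\}-\overline{\Delta_{r_0}}\to{\mathbb P}_n$ and ${\mathbb C}\cup\{\infty\}-\overline{\Delta_{r_0}}\to{\mathbb P}_N$ respectively; equivalently, the ratios $\varphi_j/\varphi_0$ and $\alpha_{\nu}/\alpha_{\nu_*}$ (for a fixed $\nu_*$) extend meromorphically across $\infty$, and after multiplying by appropriate holomorphic nowhere-zero factors supplied by Lemma \ref{lemma(5.I.1)} (the trivial multiplicative Heftungslemma) the functions $\varphi_0,\ldots,\varphi_n$ and $\{\alpha_\nu\}$ themselves extend to meromorphic functions on ${\mathbb C}\cup\{\infty\}-\overline{\Delta_{r_0}}$.

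The main obstacle I expect is the bookkeeping in the Schwarz-lemma step: one must verify that the ``parameter-direction'' pole divisor $E_{j_0}$ of the meromorphic family $\omega^{(\alpha)}_j$ is genuinely horizontal (pulled back from ${\mathbb P}_N$) so that $T(r,r_1,\Phi,\eta_{E_{j_0}})$ is controlled by $T(r,r_1,\psi)$ alone, and that the vanishing divisor $D_{j_0}$ is sufficiently positive in the ${\mathbb P}_n$-direction that $T(r,r_1,\Phi,\eta_{D_{j_0}})$ bounds a positive multiple of $T(r,r_1,\varphi)$ up to contributions absorbed by hypothesis (iii). This is essentially the same delicate ampleness/positivity balance already used in the proof of Theorem \ref{solution_slow_varying_coefficient}, now performed with the ``outside-a-disk'' characteristic functions.
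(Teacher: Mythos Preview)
Your proposal is correct and follows essentially the same route as the paper's own proof: reduce to the nonconstant case, use the osculation hypotheses to land in $J^{\rm (vert)}_{n-1}({\mathcal X})$, pick a jet differential $\omega^{(\alpha)}_{j_0}$ from Proposition \ref{nonzero_jet_differential} that does not vanish at the relevant $(n-1)$-jet, apply the general Schwarz lemma (Proposition \ref{general_schwarz_lemma}) in its outside-a-disk form together with the slow-growth hypothesis (iii) to force $T(r,r_1,\varphi)=O(\log r)\ \|$, and then invoke Lemma \ref{extend_across_infinity_point}. Your write-up is in fact more careful than the paper's in two places: you explicitly arrange that $d\varphi\neq 0$ and $\psi\notin{\mathcal Z}$ hold simultaneously at the chosen point, and you spell out the passage from extension of the maps $\varphi,\psi$ to meromorphic extension of the individual functions $\varphi_j,\alpha_\nu$ via the Heftungslemma.
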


\begin{proof} We use the same notations as in the proof of Theorem \ref{solution_slow_varying_coefficient} except the domains for the maps
$\varphi:{\mathbb C}-\overline{\Delta_{r_0}}\to{\mathcal X}$ and $psi:{\mathbb C}-\overline{\Delta_{r_0}}\to{\mathbb P}_N$ are now different.  Without
loss of generality we can assume that the map $\varphi$ is nonconstant, otherwise the extendibility of $\varphi$ and $\psi$ is clear.  Denote by $P_0$ the point in $J_{n-1}\left(X^{(\alpha)}\right)$ represented by the nonsingular complex curve germ $\varphi$ at $\zeta_0$.  Some $\omega_{j_0}^{(\alpha)}$ has nonzero value at $P_0$.  From Proposition \ref{general_schwarz_lemma} applied to $\omega_{j_0}^{(\alpha)}$, it follows that $$T(r,\varphi)=O(\log r)\ \ \|.$$
Now the extendibility of $\varphi$ and $\psi$ to respectively holomorphic maps ${\mathbb C}-\overline{\Delta_{r_0}}\to{\mathcal X}$ and ${\mathbb C}-\overline{\Delta_{r_0}}\to{\mathbb P}_N$ follows from Lemma \ref{extend_across_infinity_point}.
\end{proof}

\bigbreak\noindent{\bf III.} {\sc Second Main Theorem from Log-Pole Jet Differential}

\medbreak Nevanlinna's Second Main Theorem is a quantitative version of the Little Picard Theorem.  The hyperbolicity of generic hypersurface of high degree corresponds to the Little Picard Theorem.  We now discuss the analogue of Nevanlinna's Second Main Theorem for any regular hypersurface of high degree from our approach of jet differentials.

\medbreak In contrast to the use of holomorphic jet differentials vanishing on an ample divisor in the hyperbolicity problem, the jet differentials used for the Second Main Theorem are log-pole jet differentials vanishing on ample divisor.  Our method fits in with Cartan's proof of the Second Main Theorem for entire holomorphic curves in ${\mathbb P}_n$ and a collection of hyperplanes in ${\mathbb P}_n$ in general position given in \cite{Ca33}.

\medbreak We will first show how to construct log-pole jet differentials on ${\mathbb P}_n$ which vanishes on an appropriate ample divisor of ${\mathbb P}_n$ and whose log pole-set is contained in the hypersurface.  We then present two Second Main Theorems for log-pole jet differentials, with the second one dealing with the situation of slowly moving targets.  Then we show how Cartan's proof can be recast in our setting of Second Main Theorem for log-pole jet differentials vanishing on an appropriate ample divisor.

\medbreak Second main theorems are useful only when the estimates are reasonably sharp. In the case at hand, because of our construction of jet differentials is so far away from the conjectured optimal situation, the discussion about Second Main Theorems can only serve as pointing out a connection between Second Main Theorems and jet differentials and their construction.

\begin{theorem}\label{construction_log_pole_differential} {\rm (Existence of Log Pole Jet Differential)}\ \ Let $0<\varepsilon_0,\varepsilon_0^\prime<1$.  There exists a positive integer $\hat\delta_n$ such that for any regular hypersurface $X$ of degree $\delta\geq\hat\delta_n$ in ${\mathbb P}_n$ there exists a non identically zero log-pole $n$-jet differential $\omega$ on ${\mathbb P}_n$ of weight $\leq \delta^{\varepsilon_0}$ which vanishes with multiplicity at least $\delta^{1-\varepsilon_0^\prime}$ on the infinity hyperplane of ${\mathbb P}_n$ and which is holomorphic on ${\mathbb P}_n-X$. In particular, the log-pole divisor of $\omega$ is no more than $\lambda$ times $X$ with $\lambda\leq n\delta^{\varepsilon_0}$.\end{theorem}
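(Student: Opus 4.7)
The plan is to imitate the linear algebra (Siegel lemma) construction of Proposition \ref{construction_jet_differential_as_polynomial}, now on $\mathbb{P}_n$ itself rather than on $X$, with the denominator $f_{x_1}-1$ replaced by logarithmic derivative factors $d^\ell\log f$ that account for the log-pole divisor along $X$. Fixing inhomogeneous coordinates $x_1,\ldots,x_n$ on $\mathbb{P}_n$ and the degree $\delta$ polynomial $f$ defining $X$, I would take
$$
\omega=\sum_{\mathbf{a},\mathbf{b},\mathbf{c}}c_{\mathbf{a},\mathbf{b},\mathbf{c}}\,x^{\mathbf{a}}\prod_{\ell,j}(d^\ell x_j)^{b_{\ell,j}}\prod_{\ell}(d^\ell\log f)^{c_\ell}
$$
as candidate log-pole $n$-jet differential, with $|\mathbf{a}|\le m_0:=\lceil\delta^{\theta_0}\rceil$ for some $0<\theta_0<1$ to be chosen and with total weight $\sum\ell\,b_{\ell,j}+\sum\ell\,c_\ell\le m:=\lceil\delta^{\varepsilon_0}\rceil$. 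Any such $\omega$ is automatically a log-pole $n$-jet differential holomorphic on $\mathbb{P}_n-X$ whose log-pole divisor is at most $(\sum\ell\,c_\ell)X\le m\,X\le n\delta^{\varepsilon_0}X$, so only the existence of a nonzero choice of coefficients forcing vanishing to order $q:=\lceil\delta^{1-\varepsilon_0'}\rceil$ on $H_\infty=\{z_0=0\}$ needs to be established.

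\textbf{Counting.} By Lemma \ref{lemma(3.6)} the number of unknowns $c_{\mathbf{a},\mathbf{b},\mathbf{c}}$ is bounded below by the product of $\binom{m_0+n}{n}$ with a quantity comparable to $m^{n^2+n-1}$ (coming from weight-$\le m$ monomials in the $n^2+n$ graded variables $d^\ell x_j$ and $d^\ell\log f$, each assigned weight $\ell$), hence of order at least $\delta^{\,n\theta_0+(n^2+n-1)\varepsilon_0}$ up to combinatorial constants. To impose the vanishing along $H_\infty$ I would switch to the coordinates $\zeta_j=x_j/x_n$ ($j<n$), $\zeta_n=1/x_n$ centered at infinity: by Lemma \ref{lemma(3.3)}, $\zeta_n^{\ell+1}\,d^\ell x_j$ is a polynomial in the $d^k\zeta_r$, and an analogous identity shows that after multiplication by $z_0^{(\ell+1)\delta}$ each factor $d^\ell\log f$ also becomes polynomial in the $d^k\zeta_r$. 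After premultiplying $\omega$ by a uniform power $z_0^{N_0}$ with $N_0=O(\delta m)$, the condition that $z_0^{N_0}\omega$ vanish to order $q+N_0$ along $\{\zeta_n=0\}$ becomes a linear system whose number of equations, counted by the analogue of Lemma \ref{lemma(3.5)} on the $(n-1)$-dimensional hyperplane $H_\infty$, is at most $O\!\bigl(q\,m_0^{n-2}\,m^{(n-1)^2-1}\bigr)$, i.e.\ of order $\lesssim\delta^{\,(1-\varepsilon_0')+(n-2)\theta_0+((n-1)^2-1)\varepsilon_0}$.

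\textbf{Conclusion by Siegel's lemma.} Subtracting the exponent on the equations side from that on the unknowns side yields the inequality $2\theta_0+(3n-1)\varepsilon_0+\varepsilon_0'>1$, which is easily satisfied by choosing $\theta_0$ slightly above $\tfrac{1}{2}$ for any prescribed $\varepsilon_0,\varepsilon_0'\in(0,1)$. With such a choice, there exists an explicit $\hat\delta_n=\hat\delta_n(n,\varepsilon_0,\varepsilon_0')$ so that for all $\delta\ge\hat\delta_n$ the linear system has strictly more unknowns than equations, and any nonzero solution $\omega$ has by construction weight $\le\delta^{\varepsilon_0}$, log-pole divisor bounded by $n\delta^{\varepsilon_0}X$, is holomorphic on $\mathbb{P}_n-X$, and vanishes on $H_\infty$ to order at least $\delta^{1-\varepsilon_0'}$.

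\textbf{Main obstacle.} The chief technical difficulty will be precise bookkeeping of the contribution of the factors $d^\ell\log f$ to the pole at $H_\infty$: since $\log f=\log(z_0^{-\delta}\cdot z_0^\delta f)$ carries a logarithmic singularity of order $\delta$ along $H_\infty$, a naive estimate of this pole would immediately destroy the balance between unknowns and equations. As in the passage from $Q$ to $\zeta_n^{2m}Q$ in Proposition \ref{construction_jet_differential_as_polynomial}, the $z_0^{N_0}$-premultiplication has to be performed with care so that the vanishing condition ends up expressed as the vanishing of a polynomial of controlled degree in the $\zeta_j$-coordinate differentials, and so that the log-pole bound $n\delta^{\varepsilon_0}X$ is preserved. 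Once this bookkeeping is carried out, the counting inequality above yields the desired log-pole $n$-jet differential $\omega$.
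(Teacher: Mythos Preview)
Your direct Siegel-lemma construction on $\mathbb{P}_n$ has a genuine gap: the building blocks you allow cannot produce vanishing of order $q\approx\delta^{1-\varepsilon_0'}$ along $H_\infty$. In the coordinates $\zeta_j$ centered at infinity one has $\log f=-\delta\log\zeta_n+\log(z_0^\delta f)$, so $d^\ell\log f$ has only an order-$\ell$ pole along $\{\zeta_n=0\}$ (the residue $-\delta$ is a \emph{constant} multiplier, not a pole order), not the order $(\ell+1)\delta$ you feared. Consequently the total pole order of each of your monomials $x^{\mathbf a}\prod(d^\ell x_j)^{b_{\ell,j}}\prod(d^\ell\log f)^{c_\ell}$ along $H_\infty$ is at most $m_0+2m$, and after multiplying by $\zeta_n^{m_0+2m}$ you obtain a polynomial whose degree \emph{in $\zeta_n$} is also at most $m_0+2m$. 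Asking such a polynomial to vanish along $\zeta_n=0$ to order $q+m_0+2m$ with $q=\delta^{1-\varepsilon_0'}\gg m_0+2m$ forces every $\zeta_n$-coefficient to vanish, i.e.\ $\omega\equiv 0$. In Proposition~\ref{construction_jet_differential_as_polynomial} the analogous obstacle is overcome by the denominator $f_{x_1}-1$, whose reciprocal contributes vanishing of order $\delta-1$ at infinity; your ansatz has no such high-degree denominator, and the log factors $d^\ell\log f$ do not supply one because $f$ does not vanish on $H_\infty$. So the counting inequality you write down is moot: the linear system actually has only the trivial solution.

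The paper takes a completely different route that sidesteps this issue. It passes to the cyclic $\delta$-fold cover $\hat X=\{f(x_1,\ldots,x_n)-x_{n+1}^\delta=0\}\subset\mathbb{P}_{n+1}$, applies Proposition~\ref{construction_jet_differential_as_polynomial} (with $n$ replaced by $n+1$) to obtain a holomorphic $n$-jet differential $\hat\omega=Q/(F_{x_1}-1)$ on $\hat X$ vanishing to order $\ge\delta^{\theta'}$ at infinity, and then takes a direct image under the projection $\hat X\to\mathbb{P}_n$. On $\hat X$ one has $x_{n+1}^\delta=f$, hence $d^j\log x_{n+1}=\frac{1}{\delta}d^j\log f$; rewriting each $d^\ell x_{n+1}$ through $x_{n+1}$ and $d^j\log x_{n+1}$, dividing by a suitable power $x_{n+1}^\ell$, and then setting $x_{n+1}=0$ converts $\hat\omega$ into a log-pole jet differential on $\mathbb{P}_n$ with the required bounds. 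The high vanishing at $H_\infty$ is inherited from the $1/(F_{x_1}-1)$ factor already present in $\hat\omega$, which is exactly the mechanism your direct approach lacks.
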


\begin{proof}  We choose $\epsilon$, $\epsilon^\prime$, $\theta_0$, $\theta$, $\theta^\prime$ in the open interval $(0,1)$ such that
$(n+1)\theta_0+\theta\geq (n+1)+\epsilon$,
$1-\varepsilon_0^\prime\leq\theta^\prime<1-\epsilon^\prime$, and $\varepsilon_0\leq\theta$.  We apply Proposition \ref{construction_jet_differential_as_polynomial} to get $A=A(n+1,\epsilon,\epsilon^\prime)$ from it and then set $\hat\delta_{n+1}=A$.

\medbreak Let $f\left(x_1,\cdots,x_n\right)$ be a polynomial in terms of the inhomogeneous coordinates $x_1,\cdots,x_n$ of ${\mathbb P}_n$ which defines $X$.  Let $\hat X$ be the regular hypersurface in ${\mathbb P}_{n+1}$ defined by the polynomial $F=f\left(x_1,\cdots,x_n\right)-x_{n+1}^\delta$ in the inhomogeneous coordinates $x_1,\cdots,x_{n+1}$ of ${\mathbb P}_{n+1}$.  We apply Proposition \ref{construction_jet_differential_as_polynomial} to $F$ (instead of to $f$) to get
an
$n$-jet differential $\hat\omega$ of the form
${Q\over F_{x_1}-1}$ which vanishes to order $\geq \delta^{\theta^\prime}$ at the infinity hyperplane of ${\mathbb P}_{n+1}$, where $Q$ is a polynomial in
$$d^jx_1,\cdots,d^jx_{n+1}\quad(0\leq j\leq n)$$ which is of degree
$m_0=\left\lceil\delta^{\theta_0}\right\rceil$ in $x_1,\cdots,x_{n+1}$
and is of homogeneous weight
$m=\left\lceil\delta^\theta\right\rceil$ in
$$d^jx_1,\cdots,d^jx_{n+1}\quad(1\leq j\leq n)$$ when the weight of
$d^j x_\ell$ is assigned to be $j$.

\medbreak We choose a nonzero integer $\ell$ such that one nonzero term of $\frac{Q}{x_{n+1}^\ell}$ is of the form
$$
Q_0\,\left(\frac{dx_{n+1}}{x_{n+1}}\right)^{\nu_1}\cdots\left(\frac{d^nx_{n+1}}{x_{n+1}}\right)^{\nu_n},
$$
where $Q_0$ is a polynomial in the variables
$$d^jx_1,\cdots,d^jx_n\quad(0\leq j\leq n)$$
with constant coefficients and $\nu_1,\cdots,\nu_n$ are nonnegative integers.

\medbreak The complex manifold $\hat X$ is a branched cover over ${\mathbb P}_n$ with cyclic branching  of order $\delta$ at $X$ under the projection map $\hat\pi:\hat X\to{\mathbb P}_n$ induced by $\left(x_1,\cdots,x_n,x_{n+1}\right)\mapsto \left(x_1,\cdots,x_n\right)$.  Let $\omega$ be the direct image of $$\frac{\hat\omega}{x_{n+1}^\ell}=\frac{Q}{x_{n+1}^\ell\left(f_{x_1}-1\right)}$$ under $\hat\pi$.  The $n$-jet differential $\omega$ on ${\mathbb P}_n$ can be computed as follows. First we express $d^\ell x_{n+1}$ (by induction on $\ell$) as a polynomial of the variables $$x_{n+1}, d\log x_{n+1}, d^2\log x_{n+1},\cdots, d^\ell\log x_{n+1}$$ with constant coefficients so that $\frac{Q}{x_{n+1}^\ell}$ is expressed as a polynomial of $x_1,\cdots,x_{n+1}$ and
$$d^jx_1,\cdots,d^jx_n,d^j\log x_{n+1}\quad(0\leq j\leq n)$$
with constant coefficients.  Then we obtain $\omega$ from $\frac{Q}{x_{n+1}^\ell\left(f_{x_1}-1\right)}$ by replacing $d^j\log x_{n+1}$ by $d^j\log f$ and setting $x_{n+1}$ equal to $0$.  The log-pole jet differential $\omega$ on ${\mathbb P}_n$ is not identically zero, because of the nonzero term
$$
Q_0\,\left(\frac{dx_{n+1}}{x_{n+1}}\right)^{\nu_1}\cdots\left(\frac{d^nx_{n+1}}{x_{n+1}}\right)^{\nu_n},
$$
in $Q$.  The log-pole divisor of $\omega$ is no more than $\lambda$ times $X$ with $\lambda\leq n\delta^{\varepsilon_0}$, because $\omega$ is an $n$-jet differential of weight $\leq \delta^{\varepsilon_0}$.
\end{proof}

\bigbreak By applying Proposition \ref{general_schwarz_lemma}, we have the following two Second Main Theorems for log-pole jet differentials, with second one dealing with the case of slowly moving targets.

\begin{theorem}\label{second_main_theorem_jet_differential} {\rm (Second Main Theorem from Log-Pole Jet Differentials)}\ \  Let $X$ be an $n$-dimensional compact complex manifold with an ample line bundle $L$.  Let $D_1,\cdots,D_p,E_1,\cdots,E_q$ be divisors of $L$.  Let $\omega$ be a log-pole jet differential on $X$ vanishing on $D=D_1+\cdots+D_p$ such that the log-pole set of $\omega$ is contained in $E=E_1+\cdots+E_q$ with multiplicities counted.  Then for any holomorphic map $\varphi$ from the affine complex line ${\mathbb C}$ to $X$ such that the image of $\varphi$ is not contained in $E$ and the pullback $\varphi^*\omega$ is not identically zero,
$$
pT(r,\varphi,L)\leq N(r,\varphi,E)+O\left(\log T(r,\varphi,L)\right)\quad\|
$$
holds.
In other words,
$$
\sum_{j=1}^q m\left(r,\varphi,E_j\right)\leq(q-p)pT(r,\varphi,L)+O\left(\log T(r,\varphi,L)\right)\quad\|.
$$
The meaning of the log-pole set of $\omega$ being contained in $E=E_1+\cdots+E_q$ with multiplicities counted is the following.  Locally $\omega$ is of the form $$\sum_{\tau_1,\lambda_1,\cdots,\tau_k,\lambda_k}h_{\tau_1,\lambda_1,\cdots,\tau_k,\lambda_k}
\left(d^{\tau_1}\log F_1\right)^{\lambda_1}\cdots\left(d^{\tau_\ell}\log F_\ell\right)^{\lambda_\ell}$$ with
$$
\tau_1\lambda_1\,{\rm div}\,F_1+\cdots+\tau_\ell\lambda_\ell\,{\rm div}\,F_\ell
$$
contained in $E$ with multiplicities counted, where ${\rm div}\,F_j$ is the divisor of $F_j$.
\end{theorem}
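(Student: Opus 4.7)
The plan is to obtain the stated Second Main Theorem as a direct specialization of the general Schwarz lemma for log-pole jet differentials already established in Proposition~\ref{general_schwarz_lemma}. Concretely, I take the subvariety $Y$ of the jet space to be the full space $J_k(X)$, where $k$ is the order of $\omega$; assign to the role of the divisor $D$ in that proposition our divisor $D=D_1+\cdots+D_p \in |pL|$ equipped with the metric $e^{-p\chi_L}$ induced from a positively-curved smooth Hermitian metric $e^{-\chi_L}$ of $L$; take the divisor in the role of $E$ in the proposition to be the zero divisor (so $s_E\equiv 1$ and $\eta_E\equiv 0$); and take the divisor in the role of $F$ in the proposition to be our log-pole divisor $E=E_1+\cdots+E_q$. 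With these choices the sum playing the role of $D+E$ in the proposition equals $pL$ and is ample, as required by the hypotheses there.

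Next I verify the remaining hypotheses. Since $\omega$ vanishes along $D$, the expression $(s_D)^{-1}\omega$ is a holomorphic $k$-jet differential on $J_k(X)-\pi_k^{-1}({\rm Supp}\,E)$ with at worst log-pole singularities along $\pi_k^{-1}({\rm Supp}\,E)$; this is precisely the property demanded by Proposition~\ref{general_schwarz_lemma}. The local product representation $\omega|_{\pi_k^{-1}(U_j)} = s_D\,\omega_j$, with $\omega_j$ a log-pole $k$-form whose log-pole divisor is contained in $E$ with multiplicities counted, is simply a restatement of what it means for $\omega$ to be a global log-pole jet differential vanishing on $D$ with log poles along $E$, once $L_D$ is trivialized on each chart of a finite cover $\{U_j\}$ of $X$. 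The hypothesis $\varphi^*\omega\not\equiv 0$, together with the fact that $\varphi({\mathbb C})$ is not contained in ${\rm Supp}\,E$, guarantees that for some $\zeta_0\in {\mathbb C}$ and some $j_0$ with $\varphi(\zeta_0)\in U_{j_0}$ the function ${\rm eval}_{{\rm id}_{\mathbb C}}(\varphi^*\omega_{j_0})$ is not identically zero on ${\mathbb C}$, which is the nonvanishing needed to start the twice-integration-of-Laplacian step in the proof of the proposition.

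The conclusion of Proposition~\ref{general_schwarz_lemma} then reads
$$
T(r,\varphi,\eta_D) \;\leq\; T(r,\varphi,\eta_E) + N(r,\varphi,E) + O\!\left(\log T(r,\varphi,\eta_D+\eta_E)+\log r\right)\ \ \|,
$$
where for maps from the whole ${\mathbb C}$ the parameter $r_1$ in the proposition is inert. Substituting $\eta_D=p\,\eta_L$ and $\eta_E=0$, and using $T(r,\varphi,\eta_D)=p\,T(r,\varphi,L)$, I obtain
$$
p\,T(r,\varphi,L) \;\leq\; N(r,\varphi,E) + O\!\left(\log T(r,\varphi,L)\right)\ \ \|,
$$
which is the first assertion. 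The equivalent $m$-function form follows from the First Main Theorem: from $N(r,\varphi,E_j)+m(r,\varphi,E_j)=T(r,\varphi,L)+O(1)$, summed over $1\leq j\leq q$, one gets $N(r,\varphi,E)=q\,T(r,\varphi,L)-\sum_{j=1}^q m(r,\varphi,E_j)+O(1)$, and substitution into the first assertion yields $\sum_{j=1}^q m(r,\varphi,E_j)\leq (q-p)\,T(r,\varphi,L)+O\!\left(\log T(r,\varphi,L)\right)\ \ \|$.

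Since all the hard analytic content, in particular Bloch's technique of writing the log-pole form in logarithmic-derivative coordinates and invoking Nevanlinna's logarithmic derivative lemma to bound the angular integral, is already packaged inside Proposition~\ref{general_schwarz_lemma}, the main point requiring care in the argument above is purely bookkeeping: verifying that the ample-divisor hypothesis, the local product representation of the global log-pole jet differential, and the counting-function terms are lined up correctly, so that the choice of a trivial compensating divisor in the role of $E$ in the proposition is admissible in our setting. No additional geometric input is expected to be needed beyond the correct translation of data.
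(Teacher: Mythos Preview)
Your proposal is correct and follows exactly the route the paper indicates: the paper states this theorem immediately after Proposition~\ref{general_schwarz_lemma} with the one-line remark that it is obtained ``by applying Proposition~\ref{general_schwarz_lemma}'' and gives no further proof, so your detailed specialization (taking $Y=J_k(X)$, the proposition's $D$ equal to $D_1+\cdots+D_p\in|pL|$, the proposition's $E$ equal to the zero divisor, and the proposition's $F$ equal to $E_1+\cdots+E_q$) is precisely the intended argument, carried out carefully. Your derivation of the proximity-function form via the First Main Theorem is also the standard reduction and matches what the paper's ``In other words'' reformulation intends (note the paper's printed coefficient $(q-p)p$ appears to be a typographical slip for $(q-p)$, which is what you obtain).
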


\begin{theorem}\label{second_main_theorem_jet_differential_moving_target} {\rm (Second Main Theorem for Jet Differential with Slowly Moving Targets)}\ \  Let $S\subset{\mathbb P}_N$ be a complex algebraic manifold and ${\mathcal X}\subset{\mathbb P}_{\hat n}\times S$ be a complex algebraic manifold.  Let $\pi:{\mathcal X}\to S$ be the projection induced by the natural projection ${\mathbb P}_{\hat n}\times{\mathbb P}_N\to{\mathbb P}_N$ to the second factor.  Let $L_S$ be an ample line bundle on $S$. Let $L$ be a line bundle on ${\mathcal X}$ such that $L+\pi^{-1}\left(L_S\right)$ is ample on $X$. Let $D_1,\cdots,D_p,E_1,\cdots,E_q$ be divisors of $L$. Let $D=D_1+\cdots+D_p$ and $E=E_1+\cdots+E_q$.  For $\alpha\in S$ let $X^{(\alpha)}=\pi^{-1}(\alpha)$ and $D^{(\alpha)}=D|_{X^{(\alpha)}}$ and $E^{(\alpha)}=E|_{X^{(\alpha)}}$.  Let $Z$ be a proper subvariety of $S$.  For $\alpha\in S-Z$ let $\omega^{(\alpha)}$ be a log-pole jet differential on $X^{(\alpha)}$ such that $\omega^{(\alpha)}$ vanishes on the divisor $D^{(\alpha)}$ and the log-pole set of $\omega^{(\alpha)}$ is contained in the divisor $E^{(\alpha)}$ with multiplicities counted.  Assume that $\omega^{(\alpha)}$ is holomorphic in $\alpha$ for $\alpha\in S-Z$ and is meromorphic in $\alpha$ for $\alpha\in S$.  Let $\varphi$ be a holomorphic map from the affine complex line ${\mathbb C}$ to ${\mathcal X}$ such that the image of $\pi\circ\varphi$ is not contained in $Z$ and  $T\left(r,\pi\circ\varphi, L_S\right)=o\left(T\left(r,\varphi,L+\pi^{-1}\left(L_S\right)\right)\right)$, then
$$
qT\left(r,\varphi,L+\pi^{-1}\left(L_S\right)\right)\leq N(r,\varphi,D)+o\left(T\left(r,\varphi,L+\pi^{-1}\left(L_S\right)\right)\right)\quad\|.
$$
In other words,
$$
\sum_{j=1}^p m\left(r,\varphi,D_j\right)\leq(q-p)\,T\left(r,\varphi,L+\pi^{-1}\left(L_S\right)\right)
+o\left(T\left(r,\varphi,L+\pi^{-1}\left(L_S\right)\right)\right)\quad\|.
$$
In the product case of ${\mathcal X}=X^{(0)}\times S$ with $D_j=D^{(0)}_j$, the proximity function $m\left(r,\varphi,D_j\right)$ in the formulation can be replaced by the proximity function $m\left(r,{\rm pr}_1\circ\varphi,D^{(0)}_j\right)$, because $N(r,\varphi,D)$ is equal to $N\left(r,{\rm pr}_1\circ\varphi,D_j^{(0)}\right)$ and we can apply Nevanlinna's First Main Theorem to
${\rm pr}_1\circ\varphi$ and the divisor $D_j^{(0)}$ and use the assumption that $T\left(r, \pi\circ\varphi, L_S\right)=o\left(T\left(r,\varphi,L+\pi^{-1}\left(L_S\right)\right)\right)$.  Here ${\rm pr}_1$ means the natural projection ${\rm pr}_1:{\mathbb P}_{\hat n}\times{\mathbb P}_N\to{\mathbb P}_{\hat n}$ to the first factor.
\end{theorem}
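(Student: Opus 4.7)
The plan is to lift the whole picture to the total space $\mathcal{X}$ and apply the general log-pole Schwarz lemma (Proposition \ref{general_schwarz_lemma}) to a globalized version of the family $\omega^{(\alpha)}$, then use the slow-movement hypothesis to absorb all parameter-direction contributions into the $o\!\left(T(r,\varphi,L+\pi^{-1}(L_S))\right)$ error term. This is the natural moving-target analog of the fixed-target Theorem \ref{second_main_theorem_jet_differential} and parallels the proof of Theorem \ref{solution_slow_varying_coefficient} above.

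First I would globalize. Since $\omega^{(\alpha)}$ is holomorphic in $\alpha$ on $S-Z$ and meromorphic in $\alpha$ on all of $S$, and since $L_S$ is ample, there exists a positive integer $m_0$ and a section $\sigma\in H^0(S,L_S^{\otimes m_0})$ whose divisor $H_S$ absorbs the pole of $\omega^{(\alpha)}$ along $Z$. Multiplying by $\pi^{*}\sigma$ produces a globally defined log-pole $k$-jet differential $\tilde\omega$ on the vertical jet space, whose zero divisor still contains $D=D_1+\cdots+D_p$, whose log-pole divisor is contained in $E+m_0\,\pi^{-1}(H_S)$ with multiplicities counted, and whose associated line bundle is $L+m_0\,\pi^{-1}(L_S)$. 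One then interprets $\tilde\omega$ as a log-pole jet differential on (a neighborhood in) $J_k(\mathcal{X})$ by writing it as a polynomial in the fiber-direction differentials $d^{\ell}x_j$ in local product coordinates $(x,\alpha)$ on $\mathcal{X}$; this is the form in which $\omega^{(\alpha)}$ is already presented, so no additional construction is needed.

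Next I would apply Proposition \ref{general_schwarz_lemma} to $\mathcal{X}$, the map $\varphi:{\mathbb C}\to\mathcal{X}$, and the log-pole jet differential $\tilde\omega$. The divisor $D$ of the lemma is taken to be the ample vanishing divisor of $\tilde\omega$; the divisor $E$ of the lemma is taken to absorb the slack so that $L_D+L_E$ is a positive multiple of $L+\pi^{-1}(L_S)$; and the log-pole divisor $F$ of the lemma is $E+m_0\,\pi^{-1}(H_S)$. The Schwarz lemma then yields an inequality of the shape
\[
T(r,\varphi,L_D)\le T(r,\varphi,L_E)+N(r,\varphi,E)+N(r,\varphi,m_0\pi^{-1}(H_S))+O\!\left(\log T(r,\varphi,L+\pi^{-1}(L_S))+\log r\right)\ \|.
\]
Finally, Nevanlinna's First Main Theorem gives $N(r,\varphi,m_0\pi^{-1}(H_S))\le m_0\,T(r,\varphi,\pi^{-1}(L_S))+O(1)=m_0\,T(r,\pi\circ\varphi,L_S)+O(1)$, which by the slow-movement hypothesis is $o\!\left(T(r,\varphi,L+\pi^{-1}(L_S))\right)$; the $T(r,\varphi,L_E)$ term and the $O(\log T)$ remainder are absorbed into the same error after choosing $L_E$ suitably small. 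Rearranging via the First Main Theorem applied to each $D_j$ separately produces the two equivalent forms stated in the theorem, and the concluding remark about the product case $\mathcal{X}=X^{(0)}\times S$ follows because $N(r,\varphi,D_j)=N(r,{\rm proj}_1\circ\varphi,D_j^{(0)})$ literally while the corresponding identity for proximity functions holds modulo $T(r,\pi\circ\varphi,L_S)=o\!\left(T(r,\varphi,L+\pi^{-1}(L_S))\right)$.

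The main obstacle is the bookkeeping in the first step: pinning down a single absolute $m_0$ so that $\tilde\omega$ is genuinely a global object on $\mathcal{X}$, and then making sure the only parameter-direction divisor entering the Schwarz lemma output is the controlled $\pi^{-1}(H_S)$ with bounded multiplicity. Once this globalization is done cleanly and its divisor data are separated into a fiber-direction contribution (which produces the usual Second Main Theorem terms) and a parameter-direction contribution (of size $O(T(r,\pi\circ\varphi,L_S))$), the slow-movement hypothesis does all the remaining work and the rest is routine.
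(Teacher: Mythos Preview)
Your proposal is correct and follows exactly the approach the paper indicates: the paper gives no detailed proof of this theorem but simply states, just before Theorem~\ref{second_main_theorem_jet_differential}, that ``By applying Proposition~\ref{general_schwarz_lemma}, we have the following two Second Main Theorems for log-pole jet differentials, with the second one dealing with the case of slowly moving targets.'' Your globalization step (clearing the meromorphic dependence on $\alpha$ by a section of a power of $L_S$, then feeding the resulting object into Proposition~\ref{general_schwarz_lemma} and absorbing the parameter-direction terms via the slow-movement hypothesis) is precisely the mechanism the paper uses explicitly in the proof of Theorem~\ref{solution_slow_varying_coefficient}, and is the evident template for the present theorem as well.
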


\bigbreak In the following remark we discuss how Cartan's proof of his Second Main Theorem for hyperplanes in general position can be interpreted in the setting of the Second Main Theorem for log-pole jet differentials.

\begin{remark}  Cartan's Second Main Theorem for hyperplanes in ${\mathbb P}_n$ for hyperplanes in general position given in \cite{Ca33} is simply the special case of Theorem \ref{second_main_theorem_jet_differential} with
$$
\omega=\frac{{\rm Wron}(dx_1,\cdots,dx_n)}{F_1\cdots F_q}
$$
in inhomogeneous coordinates $x_1,\cdots,x_n$ of ${\mathbb P}_n$, where $F_1,\cdots,F_q$ are the degree-one polynomial in $x_1,\cdots,x_n$ which define the $q$ hyperplanes in ${\mathbb P}_n$ in general position.

\medbreak Here the notation for the Wronskian
$${\rm Wron}\left(\eta_1,\cdots,\eta_\ell\right)$$ for jet differentials $\eta_1,\cdots,\eta_\ell$ on a complex manifold $Y$ is used to mean
the jet differential
$$
\det\left(d^{\lambda-1}\eta_j\right)_{1\leq\lambda,j\leq\ell}=\sum_{\sigma\in S_\ell}\left({\rm sgn}\,\sigma\right) \eta_{\sigma(1)} \left(d\eta_{\sigma(2)}\right)\cdots\left(d^{\ell-1}\eta_{\sigma(\ell)}\right)
$$
on $Y$, where $S_\ell$ is the group of all permutations of $\left\{1,2,\cdots,\ell\right\}$ and ${\rm sgn}\,\sigma$ is the signature of the permutation $\sigma$.

\medbreak The denominator $F_1\cdots F_q$ in $\omega$ gives the vanishing order $q$ at the infinity hyperplane of ${\mathbb P}_n$.  The key argument here is that from the general position assumption of the zero-sets of $F_1,\cdots,F_q$ we can locally write $\omega$ as a constant times
$$
\frac{{\rm Wron}\left(dF_{\nu_1},\cdots,dF_{\nu_n}\right)}{F_1\cdots F_q}=
\frac{{\rm Wron}\left(d\log F_{\nu_1},\cdots,d\log F_{\nu_n}\right)}{F_{\nu_{n+1}}\cdots F_{\nu_q}}
$$
in a neighborhood $U$ of a point when $F_j$ is nowhere zero on $U$ for $j$ not equal to any of the indices $\nu_1,\cdots,\nu_n$.
\end{remark}

\bigbreak\noindent{\it Author's address:} Department of Mathematics, Harvard University, Cambridge, MA 02138, U.S.A.

\bigbreak\noindent{\it Author's e-mail address:} siu@math.harvard.edu
\end{document}